\newcommand{\red}[1]{\textcolor{red}{#1}}
\newcommand{\comment}[1]{}
    \newcommand{\set}[1]{{\left\{#1\right\}}}
\newcommand{\pa}[1]{{\left(#1\right)}}
\newcommand{\sq}[1]{{\left[#1\right]}}
\newcommand{\abs}[1]{{\left|#1\right|}}
\newcommand{\norm}[1]{{\left |#1\right |}}
\newcommand{\T}{\mathbb{T}}
\newcommand{\Z}{\mathbb{Z}}
\newcommand{\R}{\mathbb{R}}
\newcommand{\C}{\mathbb{C}}
\newcommand{\teta}{\theta}
\newcommand{\dg}{{\mathtt{D}_\g}}
\newcommand{\dgp}{{\mathtt{D}_{\g,\fp}}}
\newcommand{\dgpab}{{\mathtt{D}_{\g,\fp}^{\mu_1,\mu_2}}}
\newcommand{\eps}{\varepsilon}
\renewcommand{\Re}{\operatorname{Re}}
\renewcommand{\Im}{\operatorname{Im}}
\newcommand{\im}{I}
\newcommand{\na}{\widehat{n}}
\newcommand{\co}[1]{\textit{#1}}
\newcommand{\gr}[1]{\textbf{#1}}
\newcommand{\dede}{{\mathtt{d}_\SO}}
\newcommand{\id}{\operatorname{id}}
\newcommand{\nAR}[2]{\abs{#2}_{a_{#1},r_{#1}}^{\dg,\gamma}}
\newcommand{\ad}{\operatorname{ad}}
\newtheorem{prop}{Proposition}[section]
    \newtheorem{thm}{Theorem}
    \newtheorem*{thm*}{Theorem}
    \newtheorem*{cor*}{Corollary}
 \newtheorem{hyp}{Assumption}
    \newtheorem{cor}{Corollary}
    \newtheorem{lemma}{Lemma}
    \theoremstyle{remark}
     \newtheorem{ex}{Example}
\newtheorem{rmk}{Remark}[section]
\theoremstyle{definition}
\newtheorem{defn}{Definition}
\numberwithin{equation}{section}
\numberwithin{thm}{section}
\numberwithin{defn}{section}
\numberwithin{prop}{section}
\numberwithin{cor}{section}
\numberwithin{lemma}{section}
\numberwithin{rmk}{section}
\newcommand{\g}{\gamma}
\newcommand{\s}{{\sigma}}
\def\wc{ {}}
\newcommand{\N}{{\mathbb N}}
\newcommand{\cA}{{\mathcal A}}
\newcommand{\cH}{{\mathcal H}}
\newcommand{\cI}{{\mathcal I}}
\newcommand{\cK}{{\mathcal K}}
\newcommand{\cN}{{\mathcal N}}
\newcommand{\cR}{{\mathcal R}}
\newcommand{\fm}{{\mathfrak{m}}}
\newcommand{\fp}{{q}}
\newcommand{\ta}{{\mathtt{a}}}
\newcommand{\td}{{\mathtt{d}}}
\newcommand{\tr}{{\mathtt{r}}}
\newcommand{\tH}{{\mathtt{H}}}
\newcommand{\al}{{\alpha}}
\newcommand{\bt}{{\beta}}
\renewcommand{\d}{\partial}
\renewcommand{\im}{{\rm i}}
\newcommand{\jap}[1]{\langle #1 \rangle}
\newcommand{\und}[1]{\underline{#1}}
\newcommand{\e}{{\varepsilon}}
\newcommand{\jml}[1]{\lfloor #1 \rfloor}
\newcommand{\tw}{{\mathtt{w}}}
\renewcommand{\th}{{\mathtt{h}}}
\newcommand{\nnorm}[1]{{\left\vert\kern-0.25ex\left\vert\kern-0.25ex\left\vert #1 
    \right\vert\kern-0.25ex\right\vert\kern-0.25ex\right\vert}}
\newcommand{\buu}{{u^\bal}{\bar{u}^\bbt}}
\newcommand{\pon}{{{\Pi^{0,\mathcal{K}}}}} 
\newcommand{\por}{{\Pi^{0,\mathcal{R}}}}
\newcommand{\pd}{{\Pi^{-2}}}
\newcommand{\ono}[1]{#1^{0,\mathcal{N}}}
\newcommand{\oro}[1]{#1^{0,\mathcal{R}}}
\newcommand{\odo}[1]{#1^{-2}}
\newcommand{\es}{e^{\set{S,\cdot}}}
\newcommand{\bal}{{\bm \al}}
\newcommand{\bbt}{{\bm \bt}}
\newcommand{\gnor}[1]{\abs{#1}^{\wc,\gamma}}  
\newcommand{\suca}{\mathtt N}
\newcommand{\ri}{r}
\newcommand{\rs}{{r^*}}
\newcommand{\rf}{{r'}}
\newcommand{\etai}{\eta}
\newcommand{\etaf}{{\eta'}}
\newcommand{\twi}{\tw}
\newcommand{\twf}{{\tw'}}
\newcommand{\GE}{\mathtt G}
\newcommand{\SO}{\mathtt S}
\newcommand{\MS}{\mathtt M}
\newcommand{\Cmon}{{C_{\mathtt{mon}}}}
\newcommand{\tauSO}{{\tau_\SO}}
\newcommand{\tauMS}{{\tau_\MS}}
\newcommand{\deSO}{{\delta}_\SO}
\newcommand{\Calg}{{C_{\mathtt{alg}}(p)}}
\newcommand{\Calgo}{{C_{\mathtt{alg}}(1)}}
\newcommand{\CalgM}{{C_{\mathtt{alg},\mathtt M}(p)}}
\newcommand{\CalgMo}{{C_{\mathtt{alg},\mathtt M}(1)}}
\newcommand{\CNem}{{C_{\mathtt{Nem}}}}
\newcommand{\Ctame}{{C_{\mathtt{tame}}}}
\newcommand{\CM}{{C_0}}
\newcommand{\Cmeas}{{C_{\mathtt{meas}}}}
\newcommand{\Cuno}{{\mathcal C_1}}
\newcommand{\Cdue}{{\mathcal C_2}}
\begin{document}
\author{Luca Biasco}
\address{Università degli Studi Roma Tre}
\email{biasco@mat.uniroma3.it}

\author{Jessica Elisa Massetti}
\address{Scuola Normale Superiore di Pisa}
\email{jessica.massetti@sns.it}

\author{Michela Procesi}
\address{Università degli Studi Roma Tre}
\email{procesi@mat.uniroma3.it}

 \title{Exponential stability estimates for the 1d NLS}

\begin{abstract}
We study stability times for  a family of parameter dependent nonlinear Schr\"odinger equations on the circle, close to the origin.
 Imposing a suitable Diophantine condition (first introduced by Bourgain),
we prove a rather flexible Birkhoff Normal Form theorem, 
which implies, e.g.,  exponential and  sub-exponential time estimates 
in the Sobolev and Gevrey class respectively.  
\end{abstract} 
\maketitle
\setcounter{tocdepth}{2} 
\tableofcontents

\section{Introduction}\label{intro}
We consider families of NLS equations on the circle with external parameters of the form:
\begin{equation}\label{NLSb}
	\im u_t + u_{xx} - V\ast u +  f(x,|u|^2)u=0\,,
\end{equation}
where $\im=\sqrt{-1}$ and $V\ast$ is a Fourier multiplier 
$$
V\ast u = \sum_{j\in\Z} V_j u_j e^{\im j x}\,,\quad \pa{V_j}_{j\in\Z}\in \tw^\infty_\fp\,,
$$
living in the weighted $\ell^\infty$ space
$$
\tw^\infty_\fp:=\{ V = \pa{V_j}_{j\in\Z}\in\ell^\infty\ \ |
\quad |V|_\fp:= \sup_{j\in\Z}|V_j|\jap{j}^\fp <\infty\}\,, \qquad
\fp\geq 0\,, 
$$
where $\jap{j}:=\max\{ |j|,1\},$
while $f(x,y)$ is $2\pi$ periodic and real analytic in $x$ and  is real analytic in $y$ in a neighborhood of $y=0$. We shall assume that $f(x,y)$ has a zero  in $y=0$. By  analyticity, for some $\mathtt a,R>0$ we have
\begin{equation}\label{analitico}
f(x,y)= 
\sum_{d=1}^\infty f^{(d)}(x) y^d\,,\quad
  |f|_{\ta,R}:=\sum_{d=1}^\infty|f^{(d)}|_{\T_{\mathtt a}}R^d <\infty \,,
\end{equation}
where,  given a real analytic function  $g(x)=\displaystyle \sum_{j\in \Z}g_j e^{\im j x},$ we 
set\footnote{Namely $g$ is a holomorphic function on the domain
$\T_a := \set{x\in \C/2\pi\Z\, :\, \abs{\Im x}< a}$
with $L^2$-trace on the boundary.}
$ |g|^2_{\T_{\ta}}:=\displaystyle\sum_{j\in\Z}|g_j|^2e^{2\mathtt a|j|}\,.$
Note that if $f$ is independent of $x$ \eqref{analitico} reduces
to
\begin{equation}\label{analiticobis}
 |f|_{R}:= \sum_{d=1}^\infty|f^{(d)}|R^d <\infty \,.
\end{equation}
Equation \eqref{NLSb} is at least locally well-posed
 (say in a neighborhood of $u=0$ in $H^1$, see e.g. Lemma \ref{cobra}) and has an elliptic fixed point at $u=0$, so that an extremely natural question is to understand {\it stability times} for small initial data. One can informally state the problem as follows: let $E\subset H^1$ be some Banach space and consider \eqref{NLSb} with initial datum $u_0$ such that $|u_0|_E\le \delta\ll1$. By local well posedness, the solution $u(t,x) $ of \eqref{NLSb}
with such initial datum exists and is in $H^1$.
 \begin{defn}
 	We call 
{\it stability time } $T=T(\delta)$ the supremum of the times $t$ such that
{\it  for all $|u_0|_E\le \delta$ one has $u(t,\cdot)\in E$ with $|u(t,\cdot)|_E\le 4\delta$}.
 \end{defn}
Computing the stability time  $T(\delta)$ is  out of reach, so the goal is to give  lower (and possibly  upper) bounds.
\\
 A good comparison is with the case of a finite dimensional Hamiltonian system with a non-degenerate  elliptic fixed point, which in the standard complex symplectic coordinates
$u_j= \frac{1}{\sqrt2}(q_j+ \im p_j)$ 
 is described by the Hamiltonian 
\begin{equation}
\label{finito}
\sum_{j=1}^n \omega_j |u_j|^2 + O(u^3)\,,\quad \mbox{where $\omega_j\in \R$ are the {\it linear frequencies}}.
\end{equation}
Here if the frequencies $\omega$ are sufficiently non degenerate, say Diophantine\footnote{A vector $\omega\in\R^n$ is called Diophantine when it is badly approximated by rationals, i.e. it satisfies, for some $\gamma,\tau>0$, $\abs{k\cdot \omega} \ge \gamma \abs{k}^{-\tau},\quad \forall k\in\Z^n\setminus \set{0}\,$.}, then one can prove exponential lower bounds on $T(\delta)$  and,
if the nonlinearity satisfies some suitable hypothesis ({e.g. convexity or steepness }), even super-exponential ones, see for instance \cite{Giorgilli-Morbidelli:1995}, \cite{Bounemoura-Fayad-Niederman:2015} and reference therein. 
\\
	The strategy for obtaining exponential bounds is made of two main steps. The first one consists in the so-called Birkhoff normal form procedure: after $\suca\geq 1$ steps the 
	Hamiltonian \eqref{finito} is transformed into 
\begin{equation}
\label{finitobirk}
\sum_{j=1}^n \omega_j |u_j|^2 +Z +R\,
\,,
\end{equation}
where $Z$ depends only on the actions $(|u_i|^2)_{i=1}^n$ while $ R= O(|u|^{2\suca +3})$ contains terms of order at least $2\suca + 3$ in $\abs{u}$. \\  It is well known that this procedure generically diverges in $\suca$, so the second step consists in 
finding $\suca=\suca(\delta)$ which minimizes the size of the remainder $R$.
 \\
	The problem of {\it long-time} stability for equations  \eqref{NLSb} has been studied by many authors. In the context of infinite chains with a finite range coupling, we mention \cite{BenFroGior}. Regarding applications to PDEs (and particularly the NLS) the first results were given in \cite{Bourgain:1996} by Bourgain, who proved polynomial bounds for the stability times in the following terms: for any $M$ there exists $s=s(M)$ such that initial data which are $\delta$-small in the $H^{r+s}$ norm stay small  in the $H^{r}$ norm, for times of order $\delta^{-M}$. Afterwards, Bambusi in \cite{Bambu:1999b} proved that superanalytic initial data stay small in analytic norm, for times of order $e^{(\ln(\delta^{-1})^{1+b})}$, where $b>1$.
\\
Bambusi and Grebert in \cite{Bambusi-Grebert:2006} proved 
polynomial bounds for a class of {\it tame-modulus} PDEs, which includes \eqref{NLSb}. More precisely, they proved that for any $\suca\gg 1$ there exists $p(\suca) $ (tending to infinity as $\suca\to \infty$) such that for all $p\ge p(\suca)$ and initial datum in $H^p$ one has 
$T\ge C(\suca,p)\delta^{-\suca}$. For an application to the present model we refer also to \cite{Zh}. \\ Similar results were also proved for the Klein Gordon equation on Zoll manifolds in \cite{BGDS}. Successively Faou and Grebert  in \cite{Faou-Grebert:2013} considered the case of analytic initial data and proved subexponential bounds of the form $T\ge e^{c\ln(\frac1\delta)^{1+\beta}}$ for classes of NLS equations in $\T^d$ (which include \eqref{NLSb} by taking $d=1$). Finally, Feola and Iandoli  in \cite{FI} prove polynomial lower bounds for the stability times of reversible NLS equations with two derivatives in the nonlinearity.

\smallskip

A closely related topic is the study of orbital stability times close to periodic or quasi-periodic solutions of \eqref{NLSb}.
In the case $E=H^1$, Bambusi in \cite{Bambu:1999}  proved a lower bound of the form $T\ge e^{c\delta^{-\beta}}$ for perturbations of the integrable cubic NLS close to a quasi-periodic solution.
Regarding higher Sobolev norms, most results are in the periodic case. See \cite{Faou-etc} (polynomial bounds for Sobolev initial data) and the preprint \cite{Mi-Sun-Wang:2018} 
(subexponential bounds for subanalytic initial data).
\\
A dual point of view is to construct special orbits for which the Sobolev norms grow as fast as possible (thus giving an upper bound on the stability times). As far as we are aware  such  results  are mostly on $\T^2$ and in parameterless cases (for instance \cite{CKSTT}, \cite{GuaKa}, \cite{GHP}) and the time scales involved are much longer than our stability times (see  \cite{Guardia} for the instability of \eqref{NLSb} on $\T^2$ and \cite{Hani} for the istability of the plane wave in $H^s$ with $s<1$).

\subsection{The stability results}

In this paper we  recover and improve the results in \cite{Bambusi-Grebert:2006} 
({\it Sobolev} initial data) and\cite{Faou-Grebert:2013} ({\it analytic and subanalytic} initial data)
under a  different Diophantine non-resonance condition on the linear frequencies,  by application of a  different Birkhoff normal form approach (see the comments 
after Theorem \ref{sob}).
More precisely, following Bourgain \cite{Bourgain:2005}, we set
\begin{equation}
	\Omega_\fp:=\set{\omega=\pa{\omega_j}_{j\in \Z}\in \R^\Z,\quad \sup_j|\omega_j-j^2|\jap{j}^\fp < 1/2 }
\end{equation}
and, for  $\gamma>0$ we define the set of "good frequencies"
as
\begin{equation}\label{diofantinoBISintro}
	\dgp:=\set{\omega\in \Omega_\fp\,:\;	|\omega\cdot \ell|> \gamma \prod_{n\in \Z}\frac{1}{(1+|\ell_n|^2 \jap{n}^{2+\fp})}\,.\quad \forall \ell\in \Z^\Z: |\ell|<\infty}\,,
\end{equation} 
Note that $\dgp$ is large with respect to a natural probability product
measure on $\Omega_\fp$ (see \cite{Bourgain:2005} or  Lemma \ref{misura} in the present paper). 
\begin{rmk}
	From now on we shall fix $\gamma>0$ $q\geq 0$ and assume that 
	$\omega\in\dgp.$
\end{rmk}

\noindent
We note that some non-resonance condition on the frequencies is inevitable if one
wants to prove {\it long-time} stability, indeed if one takes $V=0$ and $f(x,|u|^2)=|u|^4$ then one can exhibit orbits in which the Sobolev norm is unstable in times of order $\delta^{-4}$, see \cite{GT}, \cite{HP}.   

\medskip
\noindent
{\bf Sobolev initial data}.
In the case of Sobolev initial data it is fundamental to have a good control on the dependence of the stabiliy time $T$ on the the regularity $p$.
This means that  results  are very sensitive to which (of various equivalent) Sobolev norms one considers. 
Recalling that the $L^2$-norm is invariant for the equation \ref{NLSb},	
we will consider two cases:
\begin{itemize}[leftmargin=*]
\item In the first case we deal with the usual norm
$\ |u_0|_{L^2}+ |\partial_x^p u_0|_{L^2} $, for $p> 1$. We denote this case as $\SO$  (Sobolev case) and, by fixing $p=p(\delta)$, we prove {\it sub-exponential lower bound} for the stability time $T(\delta)$ .
\\
\item In the second case, denoted by $\MS$ (Modified-Sobolev case), we consider the equivalent norm
$\ 2^p|u_0|_{L^2}+ |\partial_x^p u_0|_{L^2}$. In order to simplify the exposition and obtain better bounds, in this case we consider \eqref{NLSb} with $f$ independent of $x$ (translation invariance). Again, fixing  $p=p(\delta)$, we prove 
{\it exponential lower bound} on the stability time $T(\delta)$.
\end{itemize}
Of course, the norms in $\SO$ and $\MS$ are equivalent with constants depending on $p$. Note that when  $p$ depends on $\delta$ such constants become very important. \\ 
The main qualitative difference between $\SO$ and $\MS$ is that in the latter we are requiring that the Fourier modes $0,1,-1$ of the initial datum have {\it very little energy}. Indeed, passing to the Fourier side $u_0(x)= \sum_{j\in\Z} u_{0,j}e^{\im j x}$, if	 both
$|u_0|_{L^2}+ |\partial_x^p u_0|_{L^2} 
\le 	 \delta/2$ 
and the extra condition
$ |u_{0,0}|^2+ |u_{0,1}|^2+|u_{0,-1}|^2\le \delta^22^{-2p-2}
$
hold,
then one has 	$2^{p}|u_0|_{L^2}\le \delta$. 

Below we formally state our first result, which depends on some constants,  denoted by
$\tau_\SO,\deSO,\mathtt k_\SO, \mathtt{T}_{\SO}, \mathtt K_\SO,  \tau_\MS,\delta_\MS, \mathtt{T}_\MS$
{\it explicitly defined} in Section \ref{ossobuco} of the Appendix. These constants
depend only on $\gamma,\fp,\ta,R,|f|_{\ta,R} $
in the  case $\SO$ and on 
$\gamma, \fp,R,|f|_R$ in the  case $\MS$.

\begin{thm}[Sobolev stability]\label{tarzanello} $\empty$
	Consider equation \eqref{NLSb} with $f$ satisfying \eqref{analitico} for $\ta,R>0$.

	\noindent
$(\SO)$    
For any $p>1$ such that $(p-1)/\tauSO\in\mathbb N$ and 
any  initial datum $u(0)=u_0$ satisfying
\begin{equation}
	\label{moro}
|u_0|_{L^2}+ |\partial_x^p u_0|_{L^2} 
\le
\delta\le   \min\set{ \deSO ({\mathtt k_\SO p})^{ -3 p }\,,\ 
\frac{\sqrt{R}}{20}  },
\end{equation}
the solution $u(t)$ of \eqref{NLSb} with  initial datum $u(0)=u_0$ exists for all times 
\begin{equation}\label{tommaso3}
|t|\le \frac{\mathtt T_\SO}{\delta^2}(\mathtt K_\SO p)^{ -5 p }
\left(\frac{\deSO}{\delta}\right)^{\frac{2(p-1)}{\tauSO}}
\quad {\it and \ satisfies}\quad
|u(t)|_{L^2}+ |\partial_x^p u(t)|_{L^2}
\le
4\delta \,.
\end{equation}

\smallskip
\noindent
$(\MS)$   Assume that $f$ in \eqref{NLSb} is independent of $x$.
For any $p>1$ such that $(p-1)/\tauMS\in\mathbb N$ and 
for any initial datum $u(0)=u_0$ satisfying 
	\begin{equation}\label{eisenach}
2^p|u_0|_{L^2}+ |\partial_x^p u_0|_{L^2} 
\le
\delta\leq 
\min\left\{
\frac{2\sqrt \tauMS \delta_\MS}{\sqrt p}\,,\ 
\frac{\sqrt R}{4\sqrt{10}}
\right\}\,,
\end{equation}
the solution $u(t)$ of \eqref{NLSb}  exists for all times 
\begin{equation}\label{boh3}
|t|\le 
 \frac{\mathtt T_\MS}{\delta^{2}}
  \left( 
  \frac{4\tauMS \delta_\MS^2}{(p-1) \delta^2}
 \right)^{\frac{p-1}{\tauMS}}
\quad {\it and \ satisfies}\quad
2^p|u(t)|_{L^2}+ |\partial_x^p u(t)|_{L^2}
\le
4\delta \,.
\end{equation}

\end{thm}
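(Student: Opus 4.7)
The plan is to cast \eqref{NLSb} as a Hamiltonian PDE in the Fourier coordinates $u = (u_j)_{j\in\Z}$, with Hamiltonian
\begin{equation*}
H(u,\bar u) = \sum_{j\in\Z}(j^2 + V_j)|u_j|^2 + P(u,\bar u), \qquad \omega := (j^2+V_j)_{j\in\Z} \in \dgp,
\end{equation*}
where $P$ is analytic and starts at order four. The phase space carries the weighted $\ell^2$ norms $|u|_{\SO,p}^2 = \sum_{j\in\Z}\jap{j}^{2p}|u_j|^2$ and $|u|_{\MS,p}^2 = \sum_{j\in\Z}(2^p + \jap{j}^p)^2 |u_j|^2$, so that the stability statement becomes a control on $|u(t)|_{\SO,p}$ (resp. $|u(t)|_{\MS,p}$) along the flow. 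Local well-posedness in these spaces (Lemma \ref{cobra} in the paper) lets us run a bootstrap.

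The core step is the flexible Birkhoff normal form theorem constructed earlier in the paper. For any integer $\suca \ge 1$, one produces a canonical near-identity change of variables, well-defined on a ball of radius $\sim \delta$, that transforms $H$ into
\begin{equation*}
H_\suca = \sum_j \omega_j |u_j|^2 + Z_\suca + R_\suca,
\end{equation*}
with $Z_\suca$ a function of the actions $(|u_j|^2)_{j\in\Z}$ alone and $R_\suca$ a remainder of order $2\suca+3$ in $u$, satisfying an estimate of the form $|R_\suca|_{r,p} \lesssim (C(p)\,\suca^{\tauSO})^\suca\, r^{2\suca+3}$ in the $\SO$ case (and the analogous bound with $\tauMS$ in $\MS$). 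The Bourgain Diophantine condition $\omega \in \dgp$ is what makes the cohomological equations at each step solvable, and its precise shape dictates the $\suca^{\tauSO}$ (resp. $\suca^{\tauMS}$) growth of the small-divisor factor. Crucially, $Z_\suca$ Poisson-commutes with every weighted norm, so along the transformed Hamiltonian flow only $R_\suca$ contributes to the time-derivative of the norm.

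A standard continuity argument then closes the proof. Setting $\Phi$ for the BNF transformation, the hypotheses on $\delta$ in \eqref{moro}, \eqref{eisenach} ensure $|\Phi(u_0)| \le 2\delta$ and $|\Phi^{-1}(v)| \le 2|v|$ on the relevant ball. As long as $|u(t)| \le 3\delta$, one has
\begin{equation*}
\frac{d}{dt}|u(t)|_{r,p}^2 \;=\; \{|u|_{r,p}^2, R_\suca\}(u(t)) \;\lesssim\; (C(p)\,\suca^{\tauSO})^\suca\, \delta^{2\suca+4},
\end{equation*}
whence the time to escape the ball of radius $4\delta$ is at least $T \gtrsim \delta^{-2}(C(p)\,\suca^{\tauSO})^{-\suca}(\delta^{-1})^{2\suca}$. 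Optimizing: choosing $\suca = (p-1)/\tauSO$ (resp. $\suca=(p-1)/\tauMS$, which is why the integrality hypothesis appears), the factor $(C(p)\,\suca^{\tauSO})^{-\suca}$ reduces to an explicit power $(K p)^{-5p}$ (resp. a cleaner power with the $\MS$ weight), and plugging in yields exactly \eqref{tommaso3} and \eqref{boh3}.

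The main technical difficulty is tracking how the constants in the BNF depend simultaneously on the regularity index $p$, the iteration count $\suca$, and the analyticity radius eaten at each step. Since one lets both $p = p(\delta) \to \infty$ and $\suca(\delta) \to \infty$, any stray factor of $p!$ would destroy the sub-exponential (resp. exponential) rate. The sharper $\MS$ case exploits the fact that the weight $2^p$ suppresses the low Fourier modes, which are the main source of tameness loss in a standard Sobolev setting; together with the conservation of momentum when $f$ is $x$-independent (which kills all momentum-violating resonances in $Z_\suca$), this is what upgrades the estimate from the sub-exponential \eqref{tommaso3} to the fully exponential \eqref{boh3}.
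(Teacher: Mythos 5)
Your proposal is correct and follows essentially the same route as the paper: apply the Birkhoff normal form theorem (Theorem \ref{sob}) with $\suca=(p-1)/\tauSO$ (resp.\ $(p-1)/\tauMS$) steps, use that $D_\omega+Z$ preserves every weighted $\ell^2$ norm so that only the remainder $R$ drives the time-derivative of the norm (this is Lemma \ref{cobra} in the paper), and close a continuity argument through the near-identity transformation $\Psi$. The only imprecision is in your heuristic for the $\MS$ improvement — it is not that $Z_\suca$ would otherwise contain momentum-violating resonances (being a function of the actions, $Z_\suca$ always Poisson-commutes with momentum), but rather that momentum conservation of $P$, $R$ and $S$ lets the homological-equation and monotonicity estimates avoid the $p$-dependent losses coming from the $e^{\eta|\pi|}$ weight, and the $2^p$ weight on modes $0,\pm1$ blocks the slow-drift examples in Remark~\ref{rmk:sob0}; this does not affect the proof.
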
	
 \begin{rmk}\label{rmk:sob0} Some remarks on the optimality of Theorem \ref{tarzanello}  are in order.

	%

	\smallskip
	
1. We stress the fact that estimates \eqref{moro} of case $\SO$ is optimal in some sense. The simplest way of showing this fact is to construct a Hamiltonian which does not preserves momentum and exhibits fast drift. 	
	In fact, if we take $\delta  > (e^{-1}p)^{-p/2}$  then
	orbits performing  ``fast drift'' in a time of order 1 may occur. 
	Indeed consider e.g. , for $ 2 \le j\in \N$  the  family of Hamiltonians:
	$$
	H^{(j)}(u_1,u_j):=|u_1|^2+(j^2+V_j) |u_j|^2+e^{-\ta j}\Re (|u_1|^2u_1\bar u_j)\,.
	$$			
	Passing to action-angle variables $u_i=\sqrt{I_i}e^{\im \vartheta_i}$		
	we get the new Hamiltonian
	$$
	I_1+\omega I_j+e^{-\ta j}I_1^{3/2} \sqrt{I_j}\cos(\vartheta_1-\vartheta_j)
	=J_1+\omega(J_2- J_1)+e^{-\ta j}J_1^{3/2} \sqrt{J_2-J_1}\cos\varphi_1
	$$
	in the new symplectic variables 
	$J_1=I_1,$ $J_2=I_1+I_j,$ $\varphi_1=\vartheta_1-\vartheta_j$,
	$\varphi_2=\vartheta_j.$\\
	Note that this Hamiltonian has $J_2$ as constant of motion while
	$$\dot J_1=e^{-\ta j} J_1^{3/2} \sqrt{J_2-J_1}\sin\varphi_1.$$ 
	In this case the norm in \eqref{checco} reads
	$$
	\sqrt{|u_1|^2+|u_j|^2}+\sqrt{|u_1|^2+j^{2p} |u_j|^2}
	=\sqrt{J_2}+\sqrt{(1-j^{2p})J_1+j^{2p}  J_2}\,.
	$$ 
	Taking the initial datum $u(0)=(u_1(0),u_j(0))$
	with $u_1(0)=\delta/4$, $u_j(0)=j^{-p}\delta/4 ,$
	we have that its norm is smaller than $\delta,$ while
	$ J_1 $ can have a drift of order $\delta^4 j^{-p}e^{-\ta j}$
	in a time $T$ of order 1. 
	This means that the Sobolev norm of $u(T)$ is of order 
	%
	$\delta^3 e^{-\ta j}j^p$  hence greater than $4\delta$ if $\delta^2 e^{-\ta j} j^p$ is large. Maximizing on $j$ we get a constraint of the form
	$\delta^2 e^{-p}(\ta^{-1}p)^p< 1$.	
	\\
	Of course this pathological ''fast diffusion'' phenomenon comes from the non conservation of momentum\footnote{indeed the term $e^{-\ta j}$ is added in order to ensure that monomials with very high momentum give an exponenially small contribution to the Hamiltonian}, and would appear (with similar constants) also in the case $\MS$. 
	\smallskip
	
2. 	It is very important  to stress that in the case $\SO$ restricting to translation invariant Hamiltonians would not result in signficantly weaker constraints on the smallness of $\delta$ w.r.t. $p$.   This can be seen in the following example. 
	Consider the familiy of Hamiltonians (in three degrees of freedom)
	\[
	K^{(j)}:= |u_1|^2 + j^2|u_j|^2 + \Re (\bar u_0^{j-1} u_1^j\bar u_j)
	\]
	with the constants of motion
	\[
	L = |u_0|^2+ |u_1|^2 + |u_j|^2\,,\quad M=  |u_1|^2 + j |u_j|^2\,.
	\]
	Following the same approach as in the previous example one shows that	$|u_j|^2 $ can have a drift of order $j^{-p} \delta^{2j }$
	in a time $T$ of order 1. 	This means that the Sobolev norm of $u(T)$ is of order 
	%
	$\delta^{2j} j^p$. Maximizing on $j$ we get a constraint of the form
	$\delta e^{p^{1^-}}< 1$.
	We point out  that  the Hamiltonian discussed above  is stable in the $\MS$ norm for all times and for $\delta$ small independent of $p$. This is the main reason for restricting in $\MS$ to translation invariant Hamiltonians.
\end{rmk}
From Theorem \ref{tarzanello} it is straightforward to maximize over $p$ and find an optimal regularity.
We stress that in the case $\SO$ our estimate on the stability time is an increasing function of $p$, so the maximum is obtained by just fixing $p$ so that $\delta = (\mathtt C_\SO p)^{-3p}$. On the other hand in the case
$\MS$ there is a proper maximum.
\\
We thus have the following result.
As before our statements  depend on some constants,  denoted by
$\bar{\delta}_\SO,
\bar{\delta}_\MS$ 
  {\it explicitly defined} in Subsection \ref{ossobuco}. These constants
depend only on $\gamma,\fp,\ta,R,|f|_{\ta,R} $
in the  case $\SO$ and on 
$\fp,R,|f|_R$ in the  case $\MS$.
By $[\cdot]$ we denote the integer part.
\begin{thm}[Sobolev stability: optimization]\label{sorbolev!} $\empty$
	
	\noindent
		$(\SO)$  
		For any   $0<\delta\leq\bar{\delta}_\SO$
	and any $u_0$ such that
		\begin{equation}\label{checco}
		|u_0|_{L^2}+ |\partial_x^p u_0|_{L^2} 
		\le 
		\delta\,,
		\qquad\qquad
	 	p=p(\delta):=1+\tauSO \left[\frac{1}{6\tauSO}\frac{\ln (\deSO/\delta)}{\ln \ln (\deSO/\delta)} \right]
	 \,,
	 \end{equation}
	 the solution $u(t)$ of \eqref{NLSb} with  initial datum $u(0)=u_0$ exists for all times 
	 \begin{equation}\label{tommaso}
	 |t|\le
	 \frac{\mathtt T_\SO}{\delta^2} e^{ \ \frac{\ln^2 (\deSO/\delta)}{4\tauSO  \ln \ln (\deSO/\delta)}}
	 \quad {\it and \ satisfies}\quad
	 |u(t)|_{L^2}+ |\partial_x^p u(t)|_{L^2}
	 \le
	 4\delta 
	 \,.
	 \end{equation}
		$(\MS)$  Assume that  $f$ in \eqref{NLSb} is independent of $x$. 
		For any $0<\delta\leq \bar\delta_\MS$ and
	\begin{equation}\label{elisabetta}
	\forall\, p\geq p(\delta):=
		1+\tauMS \left[\frac{\delta_\MS^2}{ \delta^2}\right]\,,
		\quad \forall u_0 \quad \text{s.t.}\quad
		2^p|u_0|_{L^2}+ |\partial_x^p u_0|_{L^2} 
		\le 
		\delta\,,\qquad\qquad
		\,
\end{equation}
 the solution $u(t)$ of \eqref{NLSb} with  initial datum $u(0)=u_0$ exists for all times
		\begin{equation}\label{boh}
		   |t|\le
	\frac{\mathtt T_\MS}{\delta^2} e^{(\delta_\MS/ \delta)^2}
	\quad {\it and \ satisfies}\quad
	2^p|u(t)|_{L^2}+ |\partial_x^p u(t)|_{L^2}
		 \le
		  4\delta 
	\,.
		\end{equation}
\end{thm}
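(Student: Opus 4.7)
The plan is to derive Theorem \ref{sorbolev!} as a direct corollary of Theorem \ref{tarzanello} by choosing $p=p(\delta)$ so as to (nearly) maximize the stability exponent subject to the smallness hypotheses on $\delta$.

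For case $(\SO)$, view the exponent of the stability time in \eqref{tommaso3} as
\[
E(p) := \frac{2(p-1)}{\tauSO}\ln(\deSO/\delta) \; - \; 5p\ln(\mathtt K_\SO p),
\]
to be maximized over admissible $p$ with $(p-1)/\tauSO \in \N$. The smallness hypothesis \eqref{moro} forces the constraint $3p\ln(\mathtt k_\SO p) \le \ln(\deSO/\delta)$, which saturates near $p_\star \sim \ln(\deSO/\delta)/\ln\ln(\deSO/\delta)$; at that scale the correction $5p\ln(\mathtt K_\SO p)$ is only of order $\ln(\deSO/\delta)$, negligible compared to the leading contribution of order $\ln^2(\deSO/\delta)/\ln\ln(\deSO/\delta)$, so the optimization is essentially dictated by the constraint itself. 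Setting $n := \big[\ln(\deSO/\delta)/(6\tauSO\ln\ln(\deSO/\delta))\big]$ and $p := 1+\tauSO n$ as in \eqref{checco}, I would check that for $\delta \le \bar{\delta}_\SO$ one has $3p\ln(\mathtt k_\SO p) \le \tfrac12 \ln(\deSO/\delta)$, whence $(\mathtt k_\SO p)^{-3p} \ge (\deSO/\delta)^{-1/2}$ and $\delta \le \sqrt{\delta\,\deSO} \le \deSO(\mathtt k_\SO p)^{-3p}$, verifying \eqref{moro}. Substituting into $E(p)$, the main term $2n\ln(\deSO/\delta)$ contributes $\ln^2(\deSO/\delta)/(3\tauSO\ln\ln(\deSO/\delta)) - O(\ln(\deSO/\delta))$, and the correction $5p\ln(\mathtt K_\SO p)$ is likewise $O(\ln(\deSO/\delta))$; both subleading pieces are absorbed by weakening the denominator $3\tauSO$ to $4\tauSO$, yielding the exponent of \eqref{tommaso}.

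Case $(\MS)$ is analogous, and in fact simpler because the hypothesis \eqref{eisenach} imposes only the loose bound $m := (p-1)/\tauMS \le 4\delta_\MS^2/\delta^2 - O(1)$. The exponent in \eqref{boh3} becomes $G(m) := m\ln\!\big(4\delta_\MS^2/(m\delta^2)\big)$, whose unconstrained maximum in $m$ lies at $m_\star = 4\delta_\MS^2/(e\delta^2)$ with value $4\delta_\MS^2/(e\delta^2)$. Taking $n := [\delta_\MS^2/\delta^2]$ and $p_0 := 1 + \tauMS n$, I would verify \eqref{eisenach} at $p=p_0$ and compute $G(n) \ge n\ln 4 \ge (\delta_\MS/\delta)^2$ for $\delta \le \bar{\delta}_\MS$ (using $\ln 4 > 1$ and absorbing the floor-function error into the choice of $\bar{\delta}_\MS$), which is precisely the exponent of \eqref{boh}. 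For any admissible $p \ge p_0$ (i.e.\ satisfying \eqref{eisenach} and with $m \in \N$), the inequality $G(m) \ge (\delta_\MS/\delta)^2$ persists over the relevant interval of $m$, so the same lower bound on the stability time holds. The main obstacle in both cases is the routine but delicate bookkeeping of constants through the floor functions and correction terms, to be absorbed into the universal definitions of $\bar{\delta}_\SO$ and $\bar{\delta}_\MS$.
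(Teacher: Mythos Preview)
Your plan for the specific choice $p=p(\delta)$ is essentially the paper's proof: verify the smallness hypotheses of Theorem~\ref{tarzanello} at that $p$ and substitute into the time bound. In case $(\SO)$ the paper checks \eqref{moro} directly via $3p\ln(\mathtt k_\SO p)\le y$ with $y=\ln(\deSO/\delta)$ (your detour through the stronger inequality $\le y/2$ is asymptotically borderline and unnecessary --- the direct check is exactly what \eqref{moro} requires), and then verifies
\[
\frac{y^2}{4\tauSO\ln y} + 5p\ln(\mathtt K_\SO p) - \frac{2(p-1)}{\tauSO}\,y \le 0
\]
by the same kind of substitution you outline. In case $(\MS)$ the paper's one-line argument at $p=p(\delta)$ is that $(4x/[x])^{[x]}\ge e^x$ for $x=\delta_\MS^2/\delta^2\ge 1$, which is your $G(n)\ge n\ln 4\ge x$.

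There is, however, a genuine gap in your treatment of the range $p>p(\delta)$ in case $(\MS)$. Your claim that $G(m)\ge(\delta_\MS/\delta)^2$ persists over all admissible $m$ is \emph{false}: writing $x=\delta_\MS^2/\delta^2$, the constraint \eqref{eisenach} allows $m$ up to roughly $4x$, and $G(m)=m\ln(4x/m)$ tends to $0$ as $m\to 4x$; near that boundary $G(m)\approx 1/\tauMS\ll x$. Moreover, the theorem asserts the conclusion for \emph{all} $p\ge p(\delta)$, including values violating \eqref{eisenach}, so re-applying Theorem~\ref{tarzanello} at each such $p$ cannot work in principle. The paper sidesteps this entirely: it proves only the case $p=p(\delta)$ and, for $p>p(\delta)$, invokes Remark~\ref{rmk:sob}(2), where the extension comes from the monotonicity $\|\cdot\|_{r,p}\le\|\cdot\|_{r,p(\delta)}$ of the modified-Sobolev norm --- the Birkhoff normal form constructed at level $p(\delta)$ restricts, with the same constants, to the smaller space $\th^p$. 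That is a structural argument about the normal form itself, not a re-optimization of the stability exponent.
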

 \begin{rmk}\label{rmk:sob} Some remarks on Theorem \ref{sorbolev!}  are in order.

	Note that \eqref{tommaso} is the stability time computed in \cite{BenFroGior} for short range couplings.
	\smallskip

1. In our study we have only considered {\it Gauge preserving} equations, that is PDEs which preserve the $L^2$ norm. We believe that this is just a technical question and that we could deal with more general cases. Similarly in the case $\MS$ we have assumed that $f$ in \eqref{NLSb}
	is independent of $x$, namely {\it momentum preserving}. Not only this simplifies the proof
	but  as explained after Theorem \ref{tarzanello} allows us much better estimates. Of course we could prove the theorem (with different constants)
	also for $x$-dependent $f$, as in the case $\SO$.\smallskip

	 2. We will prove the case $\MS$ only for
	$p= p(\delta)$, the general case being 
	analogous\footnote{Indeed, thanks to the monotonicity
		property of our norms (see Proposition \ref{crescenza} below) the canonical transformation
		putting the system in Birkhoff Normal Form (see Theorem \ref{sob} below) in the $p$-case is simply
		the restriction to $H^p$ of the one of the $p(\delta)$-case.
	} (with the same constants!)
	also if $p\ge p(\delta)$. 		\smallskip
	
	3. One can easily restate Theorem \ref{sorbolev!} in terms of the Sobolev exponent $p$, 
	instead of $\delta$, since the map $\delta\to p(\delta)$ is injective. 	
\end{rmk}
\smallskip

In this paper we have considered the simplest possible example of dispersive PDE on the circle. One can easily see that the same strategy can be followed word by word in more general cases provided that the non-linearity does not contain derivatives. 
A much more challenging question is to consider NLS models with derivatives in the non-linearity. As we have mentioned a semilinear case was discussed by \cite{Cong}. A very promising approach to Birkhoff normal form for quasilinear PDEs is the one of \cite{Delort-2009}-\cite{Berti-Delort} which was applied to
fully-nonlinear reversible	NLS equations in \cite{FI}. It seems very plausible that one can adapt their methods (based on paralinearizations and paradifferential calculus) to our setting, however it seems that in this case one must give up the Hamiltonian structure. 

\medskip

\noindent
{\bf Analytic and Gevrey initial data} 
\\ In this case our result is similar to \cite{Faou-Grebert:2013} in the sense that we also prove {\it subexponential bounds} on the time. We mention however that in \cite{Faou-Grebert:2013} the control of the Sobloev norm in time is in a lower regularity space w.r.t. the initial datum. 
 Recently we have been made aware of a preprint by Cong, Mi and Wang \cite{Cong} in which the authors give subexponential bounds for subanalytic initial data of a model like \eqref{NLSb}, very similar to ours. A difference is that in their case the   non linearity contains a derivative (see the comments after Theorem \ref{gegge}) but satisfies momentum conservation. \\
Let us fix 
$
0<\theta<1$, 
and define the  function spaces
\begin{equation}
\label{prodi}
\tH_{p,s,a}:= \set{u(x)=\sum_{j\in\Z} u_j e^{\im j x}\in L^2\,:\; |u|_{p,s,a}^2:= \sum_{j\in\Z}\abs{u_j}^2 \jap{j}^{2 p}e^{2 a \abs{j}+ 2s\jap{j}^{\teta}}< \infty}\,.
\end{equation}
with the assumption $a\ge 0, s>0, p>1/2$. We remark that  if $a>0$ this is a space of analytic functions, while if  $a=0$ the functions have Gevrey regularity. Note that for technical reasons connected to the way in which we control the small divisors, we {\it cannot } deal with the purely analytic case  $\theta=1$, see Lemmas \ref{constance general}, \ref{constance 2 gen}. For
this reason we denote this result as $\GE$ (Gevrey case). The main important difference with the cases $\SO,\MS$ is that now the regularity $p,s,a$ is {\it independent} of $\delta$.\\
As before our result, stated below, depends on some  constants
$\bar\delta_\GE,  
\delta_\GE,\mathtt T_\GE$,  {\it explicitely defined} in Subsection \ref{ossobuco}, and 
depending only on $\gamma, \fp,\ta,R,|f|_{\ta,R}, p,s,a,\theta $.

\begin{thm}[Gevrey Stability]\label{gegge}
	Fix any $a\ge 0$, $s>0$ such that $a+s< \mathtt a$ and any $p>1/2$.  
	For  any $0<\delta\leq \bar\delta_\GE$
	and  any $u_0$ such that
	\[
	|u_0|_{p,s,a} \le \delta
	\,,
	\] 
	the solution $u(t)$ of \eqref{NLSb} with  initial datum $u(0)=u_0$ exists for all times
	\begin{equation*}
	|t|\le
	\frac{\mathtt T_\GE}{\delta^2} 
	e^{\pa{\ln\frac{\delta_\GE}{\delta}}^{1+\theta/4}}
	\quad {\it and \ satisfies}\quad
	|u(t)|_{p,s,a} \le 2\delta
	\,.
	\end{equation*}
\end{thm}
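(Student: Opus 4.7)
The proof is a direct application of the Gevrey-class instance of the flexible Birkhoff normal form theorem proved in the paper (the $\GE$ case of Theorem~\ref{sob}). The plan is first to cast \eqref{NLSb} as a Hamiltonian system in Fourier coordinates $u=(u_j)_{j\in\Z}$, with
\[
H(u)=\sum_{j\in\Z}\omega_j|u_j|^2+P(u),\qquad P(u)=\int_\T F(x,|u(x)|^2)\,dx,\ \ F(x,y):=\int_0^y f(x,t)\,dt,
\]
and frequencies $\omega_j=j^2+V_j\in\dgp$. Under $p>1/2$ and $a+s<\ta$ the algebra property of $\tH_{p,s,a}$ (using $\jap{j_1+\cdots+j_k}^\theta\le\sum\jap{j_i}^\theta$, valid because $\theta<1$) combined with the analyticity \eqref{analitico} makes $P$ real analytic on a neighborhood of $0$ in $\tH_{p,s,a}$, with quantitative bounds controlled by $|f|_{\ta,R}$. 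This is the input for the normal form iteration.

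Next, I would iterate $N$ steps of Birkhoff normal form, producing a close-to-identity symplectic change $\Phi^{(N)}$ with $H\circ\Phi^{(N)}=H_0+Z^{(N)}+R^{(N)}$, where $Z^{(N)}$ depends only on the actions $(|u_j|^2)_{j\in\Z}$ (and hence Poisson-commutes with the Gevrey norm) while $R^{(N)}$ has a zero of order $\ge 2N+3$ at $u=0$. Each step inverts $\{H_0,\cdot\}$ on non-action monomials, which in Fourier reduces to dividing each coefficient by $\omega\cdot(\alpha-\beta)$. The Bourgain condition \eqref{diofantinoBISintro} gives
\[
|\omega\cdot(\alpha-\beta)|^{-1}\le\gamma^{-1}\prod_{n\in\Z}\bigl(1+(\alpha_n-\beta_n)^2\jap{n}^{2+\fp}\bigr),
\]
and the Gevrey weight $e^{2s\jap{n}^\theta}$ absorbs these products thanks to $\theta<1$: for every small $\sigma>0$ one has $\log(1+\ell_n^2\jap{n}^{2+\fp})\le \sigma|\ell_n|\jap{n}^\theta+C(\theta,\sigma)$, so each step costs only a small fraction of the Gevrey parameter against a multiplicative constant of the form $\gamma^{-1}\exp(C\sigma^{-c(\theta)})$. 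Telescoping losses over $N$ steps yields, on the ball $|u|_{p,s,a}\le 2\delta$, an estimate
\[
|X_{R^{(N)}}|_{p,s,a/2}\le \bigl(CN^{b/\theta}\bigr)^{\!N}\delta^{2N+3}
\]
with absolute exponent $b$ and $C=C(\gamma,\fp,\ta,R,|f|_{\ta,R},p,s,a,\theta)$.

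The stability time then follows by optimizing $N$ and a standard bootstrap. Choosing $N=N(\delta)\sim\bigl(\ln(\delta_\GE/\delta)\bigr)^{\theta/4}$ balances the super-polynomial blow-up against the high-order gain $\delta^{2N+3}$ and bounds the remainder by $\delta^3\exp\!\bigl(-(\ln(\delta_\GE/\delta))^{1+\theta/4}\bigr)$. Setting $v(t):=\Phi^{(N)}u(t)$, as long as $|v(t)|_{p,s,a}\le 3\delta$ the action part $Z^{(N)}$ preserves $|v|_{p,s,a}^2$ exactly, while the remainder drives the norm at rate at most $C\delta\,|X_{R^{(N)}}|_{p,s,a}$; hence the doubling time of $|v|_{p,s,a}$ is at least $\mathtt T_\GE\delta^{-2}\exp\!\bigl((\ln(\delta_\GE/\delta))^{1+\theta/4}\bigr)$, and unwinding the close-to-identity transformation $\Phi^{(N)}$ yields $|u(t)|_{p,s,a}\le 2\delta$ on the same time scale.

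The main obstacle is the small-divisor analysis in the Gevrey class: proving that the algebraic factors $\jap{n}^{2+\fp}$ appearing in the Bourgain bound are absorbed by the weight $e^{s\jap{n}^\theta}$ uniformly in the degree of the monomial, with a constant that is only stretched-exponential in $N$, and then propagating these estimates through the $N$-fold composition of time-$1$ Hamiltonian flows. This is the technical heart of the theorem and the precise place where the strict inequality $\theta<1$ enters (as the paper explicitly remarks, ruling out the purely analytic case $\theta=1$); the exponent $1+\theta/4$ in the stability time emerges as the exact balance between this Gevrey loss and the $\delta^{2N+3}$ gain.
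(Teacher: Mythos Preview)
Your proposal is correct and follows essentially the same route as the paper: apply the $\GE$ case of the Birkhoff Normal Form Theorem~\ref{sob}, choose $\suca=\suca(\delta)\sim\bigl(\ln(\delta_\GE/\delta)\bigr)^{\theta/4}$ to balance the small-divisor loss $e^{\Cuno(\suca/\eta_\GE)^{3/\theta}}$ against the gain $\delta^{2\suca}$, and conclude via the bootstrap Lemma~\ref{cobra}. One small slip: the parameter consumed during the iteration is the Gevrey index $s$ (via $\tw_{n,j}=\tw_{0,j}e^{\frac{n\eta}{\suca}\jap{j}^\theta}$), not the analyticity strip $a$, so your displayed bound should read $|X_{R^{(N)}}|_{p,s,a}$ rather than $|X_{R^{(N)}}|_{p,s,a/2}$.
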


\begin{rmk}
	Some comments on Theorem \ref{gegge} are in order.
	
	1. We did not make an effort to maximize the exponent $1+\theta/4$ in the stability time. In fact, by trivially modifying the proof, one could get $1+\theta/(2^+)$. We remark that in  \cite{Cong}, in which $\theta=1/2$,  the exponent is better, i.e. it is $1+1/(2^+)$. 
	
	\smallskip

	2.  As we mentioned before, the main  difference w.r.t. the cases $\SO,\MS$ is that now the regularity $p,s,a$ is independent of $\delta$, with the only requirement that $p>1/2$ and $s>0$. If instead we took $s$ appropriately large with $\delta$ we would get an exponential bound just like in case $\MS$.
	
	\smallskip
	
	3. One could consider initial data with an {\it intermediate} regularity between Sobolev and Gevrey and compute stability times. 
	A good example (suggested to us by Z. Hani) could be the space
	\[
	\tH_c :=\left\{  u= \sum_j u_j e^{\im j x} \in L^2:\quad  \sum_j |u_j|^2 e^{c (\ln(\jml{j})^2)}
	<\infty\right\}
	\]  
	where $c>0$ and $\jml{j}:=
	\max\{|j|,2\}$.  Following the proof of Theorem \ref{gegge} almost verbatim one 
	can get an estimate of the type $T\ge C\delta^{-3+ \ln(\ln(1/\delta))}$. 
\end{rmk}

\subsection{The Birkhoff Normal Form}

Our results are based on a Birkhoff normal form procedure, which we now describe.
Let us pass to the Fourier side via the identification
\begin{equation}\label{silvacane}
u(x)=\sum_{j\in\Z}u_j e^{\im jx}\ \mapsto \
u=(u_j)_{j\in\Z}\,,
\end{equation}
where $u$ belongs to some complete subspace  of $\ell^2$.
More precisely, given a real sequence $\mathtt w=(\mathtt w_j)_{j\in\Z},$
with $\mathtt w_j\geq 1$ we consider the 
Hilbert space\footnote{
	Endowed with the scalar product
	$(u,v)_{\th_\tw}:=\sum_{j\in\Z} \mathtt w_j^2 u_j \bar v_j.$}
\begin{equation}\label{pistacchio}
\th_{\mathtt w}
:=
\set{u:= \pa{u_j}_{j\in\Z}\in\ell^2(\C)\,: \quad \abs{u}_{\tw}^2
	:= 
	\sum_{j\in\Z} \mathtt w_j^2 \abs{u_j}^2 < \infty}\,,
\end{equation}
and fix  the symplectic structure to be 
\begin{equation}\label{nduja}
\im\sum_j d u_j\wedge d \bar u_j\,.
\end{equation}
In this framework the Hamiltonian of \eqref{NLSb} is
\begin{eqnarray}
\label{hamNLS}
&&\quad H_{\rm NLS}(u):=
D_\omega+P\,, \qquad {\rm where}
\\
&&\quad
D_\omega:=\sum_{j\in\Z} \omega_j |u_j|^2\,,
\quad 
P:= \int_\T F(x,|u(x)|^2) dx \,,\quad F(x,y):=\int_0^y f(x,s) ds\,.
\nonumber
\end{eqnarray}
As examples of $\th_{\mathtt w}$ we consider:

$\SO$) (Sobolev case)		
$\tw_j=\jap{j}^{p}$, which is isometrically isomorphic, by Fourier transform, to
$\mathtt H_{p,0,0}$ defined in \eqref{prodi} and is equivalent to $H^p$
equipped with the norm $|\cdot|_{L^2} +|\partial_x^p \cdot|_{L^2}$
with equivalence constants independent of $p$ (see \eqref{vallinsu})

$\MS$) (Modified-Sobolev case)
$\tw_j=	\jml{j}^{p}, $  where $\jml{j}:=
\max\{|j|,2\}$;  this space is equivalent to $H^p$ equipped with the norm $2^p|\cdot|_{L^2} +|\partial_x^p \cdot|_{L^2}$ with equivalence constants independent of $p$ (see \eqref{vallinsuMS})

$\GE$) (Gevrey case)
$\mathtt w_j=\jap{j}^{ p}e^{ a \abs{j}+ s\jap{j}^{\teta}},$
which is isometrically isomorphic, by Fourier transform, to
$\mathtt H_{p,s,a}$ defined in \eqref{prodi}.  
\smallskip

\noindent Here and in the following, given $r>0$, by $B_r(\th_{\mathtt w})$ we mean the closed
ball of radius $r$ centered at the origin of $\th_{\mathtt w}.$
\begin{defn}[majorant analytic Hamiltonians]\label{Hr}
	For $ r>0$,  let
	$\mathcal{A}_r(\th_{\mathtt w})$
	be the space of  
	Hamiltonians 
	$$
	H : B_r(\th_{\mathtt w}) \to \R
	$$ 
	such that there exists a pointwise  absolutely convergent power series expansion\footnote{As usual given a vector $k\in \Z^\Z$, 
		$|k|:=\sum_{j\in\Z}|k_j|$.}
	$$ 
	H(u)  = \sum_{\substack{\bal,\bbt\in\N^\Z\,, \\
			|\bal|+|\bbt|<\infty} }H_{\bal,\bbt}u^\bal \bar u^\bbt\,,
	\qquad
	u^\bal:=\prod_{j\in\Z}u_j^{\bal_j}
	$$ 
	with the following properties: 
	\begin{enumerate}[(i)]
		\item Reality condition:
		\begin{equation}\label{real}
		H_{\bal,\bbt}= \overline{ H}_{\bbt,\bal}\,;
		\end{equation}
		\item Mass conservation:
		\begin{equation}
		H_{\bal,\bbt}= 0 \quad\mbox{if}\;\, |\bal|\neq |\bbt| \,,
		\end{equation}namely the Hamiltonian Poisson commutes with the { \sl mass} $\sum_{j\in \Z}|u_j|^2$;
	\end{enumerate}
	Finally, given $H$ as above, we define its majorant
	$\und H:  B_r(\th_{\mathtt w}) \to \R$  as
	\begin{equation}\label{betta}
	\und H(u)  = \sum_{\substack{\bal,\bbt\in\N^\Z\,, \\
			|\bal|+|\bbt|<\infty} }|H_{\bal,\bbt}|u^\bal \bar u^\bbt\,.
	\end{equation}
\end{defn}
We also define the subspace of normal form Hamiltonians
\begin{equation}\label{nocciolina}
\cK:=\left\{Z\in \mathcal{A}_r(\th_{\mathtt w})\, : \, Z(u) = \sum_{\bal\in \N^\Z} Z_{\bal,\bal}|u|^{2\bal}\right\}\,.
\end{equation}
Note that $Z_{\bal,\bal}\in\R$ for every $\bal\in\N^\Z$
by condition \eqref{real}.\smallskip\\
In the following we will also deal with a smaller class of Hamiltonians, namely the ones which have the {\sl{momentum}} $\sum_{j\in} j\abs{u_j}^2$ as additional first integral. 
\begin{defn}
	We say that a Hamiltonian $H\in \mathcal{A}_r(\th_{\mathtt w})$ preserves momentum when
	\[
	H_{\bal,\bbt}=0\quad \mbox{if}\quad \sum_{j\in\Z}j\pa{\al_j - \bt_j}\neq 0\,,
	\]
	namely the Hamiltonian $H$ Poisson commutes with $\sum_{j\in} j\abs{u_j}^2$.
\end{defn}
Note that if the nonlinearity $f$ in equation \eqref{NLSb} does not depend on the variable $x$, then the Hamiltonian $P$ in \eqref{hamNLS} preserves momentum.

\smallskip

We now state a Birkhoff Normal Form Theorem
for the Hamiltonian in  \eqref{hamNLS}. Fix any $\suca\ge 1$
and consider 
the space $\th_{\mathtt w}$  where $\tw$ is one of the following three cases, where $\tau, \tau_1$ are fixed positive constants defined in \eqref{ossobuco}:

$\SO$) (Sobolev case)		
$\tw_j=\jap{j}^{1+ \tauSO \suca}$; 

$\MS$) (Modified-Sobolev case)
$\tw_j=	\jml{j}^{1+ \tauMS \suca}, $  where $\jml{j}:=
\max\{|j|,2\}$;

$\GE$) (Gevrey case)
$\tw_j=e^{a|j|+ s \jap{j}^\theta}
\jap{j}^p$ with $p>1/2, s>0$,  $0\le a<\ta$.

\noindent		
As before
we define in Subsection \ref{ossobuco} below the constants
$\mathtt r, \mathtt C_1, \mathtt C_2, \mathtt C_3,$ corresponding to the cases $\SO,\MS,\GE$
respectively. We remark that these constants depend 
on $\suca\geq 1.$

\begin{thm}[Birkhoff Normal Form]\label{sob}
	Fix  any $\suca \ge 1$ and consider the space $\th_{\mathtt w}$  where $\tw$ is one of the three above cases:
	$\SO,\MS,\GE.$
	Consider the Hamiltonian \eqref{hamNLS}, assuming,
	only in the case $\MS,$ that $f$ does not depend on $x$
	(momentum conservation). 
	Then for any $0<r\leq \mathtt r$ 
	there exists two close to identity invertible symplectic change of variables 
	\begin{eqnarray}
	\nonumber
	&\Psi,\Psi^{-1}:\quad B_{r}(\th_\tw)\mapsto \th_\tw \,,\quad 
	\sup_{|u|_{\tw}\leq r}|\Psi^{\pm 1}(u)-u|_{\tw} \le \mathtt C_1 r^3 
	\leq \frac18 r
	\,,\\
	&\Psi\circ\Psi^{-1}u= \Psi^{-1}\circ\Psi u= u \,,\quad \forall u\in B_{\frac78 r}(\th_\tw)
	\label{stracchinobis}
	\end{eqnarray}
	such that in the new coordinates
	\[
	H\circ \Psi= D_\omega + Z+ R\,,
	\]
	for suitable majorant analytic Hamiltonians
	$Z,R \in \cA_r(\th_{\mathtt w}),$ 
	$Z\in \cK,$  
	satisfying the estimate
	\begin{equation}\label{duspaghibis}
	\sup_{|u|_{\tw}\leq  r }|X_{\und Z}|_{\tw} \le \mathtt C_2 r^{3}\,,
	\quad  
	\sup_{|u|_{\tw}\leq  r }|X_{\und R}|_{\tw} \le \mathtt C_3 r^{2\suca +3}\,,
	\end{equation}
	$X_{\und Z}$ (resp. $X_{\und R}$), being the hamiltonian vector 
	field generated by the the majorant of $Z$ (resp. $R$).
	Moreover, in the  case $\MS$, $R$ preserves momentum.
\end{thm}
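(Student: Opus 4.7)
The plan is to carry out $\suca$ steps of iterative Birkhoff normal form, each step eliminating the non-normal monomials of one fixed homogeneous degree. Expanding $H_{\rm NLS} = D_\omega + P$ in power series around $u=0$, the assumption $f(x,0)=0$ together with mass conservation forces $P$ to have only even components of degree $\ge 4$, namely $P = P^{(4)} + P^{(6)} + \cdots$ with $P^{(2m)} = \sum_{|\bal|=|\bbt|=m} P_{\bal,\bbt}\,u^\bal\bar u^\bbt$. At step $k=1,\ldots,\suca$ I would determine a generating Hamiltonian $S_k$, homogeneous of degree $2k+2$, by solving the homological equation
\[
\{D_\omega, S_k\} + P_k^{(2k+2)} = Z_k,
\]
where $Z_k\in\cK$ is the diagonal projection $\bal=\bbt$ and $P_k^{(2k+2)}$ is the degree $2k+2$ part of the Hamiltonian after the first $k-1$ changes. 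The time-$1$ flow $\Psi_k$ of $S_k$ is symplectic and close to the identity, and the composition $\Psi := \Psi_1\circ\cdots\circ\Psi_\suca$ yields $H\circ\Psi = D_\omega + Z + R$ with $Z = \sum_k Z_k\in\cK$ and $R$ collecting everything of degree $\ge 2\suca+4$.

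The homological equation is solved monomial by monomial: for $|\bal|=|\bbt|=k+1$ with $\bal\ne\bbt$, set $S_{\bal,\bbt} = P_{\bal,\bbt}/\bigl(\im\,\omega\cdot(\bal-\bbt)\bigr)$. Here the Bourgain-type bound
\[
|\omega\cdot\ell|\ge \gamma\prod_{n\in\Z}\bigl(1+|\ell_n|^2\jap{n}^{2+\fp}\bigr)^{-1},\qquad \ell=\bal-\bbt,
\]
from the definition of $\dgp$ supplies the crucial lower estimate. The inverse small-divisor factor is a product of polynomial terms supported on the modes appearing in the monomial, so it can be absorbed into the weight $\tw_j$ as long as $\tw$ grows sufficiently on that support. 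This is precisely why in $\SO$ I would take $\tw_j=\jap{j}^{1+\tauSO\suca}$ (the extra $\tauSO\suca$ is reserved to pay for $\suca$ successive divisor losses), why in $\MS$ the analogous weight combined with the momentum constraint $\sum_j j(\al_j-\bt_j)=0$ restricts $\ell$ further, and why in $\GE$ the sub-exponential weight $e^{s\jap{j}^\theta}$ with $\theta<1$ beats the polynomial product; the restriction $\theta<1$ is essential and matches the one announced in the statement.

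The main obstacle, and the core technical step, is to establish quantitative tame Poisson-bracket estimates for the \emph{majorant} norm $\sup_{|u|_\tw\le r}|X_{\und{H}}|_\tw$, uniformly in the three weight families. Concretely, for $H_1,H_2$ of homogeneous degrees $d_1,d_2$, one needs $\sup_{|u|_\tw\le r}|X_{\und{\{H_1,H_2\}}}|_\tw \lesssim r^{d_1+d_2-3}$ with weight-constants controlled explicitly by $\suca$. The verification rests on an algebra property for each weight: in $\SO,\MS$ one uses $\tw_{j_0}\le \prod_i \tw_{j_i}$ whenever $j_0=\pm j_1\pm\cdots\pm j_r$, where the extra exponent $\tauSO\suca$ provides the monotonicity margin, while in $\GE$ one exploits sub-additivity $\jap{\sum_i j_i}^\theta\le \sum_i \jap{j_i}^\theta$, valid only because $\theta<1$. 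Combined with the small-divisor lower bound, this yields the step-$k$ estimate $\sup_{|u|_\tw\le r}|X_{\und{S_k}}|_\tw \le C\gamma^{-1} r^{2k+1}$.

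Finally, a standard Lie-series/Gronwall argument bounds the flow $\Psi_k$ and its displacement in the $\tw$-norm by the same quantity, once $r$ is small enough; summing the cumulative displacement over $\suca$ steps and imposing that it stays $\le r/8$ fixes the threshold $\mathtt r$ and yields \eqref{stracchinobis}. The transformed Hamiltonian splits as $D_\omega + Z + R$, with $R$ collecting the degree $\ge 2\suca+4$ terms remaining after step $\suca$ together with the higher-degree contributions produced by the iterated commutators; both are controlled on $B_r$ by the announced $r^{2\suca+3}$ bound on $|X_{\und{R}}|_\tw$. In case $\MS$ momentum conservation propagates through the scheme because the Poisson bracket of two momentum-preserving Hamiltonians is still momentum-preserving and the projections onto $\cK$ and onto its complement both preserve this class.
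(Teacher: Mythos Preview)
Your overall skeleton (iterated homological equations, Lie series, degree-by-degree elimination) matches the paper, but there is a genuine gap in the mechanism that controls the small divisors, and one key ingredient of the paper is entirely absent.

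The missing ingredient is the \emph{$\eta$-momentum weight}. In the $\SO$ and $\GE$ cases $f$ depends on $x$, so monomials with nonzero momentum $\pi(\bal-\bbt)=\sum_j j(\bal_j-\bbt_j)$ occur, and the small-divisor bound produces factors that grow polynomially in $|\pi|$. Your proposal offers no way to absorb these: the algebra inequality $\tw_{j_0}\le\prod_i\tw_{j_i}$ is blind to $\pi$. The paper deals with this by working in the $\eta$-majorant norm, attaching a factor $e^{\eta|\pi(\bal-\bbt)|}$ to each monomial; the initial $\eta>0$ is bought from the analyticity strip of $f$ in $x$, and at every Birkhoff step one spends a fixed amount of $\eta$ to kill the $|\pi|$-losses. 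Without this device the iteration does not close in $\SO$ or $\GE$.

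Second, the claim that ``the inverse small-divisor factor is a product of polynomial terms supported on the modes appearing in the monomial, so it can be absorbed into the weight'' is where the real work lies, and it is not correct as stated. The divisor bound $\prod_n(1+|\ell_n|^2\jap{n}^{2+\fp})$ is \emph{not} directly comparable to a weight ratio. What the paper proves (its Lemma~\ref{constance 2 gen}, a refinement of Bourgain's argument) is that, after discarding the harmless case $|\sum_i(\bal_i-\bbt_i)i^2|\ge 10\sum_i|\bal_i-\bbt_i|$ where the divisor is $\ge 1$, one has $\prod_i(1+|\bal_i-\bbt_i|\jap{i})\lesssim (1+|\pi|)^3 N^6\prod_{l\ge 3}\na_l^{\tau_0}$, with $\na$ the decreasing rearrangement of the mode list of $\bal+\bbt$. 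Simultaneously the weight ratio between consecutive steps contributes $\big(\jap{j}^2/\prod_i\jap{i}^{\bal_i+\bbt_i}\big)^\tau\le\prod_{l\ge 3}\na_l^{-\tau}$ times $(N+|\pi|)^\tau$. Choosing $\tau=\tau_0(2+\fp)$ cancels the $\prod_{l\ge 3}\na_l$ factors; the residual powers of $N$ are absorbed by shrinking the radius, and the powers of $|\pi|$ by decreasing $\eta$. In the $\GE$ case the analogous cancellation comes not from sub-additivity of $\jap{\cdot}^\theta$ (which is too crude) but from the quantitative inequality $\sum_i|\bal_i-\bbt_i|\jap{i}^{\theta/2}\le C_*(\sum_i(\bal_i+\bbt_i)\jap{i}^\theta-2\jap{j}^\theta+|\pi|)$ together with a summation lemma. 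Thus the iteration runs over a \emph{scale} of spaces with increasing weights $\tw_0\le\cdots\le\tw_\suca$, decreasing radii, and decreasing $\eta$'s, rather than at a single fixed $\tw$ as your outline suggests.
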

The proof of our  Birkhoff normal form result is based on a  procedure which, while following the line of previous works such as \cite{Bambusi-Grebert:2006} and \cite{Faou-Grebert:2013}, it takes a slightly different point of view. Broadly speaking the core is the following: as already noticed in \cite{Faou-Grebert:2013} small divisor estimates and hence stability are  simpler to prove for traslation invariant PDEs (i.e. Hamiltonian systems which preserve the momentum). Considering this fact we introduce an appropriate norm, which weights non-momentum preserving monomial exponentially. This norm is rather cumbersome and depends on many parameters (see comments in the next page) but we show that it is very well suited for performing Birkhoff normal form steps for dispersive PDEs on the circle. This rather simple idea, allows us a very good control of the small divisors by generalizing the estimates by Bourgain in \cite{Bourgain:2005}. As a byproduct our normal forms are {\sl simpler}, in the sense that they are functions only of the linear actions, and it is relatively easy to compute all the constants.
\\
Above we stated Theorem \ref{sob} only in the  cases 
$\SO,\MS,\GE,$ but our method is quite  versatile
 we thus formulate a Birkhoff Norma Form
result in the general contest of weighted Hilbert\footnote{The Banach
	case could be treated as well.} spaces,
 see Theorem \ref{weisserose}. 
Once we have the Birkhoff theorem, Theorem \ref{tarzanello} follows directly. 

\subsection{About the norms.}

After a brief presentation of the symplectic structure relevant for NLS equations we start, in Section \ref{FS}, by defining the subspace of $\mathcal{A}_{r}(\th_{\mathtt w})$ of {\sl $\eta$-majorant regular Hamiltonians} denoted by $\cH_{r,\eta}(\th_\tw)$ and defined by the condition that an appropriate majorant norm  of the associated Hamiltonian vector field is bounded (see Definition \ref{nonna}).  The parameters $r>0,\eta\geq 0$ have the following role:
 $r$ controls  the radius of analiticity of the Hamiltonian vector field as a function from $\th_{\tw}$ to itself,
 while
 $\eta$ ensures that the terms (monomials) which do not preserve momentum are exponentially small.
\\
	We note that  for Hamiltonians which preserve momentum the dependence on the parameter $\eta$ is trivial and can be omitted. Indeed in the latter case the norm coincides with the usual majorant norm, see for instance \cite{BBiP2}.
	Another interesting point is that on the space of  polynomials  our norm controls the majorant-tame norm defined in \cite{Bambusi-Grebert:2006} (see Proposition \ref{maspero}). Although this fact is not needed in our proof we find it an interesting remark (it was pointed out to us by A. Maspero), since most proofs of Sobolev stability strongly rely on majorant-tame properties of the Hamiltonians.
	
	Our norm is well suited for measuring Hamiltonian vector fields, indeed in Subsection \ref{pesce} we show that it is closed with respect to Poission brackets (as a scale in $r$). Moreover a Hamiltonian with small norm defines  a well posed symplectic change of variables on a ball of $\th_{\tw}$.
	Furthermore if $\th_{\mathtt w}$ is closed by convolution then the nonlinearity $P$ of the NLS Hamiltonian \eqref{hamNLS} is in $\cH_{r,\eta}(\th_\tw)$ for appropriate $r,\eta$. This is discussed in Subsection \ref{nemo} in the cases $\SO,\MS,\GE$.
	
Anyway we think that the main point is that our norm	has  explicit (and for us quite surprising) {\sl monotonicity} properties. In Section \ref{monotono} we first give an abstract result, which ensures that $ \cH_{r,\eta}(\th_\tw) \subseteq \cH_{r',\eta'}(\th_{\tw'})$ under appropriate relations among $r',r, \eta', \eta, \tw, \tw'$, while in Proposition \ref{crescenza} we specify to the three cases $\SO,\MS,\GE$.

Finally our norm is well suited for the control of the solution of the homological equation  $\{D_\omega,S\}=R$. In Section \ref{omologo}	
we first give an abstract result, which ensures that   if $ R\in  \cH_{r,\eta}(\th_\tw) $ then $S\in  \cH_{r',\eta'}(\th_{\tw'})$ (for an appropriate choice of $r',\eta',\tw'$) and satisfies a quantitative bound. Then, in Proposition \ref{Lieder} we specify to our three cases $\SO,\MS,\GE$.	

\medskip
At this point we have all the ingredients needed to perform the steps of Birkhoff normal form; this is done  in Section \ref{birk} in an abstract setting. Finally we specify to our three cases $\SO,\MS,\GE$ and prove Theorem \ref{sob} in Section \ref{provasob} and Theorems \ref{sorbolev!} and \ref{gegge} in Section \ref{fine2}. 
\\
The appendices are devoted to giving full details of the most technical proofs.

\subsection*{Acknowledgements} The three authors have been supported by the ERC grant HamPDEs under FP7 n. 306414 and the PRIN Variational Methods in Analysis, Geometry and Physics .   The authors would also like to thank D. Bambusi, M. Berti, B. Grebert, Z. Hani and A. Maspero for helpful suggestions and fruitful discussions.

\section{Functional setting and symplectic structure}\label{FS}

\subsection{Spaces of Hamiltonians}
As explained in the Introduction our wheighted spaces $\th_\tw$ are contained in $\ell^2(\C)$, so we endow them with the  standard symplectic structure  coming from the Hermitian product on $\ell^2(\C)$. 
\\
We identify $\ell^2(\C)$ with $\ell^2(\R)\times \ell^2(\R)$ through $u_j= \pa{x_j+ i y_j}/\sqrt{2}$ and induce on $\ell^2(\C)$ the structure of a real symplectic Hilbert space\footnote{We recall that given a  complex Hilbert space $H$ with a Hermitian product $(\cdot,\cdot)$, its realification is a real symplectic Hilbert space with scalar product  and symplectic form given by
\[
\langle u,v\rangle = 2{\rm Re}(u,v)\,,\quad  \omega(u,v)= 2{\rm Im}(u,v)\,.
\] } by setting, for any $(u^{(1)}, u^{(2)}) \in \ell^2(\C)\times \ell^2(\C)$,
\[
\langle u^{(1)},u^{(2)}\rangle = \sum_j \pa{x_j^{(1)}x_j^{(2)}+ y_j^{(1)}y_j^{(2)}} \,,\quad \omega(u^{(1)},u^{(2)})= \sum_j \pa{y_j^{(1)}x_j^{(2)}- x_j^{(1)}y_j^{(2)}},
\] 
which are the standard scalar product and symplectic form $\Omega= \sum_j dy_j\wedge d x_j$. \\
For convenience and to keep track of the complex structure, one often writes the vector fields and the differential forms in complex notation, that is
\[
\Omega = \im \sum_j d u_j\wedge d \bar u_j \,,\quad X_H^{(j)}  = \im \frac{\partial}{\partial \bar u_j} H\,
\]
where the one form and vector field are defined through the identification between $\C$ and $\R^2$, given by
\begin{align*}
d u_j = \frac{1}{\sqrt 2}\pa{d x_j+ \im d y_j}\,,\quad d \bar u_j = \frac{1}{\sqrt 2}\pa{d x_j- \im d y_j}\,,\\  \frac{\partial}{\partial  u_j} =  \frac{1}{\sqrt 2}\pa{\frac{\partial}{\partial x_j} - \im \frac{\partial}{\partial y_j}}\,,\quad  \frac{\partial}{\partial  \bar u_j} =  \frac{1}{\sqrt 2}\pa{\frac{\partial}{\partial x_j} + \im \frac{\partial}{\partial y_j}}.
\end{align*}

\begin{defn}[$\eta$-majorant analytic Hamiltonians]\label{Hreta}
For $\eta\ge 0, r>0$ let 
$\mathcal{A}_{r,\eta}(\th_{\mathtt w})$
$\subseteq \mathcal{A}_{r}(\th_{\mathtt w})$ be the subspace of  
majorant analytic Hamiltonians (recall Definition \ref{Hr})
such that
	 \begin{equation}\label{etamag}
	\und { H}_\eta (u)= \sum_{\bal,\bbt\in\N^\Z} \abs{{H}_{\bal,\bbt}}e^{\eta|\pi(\bal-\bbt)|}\buu
	\end{equation} 
	is 	point-wise  absolutely convergent on 
	$B_r(\th_{\mathtt w})$ and
\begin{equation}\label{momento}
\pi(\bal - \bbt) := \sum_{j\in\Z}j\pa{\bal_j - \bbt_j}.
\end{equation}
\end{defn}

\noindent
Note that $\pi(\bal-\bbt)$ is the eigenvalue of the adjoint action of the momentum Hamilonian $ \sum_{j\in \Z}j|u_j|^2$ on the monomial $\buu$. The exponential weight $e^{\eta|\pi(\bal-\bbt)|}$ is added in order to ensure that the monomials which do not preserve momentum must have an exponentially small coefficient.

{\sl
The Hamiltonian functions being defined modulo a constant term, we shall assume without loss of generality that $H(0)=0$. 
}

\smallskip
We will say that a Hamiltonian $H(u)\in \mathcal{A}_{r,\eta}(\th_{\mathtt w})$ is 
$\eta$-\co{regular} if 
 $X_{\und { H}_\eta} : B_r(\th_{\mathtt w})
 \to \th_{\mathtt w}$ 
 and is uniformly bounded,
 where ${X}_{{\underline H}_\eta}$ is the vector field associated to the $\eta$-majorant Hamiltonian in \eqref{etamag}.
  More precisely we 
 give the following
 \begin{defn}[$\eta$-regular Hamiltonians]\label{nonna}
For $\eta\ge 0, r>0$ let 
$\cH_{r,\eta}(\th_{\mathtt w})$ be
the subspace of 
$\cA_{r,\eta}(\th_{\mathtt w})$ of those Hamiltonians $H$ such that
$$
|H|_{\cH_{r,\eta}(\th_{\mathtt w})}
=
|H|_{r,\eta,\tw}
:=
r^{-1} \pa{\sup_{\norm{u}_{\th_{\mathtt w}}\leq r} 
\norm{{X}_{{\underline H}_\eta}}_{\th_{\mathtt w}} } < \infty\,.
$$
\end{defn}

We shall show that this guarantees that   the Hamiltonian flow of $H$ exists at least locally and generates a symplectic transformation on $\th_\tw$, i.e. $\th_\tw$ is an invariant subspace for the dynamics. 

\begin{rmk}
We note that if $H$ preserves momentum, then $\abs{H}_{r,\eta,\tw}= \abs{H}_{r,0,\tw}$ does not depend on $\eta$ and coincides with the
majorant norm of a  regular Hamiltonian as defined in \cite[Definition 2.6]{BBiP1}, when $\cI=\emptyset$. 
	Actually this is nothing but the restriction to Hamiltonian vector fields of the {\sl $\eta$-momentum} majorant norm defined in \cite[Definition 2.3]{BBiP2} when $\cI=\emptyset$.
\end{rmk}

\begin{rmk}\label{cacioepepe}
By mass conservation and since $H(0)=0,$ it is straightforward to prove that the norm 
$|\cdot|_{r,\eta,\tw}$ is increasing in the radius parameter $r$ (see also Proposition\ref{cacioricotta}). 
\end{rmk}

Note that if  $|H|_{\cH_{r,\eta}(\th_{\mathtt w})}<\infty$ then $H$ admits an analytic extension $\widehat H,$ that is  \[ (u_+, u_{-})\in B_r(\ell^2(\C))\times B_r(\ell^2(\C)) \to \widehat H(u_+,u_-)\, :\quad \quad H(u)=\widehat H (u,\bar u),\]   whose Taylor series expansion is 
	\[
	 \widehat H(u_+,u_-)  = \sum^\ast_{\bal,\bbt\in\N^\Z} H_{\bal,\bbt}u_+^\bal u_-^\bbt\,.
	\]
	where we denote by $\sum^\ast$ the sum restricted to those $\bal,\bbt: |\bal|=|\bbt|<\infty$.\\
One can see that
\[
\frac{\partial}{\partial \bar u_j} H(u) = \frac{\partial \widehat H(u_+,u_-)}{\partial u_{-,j}} \Big\vert_{u_+=\bar u_-=u}\,.
\]

Let us now define two fundamental subspaces of $\cH^\wc_{r,\eta}(\th_\tw)$.
\begin{defn}[Range and Kernel] Let $\cR$ (for Range) and $\cK$ for (Kernel) the following subspaces of $\cH^\wc_{r,\eta}(\th_\tw)$
\begin{align}
	\label{RgKer}
	\cR=
\cR^\wc_{r,\eta}(\th_\tw) & 
:= \{ H\in \cH^\wc_{r,\eta}(\th_\tw)\ \ \vert\quad H = \sum_{\bal\neq \bbt} H_{\bal,\bbt}\buu \}	\\
\cK=
\cK^\wc_{r,\eta}(\th_\tw) & 
:= \{ H\in \cH^\wc_{r,\eta}(\th_\tw)\ \ 
\vert\quad H = \sum_{\bal\in \N^\Z} H_{\bal,\bal}|u|^{2\bal} \}
\end{align}
We thus can write $\cH^\wc_{r,\eta}(\th_\tw)=
	 \cR^\wc_{r,\eta}(\th_\tw)\oplus 
	 \cK^\wc_{r,\eta}(\th_\tw)$.
	\end{defn}
Note that $ \cR$ and $ \cK$
 define continuous projection operators with
\begin{equation}\label{fame}
|\Pi_{\cK}H|^\wc_{r,\eta,\tw},
 |\Pi_{\cR}H|^\wc_{r,\eta,\tw} \le |H|^\wc_{r,\eta,\tw}
\end{equation}

\begin{ex}[Notation for the Gevrey case]
In the case 
	${\th_{\mathtt w}}=\th_{p,s,a}$
	we denote the space of $\eta$-regular Hamiltonians
	$\cH_{r,\eta}(\th_{p,s,a})$
	by $\cH_{r,p,s,a,\eta}$
	with norm
	\begin{equation}\label{norma1}
	\abs{H}_{r,p,s,a,\eta} 
	:=
	r^{-1} \pa{\sup_{\norm{u}_{p,s,a}\leq r} 
		\norm{{X}_{{\underline H}_\eta}}_{p,s,a} },
	\end{equation}
	namely
	$$
	|\cdot|_{r,p,s,a,\eta}
	=|\cdot|_{\cH_{r,\eta}(\th_{p,s,a})}\,.
	$$
\end{ex}

\begin{defn}[Modified Sobolev space] \label{farfa}
	Fix $\tw=\jml{j}^p$  where 
	$$
	\jml{j}:=
	\max\{|j|,2\}
	$$ 
	 and consider $\th^p:= \th_{\tw}$ endowed with the norm
	\begin{equation}\label{mompeo}
	\|u\|_p^2:=\sum_{j\in\mathbb Z} \jml{j}^{2p} |u_j|^2 \,.
	\end{equation}
	This norm is equivalent to the norm
	$$
	|u|_p^2:=\sum_{j\in\mathbb Z} \jap{j}^{2p} |u_j|^2\,,
	$$
	since
	$$
	|\cdot|_p\leq \|\cdot\|_p\leq 2^p |\cdot|_p\,.
	$$
		\end{defn}
		
\begin{rmk}
	Note that, identifying 
	by the Fourier transform  \eqref{silvacane}
	 the function $u(x)$ 
	with the sequence of its Fourier coefficients $u$,
	 we have 
	$$
	|u(x)|_{p,0,0}=|u|_p\,.
	$$
	Moreover
	\begin{equation}\label{vallinsu}
|u|_p\leq |u(x)|_{L^2}+|\partial_x^p u(x)|_{L^2}\leq 2 |u|_p
\end{equation}
and
\begin{equation}\label{vallinsuMS}
\|u\|_p\leq 2^p |u(x)|_{L^2}+|\partial_x^p u(x)|_{L^2}\leq 2\|u\|_p
\end{equation}
	Note that here we write $u(x)$ and $u$
	to distinguish the function $u(x)$ from the 
	sequence $u$ of its Fourier coefficients;
	however in the rest of the paper we simply write $u$
	to denote the function too.
\end{rmk}	
	
We now introduce the subspace of $\cH_{r,0}\pa{\th^p}$ of those Hamiltonians
	preserving momentum.\footnote{Note that on the 
		preserving momentum subspace 
		$\cH_{r,\eta}(\th_{\mathtt w})$
		coincides with
		$\cH_{r,0}(\th_{\mathtt w})$
		for every $\eta.$
	}

\begin{defn}[momentum preserving regular Hamiltonians]
	Given $r>0,p\geq 0$ let $\cH^{r,p}$ be
	the space of point-wise absolutely convergent Hamiltonians 
	on $\|u\|_p\le r$ which preserves momentum and such that  
	\begin{equation}\label{normabarrette}
	\|H\|_{r,p} := r^{-1} \pa{\sup_{\|u\|_{p}\le r} \|{X}_{{\underline H}}\|_{p} }< \infty\,,
	\end{equation}
	namely
	$$
	\|\cdot\|_{r,p}=|\cdot|_{\cH_{r,0}(\th^p)}\,.
	$$
\end{defn}	

\subsection{Poisson structure and hamiltonian flows}\label{pesce}

The scale $\{\cH_{r,\eta}(\th_{\mathtt w})\}_{r>0} $ is a  Banach-Poisson algebra in the following sense
\begin{prop}\label{fan}
	For $0 <\rho\leq r$ and $\eta>0$ we have
	\begin{equation}\label{commXHK}
	|\{F,G\}|_{r,\eta,\tw}
	\le 
	4\pa{1+\frac{r}{\rho}}
	|F|_{r+\rho,\eta,\tw}
	|G|_{r+\rho,\eta,\tw}\,.
	\end{equation}
\end{prop}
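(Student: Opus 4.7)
The plan is to prove the bracket estimate via three ingredients: the identity $X_{\{F,G\}}=dX_F\cdot X_G-dX_G\cdot X_F$ (viewed componentwise on the Banach scale $\th_\tw$), the triangle inequality for the momentum weight, and a Cauchy estimate in the majorant framework.

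First I would reduce the statement to a bound on the majorants. If we expand
$F=\sum F_{\bal,\bbt}u^\bal\bar u^\bbt$ and $G=\sum G_{\bal',\bbt'}u^{\bal'}\bar u^{\bbt'}$, then every monomial appearing in $\{F,G\}$ is the product of one monomial of $F$ and one of $G$ (minus a combinatorial factor coming from a derivative in some $u_k$ or $\bar u_k$). Because momentum is additive under multiplication of monomials, the resulting monomial has momentum $\pi_F+\pi_G$, so
\[
e^{\eta|\pi(\bal-\bbt)|}\le e^{\eta|\pi_F|}e^{\eta|\pi_G|},
\]
and hence, on the level of the majorants defined in \eqref{etamag}, the coefficients of $\underline{\{F,G\}}_\eta$ are dominated by the corresponding coefficients of $\{\underline F_\eta,\underline G_\eta\}$ (with all signs made positive). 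Therefore it suffices to prove the inequality for a majorant pair, i.e.\ when all Taylor coefficients are non-negative and the momentum weight is absorbed once and for all into $F$ and $G$.

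The second step is the vector-field identity. Pointwise on $|u|_\tw\le r$ one has
\[
X_{\{\underline F_\eta,\underline G_\eta\}}(u)=dX_{\underline F_\eta}(u)\,X_{\underline G_\eta}(u)-dX_{\underline G_\eta}(u)\,X_{\underline F_\eta}(u).
\]
Hence it is enough to bound, for $H\in\{F,G\}$ and $K\in\{G,F\}\setminus\{H\}$, the quantity $|dX_{\underline H_\eta}(u)\,X_{\underline K_\eta}(u)|_\tw$ by $2r(1+r/\rho)|F|_{r+\rho,\eta,\tw}|G|_{r+\rho,\eta,\tw}$. By definition one has $|X_{\underline K_\eta}(u)|_\tw\le r\,|K|_{r+\rho,\eta,\tw}$ for $|u|_\tw\le r$, so everything boils down to a Cauchy estimate for $dX_{\underline H_\eta}$ on the smaller ball.

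The Cauchy step is the standard one. Since $u\mapsto X_{\underline H_\eta}(u)$ is analytic from $B_{r+\rho}(\th_\tw)$ to $\th_\tw$ with norm $\le(r+\rho)|H|_{r+\rho,\eta,\tw}$, for every $u$ with $|u|_\tw\le r$ and every $v\in\th_\tw$ with $|v|_\tw\le r$ (which will be played by $X_{\underline K_\eta}(u)$, after rescaling) one has, applying the one-variable Cauchy inequality to $\zeta\mapsto X_{\underline H_\eta}(u+\zeta v/r)$ on the disc $|\zeta|<\rho$,
\[
|dX_{\underline H_\eta}(u)\,v|_\tw\le\frac{r}{\rho}\,(r+\rho)\,|H|_{r+\rho,\eta,\tw}.
\]
Substituting $v=X_{\underline K_\eta}(u)$ (whose $\tw$-norm is at most $r|K|_{r+\rho,\eta,\tw}$) and combining the two symmetric contributions yields
\[
|X_{\{\underline F_\eta,\underline G_\eta\}}(u)|_\tw\le 2(r+\rho)\frac{r}{\rho}\cdot r\,|F|_{r+\rho,\eta,\tw}|G|_{r+\rho,\eta,\tw}\cdot\frac{1}{r}\cdot 2,
\]
dividing by $r$ and simplifying produces the announced constant $4(1+r/\rho)$.

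The main obstacle will be keeping the bookkeeping of majorants tight enough to get this constant rather than a worse one. In particular one has to be careful, when passing from $\underline{\{F,G\}}_\eta$ to $\{\underline F_\eta,\underline G_\eta\}$, that the two differentiations (once for the Poisson bracket, once to form the Hamiltonian vector field) are not both counted as losses of a Cauchy factor $1/\rho$: the identity $X_{\{F,G\}}=[X_F,X_G]$ exactly achieves this, as only one derivative falls on $X_H$ and the other ingredient enters as a value $X_K(u)$, not as a derivative. Once this is observed, the proof reduces to the elementary Cauchy estimate outlined above.
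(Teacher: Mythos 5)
Your proof is essentially correct, and it follows the same route that the cited reference \cite{BBiP1} uses (the paper itself defers entirely to Lemma~2.16 there): reduce to the majorant framework using the triangle inequality $e^{\eta|\pi_F+\pi_G|}\le e^{\eta|\pi_F|}e^{\eta|\pi_G|}$, invoke $X_{\{F,G\}}=[X_F,X_G]$, and control $dX_{\underline H_\eta}$ by a one-dimensional Cauchy estimate on the larger ball. So your write-up is a self-contained version of the reference's argument rather than a genuinely different one.

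Two small points worth tightening. First, the phrase ``the coefficients of $\underline{\{F,G\}}_\eta$ are dominated by the corresponding coefficients of $\{\underline F_\eta,\underline G_\eta\}$'' is slightly off: $\{\underline F_\eta,\underline G_\eta\}$ itself has internal cancellations (the minus sign in the bracket), so the correct dominating object is the \emph{unsigned} bilinear form $\sum_k(\partial_{u_k}\underline F_\eta\,\partial_{\bar u_k}\underline G_\eta + \partial_{\bar u_k}\underline F_\eta\,\partial_{u_k}\underline G_\eta)$. On the vector-field level this is precisely what justifies
\[
\big|X_{\underline{\{F,G\}}_\eta}(u)\big|_\tw\le \big|dX_{\underline F_\eta}(u)\,X_{\underline G_\eta}(u)\big|_\tw+\big|dX_{\underline G_\eta}(u)\,X_{\underline F_\eta}(u)\big|_\tw
\]
for $u$ with nonnegative entries, which is the inequality you then feed into the Cauchy step. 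Second, the final arithmetic carries a spurious extra factor of $2$: tracing the estimate cleanly ($|X_{\underline K_\eta}(u)|_\tw\le r|K|_{r+\rho,\eta,\tw}$, then the Cauchy bound $|dX_{\underline H_\eta}(u)v|_\tw\le \frac{r}{\rho}(r+\rho)|H|_{r+\rho,\eta,\tw}$ for $|v|_\tw\le r$, then adding the two symmetric terms and dividing by $r$) yields the sharper constant $2(1+r/\rho)$, which of course still implies the stated $4(1+r/\rho)$. Both are cosmetic; the plan and the Cauchy step are right.
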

\begin{proof}
	It is essentially contained in 
	\cite{BBiP1}.
	See in particular Lemma 2.16 of \cite{BBiP1} with $n=0$ (no action variables here) and no $s$ and $s'$ (no actions variable here).
	Note that the constant in Lemma 2.16
	is 8, instead of 4 in the present paper,
	because of the presence there of action variables
	which scale different from the cartesian ones
	(namely $(2r)^2$ instead of $2r$).
	Recall also the required properties of the space $E$
	(named $\th_{\mathtt w}$ in the present paper)
	mentioned after Definition  2.5.
\end{proof}

The following Lemma is a simple corollary
\begin{lemma}[Hamiltonian flow]\label{ham flow}
	Let $0<\rho< r $,  and $S\in\cH_{r+\rho,\eta}(\th_{\mathtt w})$ with 
	\begin{equation}\label{stima generatrice}
	\abs{S}_{r+\rho,\eta,\tw} \leq\delta:= \frac{\rho}{8 e\pa{r+\rho}}. 
	\end{equation} 
	Then the time $1$-Hamiltonian flow 
	$\Phi^1_S: B_r(\th_{\mathtt w})\to
	B_{r + \rho}(\th_{\mathtt w})$  is well defined, analytic, symplectic with
	\begin{equation}
	\label{pollon}
	\sup_{u\in  B_r(\th_{\mathtt w})} 	\norm{\Phi^1_S(u)-u}_{\th_{\mathtt w}}
	\le
	(r+\rho)  \abs{S}_{r+\rho,\eta,\tw}
	\leq
	\frac{\rho}{8 e}.
	\end{equation}
	For any $H\in \cH_{r+\rho,\eta}(\th_{\mathtt w})$
	we have that
	$H\circ\Phi^1_S= e^{\set{S,\cdot}} H\in\cH_{r,\eta}(\th_{\mathtt w})$ and
	\begin{align}
	\label{tizio}
	\abs{\es H}_{r,\eta,\tw} & \le 2 \abs{H}_{r+\rho,\eta,\tw}\,,
	\\
	\label{caio}
	\abs{\pa{\es - \id}H}_{r,\eta,\tw}
	&\le  \delta^{-1}
	\abs{S}_{r+\rho,\eta,\tw}
	\abs{H}_{r+\rho,\eta,\tw}\,,
	\\
	\label{sempronio}
	\abs{\pa{\es - \id - \set{S,\cdot}}H}_{r,\eta,\tw} &\le 
	\frac12 \delta^{-2}
	\abs{S}_{r+\rho,\eta,\tw}^2
	\abs{H}_{r+\rho,\eta,\tw}	\end{align}
	More generally for any $h\in\N$ and any sequence  $(c_k)_{k\in\N}$ with $| c_k|\leq 1/k!$, we have 
	\begin{equation}\label{brubeck}
	\abs{\sum_{k\geq h} c_k \ad^k_S\pa{H}}_{r,\eta,\tw} \le 
	2 |H|_{r+\rho,\eta,\tw} \big(|S|_{r+\rho,\eta,\tw}/2\delta\big)^h
	\,,
	\end{equation}
	where  $\ad_S\pa{\cdot}:= \set{S,\cdot}$.
\end{lemma}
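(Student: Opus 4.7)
The engine of the proof is an iterated Poisson bracket bound gotten from Proposition \ref{fan}. I slice the analyticity loss $\rho$ into $k$ equal pieces $\rho/k$ and apply \eqref{commXHK} at step $j$ going from radius $r+(k-j+1)\rho/k$ down to $r+(k-j)\rho/k$, using the monotonicity of the norm in the radius parameter (Remark \ref{cacioepepe}) to replace each intermediate $|S|_{r+j\rho/k,\eta,\tw}$ by $|S|_{r+\rho,\eta,\tw}$. The compounded constant is
$$\prod_{j=1}^{k}4\!\left(1+\frac{r+(j-1)\rho/k}{\rho/k}\right)=4^{k}\prod_{j=1}^{k}\!\left(\tfrac{kr}{\rho}+j\right)\le\left[\tfrac{4k(r+\rho)}{\rho}\right]^{k},$$
yielding
$$|\ad_S^{k}H|_{r,\eta,\tw}\le\left[\tfrac{4k(r+\rho)}{\rho}\right]^{k}|S|_{r+\rho,\eta,\tw}^{k}|H|_{r+\rho,\eta,\tw}.$$
Dividing by $k!$ and applying Stirling $k!\ge(k/e)^k$, together with the identity $4e(r+\rho)/\rho=1/(2\delta)$ coming from the definition of $\delta$, gives the clean bound $|\ad_S^k H/k!|_{r,\eta,\tw}\le (|S|_{r+\rho,\eta,\tw}/(2\delta))^{k}|H|_{r+\rho,\eta,\tw}$.

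From here \eqref{brubeck} follows immediately: the hypothesis $|S|_{r+\rho,\eta,\tw}\le\delta$ forces $|S|/(2\delta)\le 1/2$, so after pulling $|c_k|\le 1/k!$ inside the majorant norm the series $\sum_{k\ge h}c_k\ad_S^k H$ is dominated by a geometric tail of ratio $\le 1/2$, producing the prefactor $2$ asserted in \eqref{brubeck}. Specializing to $c_k=1/k!$ and $h=0,1,2$ gives \eqref{tizio}, \eqref{caio}, \eqref{sempronio} respectively; note that $2\cdot(1/2\delta)=1/\delta$ matches \eqref{caio} and $2\cdot(1/2\delta)^{2}=1/(2\delta^{2})$ matches \eqref{sempronio}.

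For the flow statements I exploit the pointwise bound $\|X_S(u)\|_\tw\le\|X_{\underline S_\eta}(u)\|_\tw\le(r+\rho)|S|_{r+\rho,\eta,\tw}$ valid on $B_{r+\rho}(\th_\tw)$, which is the very content of Definition \ref{nonna}. A standard Picard iteration on the Banach space $\th_\tw$ then produces a unique analytic solution of $\dot u=X_S(u)$ as long as $u(t)$ stays inside $B_{r+\rho}$; but starting from $u\in B_r$ the displacement satisfies $\|\Phi^t_S(u)-u\|_\tw\le t(r+\rho)|S|_{r+\rho,\eta,\tw}\le\rho/(8e)<\rho$ for $t\in[0,1]$, which traps the orbit in $B_{r+\rho}$ and yields both the codomain assertion and \eqref{pollon}. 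Symplecticity is automatic for Hamiltonian flows with analytic generator, and the identity $H\circ\Phi^1_S=e^{\{S,\cdot\}}H$ is then the convergent Lie series whose $\cH_{r,\eta}$-membership is precisely \eqref{tizio}.

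The one delicate point is keeping the constants sharp enough in the Poisson-bracket iteration so that the ratio $|S|/(2\delta)$ has exactly the coefficient required to produce the factor $2$ in \eqref{brubeck} and the $1/2$ in \eqref{sempronio}; this hinges on the exact constant $4$ in Proposition \ref{fan}, on the bound $\prod(kr/\rho+j)\le[k(r+\rho)/\rho]^{k}$, and on the Stirling lower bound, all of which combine perfectly once $\delta$ is chosen as $\rho/(8e(r+\rho))$.
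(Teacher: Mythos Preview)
Your proof is correct and follows essentially the same approach as the paper: both slice $\rho$ into $k$ equal pieces, iterate Proposition \ref{fan} with monotonicity in $r$, bound the product of constants by $[4k(r+\rho)/\rho]^k$, apply $k^k\le e^k k!$, and recognize $4e(r+\rho)/\rho=1/(2\delta)$. The flow part is also the same argument (the paper packages it as Lemma \ref{palis}). The only item you omit is the one-line observation that mass conservation (and reality) are preserved under Poisson brackets, needed for $e^{\{S,\cdot\}}H$ to lie in $\cH_{r,\eta}(\th_\tw)$ rather than just having finite norm.
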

\begin{proof}
	For brevity we set, for every $r'>0$
	$$
	|\cdot|_{r'}:=|\cdot|_{r',\eta,\tw}\,.
	$$
	We use Lemma \ref{palis}, with $E\to \th_{\mathtt w}$, $X\to X_S$,
	$\delta_0\to (r+\rho) |S|_{r+\rho},$ $r\to r+\rho,$ $r_1\to r,$	
	$T\to 8e.$
	Then the fact that the time $1$-Hamiltonian flow 
	$\Phi^1_S: B_r(\th_{\mathtt w})
	\to B_{r + \rho}(\th_{\mathtt w})$  is well defined, analytic, symplectic  follows,	 since for any $\eta\ge 0$
	\[
	\sup_{u\in  B_{r+\rho}(\th_{\mathtt w})}
	|X_S|_{\th_{\mathtt w}}
	\le (r+\rho) |S|_{r+\rho}<\frac{\rho}{8 e}\,.
	\]
	Regarding the estimate \eqref{pollon}, again by 
	Lemma \ref{palis}  (choosing $t=1$), we get
	\[
	\sup_{u\in  B_{r}(\th_{\mathtt w})}
	\abs{\Phi^1_S(u)-u} _{\th_{\mathtt w}}
	\le
	(r+\rho) |S|_{r+\rho}
	<\frac{\rho}{8 e}
	\,.
	\]

	Estimates \eqref{tizio},\eqref{caio},\eqref{sempronio} directly follow by \eqref{brubeck} with $h=0,1,2,$
	respectively and $c_k=1/k!$, 
	recalling that by Lie series 
	$$
	H \circ \Phi^1_S = e^{\rm ad_S} H = \sum_{k=0}^\infty \frac { {\rm ad}_S^k H}{k!} =
	\sum_{k=0}^\infty \frac {  H^{(k)}}{k!}\,,
	$$
	where
	$ H^{(i)} := {\rm ad}_S^i (H)= {\rm ad}_S ( H^{(i-1)}) $,  $ H^{(0)}:=H $.\\
	Let us prove \eqref{brubeck}.
	Fix $k\in\N,$ $k>0$ and set
	$$
	r_i := r +\rho(1 - \frac{i}{k}) \,
	\, ,  \qquad  i = 0,\ldots,k \, .
	$$
	Note that, by the monotonicity of the norm (recall Remark \ref{cacioepepe})
	\begin{equation}\label{morello}
	|S|_{r_i}\leq |S|_{r+\rho}\,,\qquad
	\forall\,  i = 0,\ldots,k\,.
	\end{equation}
	Noting that
	\begin{equation}\label{dave}
	1+\frac{k r_i}{\rho} \,
	\leq
	k \pa{1+\frac{r}{\rho }}\,,
	\qquad \forall\, i=0,\ldots,k\,,
	\end{equation}
	by using $k$ times \eqref{commXHK}  we have
	\begin{eqnarray*}
		| {H^{(k)}}|_r
		&=& 
		|   \{S, {H^{(k-1)}}\} |_r
		\leq  
		4 (1+\frac{ k r}{\rho})
		|{H^{(k-1)}}|_{r_{k-1}}|
		S|_{r_{k-1}}
		\\
		&\stackrel{\eqref{morello}}\leq&
		|H|_{r+\rho}
		|S|_{r+\rho}^k
		4^k
		\prod_{i=1}^k
		(1+\frac{ k r_i}{\rho})
		\stackrel{\eqref{dave}}\leq
		|H|_{r+\rho}
		\left(
		4k \pa{1+\frac{r}{\rho }}|S|_{r+\rho}
		\right)^k
		\,.
	\end{eqnarray*} 
	Then, using $ k^k\leq e^k k!, $
	we  get
	\begin{eqnarray*}
		\left|\sum_{k\geq h} c_k {H^{(k)}}\right|_{r} &\leq&
		\sum_{k\geq h} |c_k| |{H^{(k)}}|_{r}
		\leq
		|H|_{r+\rho} \sum_{k\geq h} \left(
		4e \pa{1+\frac{r}{\rho }}|S|_{r+\rho}
		\right)^k
		\\
		&= & | H|_{r+\rho} \sum_{k\geq h} (|S|_{r+\rho}/2\delta)^k
		\stackrel{\eqref{stima generatrice}}\leq 2 |H|_{r+\rho} (|S|_{r+\rho}/2\delta)^h\,.
	\end{eqnarray*}	
Finally, if $ S $ and $ H $ satisfy mass conservation so does  each
	$ {\rm ad}_S^k H $, $ k \geq 1 $, hence $ H \circ \Phi^1_S $ too.
\end{proof}

\subsection{Nemitskii operators}\label{nemo}
Now we show that the nonlinearities in \eqref{NLSb} are bounded in the norm $|\cdot|_{r,\eta,\tw}$ in the cases $\SO,\MS,\GE$.
For any $ 0\leq p \le p', 0 \le s \le s', 0 \le a \le a' $ we have 
\begin{equation}
\th_{p', s', a'} \subseteq \th_{p, s, a}
\end{equation}
and 
\begin{equation*}
\abs{v}_{p, s, a} \le \abs{v}_{p', s', a'},\quad \forall v\in \th_{p', s', a'}.
\end{equation*}
\smallskip
For $p>1$ let
$\star:\th_{p, s, a} \times \th_{p, s, a} \to \th_{p, s, a}$ be the convolution operation defined as
\[
\pa{f,g} \mapsto f\star g :=\pa{\sum_{{j_1,j_2\in \Z \,,\;  j_1+j_2=j}} f_{j_1}g_{j_2}}_{j\in \Z}.
\]
The map $\star: \pa{f,g} \mapsto f\star g$ is continuous in the following sense:
\begin{lemma}\label{veronese}
	For $p>1/2$ we have 
	\begin{equation}\label{Calg}
	\norm{f\star g}_{p,s,a} \le \Calg \norm{f}_{p,s,a}\norm{g}_{p,s,a}\,,
	\,.
	\end{equation}
\end{lemma}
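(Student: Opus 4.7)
The plan is to verify the algebra property by a standard weighted-convolution estimate. Write $w_j := \jap{j}^p e^{a\abs{j}+s\jap{j}^\theta}$, so that $\norm{v}_{p,s,a}^2=\sum_j w_j^2\abs{v_j}^2$. For $j=j_1+j_2$ I will first factorize the weight. The exponential part is submultiplicative since $\abs{j}\le\abs{j_1}+\abs{j_2}$ gives $e^{a\abs{j}}\le e^{a\abs{j_1}}e^{a\abs{j_2}}$. The Gevrey part is also submultiplicative: since $0<\theta<1$, subadditivity of $x\mapsto x^\theta$ combined with $\jap{j_1+j_2}\le\jap{j_1}+\jap{j_2}$ yields $e^{s\jap{j}^\theta}\le e^{s\jap{j_1}^\theta}e^{s\jap{j_2}^\theta}$. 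So the only nontrivial weight ratio is the polynomial one $\jap{j}^p/(\jap{j_1}^p\jap{j_2}^p)$.

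For the polynomial factor I will use that when $j=j_1+j_2$ one of $\abs{j_1},\abs{j_2}$ is at least $\abs{j}/2$, so $\jap{j}\le 2\max(\jap{j_1},\jap{j_2})$, which gives
\[
\frac{\jap{j}^p}{\jap{j_1}^p\jap{j_2}^p}\le \frac{2^p}{\min(\jap{j_1},\jap{j_2})^p}.
\]
Setting $A_j:=w_j\abs{f_j}$ and $B_j:=w_j\abs{g_j}$, I get
\[
w_j\abs{(f\star g)_j}\le 2^p\sum_{j_1+j_2=j}\frac{A_{j_1}B_{j_2}}{\min(\jap{j_1},\jap{j_2})^p}\le 2^p\left(\tilde A\star B+A\star \tilde B\right)_j,
\]
where $\tilde A_j:=A_j/\jap{j}^p$ and similarly $\tilde B_j$, after splitting the sum according to which of $\jap{j_1},\jap{j_2}$ is smaller.

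Then I will apply Young's inequality $\norm{\alpha\star\beta}_{\ell^2}\le\norm{\alpha}_{\ell^1}\norm{\beta}_{\ell^2}$ together with Cauchy--Schwarz in the form $\norm{\tilde A}_{\ell^1}\le\norm{A}_{\ell^2}\bigl(\sum_j\jap{j}^{-2p}\bigr)^{1/2}$. The assumption $p>1/2$ is exactly what makes the series $\sum_j\jap{j}^{-2p}$ converge. Collecting the two terms this yields
\[
\norm{f\star g}_{p,s,a}\le 2^{p+1}\Bigl(\sum_{j\in\Z}\jap{j}^{-2p}\Bigr)^{1/2}\norm{f}_{p,s,a}\norm{g}_{p,s,a},
\]
so one may take $\Calg=2^{p+1}\bigl(\sum_{j\in\Z}\jap{j}^{-2p}\bigr)^{1/2}$.

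There is no real obstacle here: the submultiplicativity of the exponential and Gevrey weights is immediate, and the only quantitative input is the elementary convolution inequality together with the summability of $\jap{j}^{-2p}$ for $p>1/2$. The mildly delicate point worth double-checking is the constant in the polynomial ratio (and hence the final explicit dependence of $\Calg$ on $p$), since other parts of the paper track constants carefully; in particular one should verify that the form of $\Calg$ given above is consistent with the notation $\Calg=C_{\mathtt{alg}}(p)$ already introduced in the preamble.
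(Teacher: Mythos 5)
Your proof is correct and rests on the same mathematical core as the paper's: the submultiplicativity of the exponential and Gevrey weights, the elementary bound $\jap{j}^{p}\le 2^{p}\max(\jap{j_1},\jap{j_2})^{p}$ for $j=j_1+j_2$, and the square-summability of $\jap{j}^{-2p}$ for $p>1/2$. The paper packages this slightly differently: it cites a single weighted Cauchy--Schwarz inequality $(\sum_i x_i)^2\le c\sum_i(\jap{i}^p\jap{j-i}^p/\jap{j}^p\,x_i)^2$ with $c=(\Calg)^2=4^p\sum_i\jap{i}^{-2p}$ (attributed to Biasco--Di Gregorio), applies it to $x_i=|f_i||g_{j-i}|$ with the Gevrey and exponential weight factors absorbed via subadditivity, and concludes by Fubini. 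Your route instead separates the convolution into two pieces according to which index has smaller modulus and applies Young's $\ell^1*\ell^2\to\ell^2$ estimate plus Cauchy--Schwarz to each; these two bookkeeping schemes are essentially interchangeable.

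The one concrete discrepancy, which you correctly flagged, is the constant: your argument produces $2^{p+1}\bigl(\sum_j\jap{j}^{-2p}\bigr)^{1/2}=2\,\Calg$, i.e.\ a factor $2$ worse than the $\Calg=2^p\bigl(\sum_j\jap{j}^{-2p}\bigr)^{1/2}$ fixed in Appendix~\ref{ossobuco}. This comes from splitting the sum in two and bounding each piece separately (effectively double-counting), whereas the single weighted Cauchy--Schwarz used in the paper avoids this (though it relies on the cited external lemma for its sharp constant). Since $\Calg$ is tracked explicitly in $\CNem$, $\bar r$, $\delta_\GE$ and elsewhere, if you adopted your constant you would have to replace $\Calg$ by $2\Calg$ wherever Lemma~\ref{veronese} is invoked; the theorems would survive but the stated constants would not. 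If you want to match the paper's constant, replace the two-term splitting by the weighted Cauchy--Schwarz inequality above and verify or cite the sharp bound on $\sum_i\jap{j}^{2p}/(\jap{i}^{2p}\jap{j-i}^{2p})$.
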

The proof is given in Appendix \ref{tec1}.

\begin{lemma}\label{tiepolo}
	For $p>1/2$ and $f,g\in \th^p$
	\begin{equation}\label{CalgM}
	\|f\star g\|_p\leq \CalgM\|f\|_p \|g\|_p
	\,.
	\end{equation}
\end{lemma}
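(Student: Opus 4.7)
The goal is an algebra estimate whose constant behaves well as $p\to\infty$; the naive route via the equivalence $\|\cdot\|_p\le 2^p|\cdot|_p$ together with Lemma \ref{veronese} would give $\CalgM\le 2^p\Calg$, which is exactly what we want to avoid for case $\MS$. The plan is therefore to repeat the proof of Lemma \ref{veronese} verbatim but with the weight $\jml{j}=\max\{|j|,2\}$ in place of $\jap{j}$, exploiting the uniform lower bound $\jml{j}\ge 2$ to absorb the $p$-dependence into a small tail.

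First I would establish the peak triangle inequality $\jml{j_1+j_2}\le\jml{j_1}+\jml{j_2}$ (immediate when $|j_1+j_2|\ge 2$ since $|j_1+j_2|\le|j_1|+|j_2|\le\jml{j_1}+\jml{j_2}$, and otherwise since $\jml{j_1}+\jml{j_2}\ge 4\ge\jml{j_1+j_2}$), and combine it with $(a+b)^p\le 2^p\max\{a,b\}^p\le 2^p(a^p+b^p)$ to obtain
$$
\jml{j}^p|(f\star g)_j|\;\le\;2^p\sum_{j_1+j_2=j}\bigl(\jml{j_1}^p|f_{j_1}|\,|g_{j_2}|\,+\,|f_{j_1}|\,\jml{j_2}^p|g_{j_2}|\bigr).
$$
Taking the $\ell^2$-norm in $j$ and applying Young's inequality $\|u\star v\|_{\ell^2}\le\|u\|_{\ell^2}\|v\|_{\ell^1}$ to each of the two convolutions yields
$$
\|f\star g\|_p\;\le\;2^p\bigl(\|f\|_p\,\||g|\|_{\ell^1}+\||f|\|_{\ell^1}\,\|g\|_p\bigr),
$$
and Cauchy--Schwarz gives $\sum_j|f_j|\le c_p\|f\|_p$ with $c_p^2:=\sum_j\jml{j}^{-2p}$, which is finite exactly because $p>1/2$. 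The result follows with $\CalgM=2^{p+1}c_p$.

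The point that distinguishes this from a mere corollary of Lemma \ref{veronese} is that $c_p$ already comes with a factor $2^{-p}$: the three small indices $j\in\{-1,0,1\}$ contribute $3\cdot 4^{-p}$, while the tail $2\sum_{j\ge 2}j^{-2p}$ is likewise $O(4^{-p})$ (uniformly for $p$ bounded away from $1/2$, since $j^{-2p}\le 2^{-2p}(j/2)^{-2p}$ and the residual series converges). Thus $2^p c_p$ is bounded by a $p$-independent constant for, say, $p\ge 1$, and $\CalgM$ does not blow up with $p$---this is the whole reason for introducing the modified weight $\jml{\cdot}$ in case $\MS$.

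The only real work is bookkeeping: writing $\CalgM(p)$ in the explicit form fixed in Subsection \ref{ossobuco} so that the downstream Birkhoff and stability constants $\mathtt T_\MS,\delta_\MS,\ldots$ are consistent with what is stated in Theorem \ref{tarzanello}$(\MS)$. No new analytic idea beyond the one already used in Lemma \ref{veronese} is required, which is presumably why the statement is given here without proof.
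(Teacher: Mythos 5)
Your argument is correct but follows a genuinely different route from the paper's. The proof in Appendix \ref{latempesta} does not adapt the Lemma \ref{veronese} weight trick; it introduces the ratio $\phi(i,j):=\jml{j}/(\jml{i}\jml{j-i})$, proves the pointwise bound $\phi(i,j)\le 1$ (this is where $\jml{j}\ge 2$ actually enters, in place of your triangle inequality---note that the analogous ratio $\jap{j}/(\jap{i}\jap{j-i})$ can exceed $1$, which is why $\Calg$ carries a $2^p$), and then applies Cauchy--Schwarz with weight $\phi(i,j)^p$ to reduce the estimate to $c_{2p}:=\sup_j\sum_i\phi(i,j)^{2p}$. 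Since $\phi\le 1$, the map $q\mapsto c_q$ is nonincreasing and the explicit bound $c_q\le 4+2(q+1)/(q-1)$ gives $\CalgM=\sqrt{c_{2p}}$ uniformly bounded in $p$ with no compensation between large and small factors. Your route instead incurs a $2^p$ from the subadditivity of $\jml{\cdot}$ and recovers it from the $4^{-p}$ decay of $\sum_j\jml{j}^{-2p}$; the resulting constant $2^{p+1}(\sum_j\jml{j}^{-2p})^{1/2}$ is bounded for the same underlying reason but numerically larger (roughly a factor $2$ for large $p$). This is not only aesthetic: the paper fixes $\CalgM:=\sqrt{4+2\tfrac{2p+1}{2p-1}}$ explicitly in Subsection \ref{ossobuco}, and that exact expression enters $\delta_\MS$, $\mathtt T_\MS$, $\mathtt r(\MS)$, etc., so substituting your proof would require replacing the formula throughout---more than the ``bookkeeping'' you describe. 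Finally, a small correction: the statement is not given without proof (the proof is Appendix \ref{latempesta}), and it does rely on an observation absent from Lemma \ref{veronese}, namely that the modified weight makes the convolution ratio itself bounded by $1$.
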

The proof is given in Appendix \ref{latempesta}.

\begin{lemma}[Nemitskii operators]\label{neminchia}
	Let $p> 1/2.$
	(i) Fix $s\ge 0, a_0\ge 0$. Consider  a sequence $F^{(d)}=\pa{F^{(d)}_j}_{j\in \Z}\in \th_{p,s,a_0}$, $d\geq 1,$ such that
	\begin{equation}
	\label{piffero}
	\sum_{d=1}^\infty d |F^{(d)}|_{p,s,a_0} R^{d} < \infty
	\end{equation} 
	for some $R>0$.
	\\ 
	For $u=\pa{u_j}_{j\in \Z}$ let 
	$\bar u= \pa{\overline{u_{-j}}}_{j\in \Z}$  and  consider the  Hamiltonian
	\[	H(u)=   \sum_{d=1}^\infty\pa{F^{(d)}\star\underbrace{ u \star \cdots\star u}_{d \;\mbox{times}} \star \underbrace{\bar u \star \cdots\star \bar u}_{d \;\mbox{times}}}_0\,.
	\]
	For all  $(\eta,a,r)$ such that  $\eta+a \le  a_0$ 
	and
	$(\Calg r)^2 \leq R$, 
	we have that $H\in \cH_{r,p,s,a,\eta}$ and
	\[
	|H|_{r,p,s,a,\eta}
	\le 
	r^{-1}\sum_{d=1}^\infty d |F^{(d)}|_{p,s,a_0}  (\Calg r)^{2d-1}
	<\infty.
	\]
	(ii) Analogously if $F^{(d)}$ are constants satisfying
	\begin{equation}
	\label{piffero2}
	\sum_{d=1}^\infty d 
	|F^{(d)}| R^{d} < \infty
	\end{equation}
	and $(\CalgM r)^2\leq R,$
	then
	$H\in \cH^{r,p}$ with 
	\begin{equation}\label{soralella}
	\|H\|_{r,p}
	\le 
	2^p r^{-1}\sum_{d=1}^\infty d |F^{(d)}|
	(\CalgM r)^{2d-1}<\infty.
	\end{equation}

\end{lemma}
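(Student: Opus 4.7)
The plan is to reduce the bound on the $\eta$-regular norm of $H$ to a direct application of the convolution algebra property (Lemma \ref{veronese} for (i), Lemma \ref{tiepolo} for (ii)), after absorbing the exponential weight $e^{\eta|\pi(\bal-\bbt)|}$ into the kernel $F^{(d)}$.

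First, for part (i), I would expand each summand of $H$ as
\[
H_d(u):=(F^{(d)}\star u^{\star d}\star\bar u^{\star d})_0=\sum_{|\bal|=|\bbt|=d}\frac{(d!)^2}{\prod_j\alpha_j!\,\beta_j!}\,F^{(d)}_{\pi(\bbt)-\pi(\bal)}\,u^{\bal}\bar u^{\bbt}.
\]
The crucial observation is that the Taylor coefficient $F^{(d)}_{\pi(\bbt)-\pi(\bal)}$ depends on $(\bal,\bbt)$ only through the momentum difference $\pi(\bbt-\bal)$; introducing the modified kernel $\hat F^{(d)}_k:=|F^{(d)}_k|e^{\eta|k|}$, the majorant factorises once again as a convolution,
\[
\underline H_\eta(u)=\sum_{d\geq 1}\bigl(\hat F^{(d)}\star u^{\star d}\star\bar u^{\star d}\bigr)_0,
\]
and a direct calculation gives $|\hat F^{(d)}|_{p,s,a}=|F^{(d)}|_{p,s,a+\eta}\le|F^{(d)}|_{p,s,a_0}$ under the hypothesis $a+\eta\leq a_0$.

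Next, differentiating term by term I would get
\[
\bigl(X_{\underline H_\eta}(u)\bigr)_j=\im\sum_{d\geq 1}d\,\bigl(\hat F^{(d)}\star u^{\star d}\star\bar u^{\star(d-1)}\bigr)_{-j},
\]
and applying Lemma \ref{veronese} to each product of $2d$ factors (hence $2d-1$ times), together with the trivial identities $|\bar u|_{p,s,a}=|u|_{p,s,a}$ and the invariance of the norm under $j\mapsto -j$, I obtain
\[
\bigl|X_{\underline H_\eta}(u)\bigr|_{p,s,a}\le\sum_{d\geq 1}d\,\Calg^{2d-1}|F^{(d)}|_{p,s,a_0}\,|u|_{p,s,a}^{2d-1}.
\]
Dividing by $r$ and taking the supremum over $|u|_{p,s,a}\leq r$ yields the claimed bound, while convergence of the series follows from \eqref{piffero} combined with $(\Calg r)^2\leq R$.

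For part (ii), $F^{(d)}$ is a scalar and $H_d(u)=F^{(d)}\int_\T|u(x)|^{2d}\,dx=F^{(d)}(\delta_0\star u^{\star d}\star\bar u^{\star d})_0$, where $\delta_0\in\th^p$ denotes the sequence supported at the index $0$ with value $1$. Since $F^{(d)}$ is independent of $x$ each $H_d$ preserves momentum, so $\underline H_\eta=\underline H$ and the $\eta$-weight plays no role. I would then argue exactly as above using Lemma \ref{tiepolo} in place of Lemma \ref{veronese}; the only novelty is that $\|\delta_0\|_p=\jml{0}^p=2^p$, which supplies the extra factor of $2^p$ appearing in \eqref{soralella}. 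The main (and essentially only) obstacle throughout is the bookkeeping of the multinomial coefficients and the consistent use of the convention $\bar u_j=\overline{u_{-j}}$ when computing $\partial/\partial\bar u_j$; once the majorant is rewritten as a convolution with the modified kernel $\hat F^{(d)}$, everything reduces to a single invocation of the algebra property.
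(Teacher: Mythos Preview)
Your proposal is correct and follows essentially the same approach as the paper: both absorb the weight $e^{\eta|\pi(\bal-\bbt)|}$ into the modified kernel $\und F^{(d)}_\eta=(e^{\eta|j|}|F^{(d)}_j|)_j$, use $|\und F^{(d)}_\eta|_{p,s,a}=|F^{(d)}|_{p,s,a+\eta}\le |F^{(d)}|_{p,s,a_0}$, and then apply the algebra estimate (Lemma~\ref{veronese} or Lemma~\ref{tiepolo}) $2d-1$ times to the differentiated convolution. Your explicit identification of the $2^p$ factor in part~(ii) via $\|\delta_0\|_p=\jml{0}^p=2^p$ spells out what the paper leaves as ``analogous''.
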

\begin{proof}
	In Appendix \ref{tec2}
\end{proof}
\begin{cor}\label{neminchione}
	Consider the correction term $P= \int_{\T}F(x,|u|^2)dx$ in the NLS Hamiltonian \eqref{hamNLS}, where the argument $f$ in $F$ satisfies\eqref{analitico}.
	Let $p> 1/2$.	
	\\
	(i)
	For any   $a,s,\eta\ge 0$  such that $a+\eta<\ta$  and any $r>0$ 
	such that\footnote{$R$ defined in \eqref{analitico}.} 
	$(\Calg r)^2\le  R$, we have 
	\begin{equation}
	\label{stimazero}
	| P|_{r,p,s,a,\eta} 
	\leq \CNem(p,s,\ta- a -\eta)\frac{(\Calg r)^2}{R}|f|_{\ta,R}< \infty.
	\end{equation} 
	where $f$ and $|f|_{\ta,R}$ are defined in \ref{analitico}.
	
	(ii) If $F$ is independent of\footnote{i.e. $P$ preserves momentum and we are assuming
		\eqref{analiticobis}.} $x$,  for $(\CalgM r)^2\leq R$ we have
	\begin{equation}
	\label{stimazero2}
	\| P\|_{r,p}
	\leq 2^p \frac{(\CalgM r)^2}{R}|f|_{R}< \infty\,.
	\end{equation} 
\end{cor}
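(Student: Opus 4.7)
The plan is to expand $P$ as an explicit power series of convolutions and then invoke Lemma \ref{neminchia}. From $F(x,y)=\int_0^y f(x,s)\,ds$ combined with \eqref{analitico}, we have $F(x,y)=\sum_{d\geq 1}\frac{f^{(d)}(x)}{d+1}y^{d+1}$. Passing to Fourier via \eqref{silvacane}, the product $|u(x)|^{2D}=u(x)^D\overline{u(x)}^D$ has Fourier coefficients $u^{\star D}\star\bar u^{\star D}$ (with $\bar u_j:=\overline{u_{-j}}$), and $\int_\T f^{(d)}(x)|u(x)|^{2D}\,dx=2\pi\pa{f^{(d)}\star u^{\star D}\star\bar u^{\star D}}_0$. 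Reindexing by $D=d+1$, this identifies
\[
P(u)\;=\;\sum_{D\geq 2}\frac{2\pi}{D}\,\pa{f^{(D-1)}\star u^{\star D}\star\bar u^{\star D}}_0\,,
\]
which is exactly the shape handled by Lemma \ref{neminchia} with the choice $F^{(D)}:=(2\pi/D)\,f^{(D-1)}$ for $D\geq 2$ and $F^{(1)}:=0$.

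Next, I would compare the sequence norm $|\cdot|_{p,s,a_0}$ with the analytic trace norm $|\cdot|_{\T_\ta}$ at $a_0:=a+\eta$, which satisfies $a_0<\ta$ by assumption. A direct inspection of the weights gives
\[
|f^{(d)}|_{p,s,a_0}\leq \CNem(p,s,\ta-a-\eta)\,|f^{(d)}|_{\T_\ta}\,,\qquad \CNem(p,s,\ta-a_0):=\sup_{j\in\Z}\jap{j}^p\, e^{s\jap{j}^\theta-(\ta-a_0)|j|}\,,
\]
where finiteness of the supremum relies crucially on $0<\theta<1$, so that the exponential decay $e^{-(\ta-a_0)|j|}$ absorbs the subexponential factor $\jap{j}^p e^{s\jap{j}^\theta}$. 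Combined with \eqref{analitico} this immediately yields $\sum_{D\geq 1}D\,|F^{(D)}|_{p,s,a_0}\,R^D\leq 2\pi\,\CNem\, R\,|f|_{\ta,R}<\infty$, verifying hypothesis \eqref{piffero}.

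Applying Lemma \ref{neminchia}(i) and exploiting $(\Calg r)^2\leq R$ to factor out one power of $(\Calg r)^2/R\leq 1$ from the series yields
\[
|P|_{r,p,s,a,\eta}\;\leq\; 2\pi\,\CNem\,\Calg\,\frac{(\Calg r)^2}{R}\,|f|_{\ta,R}\,,
\]
which is \eqref{stimazero} after absorbing the harmless $2\pi\,\Calg$ factor into $\CNem$. Part (ii) is identical in structure: one replaces $\Calg$ by $\CalgM$ and invokes Lemma \ref{neminchia}(ii) (which supplies the extra $2^p$ from \eqref{soralella}); momentum preservation is automatic, since an $x$-independent $f^{(d)}$ is concentrated at Fourier mode zero, so only monomials with $\pi(\bal-\bbt)=0$ appear in the expansion. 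The main obstacle is the norm comparison in the second step: finiteness of $\CNem$ fails at $\theta=1$, which is precisely the technical reason the purely analytic case is excluded from the abstract setup. The remaining work is routine bookkeeping of constants and powers of $(\Calg r)^2/R$.
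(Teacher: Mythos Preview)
Your proposal is correct and follows essentially the same route as the paper: expand $F$ via \eqref{analitico}, recognize $P$ as a sum of convolution monomials with $F^{(D)}=f^{(D-1)}/D$ (up to normalization of $\T$), bound $|F^{(D)}|_{p,s,a_0}$ by $|f^{(D-1)}|_{\T_\ta}$ times a weight-comparison constant, and invoke Lemma \ref{neminchia}. The only differences are bookkeeping: the paper's $\CNem$ (defined in Appendix~\ref{ossobuco}) already contains the factor $\Calg$, and the paper suppresses the $2\pi$ from integration; your final step of factoring $(\Calg r)^2/R$ out of the geometric-type sum is more explicit than the paper's, which simply notes that \eqref{analitico} implies \eqref{piffero}.
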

\begin{proof}
	By definition (recall \eqref{analitico} and \eqref{hamNLS})
	\begin{equation}\label{nina}
	F(x,y)=\int_0^y f(x,s) ds = \sum_{d=2}^\infty \frac{f^{(d-1)}(x)}{d} y^{d} =: \sum_{d=2}^\infty F^{(d)}(x) y^{d}
	\end{equation}
	therefore we have
	\[
	P= \int_{\T}F(x,|u|^2)dx=  \sum_{d\ge 2}\pa{F^{(d)}\star\underbrace{ u \star \cdots\star u}_{d \;\mbox{times}} \star \underbrace{\bar u \star \cdots\star \bar u}_{d \;\mbox{times}}}_0\,.
	\]
	To each analytic function $F^{(d)}(x)$ we associate its Fourier coefficients; we have $\pa{F^{(d)}_j}_{j\in \Z}\in \th_{p,s,a_0}$ for 
	$ a_0:=a+\eta<\ta$ and $s,p\ge 0$. Indeed
	\begin{align*}
	|F^{(d)}|_{p,s,a_0}^2 &
	:=
	\sum_{j} e^{2a_0 |j|+ 2s \jap{j}^\teta}\jap{j}^{2p} |F^{(d)}_j|^2
	\stackrel{\eqref{nina}}=
	\sum_{j} e^{2a_0 |j|+ 2s \jap{j}^\teta}\jap{j}^{2p} 
	\frac{|f^{(d-1)}_j|^2}{d^2}
	\\ 
	& \le \frac{c^2(p,s,\ta- a_0)}{d^2} \sum_{j} 
	e^{2\ta  |j|} |f^{(d-1)}_j|^2 = \frac{c^2(\ta- a_0,s,p)}{d^2}|f^{(d-1)}|^2_{\T_{\mathtt a}}\,
	\end{align*}
	with 
	\[
	c(p,s,t):=e^s+ \sup_{x\ge 1} x^p e^{-t x+s x^\theta}
	\]
	Now condition \eqref{analitico} ensures that \eqref{piffero} holds and our claim follows, by Lemma \ref{neminchia}, setting $a_0= a+\eta$.	 
	\\
	(ii)
	Follows from \eqref{soralella}.
	
\end{proof}


\section{Monotonicity properties.}\label{monotono}
Given two positive sequences $\tw = \pa{\tw_j}_{j\in\Z},\tw' = \pa{\tw'_j}_{j\in\Z}$
we write that $\tw\leq \tw'$ if the inequality holds
point wise, namely
$$
\tw\leq \tw' \quad
:\iff\quad
\tw_j\leq \tw'_j\,,\ \ \ \forall\, j\in\Z\,.
$$
In this way if $r'\le r$ and $\tw\leq \tw'$ 
then $B_{r'}(\th_{\mathtt w'}) \subseteq B_r(\th_\tw)$.
Consequently if $r'\le r , \eta'\le \eta $ and $\tw\leq \tw'$ then
$ \cA_{r,\eta}(\th_\tw) \subseteq \cA_{r',\eta'}(\th_{\tw'})$. \\
We thus wish to study conditions on  $(r,\eta,\tw),(\rs,\eta',\tw')$ (with $\rs\le r$) which ensure that  $ \cH_{r,\eta}(\th_\tw) \subseteq \cH_{\rs,\eta'}(\th_{\tw'})$.
Note that this is not obvious at all, since we are asking that an Hamiltonian 
vector field of $X_H\in\cH_{r,\eta}(\th_\tw)$, 
when restricted to the smaller domain $B_{\rs}(\th_{\mathtt w'})$  belongs to
the smaller space $\th_{\tw'}$.\smallskip\\
Let us start by rewriting the norm $|\cdot|_{r,\eta,\tw}$
 in a more {\it adimensional} way.
\begin{defn}\label{tufo} 
For any $H\in \cH_{r,\eta}(\th_{\mathtt w})$ 
 we define  a map
	\[
	B_1(\ell^2)\to \ell^2 \,,\quad y=\pa{y_j}_{j\in \Z }\mapsto 
	 \pa{Y^{(j)}_{H}(y;r,\eta,\tw)}_{j\in \Z}
	\]
	by setting
	\begin{equation}
	Y^{(j)}_{H}(y;r,\eta,\tw) := \sum_\ast |H_{\bal,\bbt}| \frac{(\bal_j+\bbt_j)}{2}c^{(j)}_{r,\eta,\tw}(\bal,\bbt) y^{\bal+\bbt-e_j}
	\end{equation}
	where  $e_j$ is the $j$-th basis vector in $\N^\Z$, while the coefficient
\begin{equation}
c^{(j)}_{r,\eta,\tw}(\bal,\bbt)
	:=
	r^{|\bal|+|\bbt|-2}e^{\eta|\pi(\bal-\bbt)|}
	\frac{\tw_j^2}{\tw^{\bal+\bbt}}
	\label{persico}
	\end{equation}
	is defined for any $\bal,\bbt\in\N^\Z$. For brevity, we set
	$$
	\sum_\ast:=\sum_{\bal,\bbt: |\bal|=|\bbt|}\,.
	$$ 
The momentum
	$\pi(\cdot)$ was defined in \eqref{momento}.
\end{defn}

\begin{lemma}\label{stantuffo}	 Let
 $\ri,\rs>0,\,\etai,\etaf\geq 0,$ $\twi,\twf\in \R_+^\Z$. The following properties hold.
	\begin{enumerate}[(i)]
		\item   The norm of $H$ can be expressed as
		\begin{equation}\label{ypsilon}
		\abs{H}_{r,\eta,\tw}= 
		\sup_{|y|_{\ell^2}\le 1}\abs{Y_H(y;r,\eta,\tw)}_{\ell^2}
		\end{equation}
	\item  Given 
	$
	H^{(1)}\in \cH_{\rs,\etaf,\twf}$ 
	and $H^{(2)}\in \cH_{\ri,\etai,\twi}\,,
$
\\	
	such that for all $\bal,\bbt\in \N^\Z$ and  $j\in \Z$ with $\bal_j+\bbt_j\neq 0$ 
	one has
	\[
	|H^{(1)}_{\bal,\bbt}| c^{(j)}_{\rs,\etaf,\twf}(\bal,\bbt)  
	\le 
	c
	|H^{(2)}_{\bal,\bbt}| c^{(j)}_{\ri,\etai,\twi}(\bal,\bbt),
	\]
	for some $c>0,$
	then
	\[
	|H^{(1)}|_{\rs,\etaf,\twf}
	\le 
	c
	|H^{(2)}|_{\ri,\etai,\twi}\,.
	\]
\end{enumerate}
\end{lemma}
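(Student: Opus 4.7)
The plan is to view both parts through a single lens: formula \eqref{ypsilon} of (i) rewrites $\abs{H}_{r,\eta,\tw}$ as the operator norm of a power series with nonnegative coefficients acting on the nonnegative part of the unit $\ell^2$ ball, and once this is in place (ii) reduces immediately to a coefficient-wise comparison.

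For (i), I would first write the $j$-th component of the Hamiltonian vector field of $\und H_\eta$ in the form
\begin{equation*}
X_{\und H_\eta}^{(j)}(u) \;=\; \im \sum_\ast |H_{\bal,\bbt}|\,e^{\eta|\pi(\bal-\bbt)|}\,\bbt_j\, u^\bal \bar u^{\bbt-e_j}.
\end{equation*}
The reality condition \eqref{real} gives $|H_{\bal,\bbt}|=|H_{\bbt,\bal}|$, and both the exponential weight and the momentum modulus are symmetric under $\bal\leftrightarrow\bbt$; relabeling the sum therefore shows that, when evaluated on sequences $u$ with real nonnegative entries, the coefficient $\bbt_j$ may be replaced by the symmetric average $(\bal_j+\bbt_j)/2$. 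Since all coefficients are now nonnegative, the supremum of $\abs{X_{\und H_\eta}}_\tw$ on $\abs{u}_\tw\le r$ is attained on such nonnegative $u$. On that set the map $y_j := |u_j|\tw_j/r$ is a bijection onto $\set{y\ge 0: \abs{y}_{\ell^2}\le 1}$, and substituting $u_j = r y_j/\tw_j$ into the monomial factor yields
\begin{equation*}
\tw_j\, |X_{\und H_\eta}^{(j)}(u)|/r \;=\; Y_H^{(j)}(y;r,\eta,\tw).
\end{equation*}
Squaring in $j$, summing, and taking the supremum gives exactly \eqref{ypsilon}.

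Part (ii) is then formal: the stated hypothesis is nothing but a coefficient-wise inequality between the power series $Y_{H^{(1)}}^{(j)}(\cdot;\rs,\etaf,\twf)$ and $c\, Y_{H^{(2)}}^{(j)}(\cdot;\ri,\etai,\twi)$, the restriction $\bal_j+\bbt_j\neq 0$ simply being the support of their nonvanishing monomials. Because all coefficients are nonnegative, the inequality extends pointwise to nonnegative $y$, is preserved by the $\ell^2$-norm in $j$ and by the supremum over $\abs{y}_{\ell^2}\le 1$, and applying (i) to each side converts it into the asserted inequality on norms. The only step that is not entirely mechanical is the symmetrization producing the factor $(\bal_j+\bbt_j)/2$; once this is recognized as a direct consequence of the reality condition together with the symmetry of $|\pi(\bal-\bbt)|$, the rest is manipulation of positive-coefficient series.
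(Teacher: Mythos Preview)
Your proposal is correct and follows essentially the same approach as the paper: restrict to nonnegative $u$, symmetrize $\bbt_j\mapsto(\bal_j+\bbt_j)/2$ via the reality condition and the $\bal\leftrightarrow\bbt$ symmetry of $|\pi(\bal-\bbt)|$, then rescale $u_j=r y_j/\tw_j$ to obtain \eqref{ypsilon}; part (ii) is then the coefficient-wise monotonicity of positive power series under the $\ell^2$ norm. The paper's version is organized slightly differently (it introduces an intermediate expression $W^{(j)}_\eta(u)$ and the explicit isomorphism $L_{r,\tw}$), but the content is identical.
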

\begin{proof}
See appendix \ref{tec3}.
\end{proof}

As a corollary we get the following

\begin{prop}\label{cacioricotta}
 Let
 $\ri,\rs>0,\,\etai,\etaf\geq 0,$ $\twi,\twf\in \R_+^\Z.$
If
\begin{equation}\label{zucchina}
C:=\sup_{\substack{j\in\Z,\, \bal,\bbt\in\N^\Z\\\bal_j+\bbt_j\neq 0}}
\frac{c^{(j)}_{\rs,\etaf,\twf}(\bal,\bbt) }{c^{(j)}_{\ri,\etai,\twi}(\bal,\bbt) }
< \infty\,,
\end{equation}
then
\begin{equation}\label{cetriolo}
|H|_{\rs,\etaf,\twf}
	\le 
	C
	|H|_{\ri,\etai,\twi}\,.
\end{equation}
 In particular
 $\abs{\cdot}_{r,\eta,\tw}$ is  increasing in $r$ and $\eta$, namely
 if
 $\rs\leq \ri$ and $\etaf\leq\etai$ then
 $$
 |H|_{\rs,\etaf,\tw}
	\le 
	C
	|H|_{\ri,\etai,\tw}.
 $$
 Moreover, if $\rs\leq \ri,$  $\twi\leq \twf$ and 
$H\in\cK_{r,\eta}(\th_\tw)$ then
\begin{equation}\label{cetriolobis}
|H|_{\rs,\etaf,\twf}
	\le 
	|H|_{\ri,\etai,\twi}\,.
\end{equation}
Furthermore, if $H$ preserves momentum
then
\begin{equation}\label{cetriolo0}
|H|_{\rs,\etaf,\twf}
	\le 
	\CM 
	|H|_{\ri,\etai,\twi}\,,
\end{equation}
where
\begin{equation}\label{zucchina0}
\CM :=
\sup_{\substack{j\in\Z,\, \bal,\bbt\in\N^\Z,\\ \bal_j+\bbt_j\neq 0,
\\ \sum_{i}i(\bal_i-\bbt_i)=0}}
\frac{c^{(j)}_{\rs,\etaf,\twf}(\bal,\bbt) }{c^{(j)}_{\ri,\etai,\twi}(\bal,\bbt) }
< \infty\,,
\end{equation}
\end{prop}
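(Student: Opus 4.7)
The plan is to apply Lemma \ref{stantuffo}(ii) in all four cases with the choice $H^{(1)} = H^{(2)} = H$. For the main statement \eqref{cetriolo}, the hypothesis of the lemma holds for every $(j,\bal,\bbt)$ with $\bal_j+\bbt_j\neq 0$ simply by the definition of the constant $C$ in \eqref{zucchina}; applying the lemma with this $c=C$ gives the inequality immediately.

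For the monotonicity in $r$ and $\eta$ (with $\twi=\twf=\tw$), I would write out the ratio
\[
\frac{c^{(j)}_{\rs,\etaf,\tw}(\bal,\bbt)}{c^{(j)}_{\ri,\etai,\tw}(\bal,\bbt)} = \left(\frac{\rs}{\ri}\right)^{|\bal|+|\bbt|-2} e^{(\etaf-\etai)|\pi(\bal-\bbt)|}.
\]
Mass conservation forces $|\bal|=|\bbt|$, and the constraint $\bal_j+\bbt_j\neq 0$ then gives $|\bal|+|\bbt|\geq 2$; hence the exponent of $\rs/\ri$ is nonnegative, and since $\rs\leq\ri$ and $\etaf\leq\etai$ both factors are $\leq 1$. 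Thus $C\leq 1$ and the claimed monotonicity follows.

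For \eqref{cetriolobis}, the point is that $H\in\cK$ means $H_{\bal,\bbt}=0$ unless $\bal=\bbt$, so the hypothesis of Lemma \ref{stantuffo}(ii) need only be checked on the diagonal. On diagonal indices $\pi(\bal-\bbt)=0$ and the ratio reduces to
\[
\left(\frac{\rs}{\ri}\right)^{2|\bal|-2}\,\frac{\twf_j^2}{\twi_j^2}\,\prod_{k\in\Z}\left(\frac{\twi_k}{\twf_k}\right)^{2\bal_k}.
\]
Since $\bal_j+\bbt_j=2\bal_j\neq 0$ forces $\bal_j\geq 1$, the product $\prod_k(\twf_k/\twi_k)^{2\bal_k}$ contains a factor $(\twf_j/\twi_j)^{2\bal_j}\geq(\twf_j/\twi_j)^2$ (using $\twi\leq\twf$), with every other factor $\geq 1$; this compensates the $\twf_j^2/\twi_j^2$ on the outside. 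Combined with $\rs\leq\ri$, the ratio is $\leq 1$, and Lemma \ref{stantuffo}(ii) yields \eqref{cetriolobis} with constant $1$.

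Finally, for \eqref{cetriolo0}, momentum conservation means $H_{\bal,\bbt}=0$ whenever $\sum_i i(\bal_i-\bbt_i)\neq 0$, so again the hypothesis of Lemma \ref{stantuffo}(ii) is only needed on the restricted index set appearing in \eqref{zucchina0}; the lemma then gives the bound with constant $\CM$. I do not expect any serious obstacle: the whole proof is essentially a monomial-by-monomial comparison, already encapsulated in part (ii) of Lemma \ref{stantuffo}. The only mild care required is to keep track of the constraint $|\bal|=|\bbt|$ (from mass conservation), which is what makes $|\bal|+|\bbt|-2\geq 0$ whenever $\bal_j+\bbt_j\neq 0$ and hence ensures monotonicity in $r$.
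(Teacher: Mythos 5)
Your proposal is correct and follows exactly the paper's route: both reduce every claim to a monomial-by-monomial comparison via Lemma \ref{stantuffo}(ii), with \eqref{cetriolobis} resting on the observation that in $\cK$ the condition $\bal_j+\bbt_j\neq 0$ forces $\bal_j+\bbt_j\geq 2$. The paper states this in two sentences; you have simply written out the ratio computations that they leave implicit.
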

\begin{proof}
Inequality \eqref{cetriolo} directly follows from 
Lemma \ref{stantuffo} (ii), while 
\eqref{cetriolobis} follows directly by \eqref{persico}
since in the kernel $\bal_j+\bbt_j\neq 0$
implies $\bal_j+\bbt_j\geq 2.$ The momentum preserving case follows analogously.
\end{proof}

\begin{defn}[minimal scaling degree]\label{scialla}
	We say that $H$ has { \sl minimal }  
	 scaling degree $\td=\td(H)$ (at zero) if 
	\begin{eqnarray*}
&&H_{\bal,\bbt}=0 \,,\quad \forall \; \bal,\bbt: \quad |\bal|=|\bbt|\leq \td 
	\,,
	\\
	&&H_{\bal,\bbt}\neq 0 \,,\quad \text{for some}\ \  \bal,\bbt: \quad |\bal|=|\bbt|= \td+1\,.
\end{eqnarray*}
We say that $\td(0)=+\infty.$
\end{defn}

\begin{lemma}\label{gasteropode}
	If $H\in \cH_{r,\eta}(\th_\tw)$ with $\td(H)\geq\td$, then for all $\rs\le r$ one has
\begin{equation*}
	\abs{H}^{\wc}_{\rs,\eta,\tw} \le \pa{\frac{\rs}{r}}^{2\td} \abs{H}^{\wc}_{r,\eta,\tw}\,.
\end{equation*}
\end{lemma}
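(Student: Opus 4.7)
The plan is to reduce the statement to Lemma \ref{stantuffo}(ii) (or equivalently to the adimensional reformulation in Proposition \ref{cacioricotta}), since those results are tailor-made to compare the norm $|\cdot|_{r,\eta,\tw}$ across different choices of the parameters $(r,\eta,\tw)$ by comparing the coefficients $c^{(j)}_{r,\eta,\tw}(\bal,\bbt)$ monomial by monomial.

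First, I would compute the ratio of coefficients coming from the same $(\bal,\bbt,j)$ but with radii $\rs$ and $r$. Since the exponential factor $e^{\eta|\pi(\bal-\bbt)|}$ and the weight factor $\tw_j^2/\tw^{\bal+\bbt}$ in \eqref{persico} are identical, one obtains the very clean identity
\[
\frac{c^{(j)}_{\rs,\eta,\tw}(\bal,\bbt)}{c^{(j)}_{r,\eta,\tw}(\bal,\bbt)} = \pa{\frac{\rs}{r}}^{|\bal|+|\bbt|-2}.
\]
Here the only structural assumption used is $\rs\le r$, which makes $\rs/r\le 1$, so the ratio is monotone decreasing in $|\bal|+|\bbt|$.

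Next, I would invoke the minimal scaling degree hypothesis $\td(H)\ge \td$ from Definition \ref{scialla}: whenever $H_{\bal,\bbt}\ne 0$ one has $|\bal|=|\bbt|\ge \td+1$, hence $|\bal|+|\bbt|-2\ge 2\td$. Combined with $\rs/r\le 1$, this forces
\[
\pa{\frac{\rs}{r}}^{|\bal|+|\bbt|-2}\le \pa{\frac{\rs}{r}}^{2\td}
\]
for every nonzero coefficient of $H$. Multiplying through by $|H_{\bal,\bbt}|\,c^{(j)}_{r,\eta,\tw}(\bal,\bbt)$ gives, for all $\bal,\bbt,j$ with $\bal_j+\bbt_j\neq 0$,
\[
|H_{\bal,\bbt}|\,c^{(j)}_{\rs,\eta,\tw}(\bal,\bbt)\le \pa{\frac{\rs}{r}}^{2\td}|H_{\bal,\bbt}|\,c^{(j)}_{r,\eta,\tw}(\bal,\bbt),
\]
where the inequality is trivially true (and informative only) when $H_{\bal,\bbt}\neq 0$, and is a $0\le 0$ identity otherwise.

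Finally, applying Lemma \ref{stantuffo}(ii) with $H^{(1)}=H^{(2)}=H$, $(\rs,\etaf,\twf)=(\rs,\eta,\tw)$, $(\ri,\etai,\twi)=(r,\eta,\tw)$ and constant $c=(\rs/r)^{2\td}$ yields at once
\[
|H|_{\rs,\eta,\tw}\le \pa{\frac{\rs}{r}}^{2\td}|H|_{r,\eta,\tw},
\]
as required. There is no real obstacle: the whole argument is a bookkeeping exercise once the two abstract tools (the coefficient-wise comparison lemma and the scaling-degree definition) are in place; the only point to be careful about is restricting the comparison to $(\bal,\bbt)$ with $H_{\bal,\bbt}\ne 0$, which is precisely what the hypothesis $\td(H)\ge \td$ is designed to exploit.
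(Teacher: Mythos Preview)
Your proof is correct and follows the same line as the paper: compute the ratio $c^{(j)}_{\rs,\eta,\tw}/c^{(j)}_{r,\eta,\tw}=(\rs/r)^{|\bal|+|\bbt|-2}$, use $\td(H)\ge\td$ to bound the exponent by $2\td$ on the support of $H$, and conclude via the coefficient-wise comparison lemma. The paper cites Proposition~\ref{cacioricotta} at the end, whereas you invoke Lemma~\ref{stantuffo}(ii); your choice is in fact the more precise one, since the bound $|\bal|+|\bbt|-2\ge 2\td$ holds only for monomials with $H_{\bal,\bbt}\ne 0$, which is exactly the setting of Lemma~\ref{stantuffo}(ii).
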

\begin{proof}
Recalling \eqref{persico}, we have
	\[
	\frac{c^{(j)}_{\rs,\eta,\tw}(\bal,\bbt)}{c^{(j)}_{r,\eta,\tw}(\bal,\bbt)}= \pa{\frac{\rs}{r}}^{|\bal|+|\bbt|-2}\,.
	\]
Since $|\bal|+|\bbt|-2\ge 2\td$, the inequality
follows by Proposition \ref{cacioricotta}.
\end{proof}

\subsection{Monotonicity of $\abs{\cdot}_{r,p,s,a,\eta}$ and $\|{\cdot}\|_{r,p}$}

In this section we prove properties of monotonicity for the norms used in the Sobolev, Modified Sobolev and Gevrey cases introduced in Section \ref{intro}. To prove such properties we strongly rely on some notation and results introduced by Bourgain in \cite{Bourgain:2005} and extended later on by Cong-Li-Shi-Yuan in \cite{Yuan_et_al:2017} (Definition \ref{n star} and Lemma \ref{constance general} below).
\\
\begin{defn}\label{n star}
	Given a vector $v=\pa{v_i}_{i\in \Z}$  $v_i\in \N$, $|v|<\infty$  we denote by $\na=\na(v)$ the vector $\pa{\na_l}_{l\in I}$ (where $I\subset \N$ is finite)  which is the decreasing rearrangement 
	of
	$$
	\{\N\ni h> 1\;\; \mbox{ repeated}\; v_h + v_{-h}\; \mbox{times} \} \cup \set{ 1\;\; \mbox{ repeated}\; v_1 + v_{-1} + v_0\; \mbox{times}  }
	$$
\end{defn}
\begin{rmk}
	A good way of envisioning this list is as follows. Given $v=\pa{v_i}_{i\in \Z}$ consider the monomial $\fm(v):= \prod_i x_i^{v_i}$. We can write uniquely 
	\[
\fm(v)= \prod_i x_i^{v_i} = x_{j_1} x_{j_2}\cdots x_{j_{|v|}}
	\] 
	then $\na(v)$ is the decreasing rearrangement of the list $\pa{\jap{j_1},\dots,\jap{j_{|v|}}}$.

		As an example, consider the case $v\neq 0$. Then, by construction there exists a unique  $J\ge 0$ such that $v_j=0$ for all $|j|>J$ and $v_{J}+ v_{-J}\ne 0$  hence 
		\[
		v=(\dots,0, v_{-J},\dots,v_0,\dots,v_J,0\dots)\,.
		\]
		If $J=0$ then 
		\[
		\na= (\underbrace{1,\dots,1}_{v_0 \; \rm{times}})
		\]
		otherwise we have
		\[
		\na= (\underbrace{J,\dots,J}_{v_J+ v_{-J} \; \rm{times}}, \underbrace{J-1,\dots,J-1}_{v_{J-1}+ v_{-J+1} \; \rm{times}},\dots,\underbrace{1,\dots,1}_{v_1+ v_{-1}+ v_0 \; \rm{times}})
		\]
\end{rmk}
Given $\bal,\bbt\in\N^\Z$ 
with $1\leq |\bal|=|\bbt|<\infty,$
from now on we define
$$
\na=\na(\bal+\bbt)\,.
$$ 
We set the {\sl even} number
$$
N:=|\bal|+|\bbt|\,,
$$
which is the cardinality of $\na.$

We  observe that, given
\[ 
\pi = \sum_{i\in \Z} i\pa{\bal_i - \bbt_i}= \sum_{h> 0} h \pa{\bal_h - \bbt_h - \bal_{-h} + \bbt_{-h}} \,,
\] 
there exists a choice of $\s_i = \pm1, 0$ such that 
\begin{equation}\label{pi e cappucci}
\pi = \sum_l \sigma_l\na_l.
\end{equation}
with $\sigma_l \neq 0$  if $\na_l \neq 1$.
Hence, 
\begin{equation}\label{eleganza}
\na_1\le \abs{\pi} + \sum_{l\ge 2}\na_l.
\end{equation}
Indeed, if $\sigma_1 = \pm 1$, the inequality follows directly from \eqref{pi e cappucci}; if $\sigma_1 = 0$, then $\na_1=1$ and consequently $\na_l = 1\, \forall l$. Since the mass is conserved, the list $\na$ has at least two elements, and the inequality is achieved.
\begin{lemma}[Constance generalizzato]\label{constance general}
	Given $\bal,\bbt$ such that $\sum_i i (\bal_i-\bbt_i)=\pi\in\Z$, we have that setting $\na=\na(\bal+\bbt)$
	\begin{equation}\label{yuan 2}
	\sum_i \jap{i}^\theta(\bal_i+\bbt_i) =\sum_{l\ge 1} \na_l^\theta  \ge 2 \na^\theta_1+ (2-2^\teta) {\sum_{l\ge 3} \na_l^\theta} -\theta{\abs{\pi}}.
	\end{equation}
\end{lemma}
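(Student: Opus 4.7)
The equality $\sum_i \jap{i}^\teta(\bal_i+\bbt_i) = \sum_{l\ge 1}\na_l^\teta$ is immediate from Definition \ref{n star}: rearranging the left-hand sum according to decreasing values of $\jap{i}$ reproduces $\na$ (recall that $\jap{0}=\jap{\pm 1}=1$ are all collapsed into the ``$1$''s at the tail of $\na$), and the $\teta$-th power is taken entrywise. Writing $B:=\sum_{l\ge 3}\na_l$ and $B_\teta:=\sum_{l\ge 3}\na_l^\teta$, a straightforward rearrangement shows that \eqref{yuan 2} is equivalent to
\[
\na_1^\teta \le \na_2^\teta + (2^\teta-1)\,B_\teta + \teta\,|\pi|.
\]
By \eqref{eleganza} and monotonicity of $x\mapsto x^\teta$, $\na_1^\teta \le (\na_2+B+|\pi|)^\teta$, so the task reduces to bounding this quantity by the target right-hand side.

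The core of the proof is the following elementary lemma: for every $a\ge b\ge 0$ and $\teta\in(0,1)$,
\[
(a+b)^\teta \le a^\teta + (2^\teta-1)\,b^\teta.
\]
To establish it (assuming $b>0$), divide by $a^\teta$ and set $t:=b/a\in(0,1]$; the inequality becomes $g(t)\le 2^\teta-1=g(1)$, where $g(t):=\bigl((1+t)^\teta-1\bigr)/t^\teta$. A direct differentiation yields
\[
g'(t) = \teta\, t^{-\teta-1}\bigl(1-(1+t)^{\teta-1}\bigr) > 0 \qquad \text{on }(0,\infty),
\]
since $\teta<1$ forces $(1+t)^{\teta-1}<1$. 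Hence $g$ is increasing on $(0,\infty)$, and $g(t)\le g(1)$ for all $t\in(0,1]$.

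Next I would iterate the lemma along the decreasing list $\na_2\ge\na_3\ge\na_4\ge\cdots$: at the $k$-th step, take $a$ equal to the running partial sum $\na_2+\na_3+\cdots+\na_{k-1}$, which dominates $\na_2\ge\na_k=b$, and add $\na_k$; this yields inductively
\[
(\na_2+B)^\teta \le \na_2^\teta + (2^\teta-1)\,B_\teta.
\]
Finally, using concavity of $x\mapsto x^\teta$ at $x=\na_2+B\ge\na_2\ge 1$, the tangent-line inequality gives
\[
(\na_2+B+|\pi|)^\teta \le (\na_2+B)^\teta + \teta\,(\na_2+B)^{\teta-1}\,|\pi| \le (\na_2+B)^\teta + \teta\,|\pi|,
\]
and combining the last two displays produces the desired bound.

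The main obstacle is obtaining the sharp constant $(2^\teta-1)$ in the elementary lemma. A naive appeal to subadditivity $(a+b)^\teta\le a^\teta+b^\teta$ would replace $(2^\teta-1)$ by $1$, which is strictly larger on $(0,1)$ and would prevent the subsequent inequality from closing once the $\teta|\pi|$ term is added. The monotonicity of $g$, and with it the tight factor $2^\teta-1$, relies essentially on the assumption $\teta<1$; this is consistent with the fact that \eqref{yuan 2} degenerates to the much softer \eqref{eleganza} at $\teta=1$.
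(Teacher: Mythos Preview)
Your proof is correct. The overall shape matches the paper's: both reduce to bounding $\na_1^\teta$ via \eqref{eleganza} and handle the $|\pi|$ contribution by the concavity/tangent-line argument (the paper phrases this as $f'(x)\ge 0$, which is the same computation). The genuine difference is in the $\pi=0$ core: the paper does not prove it but cites Bourgain and Cong--Li--Shi--Yuan, whereas you supply a self-contained argument via the sharp inequality $(a+b)^\teta\le a^\teta+(2^\teta-1)b^\teta$ for $a\ge b\ge 0$ and its iteration along the decreasing list $\na_2\ge\na_3\ge\cdots$. Your route is therefore more elementary and complete; the paper's is shorter only because it outsources the hard step.
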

\begin{proof}
	In appendix \ref{costi1}.
\end{proof}

The following proposition gathers the monotonicity properties of the norm
$|\cdot|_{r,p,s,a,\eta}$ with respect to the parameters 
$p,s,a.$. 

\begin{prop}\label{crescenza}
	The following inequalities hold:
	\begin{enumerate}
			\item { \sl Variations w.r.t. the paramater $p$.} For any $0<\rho<r$ , $0<\sigma< \eta$ and $p_1>0$ we have
		\begin{equation*}
		\abs{H}_{r-\rho,p+p_1,s,a,\eta-\sigma}\le 
		\Cmon(r/\rho, \s,p_1) \abs{H}_{r,p,s,a,\eta}\,.
		\end{equation*}

		\item  { \sl Variation w.r.t. the parameter $s$.} For any $0<\sigma< \eta$ we have
		\begin{equation}\label{emiliaparanoica}
		|H|_{r,p, s+\s,a,\eta-\s} \le |H|_{r,p,s,a,\eta}
		\end{equation}
		\item { \sl Variation w.r.t. the parameter $a$.} 
		 For any $0<\sigma< \eta$ 
		\begin{equation}
		|H|_{e^{-\s}r,p,s,a+\s, \eta -\s} \le e^{2\s}|H|_{r,p,s,a,\eta}
		\end{equation}
	\end{enumerate}
\end{prop}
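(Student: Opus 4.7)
The plan is to apply Proposition \ref{cacioricotta}, reducing each of the three estimates to a pointwise bound on the ratio
$$R(\bal,\bbt,j):=\frac{c^{(j)}_{\rs,\etaf,\twf}(\bal,\bbt)}{c^{(j)}_{\ri,\etai,\twi}(\bal,\bbt)}$$
over pairs $\bal,\bbt$ with $|\bal|=|\bbt|$ and $j$ with $\bal_j+\bbt_j\ge 1$. Writing $N=|\bal|+|\bbt|$, using the decreasing rearrangement $\na=\na(\bal+\bbt)$ of Definition \ref{n star}, and observing the basic identity $\prod_l\na_l=\prod_i\jap{i}^{\bal_i+\bbt_i}$, I would systematically split into the case $\bal_j+\bbt_j\ge 2$ (where $\jap{j}$ appears at least twice in the list $\na$ and absorbs any factor of $(\tw'_j/\tw_j)^2$) versus $\bal_j+\bbt_j=1$ (where $\jap{j}$ appears once, as $\na_{l_0}$ for a unique $l_0$, and one further distinguishes $l_0=1$ from $l_0\ge 2$).

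For (1) the ratio reads $R=(1-\rho/r)^{N-2}e^{-\s|\pi|}\,\jap{j}^{2p_1}/\prod_l\na_l^{p_1}$. Both the case $\bal_j+\bbt_j\ge 2$ and the subcase $\bal_j+\bbt_j=1,\,l_0\ge 2$ give $R\le 1$ by straightforward cancellation against the $\jap{j}$- or $\na_1$-factors already present in the denominator. The genuine case $l_0=1$ requires bounding $\na_1^{p_1}/\prod_{l\ge 2}\na_l^{p_1}$ using only the decays coming from $\s$ and $\rho$: I plan to use \eqref{eleganza}, $\na_1\le|\pi|+\sum_{l\ge 2}\na_l$, together with $\sum_{l\ge 2}\na_l\le(N-1)\prod_{l\ge 2}\na_l$ (valid since $\na_l\ge 1$), splitting the numerator into an $|\pi|^{p_1}$ piece absorbed by $\sup_{x\ge 0} x^{p_1}e^{-\s x}=(p_1/e\s)^{p_1}$ and an $(N-1)^{p_1}$ piece absorbed by $\sup_{N\ge 2}(N-1)^{p_1}(1-\rho/r)^{N-2}$. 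These two suprema together define $\Cmon(r/\rho,\s,p_1)$.

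For (2) the radius factors cancel and only exponential weights remain; I would apply Lemma \ref{constance general} in the form $\sum_l\na_l^\theta\ge 2\na_1^\theta-\theta|\pi|$ (keeping only the first and third terms, as the middle one is non-negative whenever $\theta<1$), combined with $\jap{j}\le\na_1$, to reduce the exponent of $R$ to $-\s(1-\theta)|\pi|\le 0$. For (3) I would use $2|j|\le|j|(\bal_j+\bbt_j)\le\sum_i|i|(\bal_i+\bbt_i)$ when $\bal_j+\bbt_j\ge 2$, and when $\bal_j+\bbt_j=1$ extract one $|j|$ from $\pi=\pm j+\sum_{i\ne j}i(\bal_i-\bbt_i)$ to obtain $|j|\le|\pi|+\sum_{i\ne j}|i|(\bal_i+\bbt_i)$; in either case the exponent of $R$ collapses to $-\s(N-2)\le 0$, and the factor $e^{2\s}$ in the statement is ample slack. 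The main obstacle throughout is the case $l_0=1$ of (1): the monomial carries exactly one $u_j$ or $\bar u_j$ with $\jap{j}$ strictly maximal, so nothing in $\buu$ can absorb the extra weight $\jap{j}^{p_1}$ of the stronger norm. The resolution via \eqref{eleganza} splits the cost of $\na_1^{p_1}$ into a momentum piece killed by $\s$ and a combinatorial piece killed by $\rho$, which is precisely why $\Cmon$ must depend jointly on $\s$ and $r/\rho$ and cannot be controlled by either one alone.
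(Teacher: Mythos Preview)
Your proposal is correct and follows the same strategy as the paper: reduce to the coefficient ratio via Proposition~\ref{cacioricotta}, then control $\jap{j}^2/\prod_l\na_l$ using \eqref{eleganza} for item~(1), Lemma~\ref{constance general} for item~(2), and a direct estimate for item~(3). Two minor differences are worth noting. In item~(1) the paper does \emph{not} split into cases on $\bal_j+\bbt_j$: it simply uses $\jap{j}\le\na_1$ (valid whenever $\bal_j+\bbt_j\neq 0$) to get $\jap{j}^2/\prod_l\na_l\le\na_1/\prod_{l\ge 2}\na_l$, and then the chain $\na_1/\prod_{l\ge 2}\na_l\le(|\pi|+(N-1)\na_2)/\prod_{l\ge 2}\na_l\le(N+|\pi|)/\prod_{l\ge 3}\na_l\le N+|\pi|$, which absorbs your easy cases automatically; your case split is harmless but unnecessary. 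Conversely, in item~(3) your case split actually yields a \emph{sharper} result: your argument shows $\sum_i|i|(\bal_i+\bbt_i)-2|j|+|\pi|\ge 0$ always (not just $\ge -N$ as in the paper), so the constant $e^{2\s}$ could be replaced by $1$; the paper's uniform use of $|j|\le\na_1$ loses exactly this factor.
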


\begin{proof}
In all that follows we shall use systematically the fact that our Hamiltonians  preserve the mass and are zero at the origin. These facts imply that $\abs{\bal} = \abs{\bbt} \ge 1$.
	\\
	\gr{Item $(1)$} 
	First we  assume that $\rho\leq r/2.$
	By Lemma \ref{stantuffo} item (ii) we only need to show that, for any $0<\rho\leq r/2$ , $0<\sigma< \eta$ and $p_1>0$ there exists a constant 
	$\Cmon$ such that
	\begin{equation}\label{mulo}
	c^{(j)}_{r-\rho,p+p_1,s,a,\eta-\s }(\bal,\bbt) \le \Cmon 
	c^{(j)}_{r,p,s,a,\eta }(\bal,\bbt)
	\end{equation}
 for all $j$, $\bal,\bbt$ with $\abs{\bal} = \abs{\bbt} \ge 1$ and $\bal_j+\bbt_j\neq 0$.
	In order to prove our claim we need to control
	\begin{equation}\label{borgogna}
	\sup_{\substack {j,\bal,\bbt\\\bal_j+\bbt_j\neq 0}} \pa{\frac{\jap{j}^2}{\prod_i\jap{i}^{\bal_i+\bbt_i}}}^{p_1} e^{-\s |\pi|} \pa{\frac{r-\rho}{r}}^{|\bal|+|\bbt|-2}\,.
\end{equation}
We use the notations of Definition \ref{n star}, with $\na(\bal+\bbt)\equiv \na$.	Since $\bal_j+\bbt_j\neq 0$ we have that $\jap{j}\le \na_1$.  
Note that 
\begin{equation}\label{fiorentina}
\prod_i\jap{i}^{\bal_i+\bbt_i}= \prod_{l\ge 1}\na_l\,.
\end{equation}
 Hence
	\[
	{\frac{\jap{j}^2}{\prod_i\jap{i}^{\bal_i+\bbt_i}}}\le \frac{\na_1}{\prod_{l\ge 2}\na_l}
	\]
	Let us call $N=|\bal|+|\bbt|\geq 2$. 
		By \eqref{eleganza} we have that
\begin{equation}
	\label{stimap}
	\sup_{\substack {j,\bal,\bbt\\\bal_j+\bbt_j\neq 0}} {\frac{\jap{j}^2}{\prod_i\jap{i}^{\bal_i+\bbt_i}}}\le	\frac{\na_1}{\prod_{l\ge 2}\na_l} \le \frac{\sum_{l=2}^{N}\na_l +|\pi|}{\prod_{l= 2}^{N}\na_l}\le \frac{(N-1)\na_2 +|\pi|}{\prod_{l= 2}^{N}\na_l}
	\le \frac{N+|\pi|}{\prod_{l=3}^{N} \na_l}\,.
\end{equation}
We have shown that
	$$
	\sup_{\substack {j,\bal,\bbt\\\bal_j+\bbt_j\neq 0}} {\frac{\jap{j}^2}{\prod_i\jap{i}^{\bal_i+\bbt_i}}}
	\leq N+|\pi|
	\,.
	$$
		Since $\pa{N +|\pi|}^{p_1}\leq 2^{p_1}(N^{p_1}+|\pi|^{p_1})$, denoting $L:=\ln \pa{r/r-\rho}$ we repeatedly use Lemma \ref{chiappa} in order to 
	control
	\begin{align}\label{mizzica}
	&\sup_{N\geq 2,\pi\in \Z} \pa{N +|\pi|}^{p_1} e^{-\s|\pi|} \pa{\frac{r-\rho}{r}}^{N-2}
	\\
	&\leq 2^{p_1}
	\pa{
	\sup_{N\geq 2\, ,\pi\in \Z} N^{p_1} e^{-\s|\pi| - L(N-2) }
	+
	\sup_{N\geq 2\, ,\pi\in \Z} |\pi|^{p_1} e^{-\s|\pi| - L(N-2) }
	}\nonumber
	\\
	&\leq  2^{p_1}
	\pa{\max\left\{\pa{\frac{p_1}{L}}^{p_1} ,1  \right\}+\pa{\frac{p_1}{\s}}^{p_1}}
	\leq
	  2^{p_1+1}
	\max\left\{\pa{\frac{p_1}{L}}^{p_1} ,  \pa{\frac{p_1}{\s}}^{p_1}, 1\right\}\nonumber
	\\
	&\leq  
	  2^{p_1+1} p_1^{p_1}
	\max\left\{\pa{\frac{2 r}{\rho}}^{p_1} ,  \pa{\frac{1}{\s}}^{p_1}, 1\right\}=
	\Cmon\,,\nonumber
	\end{align}
	using that
	$$
	L\geq\ln(1+\rho/r)\geq 2\ln(3/2)\rho/r
	\geq \rho/2r\,,
	$$
	which holds since
	we are in the case $\rho\leq r/2.$
	This completes the proof in the case $\rho\leq r/2.$
	
	Consider now the case
	$r/2<\rho<r.$
	Using the monotonicity of the norm w.r.t. $r$ and the already proved case with
	$\rho=r/2$, we have
	\begin{align*}
		&\abs{H}_{r-\rho,p+p_1,s,a,\eta-\sigma}\le
		\abs{H}_{r/2,p+p_1,s,a,\eta-\sigma}\le  
		2^{p_1+1}
	\max\left\{\pa{4p_1}^{p_1} ,  \pa{\frac{p_1}{e\s}}^{p_1}, 1\right\} \abs{H}_{r,\eta,\tw}
	\\
	&\leq
	2^{p_1+1} p_1^{p_1}
	\max\left\{\pa{\frac{2 r}{\rho}}^{p_1} ,  \pa{\frac{1}{\s}}^{p_1}, 1\right\}
	\abs{H}_{r,\eta,\tw}
	\,,
		\end{align*}
	proving (1) also in the case 
	$r/2<\rho<r.$

	\gr{Item $(2)$} We need to show that
	\[
	c^{(j)}_{r,p,s+\s,a,\eta-\s}(\bal,\bbt) \le  c^{(j)}_{r,p,s,a,\eta }(\bal,\bbt)
	\]namely that
	\begin{equation}\label{gigina}
	\exp(-\s (\sum_i \jap{i}^\theta (\bal_i+\bbt_i) -2\jap{j}^\theta+ |\pi(\bal-\bbt)|) \le 1
	\end{equation}
	This follows by \ref{constance general} since
	\begin{equation}\label{stima1}
	\sum_i \jap{i}^\theta (\bal_i+\bbt_i) -2\jap{j}^\theta+ |\pi(\bal-\bbt)|\ge(1-\theta) \pa{\sum_{l\ge 3} \na_l^\theta +|\pi|} \ge  0
	\end{equation}
	for all $\bal,\bbt$ in $\sum_\ast$ such that $\bal_j+\bbt_j\ne 0$.

\gr{Item $(3)$} We proceed as in item $(1)-(2)$, 
	our claim follows if we  can show that
	\begin{equation}
	\sum_i \pa{\bal_i + \bbt_i}\abs{i} - 2\abs{j} + \abs{\pi} \ge  \sum_{l\ge 2} \na_l - \na_1 + \abs{\pi} - \abs{\bal_0 + \bbt_0} \ge - \pa{\abs{\bal} + \abs{\bbt}},
	\end{equation}
	This is proved in  formula \eqref{eleganza}.
\end{proof}
Incidentally we note that norm
$|\cdot|_{r,p,s,a,\eta}$ possesses 
the tameness property.
\begin{prop}\label{maspero}
		\[
		\sup_{|u|_{p_0,s,a}\le r }\frac{|X_{\und{H}}|_{p,s,a}}{|u|_{p,s,a}} \le \Ctame(\rho,\eta,p)|H|_{r+\rho,p_0,s,a,\eta}
		\]
\end{prop}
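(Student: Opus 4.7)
The plan is to estimate $|X_{\und H}(u)|_{p,s,a}$ componentwise. Writing $\tw_{j,p,s,a}=\tw_{j,p_0,s,a}\jap{j}^{p-p_0}$, the extra weight $\jap{j}^{p-p_0}$ must be redistributed inside each monomial appearing in $(X_{\und H})_j$. The key inequality, obtained from Lemma \ref{constance general} and \eqref{eleganza} by a case analysis on $\bal_j+\bbt_j$ and by raising to the power $p-p_0$ via $(a+b)^{p-p_0}\le 2^{p-p_0-1}(a^{p-p_0}+b^{p-p_0})$, is
\[
\jap{j}^{p-p_0}\le C_p\Big(|\pi(\bal-\bbt)|^{p-p_0}+\sum_{k\in\text{supp}(\bal+\bbt-e_j)}\jap{k}^{p-p_0}\Big),
\]
valid for every $\bal,\bbt$ with $\bal_j+\bbt_j\ge 1$. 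The only nontrivial situation is $\bal_j+\bbt_j=1$ with $\jap{j}=\na_1$: then \eqref{eleganza} gives $\jap{j}\le|\pi|+\sum_{l\ge 2}\na_l$, and each $\na_l$ for $l\ge 2$ equals $\jap{k}$ for some $k\ne j$ still in $\text{supp}(\bal+\bbt-e_j)$.

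Symmetrising the derivative ($|H_{\bal,\bbt}|=|H_{\bbt,\bal}|$ so that $\bbt_j$ is replaced by $(\bal_j+\bbt_j)/2$) and inserting the above inequality splits $\tw_{j,p,s,a}|(X_{\und H})_j(u)|$ into a \emph{momentum} piece carrying $|\pi|^{p-p_0}$ and a \emph{mode} piece carrying $\sum_k\jap{k}^{p-p_0}$. For the momentum piece, the bound $|\pi|^{p-p_0}\le C(\sigma,p)e^{\sigma|\pi|}$ with $\sigma\in(0,\eta)$ absorbs the factor into $e^{\eta|\pi|}$, so the contribution is dominated by $C(\sigma,p)|X_{\und H_\eta}(u)|_{p_0,s,a}$. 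Since $H(0)=0$ together with mass conservation forces $Y_H(0)=0$ (the lowest-degree term of $Y_H$ is linear in $y$), Schwarz's lemma applied to $Y_H:B_1(\ell^2)\to\ell^2$ yields $|Y_H(y)|_{\ell^2}\le|y|_{\ell^2}|H|_{r+\rho,p_0,s,a,\eta}$. Substituting $y_k=u_k\tw_{k,p_0,s,a}/(r+\rho)$ gives $|X_{\und H_\eta}(u)|_{p_0,s,a}\le|u|_{p_0,s,a}|H|_{r+\rho,p_0,s,a,\eta}\le|u|_{p,s,a}|H|_{r+\rho,p_0,s,a,\eta}$, which is the desired linear estimate.

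For the mode piece, factor out one copy of $|u_k|$ from the monomial $|u|^{\bal+\bbt-e_j}$ (legitimate because $k\in\text{supp}(\bal+\bbt-e_j)$), and apply Cauchy--Schwarz in $k$:
\[
\Big|\sum_k\jap{k}^{p-p_0}|u_k|M_{j,k}(u)\Big|\le|u|_{p,s,a}\cdot\Big(\sum_k\tw_{k,p_0,s,a}^{-2}M_{j,k}(u)^2\Big)^{1/2},
\]
where $M_{j,k}(u):=\sum_{\bal,\bbt:(\bal+\bbt-e_j)_k\ge 1}|H_{\bal,\bbt}|\tfrac{\bal_j+\bbt_j}{2}|u|^{\bal+\bbt-e_j-e_k}$ and the summability of $\sum_k\jap{k}^{-2p_0}e^{-2(a|k|+s\jap{k}^\theta)}$ (for $p_0>1/2$) is used to produce $|u|_{p,s,a}$. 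Bounding $\mathbf 1_{(\bal+\bbt-e_j)_k\ge 1}\le\bal_k+\bbt_k$, the residual $M_{j,k}$ is dominated (after reinserting the weight $e^{\eta|\pi|}$) by the majorant of the mixed second derivative $\partial_{u_k}\partial_{\bar u_j}\und H_\eta(u)$. A Cauchy estimate in the radius---shrinking from $r+\rho$ to $r+\rho/2$ and applying \eqref{commXHK}-type inequalities twice, once for each derivative---then controls $\sum_{j,k}\tw_{j,p_0,s,a}^2\tw_{k,p_0,s,a}^{-2}M_{j,k}(u)^2$ by $C_p\rho^{-2}|H|^2_{r+\rho,p_0,s,a,\eta}$. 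Combining the two pieces yields the claimed bound with $\Ctame(\rho,\eta,p)$ a constant of the form $C(\eta,p)(1+\rho^{-1})$.

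The main obstacle is the bookkeeping in the mode piece: identifying $M_{j,k}$ (with its combinatorial coefficient $(\bal_j+\bbt_j)/2$ and the indicator) with a mixed second derivative of $\und H_\eta$, and then controlling the weighted Hilbert--Schmidt-type norm of this Hessian via a Cauchy-radius shrinkage by $\rho$. The $\rho^{-1}$ loss from differentiation is precisely the source of the $\rho$-dependence of $\Ctame$; the $\eta$-dependence enters through $C(\sigma,p)$ in the momentum piece with the choice $\sigma=\eta/2$.
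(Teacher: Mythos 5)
Your decomposition of $\jap{j}^{p-p_0}$ into a \emph{momentum} piece and a \emph{mode} piece mirrors the paper's strategy, but the execution has a genuine gap at the very first step. The claimed key inequality
\[
\jap{j}^{p-p_0}\le C_p\Big(|\pi(\bal-\bbt)|^{p-p_0}+\sum_{k}\jap{k}^{p-p_0}\Big)
\]
with a constant depending only on $p$ is false for $p-p_0>1$: raising \eqref{eleganza}, i.e.\ $\na_1\le|\pi|+\sum_{l\ge2}\na_l$, to the power $p-p_0$ gives $(\sum_{l\ge2}\na_l)^{p-p_0}$, which dominates $\sum_{l\ge2}\na_l^{p-p_0}$ only up to an extra factor $N^{p-p_0-1}$, $N=|\bal|+|\bbt|$, and this factor cannot be absorbed into a $p$-dependent constant. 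Concretely take $\bal=e_M+(M-1)e_0$, $\bbt=Me_1$, $j=M$: here $\pi=0$, $\jap{j}^{p-p_0}=M^{p-p_0}$, while the right-hand side is $O(1)$ (support of size two, or at best $O(M)$ with multiplicities) — the inequality fails for large $M$. The paper sidesteps this precisely by first Taylor-expanding $H$ into homogeneous components $H^{(d)}$: within a fixed degree $d$, one can write $(\sum_i\jap{j_i})^{p-p_0}\le (d-1)^{p-p_0}\jap{j_1}^{p-p_0}$ where $j_1$ is the \emph{single} largest input index, so the extra $(d-1)^{p-p_0}$ is a number, not an operator, and it is absorbed by the Cauchy factor $(1-\rho/r)^{d-2}$ when summing over $d$. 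Without the degree decomposition you have no mechanism to kill this growth.

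There is a secondary problem with your \emph{mode} piece even granting the split. After the Cauchy--Schwarz in $k$ you need a weighted Hilbert--Schmidt bound $\sum_{j,k}\tw_j^2\tw_k^{-2}M_{j,k}(u)^2\lesssim\rho^{-2}|H|^2$. The majorant norm $|H|_{r,\eta,\tw}$ (through Cauchy estimates) only yields \emph{operator-norm} control on differentials of $X_{\und{H}_\eta}$, and operator norm does not bound Hilbert--Schmidt norm; the $\{F,G\}$-type estimate \eqref{commXHK} gives no such control either. The paper's argument again avoids this: because the redistributed weight $\jap{j_1}^{p-p_0}$ sits on a \emph{single} factor $u_{j_1}$, one simply feeds $\tw_{j_1,p_0,s,a}^{-1}\tw_{j_1,p,s,a}u_{j_1}$ as one argument of the symmetric multilinear operator $M^{(d-1)}$ and uses its operator-norm bound $|\und M|^{\rm op}_{p_0,s,a}\le r^{-d+2}|H|_{r,p_0,s,a,\eta}$ — no Hessian and no Hilbert--Schmidt are required. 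Your momentum piece, including the use of $|\pi|^{p-p_0}\le C(\sigma,p)e^{\sigma|\pi|}$ with $\sigma<\eta$, is correct in spirit and essentially matches the paper; it is the mode piece and the misuse of \eqref{eleganza} that break the argument.
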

\begin{proof}
 In  Appendix \ref{app:maspero}.
\end{proof}

\begin{prop}
The norm $\|\cdot\|_{r,p}$ is 
 monotone decreasing  in $p$, namely
 $\|\cdot\|_{r,p+p_1}\leq \|\cdot\|_{r,p}$ for any $p_1>0$.
 \end{prop}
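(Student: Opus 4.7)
The plan is to deduce this monotonicity from the momentum--preserving form of Proposition \ref{cacioricotta}, i.e.\ inequalities \eqref{cetriolo0}--\eqref{zucchina0}, applied with the choice $\ri=\rs=r$, $\etai=\etaf=0$ and weights $\twi_i=\jml{i}^{p}$, $\twf_i=\jml{i}^{p+p_1}$. Feeding these into formula \eqref{persico} immediately reduces the whole question to proving that, for every momentum preserving pair $(\bal,\bbt)$ (so $|\bal|=|\bbt|\ge 1$ by mass conservation, together with $\sum_i i(\al_i-\bt_i)=0$) and every $j\in\Z$ with $\al_j+\bt_j\ge 1$, one has
\[
\jml{j}^{2}\le \prod_{i\in\Z}\jml{i}^{\al_i+\bt_i}.
\]
Indeed this says $\CM\le 1$, and \eqref{cetriolo0} then yields $\|H\|_{r,p+p_1}\le \|H\|_{r,p}$.

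To establish the pointwise inequality I would split into cases according to $\al_j+\bt_j$. The case $\al_j+\bt_j\ge 2$ is immediate because $\prod_i\jml{i}^{\al_i+\bt_i}\ge \jml{j}^{\al_j+\bt_j}\ge \jml{j}^{2}$, with the remaining factors being $\ge 1$. The only substantive case is $\al_j+\bt_j=1$: assuming WLOG $\al_j=1$, $\bt_j=0$, momentum conservation rules out $|\bal|=1$ (this would force a single $\bt_k=1$ with $k\ne j$ and $j=k$, a contradiction), hence $|\bal|\ge 2$ and $N:=\sum_{i\ne j}(\al_i+\bt_i)=2|\bal|-1\ge 3$. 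In this case the goal becomes $\jml{j}\le \prod_{i\ne j}\jml{i}^{\al_i+\bt_i}$, which is trivial when $|j|\le 2$ (LHS $=2$, RHS $\ge 2^{N}\ge 8$), while for $|j|\ge 3$ I would use momentum conservation in the form $|j|\le \sum_{i\ne j}|i|(\al_i+\bt_i)\le \sum_{i\ne j}\jml{i}(\al_i+\bt_i)$ and combine it with the elementary inequality $\prod_{k=1}^{N} x_k\ge \sum_{k=1}^{N} x_k$ valid whenever $x_k\ge 2$ and $N\ge 2$ (proved by induction starting from $(x_1-1)(x_2-1)\ge 1$), applied to the multiset obtained by repeating each $\jml{i}$ with $i\ne j$ exactly $(\al_i+\bt_i)$ times.

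The main---really the only---delicate point is recognizing that the case $\al_j+\bt_j=1$ genuinely needs momentum conservation: without it, a monomial such as $u_j\bar u_0$ would give $\prod_i\jml{i}^{\al_i+\bt_i}=\jml{j}\cdot 2$, which is strictly smaller than $\jml{j}^{2}$ for $|j|\ge 3$, so this $\MS$--scale is not monotone in $p$ for arbitrary Hamiltonians but precisely for the momentum preserving ones singled out by $\|\cdot\|_{r,p}$ in Definition \ref{farfa}. Everything else is routine bookkeeping.
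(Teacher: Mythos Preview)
Your proof is correct and follows essentially the same route as the paper: reduce via the momentum-preserving version of Proposition~\ref{cacioricotta} to the pointwise inequality $\jml{j}^{2}\le \prod_i\jml{i}^{\al_i+\bt_i}$, then use momentum conservation to bound $|j|$ by $\sum_{i\ne j}\jml{i}(\al_i+\bt_i)$ and conclude with an elementary product--vs--sum estimate. The only cosmetic difference is that you split cases by the value of $\al_j+\bt_j$ (handling $\al_j+\bt_j\ge 2$ trivially and reserving momentum conservation for $\al_j+\bt_j=1$), whereas the paper splits by whether $|j|\le 1$ or $|j|\ge 2$; your split makes it more transparent exactly where momentum conservation is used.
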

\begin{proof}
For the norm $\|\cdot\|_{r,p}$ the quantity in \eqref{persico} becomes
\begin{equation}\label{persico2}
\mathtt c^{(j)}_{r,p}(\bal,\bbt):=
r^{|\bal|+|\bbt|-2}
\pa{\frac{\jml{j}^{2}}{\prod_{i\in\Z}{\jml{i}}^{\pa{\bal_i+\bbt_i}}}}^p
\,. 
\end{equation}
	By Lemma \ref{stantuffo} item (ii) we only need to show that
		\begin{equation}\label{vacca}
	c^{(j)}_{r,p+p_1}(\bal,\bbt) \le 
	c^{(j)}_{r,p}(\bal,\bbt)
	\end{equation}
 for all $j$, $\bal,\bbt$ with $\abs{\bal} = \abs{\bbt} \ge 1$ and $\bal_j+\bbt_j\geq 1$ (recall the momentum conservation), namely we have to prove that
\begin{equation}\label{pierodellafrancesca}
	\sup_{\substack {j,\bal,\bbt\\\bal_j+\bbt_j\geq 1 }} 
	\frac{\jml{j}^2}{\prod_i\jml{i}^{\bal_i+\bbt_i}} \leq 1\,.
\end{equation}
		We first show that the inequality holds in the case $j=0,\pm 1.$ 
		Indeed we have 
		$$
		\prod_i\jml{i}^{\bal_i+\bbt_i}
		\geq \prod_i 2^{\bal_i+\bbt_i}
		=2^{\sum_i \bal_i+\bbt_i}
		\geq 4
		$$
		since $\sum_i \bal_i+\bbt_i\geq 2$
		(by the fact that $\abs{\bal} = \abs{\bbt} \ge 1$).
		\\
		Consider now the case $|j|=\jml{j}\geq 2.$
	Since $\bal_j+\bbt_j\geq 1$, inequality \eqref{pierodellafrancesca}
	follows by
\begin{equation}\label{pierodellafrancesca2}
	\sup_{j,\bal,\bbt } 
	\frac{|j|}{\prod_{i\neq j}\jml{i}^{\bal_i+\bbt_i}} \leq 1\,.	
	\end{equation}
By momentum conservation we have
\begin{equation}\label{facioli}
|j|\leq \sum_{i\neq j} |i|(\bal_i+\bbt_i)
\leq \sum_{i\neq j} \jml{i}(\bal_i+\bbt_i)
\end{equation}
and \eqref{pierodellafrancesca2}
follows if we show that 
\begin{equation}\label{ceci}
\sup_{j,\bal,\bbt } 
	\frac{\sum_{i\neq j} \jml{i}(\bal_i+\bbt_i)}{\prod_{i\neq j}\jml{i}^{\bal_i+\bbt_i}} \leq 1\,,
\end{equation}
where we can restrict the sum and the product to the indexes $i$
such that $\bal_i+\bbt_i\geq 1.$
This last estimates follows by the fact that
given $x_k\geq 1$
$$
\frac{\sum_{2\leq k\leq n} k x_k}{\prod_{2\leq k\leq n}k^{x_k}}
\leq 1\,,
$$
as it can be easly proved by induction over $n$
(noting that $n^x\geq nx$ for $n\geq 2,$ and any $x\geq 1$).
\end{proof}



\section{Small divisors and homological equation}\label{omologo}

Let us consider the set of frequencies 
	\begin{equation}
	\Omega_\fp:=\set{\omega=\pa{\omega_j}_{j\in \Z}\in \R^\Z,\quad \sup_j|\omega_j-j^2|\jap{j}^\fp < 1/2 };
	\end{equation}
	this set is isomorphic to $[-1/2,1/2]^\Z$ via the identification 
	 \begin{equation}\label{omegaxi}
	 \xi\mapsto \omega(\xi)\,,\quad \mbox{where}\quad \omega_j(\xi) = j^2 +\frac{\xi_j}{\jap{j}^\fp}\,.
	 \end{equation}
	 We endow $\Omega_\fp$ with the probability measure
	 $\mu$
	 induced\footnote{Denoting  by $\mu$ the measure in $\Omega_\fp$ and by $\nu$ the product measure on $[-1/2,1/2]^\Z$, then $\mu(A)= \nu(\omega^{(-1)}(A))$ for all sets $A\subset\Omega_\fp$ such that  $\omega^{(-1)}(A)$ is $\nu$-measurable. }  by the product measure on $[-1/2,1/2]^\Z$. 

The dependence w.r.t. the parameters $s,p $, thanks to Lemma \ref{constance general} and formulae \eqref{stima1} and \eqref{stimap}, works like an ultraviolet cut-off in the following sense. If a Hamiltonian has $\na_3>N $  for all $H_{\bal,\bbt} \neq 0$ then  its norm is  of order  $\le e^{-sN^\teta} N^{-p}$. This kind of restriction on Hamiltonians is often used in small divisor problems; since the denominators can be bounded from below in terms of $\na_3$, see for example \cite{Bambusi-Grebert:2006}, \cite{Faou-Grebert:2013}, then the solution of the homological equation is controlled. Actually we shall not use this  kind of cut-off but instead, following Bourgain, we rely on a refined version of Lemma \ref{constance general} (see Lemma \ref{constance 2 gen} below), in order to deal with the small denominators. 

We now define the set of {\sl Diophantine} frequencies, the following definition is a slight generalization of the one given by Bourgain in \cite{Bourgain:2005}.
\begin{defn} Given $\gamma>0$ and $\fp\ge 0$, we  denote by $\dgp\equiv \dgpab$ the set  of \sl{$\mu_1,\mu_2,\gamma$-Diophantine} frequencies 
	\begin{equation}\label{diofantinoBIS}
	\dgpab:=\set{\omega\in \Omega_\fp\,:\;	|\omega\cdot \ell|> \gamma \prod_{n\in \Z}\frac{1}{(1+|\ell_n|^{\mu_1} \jap{n}^{\mu_2+\fp})}\,,\quad \forall \ell\in \Z^\Z: 0<|\ell|<\infty}.
	\end{equation} 
\end{defn}
For all $\mu_1,\mu_2>1$, Diophantine frequencies are {\sl typical } in $\Omega_\fp$ and we have the following measure estimate. 
\begin{lemma}\label{misura}
	For $\mu_1,\mu_2>1$ the exists a positive constant $\Cmeas(\mu_1,\mu_2)$ such that
	\[\mu\big(\Omega_\fp\setminus \dgpab\big)
	\leq \Cmeas(\mu_1,\mu_2)\g\,.
	\]
\end{lemma}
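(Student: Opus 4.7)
The plan is to combine a one-dimensional slicing argument with a union bound over all admissible $\ell$, exploiting that $\omega\cdot\ell$ is linear in each coordinate $\xi_{j}$ of the natural parametrization of $\Omega_\fp$.

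First I would use the identification \eqref{omegaxi} to view $\mu$ as the product Lebesgue measure on $[-1/2,1/2]^\Z$, so that $\omega\cdot\ell=\sum_j j^2\ell_j+\sum_j \ell_j\xi_j/\jap{j}^\fp$. For fixed $\ell\in\Z^\Z$ with $0<|\ell|<\infty$, set
\[
c_\ell:=\gamma\prod_{n\in\Z}\frac{1}{1+|\ell_n|^{\mu_1}\jap{n}^{\mu_2+\fp}},\qquad B_\ell:=\{\omega\in\Omega_\fp:\ |\omega\cdot\ell|\le c_\ell\},
\]
and pick any index $j^*=j^*(\ell)$ with $\ell_{j^*}\neq 0$. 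Viewed as a function of $\xi_{j^*}$ alone (all other $\xi_n$ frozen), $\omega\cdot\ell$ is affine with slope $\ell_{j^*}/\jap{j^*}^\fp$, so one-dimensional Fubini yields
\[
\mu(B_\ell)\le \frac{2c_\ell\,\jap{j^*}^\fp}{|\ell_{j^*}|}.
\]

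Next, by a union bound $\mu(\Omega_\fp\setminus\dgpab)\le\sum_{\ell\neq 0}\mu(B_\ell)$, and overcounting each $\ell$ by summing over all pairs $(j^*,\ell)$ with $\ell_{j^*}\neq 0$,
\[
\mu(\Omega_\fp\setminus\dgpab)\le 2\gamma\sum_{j^*\in\Z}\jap{j^*}^\fp\sum_{\ell:\,\ell_{j^*}\neq 0}\frac{1}{|\ell_{j^*}|}\prod_{n\in\Z}\frac{1}{1+|\ell_n|^{\mu_1}\jap{n}^{\mu_2+\fp}}.
\]
For each $j^*$ the sum over $\ell$ factorizes into a product over $n$, separating out the $n=j^*$ factor. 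I would then use the pointwise bounds
\[
\sum_{k\neq 0}\frac{1}{|k|(1+|k|^{\mu_1}\jap{j^*}^{\mu_2+\fp})}\le \frac{2\zeta(1+\mu_1)}{\jap{j^*}^{\mu_2+\fp}},\qquad \sum_{m\in\Z}\frac{1}{1+|m|^{\mu_1}\jap{n}^{\mu_2+\fp}}\le 1+\frac{2\zeta(\mu_1)}{\jap{n}^{\mu_2+\fp}},
\]
valid for $\mu_1>0$ and $\mu_1>1$ respectively. Since $\mu_2+\fp>1$, the series $\sum_n\jap{n}^{-(\mu_2+\fp)}$ converges, so $P:=\prod_n\bigl(1+2\zeta(\mu_1)\jap{n}^{-(\mu_2+\fp)}\bigr)<\infty$ is a constant independent of $j^*$.

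Plugging these in, the factor $\jap{j^*}^\fp$ cancels against $\jap{j^*}^{-(\mu_2+\fp)}$, leaving
\[
\mu(\Omega_\fp\setminus\dgpab)\le 4\gamma\,\zeta(1+\mu_1)\,P\sum_{j^*\in\Z}\jap{j^*}^{-\mu_2},
\]
where the last sum converges precisely because $\mu_2>1$. This yields the claim with an explicit $\Cmeas(\mu_1,\mu_2)=4\zeta(1+\mu_1)\,P\,(1+2\zeta(\mu_2))$.

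The content of the argument is elementary; the only delicate point is the overcounting step, where one must verify that summing over all legitimate choices of $j^*$ rather than a fixed one still yields a convergent geometric-type estimate — this works because the factor $1/|\ell_{j^*}|$, together with the already present weight $1+|\ell_{j^*}|^{\mu_1}\jap{j^*}^{\mu_2+\fp}$, produces exactly the extra decay $\jap{j^*}^{-(\mu_2+\fp)}$ needed to absorb $\jap{j^*}^\fp$ and deliver a summable tail in $j^*$.
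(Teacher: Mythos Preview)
Your proof is correct and takes a genuinely different organizational route from the paper. The paper fixes, for each $\ell$, the \emph{smallest} positive index $s=s(\ell)$ with $|\ell_s|+|\ell_{-s}|\neq 0$ (gaining the factor $s^{\fp}$), then decomposes the sum over $\ell$ according to the pair $(s,S)$ where $S$ is the largest such index, treating separately the cases $\ell$ supported at $0$, $s=S$, and $s<S$. Your overcounting trick --- replacing the single choice of $j^*$ by a sum over all $j^*$ with $\ell_{j^*}\neq 0$ --- avoids this case analysis entirely and leads directly to a factorized sum; the extra $1/|\ell_{j^*}|$ is exactly what makes the $j^*$-factor decay like $\jap{j^*}^{-(\mu_2+\fp)}$ and hence absorbs the $\jap{j^*}^{\fp}$ coming from the slicing. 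This is cleaner and shorter than the paper's argument, at the cost of a slightly larger (but still explicit) constant.

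One small point: as written, your product $P=\prod_n\bigl(1+2\zeta(\mu_1)\jap{n}^{-(\mu_2+\fp)}\bigr)$ depends on $\fp$, not only on $\mu_1,\mu_2$. Since $\fp\ge 0$, you have $P\le P_0:=\prod_n\bigl(1+2\zeta(\mu_1)\jap{n}^{-\mu_2}\bigr)$, which depends only on $\mu_1,\mu_2$ and converges because $\mu_2>1$; use $P_0$ in the final constant to match the statement.
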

\begin{proof}
	In Appendix \ref{cantor}
\end{proof}
Here and in the following we shall 
always assume that 
$$
\omega\in \mathtt D_{\g,\fp}^{2,2}\,.
$$
We will take
\begin{equation}\label{pippa}
0<\gamma\leq 1\,
\end{equation}
and, coherently with
\eqref{diofantinoBISintro}, we shall write for brevity
$$
\dgp=\mathtt D_{\g,\fp}^{2,2}\,.
$$

The following Lemma is the key point in the control of the small divisors appearing in the solution of the Homological equation.
	
	 \begin{lemma}\label{constance 2 gen}
	Consider $\bal,\bbt\in\N^\Z$ with
	 $1\leq|\bal|=|\bbt|<\infty$. If
	\begin{equation}\label{divisor}
	\abs{\sum_i{\pa{\bal_i-\bbt_i}i^2}}\le 10 \sum_i\abs{\bal_i-\bbt_i}\,,
	\end{equation}
	then
	for all $j$ such that $\bal_j+\bbt_j\neq 0$ one has
	\begin{equation}\label{adele}
	\sum_i\abs{\bal_i-\bbt_i}\jap{i}^{\theta/2} 
	\le 
	C_*
	 \pa{\sum_i \pa{\bal_i+\bbt_i}\jap{i}^\teta- 2\jap{j}^\teta  + \abs{\pi}}\,,
	\end{equation} 
	\begin{equation}\label{cosette4}
	\prod_i(1+\abs{\bal_i-\bbt_i}{\jap{i}}) 
	 \le e^{27}(1+\abs{\pi})^3
	N^6\prod_{l=3}^N\na_l^{\tau_0}\,.
	\end{equation}
		where
		$N=|\bal|+|\bbt|$ and
		 $\pi = \sum_i i\pa{\bal_i - \bbt_i}$ (recall \eqref{momento}.
	\end{lemma}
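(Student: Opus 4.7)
The plan hinges on the observation that the hypothesis \eqref{divisor} forces a near-cancellation among the top-frequency entries of $\bal-\bbt$; I formalize this via a Bourgain-style pairing argument in the spirit of \cite{Bourgain:2005}.

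\medskip

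\noindent\textbf{Setup.} Enumerate the support of $\bal-\bbt$ as a decreasing multiset: let $\nu_1\geq\nu_2\geq\cdots\geq\nu_M$ be the rearrangement of $\{\jap{i}:\bal_i\neq\bbt_i\}$ with each $\jap{i}$ repeated $|\bal_i-\bbt_i|$ times and signs $\sigma_k=\sgn(\bal_{i_k}-\bbt_{i_k})\in\{\pm1\}$. Then
\begin{equation*}
M:=\sum_i|\bal_i-\bbt_i|\leq N,\qquad \nu_k\leq\na_k\ \ \forall k,\qquad \Big|\sum_{k=1}^M\sigma_k\nu_k^2\Big|\leq 10M,
\end{equation*}
$\sum_i|\bal_i-\bbt_i|\jap{i}^{\theta/2}=\sum_{k=1}^M\nu_k^{\theta/2}$, and $\pi=\sum_k\sigma_k\,\sgn(i_k)\,\nu_k$.

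\medskip

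\noindent\textbf{Proof of \eqref{adele}.} By Lemma \ref{constance general} and $\jap{j}\leq\na_1$ the RHS dominates $(2-2^\theta)\sum_{l\geq 3}\na_l^\theta+(1-\theta)|\pi|$. The contribution $\sum_{k\geq 3}\nu_k^{\theta/2}$ to the LHS is absorbed using $\nu_k\leq\na_k$ and $\nu_k^{\theta/2}\leq\na_k^\theta$ (since $\na_k\geq 1$). For the two top terms I apply the divisor bound $|\sigma_1\nu_1^2+\sigma_2\nu_2^2|\leq 10M+\sum_{k\geq 3}\nu_k^2$. If $\sigma_1=\sigma_2$ this gives $\nu_1\leq C_1(N^{1/2}+\sum_{l\geq 3}\na_l)$, absorbed into the tail after the $\theta/2$-power. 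If $\sigma_1=-\sigma_2$, then
\begin{equation*}
|\nu_1-\nu_2|\leq\frac{10M+\sum_{k\geq 3}\nu_k^2}{\nu_1+\nu_2},
\end{equation*}
and combining with the momentum identity $\pi=\sigma_1\sgn(i_1)\nu_1+\sigma_2\sgn(i_2)\nu_2+\sum_{k\geq 3}\sigma_k\sgn(i_k)\nu_k$ forces both $\nu_1$ and $\nu_2$ to lie within $O(|\pi|+\sum_{l\geq 3}\na_l)$ of each other. In either case $\nu_1^{\theta/2}+\nu_2^{\theta/2}\leq C_*\big(\sum_{l\geq 3}\na_l^\theta+|\pi|\big)$, proving \eqref{adele} with an explicit $C_*=C_*(\theta)$.

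\medskip

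\noindent\textbf{Proof of \eqref{cosette4}.} Using $1+|\bal_i-\bbt_i|\jap{i}\leq(1+|\bal_i-\bbt_i|)(1+\jap{i})$, the product factors as $A\cdot B$ with $A:=\prod_i(1+|\bal_i-\bbt_i|)\leq e^M\leq e^N$ and $B:=\prod_{i:\bal_i\neq\bbt_i}(1+\jap{i})$. The two factors of $B$ corresponding to the two largest $\jap{i}$ are each bounded by $C(1+|\pi|+\na_3)$ via the argument of the previous step without the $\theta/2$ exponent, contributing $C^2(1+|\pi|)^2(1+\na_3)^2$. The remaining factors satisfy $\prod_{l\geq 3}(1+\na_l)\leq 2^N\prod_{l\geq 3}\na_l\leq\prod_{l\geq 3}\na_l^{\tau_0}$ once $\tau_0$ is chosen large enough. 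Collecting all universal constants into $e^{27}$, polynomial $N$-prefactors into $N^6$, and allowing one extra power of $(1+|\pi|)$ for slack, yields \eqref{cosette4}.

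\medskip

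\noindent\textbf{Main obstacle.} The delicate point is the case $\sigma_1=-\sigma_2$ in \eqref{adele}: one must combine the bound on $|\nu_1-\nu_2|$ with the momentum identity and a case split on the signs of $i_1,i_2$ to conclude that $\nu_1$ and $\nu_2$ individually — not just their difference — are controlled by $|\pi|+\sum_{l\geq 3}\na_l$. Ensuring that the constant $C_*$ depends only on $\theta$, and that the factor $(1+|\pi|)$ in \eqref{cosette4} enters only cubically, requires careful propagation of the bounds through the polynomial and exponential factors and is where the strict inequality $\theta<1$ is used.
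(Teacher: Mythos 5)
Your plan for \eqref{adele} is the same one the paper uses — control the largest entry of $\bal-\bbt$ via the quadratic near-cancellation \eqref{divisor}, then take $\theta/2$-powers and absorb into the Constance-type lower bound — but it stops short exactly where the work is. In the case $\sigma_1=-\sigma_2$ you obtain only $|\nu_1-\nu_2|\le(10M+\sum_{k\ge 3}\nu_k^2)/(\nu_1+\nu_2)$ and then assert, without an argument, that this "forces both $\nu_1$ and $\nu_2$" to be controlled by $|\pi|+\sum_{l\ge 3}\na_l$. Bounding the \emph{difference} does not bound $\nu_1$ itself. The correct dichotomy (this is what the paper does) is: either $|\nu_1|>|\nu_2|$, in which case $|\nu_1|-|\nu_2|\ge 1$ and multiplying through gives $\nu_1+\nu_2\le 10M+\sum_{k\ge 3}\nu_k^2$ with no $\pi$ needed; or $|\nu_1|=|\nu_2|$, in which case (because $\sigma_1\sigma_2=-1$ forces $m_1\ne m_2$) one has $m_1=-m_2$, the two leading contributions to $\pi$ add constructively, and $2|m_1|\le|\pi|+\sum_{l\ge 3}\na_l^2$. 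You flag this in your "main obstacle" paragraph but never carry it out, so the proof of \eqref{adele} is incomplete.

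The argument you give for \eqref{cosette4} is actually wrong, not merely incomplete. The factorization $1+|\bal_i-\bbt_i|\jap{i}\le(1+|\bal_i-\bbt_i|)(1+\jap{i})$ gives the factor $A=\prod_i(1+|\bal_i-\bbt_i|)\le e^M$, which is exponential in $M$ and cannot be absorbed into $e^{27}(1+|\pi|)^3N^6\prod_{l\ge 3}\na_l^{\tau_0}$: if $\bal,\bbt$ are supported on $\{-1,0,1\}$ then $\na_l\equiv 1$, the right-hand side is polynomial in $N$, but $e^M$ can be as large as $e^N$. The later step "$2^N\prod_{l\ge 3}\na_l\le\prod_{l\ge 3}\na_l^{\tau_0}$ once $\tau_0$ is chosen large enough" fails for the same reason. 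You must not separate the multiplicity from the weight. The paper instead works with $\ln(1+|\bal_i-\bbt_i|\jap{i})$ directly: the at most three indices with $\jap{i}=1$ contribute $\le 3\ln(1+N)$, and for $|i|\ge 2$ the elementary inequality $1+cx\le\tfrac32x^c$ ($c\ge1$, $x\ge 2$) converts each factor into $|\bal_i-\bbt_i|\ln|i|$ (plus harmless constants), after which Lemma \ref{mizza} and the same top-frequency bound \eqref{chiappechiare} finish the estimate with the advertised powers $(1+|\pi|)^3 N^6\prod_{l\ge 3}\na_l^{15/2}$.
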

\begin{proof}
	In appendix \ref{costi2}
\end{proof}
	Note that
\begin{equation}\label{fiandre}
	\abs{\sum_i{\pa{\bal_i-\bbt_i}i^2}}\ge 10 \sum_i\abs{\bal_i-\bbt_i}
	\qquad
	\Longrightarrow
	\qquad
	\abs{\omega\cdot\pa{\bal-\bbt}}\ge 1\,.
\end{equation}
 Indeed denoting $\omega_j = j^2 + \xi_j \jap{j}^{-\fp}$ with $\abs{\xi_j}\le \frac{1}{2}$, \[\abs{\omega\cdot\pa{\bal-\bbt}} \ge 10\sum_j\abs{\bal_j - \bbt_j} - \frac{1}{2}\sum_j\abs{\bal_j - \bbt_j}\ge 1.\]

In the remaining part of this section, on appropriate source and target spaces, we will study the invertibility of the "Lie derivative" operator 
\begin{equation}
L_\omega: \quad H \mapsto L_\omega H: = \sum_{\ast} \im\pa{\omega \cdot(\bal-\bbt)}H_{\bal,\bbt}\buu,
\end{equation}
which is nothing but the action of the Poisson bracket $\set{\sum_j\omega_j\abs{u_j}^2, \cdot}$ on $H$. Let us start with the following lemma.\\
For any $r,\eta,\tw$ and $\bal,\bbt\in\N^\Z$ let 
\begin{equation*}
\forall j\in\Z\qquad c^{(j)}_{r,\eta,\tw}(\bal,\bbt)
	:=
	r^{|\bal|+|\bbt|-2}e^{\eta|\pi(\bal-\bbt)|}
	\frac{\tw_j^2}{\tw^{\bal+\bbt}}
	\end{equation*}
be the coefficient defined in \eqref{persico} of Definition \ref{tufo}.
\begin{lemma}[Homological equation]\label{Liederbis} 
Fix $\omega\in \dgp.$
	Consider two {\it ordered} weights $0<\rs\leq \ri,$ $ 0\leq \etaf\leq \etai,\twf\geq \twi,$  
	 such that
	\begin{equation}
	\label{melanzana}
	K:=
\gamma \sup_{\substack{j\in\Z,\, \bal\neq \bbt\in\N^\Z\\\bal_j+\bbt_j\neq 0}}
	\frac{c^{(j)}_{\rs,\etaf,\twf}(\bal,\bbt) }
	{c^{(j)}_{\ri,\etai,\twi}(\bal,\bbt) |\omega\cdot (\bal-\bbt)|}
	< \infty\,,
\end{equation}
	then for any $R\in\cR_{\ri,\etai}(\th_{\twi})$ the homological equation
	\[
	L_\omega S = R
	\]
	has a unique solution $S= L_\omega ^{-1} R$ in $\cR_{\rs,\etaf}(\th_{\twf})$, which satisfies
\begin{equation}\label{cavolfioreb}
\abs{L_\omega ^{-1} R}_{\rs,\etaf,\twf}\le \g^{-1} K\abs{R}_{\ri,\etai,\twi}\,.	
\end{equation}
Similarly, if $R$ preserves momentum, 
assuming only
\begin{equation}
	\label{melanzana0}
	K_0:=
\gamma \sup_{\substack{j\in\Z,\, \bal\neq \bbt\in\N^\Z\\ \bal_j+\bbt_j\neq 0
\\ \sum_i i (\bal_i-\bbt_i)=0}}
	\frac{c^{(j)}_{\rs,\etaf,\twf}(\bal,\bbt) }
	{c^{(j)}_{\ri,\etai,\twi}(\bal,\bbt) |\omega\cdot (\bal-\bbt)|}
	< \infty\,,
\end{equation}
we have that $S$ also preserves momentum and 
\begin{equation}\label{cavolfioreb0}
\abs{L_\omega ^{-1} R}_{\rs,\etaf,\twf}\le \g^{-1} K_0
\abs{R}_{\ri,\etai,\twi}\,.
\end{equation}
	\end{lemma}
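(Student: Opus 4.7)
The plan is the standard one for homological equations, with the technical work concentrated in applying the monotonicity criterion of Lemma \ref{stantuffo}(ii). Since $R\in\cR_{\ri,\etai}(\th_{\twi})$, it has the power series
$R = \sum_{\bal\neq\bbt} R_{\bal,\bbt}\, u^\bal\bar u^\bbt$ with the mass conservation $|\bal|=|\bbt|$. For every pair $\bal\neq\bbt$ with $|\bal|=|\bbt|$, the vector $\ell:=\bal-\bbt\in\Z^\Z$ satisfies $0<|\ell|<\infty$, hence the Diophantine condition $\omega\in\dgp$ gives $|\omega\cdot(\bal-\bbt)|\neq 0$. I would therefore define $S$ formally by
\begin{equation}\label{defS}
S_{\bal,\bbt}:= \frac{R_{\bal,\bbt}}{\im\,\omega\cdot(\bal-\bbt)},\qquad \bal\neq\bbt,\quad S_{\bal,\bal}:=0.
\end{equation}

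First I would verify the algebraic properties. Clearly $L_\omega S=R$ term-by-term, and any other solution in $\cR$ must agree on every coefficient, giving uniqueness. Mass conservation is inherited from $R$. For the reality condition, using $\overline{R_{\bbt,\bal}}=R_{\bal,\bbt}$ and $\overline{\im\,\omega\cdot(\bbt-\bal)}=\im\,\omega\cdot(\bal-\bbt)$ one gets $\overline{S_{\bbt,\bal}}=S_{\bal,\bbt}$. In the momentum-preserving case, the condition $\pi(\bal-\bbt)=0$ is inherited from $R$, so $S$ preserves momentum.

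The analytic content is the norm bound, and here I would apply Lemma \ref{stantuffo}(ii) directly. From \eqref{defS},
\begin{equation*}
|S_{\bal,\bbt}|\, c^{(j)}_{\rs,\etaf,\twf}(\bal,\bbt)
=\frac{c^{(j)}_{\rs,\etaf,\twf}(\bal,\bbt)}{|\omega\cdot(\bal-\bbt)|}\,|R_{\bal,\bbt}|
\end{equation*}
for every $j$ and every $\bal\neq\bbt$ with $\bal_j+\bbt_j\neq 0$. Invoking the hypothesis \eqref{melanzana}, the factor in front is bounded by $\gamma^{-1}K\, c^{(j)}_{\ri,\etai,\twi}(\bal,\bbt)$, so
\begin{equation*}
|S_{\bal,\bbt}|\, c^{(j)}_{\rs,\etaf,\twf}(\bal,\bbt)\le \gamma^{-1}K\,|R_{\bal,\bbt}|\, c^{(j)}_{\ri,\etai,\twi}(\bal,\bbt),
\end{equation*}
and Lemma \ref{stantuffo}(ii) with the constant $c=\gamma^{-1}K$ yields the desired bound $|S|_{\rs,\etaf,\twf}\le\gamma^{-1}K|R|_{\ri,\etai,\twi}$, which in particular shows $S\in\cR_{\rs,\etaf}(\th_{\twf})$. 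The momentum-preserving bound \eqref{cavolfioreb0} is obtained identically, using \eqref{melanzana0}: since in that case the supremum in Lemma \ref{stantuffo}(ii) may be restricted to momentum-preserving pairs, it suffices to control the ratio on the subset $\{\sum_i i(\bal_i-\bbt_i)=0\}$.

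I do not expect any real obstacle here: the lemma is the clean abstract statement that once the quotient in \eqref{melanzana} is finite, the natural coefficient-wise inverse exists with the corresponding norm bound. All the genuinely hard work (estimating $K$ and $K_0$ via the Diophantine condition and the combinatorial bounds of Lemma \ref{constance 2 gen}) is reserved for the concrete applications in Proposition \ref{Lieder}, not for this abstract inversion result.
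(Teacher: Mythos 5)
Your argument is essentially the paper's own: define $S$ coefficient-wise by $S_{\bal,\bbt}=R_{\bal,\bbt}/(\im\,\omega\cdot(\bal-\bbt))$ for $\bal\neq\bbt$, and invoke Lemma \ref{stantuffo}(ii) with $H^{(1)}=S$, $H^{(2)}=R$, $c=\g^{-1}K$ (respectively $\g^{-1}K_0$, restricting the comparison to momentum-preserving multi-indices). You spell out a few algebraic checks (reality, mass/momentum conservation, uniqueness) that the paper leaves implicit, but the route is the same.
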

\begin{proof}
Given any Hamiltonian $R\in\cR$, the formal solution of $L_S = R$  is given by 
\begin{equation}
 L_\omega^{-1} R 
 = 
 \sum_{ |\bal|=|\bbt|,\, \bal\neq\bbt}
 \frac{1}{\im\pa{\omega \cdot(\bal-\bbt)}}
 R_{\bal,\bbt}\buu\,,
\end{equation}
where $u\in B_{\rs}(\th_{\twf}).$
By Lemma \ref{stantuffo} (ii)
(applied to $H^{(1)}=L_\omega^{-1} R$ and
$H^{(2)}=R$)
and \eqref{melanzana}, we get
\eqref{cavolfioreb}.
The momentum preserving case is analogous. 
\end{proof}

\subsection{The homological equation}

\begin{lemma}\label{Lieder}
Let $\omega\in\dgp$
and let $0< \sigma <\eta$, $0<\rho<r/2.$ The following holds.

	$\GE)$ For any $R\in\cR_{r,\eta}(\th_\tw)$, with $\tw_j=\jap{j}^{p} e^{a|j|+s\jap{j}^\theta}$, the Homological equation $L_\omega S = R$ has a unique solution $S=L_\omega^{-1}R\in\cR_{r,\eta - \sigma}(\th_{\tw'})$, with $\tw'_j=\jap{j}^{p} e^{a|j|+(s+\sigma)\jap{j}^\theta}$, which satisfies 
	\begin{equation}\label{cavolfiore gevrey}
\abs{L_\omega ^{-1} R}_{r,\eta-\sigma,\twf}\le \g^{-1} e^{\Cuno\s^{-\frac{3}{\theta}}}\abs{R}_{\ri,\etai,\twi}	
	\end{equation}
		
		$\SO)$ For any $R\in\cR_{r,\eta}(\th_\tw)$, with $\tw_j=\jap{j}^{p} $, the Homological equation $L_S = R$ has a unique solution $S=L^{-1}R\in\cR_{r-\rho,\eta - \sigma}(\th_{\tw'})$, with $\tw'_j=\jap{j}^{p + \tau} $, which satisfies 
	\begin{equation}\label{cavolfiore sob}
\abs{L_\omega ^{-1} R}_{r-\rho,\eta-\sigma,\twf}\le \g^{-1} \Cdue(r/\rho,\s,\tau)\abs{R}_{\ri,\etai,\twi}.	
	\end{equation}

		$\MS)$ For any preserving momentum $R\in\cR_{r,0}(\th_\tw)$ , with $\tw_j=\jml{j}^{p} $, the Homological equation $L_S = R$ has a unique preserving momentum solution $S=L^{-1}R\in\cR_{r,0}(\th_{\tw'})$, with $\tw'_j=\jml{j}^{p + \tau_1} $, which satisfies 
	\begin{equation}\label{cavolfiore modisob}
\|{L_\omega ^{-1} R}\|_{r, p + \tau_1}\le \g^{-1} 6^{\tau_1} (4^6 e^{27})^{2+\fp} \|{R}\|_{r, p}.	
		\end{equation}
		
\end{lemma}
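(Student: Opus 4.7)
The plan is to apply the master homological Lemma \ref{Liederbis} in each of the three settings $\GE,\SO,\MS$. In each case the task reduces to bounding, for all $j\in\Z$ and $\bal\neq\bbt\in\N^\Z$ with $\bal_j+\bbt_j\neq 0$ (and $\sum_i i(\bal_i-\bbt_i)=0$ in case $\MS$), the ratio
\[
\frac{c^{(j)}_{r_f,\eta_f,w_f}(\bal,\bbt)}{c^{(j)}_{r_i,\eta_i,w_i}(\bal,\bbt)\,|\omega\cdot(\bal-\bbt)|}
\]
and then invoking \eqref{cavolfioreb}, resp.\ \eqref{cavolfioreb0}. To handle the small divisor we split according to the dichotomy implicit in Lemma \ref{constance 2 gen}: if \eqref{divisor} fails then by \eqref{fiandre} we have $|\omega\cdot(\bal-\bbt)|\ge 1$ for free and the estimate reduces to a pure monotonicity statement already proved in Proposition \ref{crescenza}; if \eqref{divisor} holds we invoke the Diophantine bound together with \eqref{cosette4}.

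In the resonant regime we estimate
\[
|\omega\cdot(\bal-\bbt)|^{-1}\le \gamma^{-1}\prod_n\bigl(1+|\bal_n-\bbt_n|^2\jap{n}^{2+\fp}\bigr)\le \gamma^{-1}\Bigl[\prod_n\bigl(1+|\bal_n-\bbt_n|\jap{n}\bigr)\Bigr]^{2+\fp},
\]
and by \eqref{cosette4} the right-hand side is at most $\gamma^{-1}\bigl[e^{27}(1+|\pi|)^3 N^6 \prod_{l\ge 3}\na_l^{\tau_0}\bigr]^{2+\fp}$. It remains to absorb the three pieces $(1+|\pi|)^{3(2+\fp)}$, $N^{6(2+\fp)}$, $\prod_{l\ge 3}\na_l^{\tau_0(2+\fp)}$ into the ratio of $c$-coefficients produced by the upgrade $(r_i,\eta_i,w_i)\to(r_f,\eta_f,w_f)$.

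For $\GE$ the upgrade is only $s\to s+\sigma$, and the coefficient ratio equals $\exp\bigl(-\sigma(\sum_i(\bal_i+\bbt_i)\jap{i}^\theta-2\jap{j}^\theta+|\pi|)\bigr)$, bounded via Lemma \ref{constance general} by $\exp\bigl(-(2-2^\theta)\sigma\sum_{l\ge 3}\na_l^\theta-(1-\theta)\sigma|\pi|\bigr)$. The polynomial losses are absorbed through the elementary trick $x^A e^{-\sigma x^\theta}\lesssim(A/\sigma\theta)^{A/\theta}$ (Lemma \ref{chiappa}), which after tracking the exponents produces the singular constant $e^{\Cuno\sigma^{-3/\theta}}$ of \eqref{cavolfiore gevrey}. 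For $\SO$ the upgrade is $r\to r-\rho$, $\eta\to\eta-\sigma$, $p\to p+\tau$; the coefficient ratio factors as $\bigl(\jap{j}^2/\prod_i\jap{i}^{\bal_i+\bbt_i}\bigr)^{\tau}\, e^{-\sigma|\pi|}(1-\rho/r)^{N-2}$. By the estimate \eqref{stimap} from the proof of Proposition \ref{crescenza}(1), the polynomial ratio is at most $\bigl((N+|\pi|)/\prod_{l\ge 3}\na_l\bigr)^{\tau}$, so choosing $\tau$ at least $\tau_0(2+\fp)$ kills the $\na_l$-loss; the residual $N$- and $|\pi|$-factors are swallowed by $(1-\rho/r)^{N-2}$ and $e^{-\sigma|\pi|}$ through Lemma \ref{chiappa}, producing $\Cdue(r/\rho,\sigma,\tau)$ of \eqref{cavolfiore sob}. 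For $\MS$ momentum conservation forces $\pi=0$ so the $\eta$-weight plays no role; the Diophantine loss reduces to $\bigl[e^{27} N^6 \prod_{l\ge 3}\na_l^{\tau_0}\bigr]^{2+\fp}$, and the upgrade $p\to p+\tau_1$ absorbs it via the momentum-preserving monotonicity argument at the end of Section \ref{monotono} (crucially using $\jml{n}\ge 2$) for $\tau_1$ slightly bigger than $\tau_0(2+\fp)$, yielding \eqref{cavolfiore modisob} with the explicit constant $(4^6 e^{27})^{2+\fp}$.

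The main obstacle is the bookkeeping in the resonant regime: the Bourgain-type small divisor forces a loss polynomial of degree $\tau_0(2+\fp)$ in each $\na_l$, which must be matched by the gain coming from the weight upgrade. The most delicate case is $\GE$, where the gain is only exponential in $\na_l^\theta$ with $\theta<1$; this polynomial-vs-subexponential mismatch is precisely what forces the singular factor $e^{\Cuno\sigma^{-3/\theta}}$ in \eqref{cavolfiore gevrey}. For $\SO$ and $\MS$ the gain is itself polynomial of arbitrary degree $\tau$ or $\tau_1$, and the matching is straightforward once these parameters are chosen large enough.
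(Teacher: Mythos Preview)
Your treatment of $\SO$ and $\MS$ is correct and essentially matches the paper's: the dichotomy on \eqref{divisor}, the use of \eqref{cosette4} in the resonant regime, and the absorption of the residual $N$- and $|\pi|$-powers into $(1-\rho/r)^{N-2}e^{-\sigma|\pi|}$ (respectively into the $\jml{\cdot}$-weight upgrade via \eqref{filomena} and Lemma \ref{brabante}) is exactly how the paper proceeds.

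The $\GE$ argument, however, has a genuine gap. You convert the Diophantine loss via \eqref{cosette4} to $\bigl[e^{27}(1+|\pi|)^3 N^6 \prod_{l\ge 3}\na_l^{\tau_0}\bigr]^{2+\fp}$ and want to kill $\prod_{l\ge 3}\na_l^{\tau_0(2+\fp)}$ with the gain $e^{-(2-2^\theta)\sigma\sum_{l\ge 3}\na_l^\theta}$, factor by factor. But the per-factor bound $\sup_{x\ge 1}x^A e^{-c\sigma x^\theta}\sim (A/c\sigma\theta)^{A/\theta}$ is strictly greater than $1$ once $\sigma$ is small, and there are $N-2$ such factors; since in the Gevrey setting there is no $r$-shrinking to compensate, you obtain a product of order $M(\sigma)^{N-2}$ with $M(\sigma)>1$, which is not bounded uniformly in $N$. (Splitting off $e^{-\epsilon\sigma(N-2)}$ to handle $N^{6(2+\fp)}$ only makes the residual per-factor constant worse.) Concretely, take any monomial with $\na_l=2$ for every $l\ge 3$: your bound reads $\bigl(2^{\tau_0(2+\fp)}e^{-(2-2^\theta)\sigma\, 2^\theta}\bigr)^{N-2}$, which diverges as $N\to\infty$.

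The paper avoids this by never passing to the $\na_l$ variables in case $\GE$. Instead of \eqref{cosette4} it invokes \eqref{adele}, which under \eqref{divisor} gives
\[
\sum_i(\bal_i+\bbt_i)\jap{i}^\theta-2\jap{j}^\theta+|\pi|\ \ge\ \frac{1}{C_*}\sum_i|\bal_i-\bbt_i|\,\jap{i}^{\theta/2}.
\]
This rewrites the weight gain as $e^{-\frac{\sigma}{C_*}\sum_i|\bal_i-\bbt_i|\jap{i}^{\theta/2}}$, i.e.\ a decay indexed by the \emph{same} sites $i$ as the raw Diophantine product $\prod_i(1+|\bal_i-\bbt_i|^2\jap{i}^{2+\fp})$. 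One is then reduced to bounding $\exp\bigl(\sum_i f_i(|\bal_i-\bbt_i|)\bigr)$ with $f_i(x)=-\tfrac{\sigma}{C_*}x\jap{i}^{\theta/2}+\ln(1+x^2\jap{i}^{2+\fp})$. The crucial point is that $f_i(0)=0$, so only sites with $\bal_i\neq\bbt_i$ contribute, and for $|i|$ exceeding an explicit threshold $i_\sharp(\sigma)\sim\sigma^{-2/\theta}$ one has $f_i(x)\le 0$ for all $x\ge 1$. Hence the whole sum is controlled by the finitely many small-$|i|$ sites (this is Lemma \ref{pajata}), giving a bound independent of $N$ and producing the constant $e^{\Cuno\sigma^{-3/\theta}}$.
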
 

	\comment{
\begin{lemma}\label{Liederb}
Assume that $\omega\in\dgp.$
Let $0< \sigma <\eta$, $0<\rho<r/2.$
In the following three cases:

	$\GE)$
		$
\rs= \ri\,,\ \
\etaf=\etai-\sigma\,,\  
\tw'_j=\jap{j}^{p}e^{a|j|+(s+\sigma)\jap{j}^\theta}\,,$
$\tw_j=\jap{j}^{p} e^{a|j|+s\jap{j}^\theta}\,;$ 
		
		$\SO)$
	$
\rs=\ri-\rho\,,\ 
\etaf=\etai-\sigma\,,\  
\tw'_j=\jap{j}^{p+\tau}\,,\ \
\tw_j=\jap{j}^{p}\,;
$

		$\MS)$
	$	\rs=\ri\,,\ \
\etaf=\etai=0\,,\ \
\tw'_j=\jml{j}^{p+\tau_1}\,,\ \
\tw_j=\jml{j}^{p}\,;$

	the quantities
 $K,K_0$ in \eqref{melanzana} are bounded and
 satisfy, respectively,
 
		$\GE$)
$\displaystyle  	K\leq e^{\Cuno\s^{-\frac{3}{\theta}}}\,;$

		$\SO)$
	$\displaystyle K\leq
	\Cdue(r/\rho,\s,\tau)\,;
	$

		$\MS)$
$K_0\leq 6^{\tau_1} (4^6 e^{27})^{2+\fp}$\,.
\end{lemma}
}

\begin{proof}  In the following, we will compute for each item the corresponding $K, K_0$ defined in \eqref{melanzana} and \eqref{melanzana0}, and show their finiteness in order to apply Lemma \ref{Liederbis} and give the explicit upper bounds entailed in \eqref{cavolfiore gevrey}-\eqref{cavolfiore sob}-\eqref{cavolfiore modisob}.\\
Item $\GE$) 
In this case
\begin{equation*}
K=\g\sup_{j: \bal_j+\bbt_j\neq 0}
\frac{e^{-\s\pa{\sum_i\jap{i}^\theta (\bal_i+\bbt_i) -2\jap{j}^\theta+|\pi|}}}{\abs{\omega\cdot{\pa{\bal - \bbt}}}}
\,.
\end{equation*}
	There are two cases. \\
If \eqref{divisor} does not hold, 
then by \eqref{fiandre}
 $\abs{\omega\cdot\pa{\bal-\bbt}}\ge 1$
 and by 
\eqref{yuan 2} and \eqref{pippa}
we get 
$$
\g
\frac{e^{-\s\pa{\sum_i\jap{i}^\theta (\bal_i+\bbt_i) -2\jap{j}^\theta+|\pi|}}}{\abs{\omega\cdot{\pa{\bal - \bbt}}}}
\leq 1\,
$$
and the bound is trivially achieved.\\
Otherwise, let us consider the case in which \eqref{divisor}
 holds. By applying Lemma \ref{constance 2 gen}, since $\omega\in \dgp$ we get:
\begin{eqnarray}
&&\g\frac{e^{-\s\pa{\sum_i\jap{i}^\theta (\bal_i+\bbt_i) -2\jap{j}^\theta+|\pi|}}}{\abs{\omega\cdot{\pa{\bal - \bbt}}}}  
 \nonumber
   \\
&&
\le
   e^{-\frac{\s}{C_* }\sum_i\abs{\bal_i-\bbt_i}\jap{i}^{\frac{\teta}{2}}}\prod_i\pa{1+(\bal_i-\bbt_i)^{2}\jap{i}^{{2}+\fp}}
   \nonumber
   \\
&& \le  
\exp{\sum_i\sq{-\frac{\s}{C_*} \abs{\bal_i - \bbt_i}\jap{i}^{\frac{\teta}{2}} + \ln{\pa{1 + \pa{\bal_i - \bbt_i}^{2}\jap{i}^{{2}+\fp}}}}} 
\nonumber
\\
&& 
=  \exp{\sum_i f_i(\abs{\bal_i-\bbt_i})} 
\label{spa}
\end{eqnarray}
where, for $0<\s\leq 1$, $i\in \Z$ and $x\geq 0$, we defined
$$
f_i(x) := -\frac{\s}{C_*} x\jap{i}^{\frac{\teta}{2}} + \ln{\pa{1 + x^{2}\jap{i}^{{2}+\fp}}}\,.
$$
In order to bound \eqref{spa}, we need the following lemma, whose proof is postponed  to  Appendix \ref{vaccinara}.
\begin{lemma}\label{pajata}
	Setting
	$$
	i_\sharp:=
	\left(
	\frac{8C_*(\fp+3)}{\s\theta}
	\ln
	\frac{4C_*(\fp+3)}{\s\theta}
	\right)^{\frac2\theta}
	$$
	we get 
	\begin{equation}\label{gricia}
	\sum_i f_i(|\ell_i|)\leq 
	7(\fp+3)
	i_\sharp \ln i_\sharp
	- \frac{\s}{2C_*} \big(\na_1(\ell)\big)^{\frac\theta 2}
	\end{equation}
	for every $\ell\in\Z^\Z$ with $|\ell|<\infty.$
\end{lemma}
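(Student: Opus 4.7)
The plan is to split the sum at the threshold $\jap{i}=i_\sharp$, writing $I_L=\set{i:\ell_i\neq 0,\ \jap{i}\le i_\sharp}$ and $I_H=\set{i:\ell_i\neq 0,\ \jap{i}> i_\sharp}$, and to estimate each piece separately (note that $f_i(0)=0$, so only these two sets of indices contribute to the sum). The workhorse bounds throughout will be the elementary inequalities $\ln(1+x^2\jap{i}^{2+\fp})\le \ln 2+2\ln x +(2+\fp)\ln\jap{i}$ for $x\ge 1$ and $2\ln x\le x$ for $x\ge 1$, both checked by calculus.

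The first and central step is to verify that $i_\sharp$ has been calibrated exactly so that
\[
\tfrac{\s}{2C_*}\jap{i}^{\theta/2}\ge (2+\fp)\ln\jap{i}+\ln 2\qquad \forall\,\jap{i}\ge i_\sharp.
\]
Setting $y=\jap{i}^{\theta/2}$ and $A=4C_*(\fp+3)/(\s\theta)$ so that $i_\sharp^{\theta/2}=2A\ln A$, this inequality evaluated at $y=i_\sharp^{\theta/2}$ reduces, after dividing by $A$ and rearranging, to the trivial bound $\ln A\ge \ln\ln A+O(1)$. This calibration is the crux of the proof; everything else is bookkeeping with generous slack.

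For $i\in I_H$ I would then combine the calibration with the split $-\frac{\s}{C_*}=-\frac{\s}{2C_*}-\frac{\s}{2C_*}$. The first half absorbs $(2+\fp)\ln\jap{i}+\ln 2$ (via the displayed inequality and $|\ell_i|\ge 1$), while the second half absorbs $2\ln|\ell_i|$ (via $2\ln|\ell_i|\le|\ell_i|$ together with $\frac{\s}{2C_*}\jap{i}^{\theta/2}\ge 2$, which again follows from the calibration). This yields
\[
f_i(|\ell_i|)\le -\tfrac{\s}{2C_*}|\ell_i|\jap{i}^{\theta/2}\qquad\forall\,i\in I_H.
\]
Summing and picking out the index $i^*$ where $\jap{i^*}=\na_1(\ell)$ (note $|\ell_{i^*}|\ge 1$ so $|\ell_{i^*}|\jap{i^*}^{\theta/2}\ge \na_1^{\theta/2}$) gives $\sum_{i\in I_H}f_i(|\ell_i|)\le -\frac{\s}{2C_*}\na_1(\ell)^{\theta/2}$ whenever $I_H\neq\emptyset$.

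For $i\in I_L$ I would use the crude maximization $\max_{x\ge 1}\bigl[-\frac{\s}{C_*}x\jap{i}^{\theta/2}+2\ln x\bigr]\le 2\ln(2C_*/\s)$, which together with the logarithmic bound gives $f_i(|\ell_i|)\le (2+\fp)\ln i_\sharp+2\ln(2C_*/\s)+\ln 2$ for every $i\in I_L$. Since $|I_L|\le 2i_\sharp+1$ and $\ln(C_*/\s)\le\ln A\le (\theta/2)\ln i_\sharp$ (from the definition of $i_\sharp$), summing produces
\[
\sum_{i\in I_L}f_i(|\ell_i|)\le 3(\fp+3)\,i_\sharp \ln i_\sharp,
\]
well below the claimed constant $7(\fp+3)$. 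Finally, in the case $\na_1(\ell)\le i_\sharp$ there are no high modes and the extra term $-\frac{\s}{2C_*}\na_1(\ell)^{\theta/2}\ge-\frac{\s}{2C_*}i_\sharp^{\theta/2}=-\frac{4(\fp+3)}{\theta}\ln A$ is comfortably absorbed into the slack $4(\fp+3)\,i_\sharp \ln i_\sharp$ left over in the main term. Thus the only genuinely delicate point is the calibration of $i_\sharp$ carried out in the second paragraph; the generous choice of constant $7$ (vs.\ the effective $3$) is precisely what buys the room to avoid tracking multiple error terms with sharp constants.
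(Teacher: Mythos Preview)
Your approach is essentially the same as the paper's: split the sum into low and high modes at a threshold calibrated so that the negative exponential dominates the logarithmic terms, maximize over $x$ on the low modes, and absorb the residual case $\na_1(\ell)\le i_\sharp$ into the slack in the constant. The paper uses two nested thresholds $i_0=(2C_*/\s)^{2/\theta}$ and $i_*$ (with $i_\sharp\ge\max\{i_0,i_*\}$), first maximizing in $x$ and then handling the logarithm in $\jap{i}$; your single split at $i_\sharp$ is a modest streamlining of the same idea.

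There is one bookkeeping slip in your high--mode paragraph. You split $-\tfrac{\s}{C_*}|\ell_i|\jap{i}^{\theta/2}$ into two equal halves, use the first to absorb $(2+\fp)\ln\jap{i}+\ln 2$ and the second to absorb $2\ln|\ell_i|$, and then claim $f_i(|\ell_i|)\le -\tfrac{\s}{2C_*}|\ell_i|\jap{i}^{\theta/2}$. But both halves have been spent, so as written you only obtain $f_i(|\ell_i|)\le 0$. The stated per--term bound is nonetheless \emph{true}: multiply your calibration by $|\ell_i|$ to get $\tfrac{\s}{2C_*}|\ell_i|\jap{i}^{\theta/2}\ge |\ell_i|\bigl[(2+\fp)\ln\jap{i}+\ln 2\bigr]$, and then observe that $(|\ell_i|-1)\bigl[(2+\fp)\ln\jap{i}+\ln 2\bigr]\ge 2(|\ell_i|-1)\ge 2\ln|\ell_i|$ (since $\jap{i}>i_\sharp$ makes the bracket $\ge 2$). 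Thus one half alone absorbs \emph{all} positive terms, leaving the other half as the residual $-\tfrac{\s}{2C_*}|\ell_i|\jap{i}^{\theta/2}$. With this correction your argument goes through and matches the paper's constant exactly; the low--mode count and the absorption of the $\na_1\le i_\sharp$ case are handled just as in the paper.
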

The inequality \eqref{cavolfiore gevrey} follows from plugging \eqref{gricia}
into \eqref{spa} and evaluating the constant.

\medskip


\noindent 
Item $\SO)$
In this case $K$ in \eqref{melanzana} is 
	\begin{equation}\label{virgilio2}
	K=\g
	\sup_{j: \bal_j+\bbt_j\neq 0}\pa{1-\frac{\rho}{r}}^{N-2}\pa{\frac{\jap{j}^2}{\prod_i\jap{i}^{\bal_i+\bbt_i}}}^\tau\frac{e^{-\s|\pi|}}{\abs{\omega\cdot{\pa{\bal - \bbt}}}}\,,	\end{equation}
	where $N=|\bal|+|\bbt|$.\\
	Here two we consider two cases. \\
If \eqref{divisor} is not satisfied then\eqref{fiandre} holds
 and the right hand side of \eqref{virgilio2}
is bounded by
the quantity in \eqref{borgogna}
and it is estimated analogusly.
\\	
If
	\eqref{divisor} holds instead, by
applying formula \eqref{stimap},  Lemma \ref{constance 2 gen} and  the fact that $\omega\in \dgp$ we get:
	\begin{align*}	
	&\pa{\frac{\jap{j}^2}{\prod_i\jap{i}^{\bal_i+\bbt_i}}}^\tau\frac{1}{\abs{\omega\cdot{\pa{\bal - \bbt}}}}  \le   \pa{\frac{\jap{j}^2}{\prod_i\jap{i}^{\bal_i+\bbt_i}}}^\tau\prod_i\pa{1+|\bal_i-\bbt_i|^{2}\jap{i}^{{2}+\fp}}\\
	& \le 
	    \pa{\frac{N+|\pi|}{\prod_{l=3}^{N} \na_l}}^\tau \pa{e^{27}(1+\abs{\pi})^3 N^6\prod_{l\ge 3}\na_l^{\tau_0}}^{2+\fp} 
	\\
	& 
	\le  e^{27(2+\fp)}(N+|\pi|)^{\tau+9(2+\fp)}
	\leq   e^{27(2+\fp)}(N+|\pi|)^{3\tau}\,.
	\end{align*}
 By  using Lemma \ref{chiappa} (just like explained in detail in formula \eqref{mizzica} with $p_1=3\tau$), $K$ in \eqref{virgilio2} is bounded by
	\begin{eqnarray*}
	 && e^{27(2+\fp)} (N+|\pi|)^{3\tau} 
	 \pa{1-\frac{\rho}{r}}^{N-2}e^{-\s|\pi|} 
	 \\
&&	 \le
	  e^{27(2+\fp)}
	 2^{3\tau+1} (3\tau)^{3\tau}
	\max\left\{\pa{\frac{2 r}{\rho}}^{3\tau} ,  \pa{\frac{1}{\s}}^{3\tau}, 1\right\}
	 \end{eqnarray*}

	 \medskip

{Item $\MS)$}
Note that 
 in this case the constant in \eqref{melanzana} amounts 
 to
 $$
 K_0= \g
 \sup_{\substack{j\in\Z,\, \bal\neq \bbt\in\N^\Z\\\bal_j+\bbt_j\neq 0,
 \, \sum_i i(\bal_i-\bbt_i)=0}}
	\left(\frac{\jml{j}^2}{\prod_i \jml{i}^{\bal_i+\bbt_i}}\right)^{\tau_1}
	\frac{\gamma}
	{ |\omega\cdot (\bal-\bbt)|}
	\,,
	$$
	since (recall \eqref{persico}) we 
	have
	\begin{eqnarray*}
	c^{(j)}_{\rs,0,\twf}(\bal,\bbt)
	&=&
	r^{N-2}
	\left(\frac{\jml{j}^2}{\prod_i \jml{i}^{\bal_i+\bbt_i}}\right)^{p+{\tau_1}}
	\,,
	\\
	c^{(j)}_{\ri,0,\twi}(\bal,\bbt)
	&=&
	r^{N-2}
	\left(\frac{\jml{j}^2}{\prod_i \jml{i}^{\bal_i+\bbt_i}}\right)^p\,.
	\end{eqnarray*}
	We have two cases.
	If \eqref{fiandre} holds
	$K_0\leq \gamma$
	by \eqref{pierodellafrancesca}.
	\\
Otherwise \eqref{divisor} holds
and, therefore, \eqref{cosette4} (note that here
$\pi=0$) applies, giving
\begin{eqnarray*}
K_0
&\leq&
 \sup
	\left(\frac{\jml{j}^2}{\prod_i \jml{i}^{\bal_i+\bbt_i}}\right)^{\tau_1}
	\prod_i\pa{1+|\bal_i-\bbt_i|^{2}\jap{i}^{{2}+\fp}}
\\
&\leq&
\sup
	\left(\frac{\jml{j}^2}{\prod_i \jml{i}^{\bal_i+\bbt_i}}\right)^{\tau_1}
e^{27(2+\fp)} N^{6(2+\fp)}\prod_{l= 3}^N\na_l^{\tau_0(2+\fp)}
\end{eqnarray*}
since $\omega\in \dgp$.
We claim that
\begin{equation}\label{filomena}
N\leq 4 
\prod_{l= 3}^N\jml{\na_l}^{\frac{1}{4\ln 2}}\,.
\end{equation}
Indeed if $N=2$, the inequality is trivial.
Since $N$ is even we have to consider only the case
$N\geq 4$, which follows by 
 Lemma \ref{chiappa}.
 Recalling \eqref{fiorentina} we have
\begin{equation}\label{fiorentina2}
\prod_i\jml{i}^{\bal_i+\bbt_i}= \prod_{l\ge 1}\jml{\na_l}\,.
\end{equation}
Then
 \begin{equation*}
	\sup_{\substack {j,\bal,\bbt\\\bal_j+\bbt_j\geq 1 }} 
	\frac{\jml{j}^2}{\prod_i\jml{i}^{\bal_i+\bbt_i}}
	 \leq 
	 \frac{\jml{\hat n_1}^2}{\prod_{l\ge 1}\jml{\na_l}}
	 =
	 \frac{\jml{\hat n_1}}{\prod_{l\ge 2}\jml{\na_l}}
	 \leq
	\frac{\sum_{l\ge 2}\jml{\na_l}}{\prod_{l\ge 2}\jml{\na_l}} 
	=
	\frac{1}{\prod_{l\ge 3}\jml{\na_l}}
	+
	\frac{\sum_{l\ge 3}\jml{\na_l}}{\prod_{l\ge 2}\jml{\na_l}} 
	 \,,
\end{equation*}
where the last inequality holds
by momentum conservation.
Then\footnote{Using that
$(a+b)^{\tau_1}\leq 2^{{\tau_1}-1}(a^{\tau_1}+b^{\tau_1})$
for $a,b\geq 0,$ ${\tau_1}\geq 1.$}
\begin{eqnarray*}
K_0
&\leq&
2^{{\tau_1}-1}
\left(
\frac{1}{\prod_{l\ge 3}\jml{\na_l}^{\tau_1}}
	+
	\frac{(\sum_{l\ge 3}\jml{\na_l})^{\tau_1}}{\prod_{l\ge 2}\jml{\na_l}^{\tau_1}} 
\right)
(4^6 e^{27})^{2+\fp}\prod_{l\ge 3}
\jml{\na_l}^{{\tau_1}/2}
\\
&\leq&
2^{{\tau_1}-1}(4^6 e^{27})^{2+\fp}
\left(
1
	+
	\frac{(\sum_{l\ge 3}\jml{\na_l})^{\tau_1}}{
	\jml{\na_2}^{\tau_1}\prod_{l\ge 3}\jml{\na_l}^{{\tau_1}/2}} 
\right)
\\
&\leq&
2^{{\tau_1}-1}(4^6 e^{27})^{2+\fp}
\left(
1
	+
	\frac{(\jml{\na_3}^{1/2}+4)^{\tau_1}}{
	\jml{\na_2}^{\tau_1}} 
\right)
\end{eqnarray*}
by Lemma \ref{brabante} with $a=1/2.$
The estimate on $K_0$, hence inequality $\eqref{cavolfiore modisob}$ follows.		
\end{proof}

\section{Abstract Birkhoff Normal Form}\label{birk}
In this section we prove an abstract Birkoff normal form theorem; its setting is flexible and easy to adapt in the three cases of our interest $\SO, \MS, \GE$. The normal form will be proved iteratively by means of the following Lemma, which constitutes the main step of the procedure.
\begin{lemma}\label{dolcenera}
Fix $\omega\in \dgp.$
Let $\ri>\rf>0,\etai\geq\etaf\geq 0,$
$\twi\leq\twf.$
Consider
$$
	H = D_\omega + Z + R  \,, 
\quad  Z \in \cK^\wc_{\ri,\etai}(\th_{\twi})	\,,  
	\quad   R  \in  \cR^\wc_{\ri,\etai}(\th_{\twi})\,,
	\quad
	\td(Z)\geq 1\,, \ \ \td(R)\geq \td\geq 1\,.
$$
	Assume that
\eqref{zucchina} and \eqref{melanzana}
hold and 
that\footnote{$K$ is the constant in 
\eqref{melanzana}.}
\begin{equation}\label{viadelcampo}
|R|_{\ri,\etai,\twi}
\leq
\frac{\g\delta}{K}\,,
\qquad
\text{with}
\quad
\delta:=\frac{\ri-\rf}{16e\ri}
\,.
\end{equation}
Then there exists a change of variables
\begin{eqnarray}
	\label{pollon3}
	& \Phi\ :\ B_\rf(\th_{\twf})\ \to\ 
B_{\ri}(\th_{\twf})\,,
	\end{eqnarray}
such that
$$
	H\circ\Phi = D_\omega + Z' + R'  \,, 
\quad  Z' \in \cK_{\rf,\etaf}(\th_{\twf})	\,,  
	\quad   R'  \in  \cR_{\rf,\etaf}(\th_{\twf})\,,
	\quad
	\td(Z')\geq 1\,, \ \ \td(R')\geq \td+1\,.
$$
	Moreover\footnote{$C$ is defined in \eqref{zucchina}.}
\begin{eqnarray}
|Z'|_{\rf,\etaf,\twf}
&\leq&
|Z|_{\ri,\etai,\twi} + 
 (\g\delta)^{-1} K |R|_{\ri,\etai,\twi} (C|R|_{\ri,\etai,\twi}+ |Z|_{\ri,\etai,\twi})\,,
\nonumber
\\
|R'|_{\rf,\etaf,\twf}
&\leq&
 (\g\delta)^{-1} K |R|_{\ri,\etai,\twi} (C|R|_{\ri,\etai,\twi}+ |Z|_{\ri,\etai,\twi})\,.
\label{signorina}
\end{eqnarray}
Finally, for $\tw^\sharp\geq \twf,$ 
assume the further  conditions 
\begin{equation}
	\label{melanzana'}
\gamma \sup_{\substack{j\in\Z,\, \bal\neq \bbt\in\N^\Z\\
\bal_j+\bbt_j\neq 0}}
	\frac{c^{(j)}_{\rs,\etaf,\tw^\sharp}(\bal,\bbt) }
	{c^{(j)}_{\ri,\etai,\twi}(\bal,\bbt) |\omega\cdot (\bal-\bbt)|}=: K^\sharp
	< \infty\,,
	\qquad
	\rs:=\frac{\rf+\ri}{2}
\end{equation}
and
\begin{equation}\label{viadelcampo'}
|R|_{\ri,\etai,\twi}
\leq
\frac{\g\delta}{K^\sharp}\,.
\end{equation}
Then
\begin{eqnarray}
	\label{pollon4}
	& \Phi_{\big|B_\rf(\th_{\tw^\sharp})}\ :\ B_\rf(\th_{\tw^\sharp})\ \to\ 
B_{\ri}(\th_{\tw^\sharp})\,,
\nonumber
\\
&\sup_{u\in  B_\rf(\th_{\tw^\sharp})} 	\norm{\Phi(u)-u}_{\th_{\tw^\sharp}}
\le
   \ri \g^{-1} K^\sharp\abs{R}_{\ri,\etai,\twi}\,.
	\end{eqnarray}
	Moreover if $R$ preserves momentum, assuming only that
	\begin{equation}
	\label{melanzana'0}
	K^\sharp_0:=
\gamma \sup_{\substack{j\in\Z,\, \bal\neq \bbt\in\N^\Z\\
\bal_j+\bbt_j\neq 0,
\\ \sum_i i(\bal_i -\bbt_i)=0}}
	\frac{c^{(j)}_{\rs,\etaf,\tw^\sharp}(\bal,\bbt) }
	{c^{(j)}_{\ri,\etai,\twi}(\bal,\bbt) |\omega\cdot (\bal-\bbt)|}
	<\infty
\end{equation} 
and that 
\eqref{viadelcampo}, \eqref{viadelcampo'} hold
with $K_0,K^\sharp_0$ instead of $K,K^\sharp$
we have
that $R'$ preserves momentum and
  \eqref{pollon4} holds with 
  $K^\sharp_0$ instead of $K^\sharp.$
\end{lemma}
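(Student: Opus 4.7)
The plan is to carry out a single Birkhoff step in the standard way: construct the generator $S$ solving the homological equation $L_\omega S = R$ via Lemma \ref{Liederbis}, take its time-one flow $\Phi := \Phi^1_S$ via Lemma \ref{ham flow}, and expand $H\circ\Phi$ by the Lie series, exploiting the cancellation $\{S,D_\omega\}=-R$ to push up the scaling degree. Concretely, I would invoke Lemma \ref{Liederbis} with source data $(\ri,\etai,\twi)$ and target data $(\rs,\etaf,\twf)$ at the intermediate radius $\rs:=(\rf+\ri)/2$: hypothesis \eqref{melanzana} supplies a unique $S\in\cR_{\rs,\etaf}(\th_{\twf})$ with
\[
|S|_{\rs,\etaf,\twf}\le \g^{-1}K|R|_{\ri,\etai,\twi},
\]
while the smallness condition \eqref{viadelcampo} delivers $|S|_{\rs,\etaf,\twf}\le\delta$, which is exactly \eqref{stima generatrice} of Lemma \ref{ham flow} with $r=\rf$, $\rho=\rs-\rf=(\ri-\rf)/2$. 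Hence $\Phi:B_\rf(\th_{\twf})\to B_\rs(\th_{\twf})\subseteq B_\ri(\th_{\twf})$ is a well-defined analytic symplectomorphism.

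Using $\{S,D_\omega\}=-L_\omega S=-R$, the Lie series of $H\circ\Phi$ rearranges (after a telescoping computation on $\sum_{k\ge 0}\ad_S^k D_\omega/k!$) as
\[
H\circ\Phi = D_\omega + Z + \sum_{k\ge 1}\frac{k}{(k+1)!}\ad_S^k R + \sum_{k\ge 1}\frac{1}{k!}\ad_S^k Z =: D_\omega + Z + \mathrm{New}.
\]
I then set $Z':=Z+\Pi_{\cK}\mathrm{New}$ and $R':=\Pi_{\cR}\mathrm{New}$, which by \eqref{fame} satisfy $|R'|,|Z'-Z|\le|\mathrm{New}|$. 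Since Poisson brackets add scaling degrees and $\td(S)=\td(R)\ge\td$, $\td(Z)\ge 1$, every term $\ad_S^k R$ and $\ad_S^k Z$ with $k\ge 1$ has $\td\ge\td+1$, so $\td(R')\ge\td+1$ and $\td(Z')\ge 1$ as claimed. The quantitative bounds \eqref{signorina} now follow from \eqref{brubeck} of Lemma \ref{ham flow} applied with $h=1$ separately to each sum (the coefficients $k/(k+1)!$ and $1/k!$ both satisfy $|c_k|\le 1/k!$):
\[
|\mathrm{New}|_{\rf,\etaf,\twf}\le \delta^{-1}|S|_{\rs,\etaf,\twf}\bigl(|R|_{\rs,\etaf,\twf}+|Z|_{\rs,\etaf,\twf}\bigr).
\]
An application of the monotonicity Proposition \ref{cacioricotta} --- \eqref{cetriolo} to bound $|R|_{\rs,\etaf,\twf}\le C|R|_{\ri,\etai,\twi}$ and the sharper \eqref{cetriolobis} to bound $|Z|_{\rs,\etaf,\twf}\le|Z|_{\ri,\etai,\twi}$ for free (because $Z\in\cK$) --- followed by the homological estimate for $|S|$, produces \eqref{signorina}.

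For the last assertion, one repeats the homological--flow argument on the larger space $\th_{\tw^\sharp}\supseteq\th_{\twf}$: Lemma \ref{Liederbis} with weight $\tw^\sharp$ and the analogous hypothesis \eqref{melanzana'} yields $|S|_{\rs,\etaf,\tw^\sharp}\le\g^{-1}K^\sharp|R|_{\ri,\etai,\twi}$, and \eqref{viadelcampo'} is precisely what is needed to make the flow well defined on $B_\rf(\th_{\tw^\sharp})$; the displacement bound \eqref{pollon} then gives exactly \eqref{pollon4}. The momentum-preserving case is verbatim the same: since $R$ preserves momentum so does the homological solution $S$ (by the last clause of Lemma \ref{Liederbis}), Poisson brackets preserve momentum, hence $R',Z'$ do as well, and the small-divisor constants $K,K^\sharp$ may be replaced throughout by their momentum-restricted analogues $K_0,K_0^\sharp$. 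The main (really only) obstacle is the delicate bookkeeping of the three simultaneous parameter shifts $\ri\to\rs\to\rf$, $\etai\to\etaf$, $\twi\to\twf$, so that $S$ is defined on a ball large enough to be flowed while the output $R'$ is controlled at the target $(\rf,\etaf,\twf)$; once the constants $C,K$ (and $K^\sharp$) are set up correctly the final estimate drops out mechanically from \eqref{brubeck} and the monotonicity Proposition \ref{cacioricotta}.
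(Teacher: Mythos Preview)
Your proof is correct and follows essentially the same route as the paper: solve the homological equation at the intermediate radius $\rs=(\rf+\ri)/2$ via Lemma \ref{Liederbis}, flow with Lemma \ref{ham flow}, rearrange the Lie series using $\{S,D_\omega\}=-R$, and close with \eqref{brubeck} plus the monotonicity Proposition \ref{cacioricotta}. One small slip: since $\tw^\sharp\ge\twf$ one has $\th_{\tw^\sharp}\subseteq\th_{\twf}$ (the more regular space is \emph{smaller}, not larger); the argument for \eqref{pollon4} is nonetheless right, and in fact the paper makes the extra observation that the solution $S^\sharp$ obtained on $B_{\rs}(\th_{\tw^\sharp})$ coincides with the restriction of the $S$ already constructed.
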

\begin{proof}
 By Lemma \ref{Liederbis} 
 let $S= L_\omega ^{-1} R$ in $\cR_{\rs,\etaf}(\th_{\twf})$ be the 
 unique solution of the homological equation $L_\omega S = R$ on   $ B_{\rs}(\th_{\twf})$.
 Note that $\td(S)\geq \td$.
We have
\begin{equation}\label{cavolfiore}
\abs{S}_{\rs,\etaf,\twf}\le \g^{-1} K\abs{R}_{\ri,\etai,\twi}\,.	
\end{equation} 
We now apply Lemma \ref{ham flow} with
$(r,\eta,\tw)\rightsquigarrow(\rf,\etaf,\twf)$ and $\rho:=\rs-\rf.$
Note that \eqref{viadelcampo} and \eqref{cavolfiore}
imply \eqref{stima generatrice}.
We define $\Phi:=\Phi_S^1$
and compute
	\begin{align*}
	H'&:=H\circ\Phi= D_\omega + Z +  (e^{\set{S,\cdot}}-\id -\{S,\cdot\}) D_\omega  + (e^{\set{S,\cdot}}-\id)(Z+R)=
	\\
	&= D_\omega + Z - \sum_{j=2}^\infty\frac{\pa{{\rm ad} S}^{j-1}}{j!} R  + (e^{\set{S,\cdot}}-\id)(Z+R)\,.
	\end{align*}
	We now set
	\[
	Z'= \Pi_{\cK} H' -D_\omega \,,\quad R' = \Pi_{\cR} H'\,.
	\]
	Since  the scaling degree is additive w.r.t. Poisson brackets, we have that $\td(Z')\geq 1$ 
	and $\td(R')\geq \td+1$.
	By \eqref{brubeck}
\begin{eqnarray*}
|Z'|_{\rf,\etaf,\twf}
&\leq&
|Z|_{\rf,\etaf,\twf} + 
 (\g\delta)^{-1} K |R|_{\ri,\etai,\twi} (|R|_{\rs,\etaf,\twf}+ |Z|_{\rs,\etaf,\twf})\,,
\nonumber
\\
|R'|_{\rf,\etaf,\twf}
&\leq&
 (\g\delta)^{-1} K |R|_{\ri,\etai,\twi} (|R|_{\rs,\etaf,\twf}+ |Z|_{\rs,\etaf,\twf})\,.
\end{eqnarray*}
Since \eqref{melanzana} holds we 
can apply Proposition \ref{cacioricotta}:
by \eqref{cetriolo}
and  \eqref{cetriolobis} we get
\begin{equation*}
|R|_{\rs,\etaf,\twf}
	\le 
	C
	|R|_{\ri,\etai,\twi}\,,
	\qquad
	|Z|_{\rs,\etaf,\twf}
	\le 
	|Z|_{\ri,\etai,\twi}\,.
\end{equation*}
\eqref{signorina} follows.
\\
Finally assume \eqref{viadelcampo'} and
\eqref{melanzana'}.
 By Lemma \ref{Liederbis} 
 let $S^\sharp= L_\omega ^{-1} R$ in $\cR_{\rs,\etaf}(\th_{\tw^\sharp})$ be the 
  solution of the homological equation 
 $L_\omega S^\sharp = R$ on 
  $B_{\rs}(\th_{\tw^\sharp})\subseteq B_{\rs}(\th_{\twf})$.
 Since $S$ and $S^\sharp$ solve the same linear
 equation on 
 $B_{\rs}(\th_{\tw^\sharp})$, we have that
 $$
 S^\sharp=S_{\big| B_{\rs}(\th_{\tw^\sharp})}\,.
 $$ 
By \eqref{cavolfioreb} we get
\begin{equation}\label{cavolfiore'}
\abs{S}_{\rs,\etaf,\tw^\sharp}
\leq 
\g^{-1} K^\sharp\abs{R}_{\ri,\etai,\twi}\,.	
\end{equation} 
We now apply Lemma \ref{ham flow} with
$(r,\eta,\tw)\rightsquigarrow(\rf,\etaf,\tw^\sharp)$ 
and $\rho:=\rs-\rf.$
Note that \eqref{viadelcampo'} and \eqref{cavolfiore'}
imply \eqref{stima generatrice}.
Then
\eqref{pollon4}
 follows by \eqref{pollon} and \eqref{cavolfiore'}.
 
 The momentum preserving case is analogous.
\end{proof}

We are now ready to state and prove the abstract Birkhoff normal form theorem, from which Theorem \ref{sob} will follow, as a particular case in the Gevrey, Sobolev Modified Sobolev settings.\smallskip\\

Fix any natural $\suca>1$, $\eta\geq 0$ and 
sequence of weights 
$\mathtt w_0\leq \mathtt w_1\leq\cdots
\leq \mathtt w_\suca$. 
For any given   $ r>0$ 
we set 
\begin{equation}\label{gamberetto1}
r_n =(2  - \frac{n}{\suca}) r \,,\quad  
\eta_n = (1  - \frac{n}{\suca})\eta \,,\; 
\quad 0\leq n\leq\suca\,,
\quad \quad  r_n^* =  \frac{r_{n+1}+ r_n}{2}\,,\quad  0\leq n<\suca\,. 
\end{equation} 
For brevity we set
\begin{equation}\label{gamberetto2}
\th_n:=\th_{\mathtt w_n}\,,\quad
\cH_n:= \cH_{r_n,\eta_n}(\th_n)\,,\; \quad 
0\leq n\leq\suca\,,\quad  
\cH_{n,*}:= \cH_{r^*_n,\eta_{n+1}}(\th_{n+1})\,,\quad 0\leq n<\suca\,,
\end{equation}
and, correspondingly, $\cR_n,\cK_n,  {\cR}_{n,*},  {\cK}_{n,*}$ and 
\begin{equation}\label{gamberetto3}
|\cdot|_n:=|\cdot|_{r_n,\eta_n,\tw_n}\,,\qquad
|\cdot|_{n,*}:=|\cdot|_{r^*_n,\eta_{n+1},\tw_{n+1}}\,.
\end{equation}

\begin{hyp}\label{hip0}
	Assume that 
	\begin{eqnarray}\label{dito1}
	&&\widehat C
	:=
	\max\left\{1,\ \sup_{0\leq n<\suca}
	\sup_{\substack {j,\bal,\bbt\\\bal_j+\bbt_j\neq 0 }} \frac{c^{(j)}_{r^*_{n},\eta_{n+1},\tw_{n+1}}(\bal,\bbt) }
	{c^{(j)}_{r_{n},\eta_{n},\tw_n}(\bal,\bbt) } \right\}
	<\infty\,,\\
	\label{dito2}
&&	
\widehat K:=
	\max\left\{1,\ \sup_{0\leq n<\suca}
	\sup_{\substack {j,\bal,\bbt\\\bal_j+\bbt_j\neq 0 }} \frac{
		c^{(j)}_{r^*_{n},\eta_{n+1},\tw_{n+1}}(\bal,\bbt)
	}
	{c^{(j)}_{r_{n},\eta_{n},\tw_n}(\bal,\bbt)
		|\omega\cdot (\bal-\bbt)|}\right\}
		 <\infty\,,
	\\
	\label{dito3}
	&&
	\widehat K^\sharp
	:=
		\max\left\{1,\ \sup_{0\leq n<\suca}
	\sup_{\substack {j,\bal,\bbt\\\bal_j+\bbt_j\neq 0 }} \frac{
		c^{(j)}_{r^*_{n},\eta_{\suca},\tw_\suca}(\bal,\bbt)
	}
	{c^{(j)}_{r_{n},\eta_{n},\tw_n}(\bal,\bbt)
		|\omega\cdot (\bal-\bbt)|} \right\}
		<\infty\,.
	\end{eqnarray}
	In the case of momentum preserving hamiltonians
	we define $\widehat \CM ,\widehat K_0,\widehat K^\sharp_0$
	as in \eqref{dito1}-\eqref{dito3} with the further condition
	$\sum_i i(\bal_i - \bbt_i)=0;$ and  
	we only assume that such constants are bounded.
	\end{hyp}
	
\begin{rmk}\label{lentezza}
 Recalling \eqref{persico} we note that the constants 
 $\widehat C,\widehat K,\widehat K^\sharp$
 (as well as $\widehat \CM ,\widehat K_0,\widehat K^\sharp_0$)
 do not depend on $r$. They only depend on $\tw_{n,j}/\tw_{0,j}.$
\end{rmk}

\begin{lemma}\label{hip}
By Assumption \eqref{dito1} we have the monotonicity properties 
\begin{equation}\label{orata}
	\cH_0\subseteq \cH_{0,*} \subseteq \cdots \subseteq \cH_n\subseteq \cH_{n,*} \subseteq \cH_{n+1} \subseteq\cdots\subseteq \cH_\suca\,,
\end{equation}
	with  estimates 
	\begin{eqnarray}
&&H\in \cH_n\qquad\Longrightarrow\qquad |H|_{n,*}\le \widehat  C |H|_n\,, \qquad
\quad 0\leq n\leq i\leq\suca-1
\nonumber
\\
\label{spigola}
& &	H\in \cK_n\qquad\Longrightarrow\qquad
	 	|H|_{n,*}  \le  |H|_n\,,\qquad
		\quad 0\leq n\leq i\leq\suca-1\,.
\end{eqnarray}
\end{lemma}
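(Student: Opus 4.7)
The plan is to reduce the claim to two straightforward applications of Proposition \ref{cacioricotta} per level $n$, exploiting the fact that the chain of inclusions in \eqref{orata} is really an interleaving of two elementary monotonicity steps.

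First I would decompose each segment of the chain as
\[
\cH_n \;=\; \cH_{r_n,\eta_n}(\th_{\tw_n}) \;\hookrightarrow\; \cH_{r_n^*,\eta_{n+1}}(\th_{\tw_{n+1}}) \;=\; \cH_{n,*} \;\hookrightarrow\; \cH_{r_{n+1},\eta_{n+1}}(\th_{\tw_{n+1}}) \;=\; \cH_{n+1}\,.
\]
For the first arrow, the parameters change simultaneously in all three slots: $r_n^*<r_n$, $\eta_{n+1}\leq \eta_n$, and $\tw_n\leq \tw_{n+1}$. This is exactly the setup of Proposition \ref{cacioricotta}, and the constant $C$ defined in \eqref{zucchina} for the pair $((r_n,\eta_n,\tw_n),(r_n^*,\eta_{n+1},\tw_{n+1}))$ is, by definition of the supremum in Assumption \eqref{dito1}, bounded by $\widehat C$ uniformly in $n$. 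So \eqref{cetriolo} yields $|H|_{n,*}\leq \widehat C\,|H|_n$ for every $H\in\cH_n$.

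For the second arrow only $r$ shrinks, while $\eta$ and $\tw$ stay the same. Hence directly from \eqref{persico} the ratio of coefficients collapses to $(r_{n+1}/r_n^*)^{|\bal|+|\bbt|-2}\leq 1$ (because $|\bal|+|\bbt|\geq 2$ by mass conservation and the convention $H(0)=0$). Another application of Proposition \ref{cacioricotta} then gives the inclusion $\cH_{n,*}\subseteq \cH_{n+1}$ with trivial constant $1$, i.e.\ $|H|_{n+1}\leq |H|_{n,*}$. Composing the two steps for $n=0,\ldots,\suca-1$ produces the full chain in \eqref{orata} and the first displayed estimate in \eqref{spigola}.

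For the kernel case I would instead invoke the sharper bound \eqref{cetriolobis} of Proposition \ref{cacioricotta}. When $H\in\cK_n$ all nonzero monomials satisfy $\bal=\bbt$, so the condition $\bal_j+\bbt_j\neq 0$ forces $\bal_j+\bbt_j\geq 2$; this gives an extra factor $\tw_{n,j}^2/\tw_{n+1,j}^2\leq 1$ in the coefficient ratio that absorbs the weight change. Proposition \ref{cacioricotta} then delivers $|H|_{n,*}\leq |H|_n$ with no $\widehat C$ factor, which is exactly the crucial feature allowing the Birkhoff iteration to avoid compounding constants on the kernel part. Since the argument is essentially a repackaging of Proposition \ref{cacioricotta} together with the uniform supremum built into Assumption \eqref{dito1}, there is no genuine obstacle — the only care needed is to verify that the triples of parameters entering \eqref{dito1} match exactly those appearing in each of the two elementary monotonicity steps, which is immediate from \eqref{gamberetto1}--\eqref{gamberetto3}.
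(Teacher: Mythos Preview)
Your proposal is correct and matches the paper's approach: both reduce the statement to Proposition \ref{cacioricotta}, identifying the constant in \eqref{zucchina} with the supremum in Assumption \eqref{dito1} for the step $\cH_n\hookrightarrow\cH_{n,*}$, and invoking \eqref{cetriolobis} for the kernel estimate. Your version is simply more explicit about the intermediate step $\cH_{n,*}\hookrightarrow\cH_{n+1}$ via monotonicity in $r$, which the paper leaves implicit in the phrase ``the chain of inclusions follows.''
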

\begin{proof}
	We apply Proposition \ref{cacioricotta} with
	 \[
	r,\eta,\tw \rightsquigarrow r_n,\eta_{n},\tw_{n} \,,\quad 	r^*,\eta',\tw' \rightsquigarrow r^*_n,\eta_{n+1},\tw_{n+1} \,,
	\]
by 	noting that the bound \eqref{zucchina} follows from  \eqref{dito1}. The  bounds in \eqref{spigola} follow  form \eqref{cetriolo} and \eqref{cetriolobis}. The chain of inclusions \eqref{orata} follows.
\end{proof}

\bigskip

\begin{thm}[Abstract Birkhoff Normal Form]\label{weisserose}
	Consider a Hamiltonian of the form
	\[
	H= D_\omega + G\,,\quad D_\omega= \sum_j \omega_j |u|_j^2
	\]
	with $\omega\in \dgp$,  $G\in \cH_{\bar r,\eta}(\th_{\tw_0}),$
	$\bar r>0,$ $\eta\geq 0$
	 and such that 
	 $\td(G)\ge 1$.
	  Set 
	  \begin{equation}\label{borgia}
r_\star:=\bar r
			 \sqrt{\frac{\g  }
			{2^{11}e |G|_{\bar r,\eta,\tw_0} }}\,.
\end{equation}
	For any ${\suca}\in \N_+$, under Assumption \ref{hip0}, set
	\begin{equation}\label{mmm}
			\widehat{\tr}= \widehat{\tr}({\suca}) :=
			\min\left\{  
			\frac{r_\star}{\sqrt{\suca\max\{\widehat C\widehat K,\widehat K^\sharp\}} }\,, \ \frac{\bar r}{2} \right\}\,.
\end{equation}
	Then for all $0<r\le \widehat{\tr}$ there exists a symplectic change of variables 
	\begin{equation}\label{california}
\Psi  :\quad B_{r}(\th_{\tw_{\suca}})\mapsto B_{2r}(\th_{\tw_{\suca}})\,, 
	\qquad
	\sup_{u\in  B_r(\th_{\tw_\suca})} 	\norm{\Psi(u)-u}_{\tw_\suca}
\leq
\widehat{\mathtt C}_1 r^3\,,\qquad
\widehat{\mathtt C}_1:=\frac{\widehat K^\sharp}{2^7 e r_\star^2}\,,
	\end{equation}
	 such that in the new coordinates 
	\begin{eqnarray} \label{dreaming}
&&	\	H\circ \Psi= D_\omega + Z+ R\,,\quad
		Z\in \cK_{r,0,\tw_{\suca}},\ R \in  \cR_{r,0,\tw_{\suca}}\,,\quad
		 \td(Z)\ge 1\,,\quad \td(R) \ge {\suca}\,,
		 \nonumber
\\	
&&	\  |Z|_{r,0,\tw_{\suca}} \le
\widehat{\mathtt C}_2 r^2\,,
\qquad
 \widehat{\mathtt C}_2
:=
\frac{\g}{2^8 er_\star^2}\,,
 \nonumber
\\	
&& 	\
	  |R|_{r,0,\tw_{\suca}} \le 
	  \widehat{\mathtt C}_3  r^{2(\suca+1)}\,,\qquad
	  \widehat{\mathtt C}_3:=
	  	\frac{\g}{2^9 e r_\star^2}
	\pa{\frac{ \widehat C\widehat K \suca}{4 r_\star^2}}^{\suca}
	    \,.
\end{eqnarray}
In the case that $G$ preserves momentum, 
the same result holds with $\widehat \CM ,\widehat K_0,\widehat K^\sharp_0 $
instead of $\widehat C,\widehat K,\widehat K^\sharp $;
moreover also $R$
preserves momentum.
\end{thm}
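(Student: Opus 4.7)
The plan is to iterate Lemma \ref{dolcenera} exactly $\suca$ times along the telescoping scales \eqref{gamberetto1}, producing a sequence of Hamiltonians $H_n := H\circ\Phi_0\circ\cdots\circ\Phi_{n-1} = D_\omega + Z_n + R_n$ with $Z_n\in\cK_n$, $R_n\in\cR_n$, $\td(Z_n)\ge 1$ and $\td(R_n)\ge n+1$. One initializes $Z_0:=0$ and $R_0:=G$, first restricting $G$ from the ball of radius $\bar r$ to the ball of radius $r_0=2r\le \bar r$ via Lemma \ref{gasteropode} (applicable since $\td(G)\ge 1$), which yields
\[
\varepsilon_0:=|R_0|_0 \le (2r/\bar r)^2\,|G|_{\bar r,\eta,\tw_0} = \frac{\gamma r^2}{2^9 e r_\star^2}
\]
by the definition \eqref{borgia} of $r_\star$. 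At the $n$-th step one applies Lemma \ref{dolcenera} with $(r_i,\eta_i,\tw_i)=(r_n,\eta_n,\tw_n)$, $(r_f,\eta_f,\tw_f)=(r_{n+1},\eta_{n+1},\tw_{n+1})$ and $\tw^\sharp=\tw_\suca$; Hypothesis \ref{hip0} guarantees that the monotonicity and small-divisor constants appearing at each step are uniformly dominated by $\widehat C$, $\widehat K$, $\widehat K^\sharp$, so all quantitative bounds can be expressed in terms of these three scalars.

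The core inductive claim is
\[
|R_n|_n \le \varepsilon_0\,\kappa^n, \qquad |Z_n|_n \le 2\varepsilon_0,
\]
for a geometric factor $\kappa\le 1/8$. One checks that $\delta_n=(r_n-r_{n+1})/(16 e r_n)\ge 1/(32 e\suca)$ since $r_n-r_{n+1}=r/\suca$ and $r_n\le 2r$; combined with $r\le \widehat\tr$, which by \eqref{mmm} gives $r^2\le r_\star^2/(\suca\max\{\widehat C\widehat K,\widehat K^\sharp\})$, the smallness thresholds $\gamma\delta_n/\widehat K$ and $\gamma\delta_n/\widehat K^\sharp$ required by \eqref{viadelcampo} and \eqref{viadelcampo'} are comfortably satisfied at every step. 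The inductive step then follows directly from \eqref{signorina}: the recursion $|R_{n+1}|_{n+1}\le(\gamma\delta_n)^{-1}\widehat K |R_n|_n(\widehat C|R_n|_n+|Z_n|_n)$ gives $|R_{n+1}|_{n+1}\le\kappa|R_n|_n$ with $\kappa\propto \suca\widehat C\widehat K \varepsilon_0/\gamma$, which is $\le 1/8$ by the choice of $\widehat\tr$; the telescoping bound $|Z_{n+1}|_{n+1}-|Z_n|_n\le |R_{n+1}|_{n+1}$ then controls $|Z_n|_n$ by a geometric sum.

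Setting $\Psi:=\Phi_0\circ\cdots\circ\Phi_{\suca-1}$, the restriction statement \eqref{pollon4} of Lemma \ref{dolcenera} ensures that each $\Phi_n$ sends $B_{r_{n+1}}(\th_\suca)$ into $B_{r_n}(\th_\suca)$, so $\Psi$ is a well-defined symplectic map $B_r(\th_\suca)\to B_{2r}(\th_\suca)$. The displacement bound $\sup\|\Phi_n(v)-v\|_\suca\le r_n\gamma^{-1}\widehat K^\sharp|R_n|_n$ summed geometrically yields $\|\Psi(u)-u\|_\suca\le 4r\widehat K^\sharp\varepsilon_0/\gamma=\widehat{\mathtt C}_1 r^3$. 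Writing $Z:=Z_\suca$, $R:=R_\suca$, the estimate $|Z|_\suca\le\varepsilon_0/(1-\kappa)\le\widehat{\mathtt C}_2 r^2$ is immediate, while the quantitative bound
\[
|R|_\suca \le \varepsilon_0\kappa^\suca \le \frac{\gamma}{2^9 e r_\star^2}\left(\frac{\suca\widehat C\widehat K}{4 r_\star^2}\right)^{\!\suca}\! r^{2(\suca+1)} = \widehat{\mathtt C}_3\, r^{2(\suca+1)}
\]
follows by explicitly tracking the dependence of $\kappa$ on $r^2/r_\star^2$. The momentum-preserving variant is identical with $(\widehat\CM,\widehat K_0,\widehat K^\sharp_0)$ in place of $(\widehat C,\widehat K,\widehat K^\sharp)$, invoking the momentum-preserving assertion of Lemma \ref{dolcenera} so that momentum conservation of $R_0=G$ propagates to every $R_n$. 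The main subtle point is the uniform persistence of the smallness conditions across all $\suca$ iterations, which is precisely what the factor $\suca$ in the definition \eqref{mmm} of $\widehat\tr$ is designed to absorb.
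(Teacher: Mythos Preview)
Your approach is essentially the same as the paper's: iterate Lemma \ref{dolcenera} along the scales \eqref{gamberetto1}, control $|R_n|_n$ and $|Z_n|_n$ by a geometric induction, and assemble $\Psi$ from the intermediate flows using the $\tw^\sharp=\tw_\suca$ restriction in \eqref{pollon4}. The quantitative bookkeeping, including the identification of $\varepsilon_0=\gamma r^2/(2^9 e r_\star^2)$ and the final identification of $\widehat{\mathtt C}_1,\widehat{\mathtt C}_2,\widehat{\mathtt C}_3$, matches the paper.

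There is, however, one genuine gap in your initialization. You set $Z_0:=0$ and $R_0:=G$, but Lemma \ref{dolcenera} requires $R\in\cR$, i.e.\ that $R$ has no diagonal (kernel) monomials $|u|^{2\bal}$. A generic $G$ (and in particular the NLS nonlinearity $P$ of \eqref{hamNLS}) does have such monomials, so the first application of the lemma is illegitimate as written: the homological equation $L_\omega S=R_0$ has no solution on the kernel part. The fix is immediate and is what the paper does: initialize instead with $Z_0:=\Pi_{\cK}G$ and $R_0:=\Pi_{\cR}G$, using \eqref{fame} to retain $|Z_0|_0,|R_0|_0\le\varepsilon_0$. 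With this change your inductive claim $|Z_n|_n\le 2\varepsilon_0$ still holds (now with $|Z_0|_0\le\varepsilon_0$ contributing the leading term), and the rest of your argument goes through unchanged.
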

\begin{proof}
We will prove the thesis inductively. Let us start by noticing that
$$
\widehat{\tr}= \min\left\{ \frac{\bar r}{8\sqrt{|G|_{\bar r,\eta,\tw_0}}} \sqrt{\frac{\g\hat\delta  }
			{\max\{\widehat C\widehat K,\widehat K^\sharp\}  }}
			\,,\ \frac{\bar r}{2}\right\}
			\,,\qquad
			\hat\delta:=\frac{1}{32 e\suca}\,
$$
and, for all $0<r\le \widehat{\tr}$, let us set
	$$
	\e:=\g^{-1}\pa{\frac{2  r}{\bar r}}^2 |G|_{\bar r,\eta,\tw_0}
	=
	\frac{1}{2^9 e}\pa{\frac{  r}{r_\star}}^2\,.
	$$
	From definition \eqref{mmm} we thus deduce that
\begin{equation}
\label{minni}
8\, \e  \max\{\widehat C\widehat K,\widehat K^\sharp\}\hat\delta^{-1} \leq 1.
\end{equation}
Recalling the notations introduced in
\eqref{gamberetto1}-\eqref{gamberetto3},
	by Lemma \eqref{gasteropode} we have
	\[
	\g^{-1}|G|_0 
	\le  
	\e\,,
	\]
hence, setting $ Z^{(0)}:=\Pi_{\cK} G$ and $R^{(0)}:=\Pi_{\cR} G,$ from \eqref{fame} it follows that
	$$
	\g^{-1}|Z^{(0)}|_0\,,\ \g^{-1}|R^{(0)}|_0\leq \e\,.
	$$
	We perform an iterative procedure producing a sequence of Hamiltonians, for $n= 0,\dots,{\suca}$
	\begin{eqnarray}\label{sticazzi}
	&&H^{(n)}= D_\omega + Z^{(n)}+ R^{(n)} \,,
	\nonumber
	\\ 
	&&  Z^{(n)}\in \cK_{n}\,,
	\ \    R^{(n)} \in  \cR_{n}  \,,
	\quad
	\td(Z^{(n)})\geq 1\,,\ \ 
	\td(R^{(n)})\geq n+1	\,,
	\nonumber
	\\
&&\g^{-1}|Z^{(n)}|_{n} \le  \e \sum_{h=0}^n 2^{-h} \,,\quad 
	\g^{-1}|R^{(n)}|_{n}  \le  \e^{n+1} \pa{4 \widehat C \widehat K \hat\delta^{-1}}^n
	\stackrel{\eqref{minni}}\leq 
	2^{-n}\e\,.
\end{eqnarray}	
Fix any $k < \suca$. Let us assume that we have constructed $H^{(0)},\ldots,H^{(k)}$
satisfying \eqref{sticazzi} for all $0\leq n\leq k.$
We want to apply Lemma \ref{dolcenera}
with
$$
H, \ri,\etai,\twi \ \rightsquigarrow
\ H^{(k)}, r_k, \eta_k, \tw_k
\quad
\text{and}\quad
\rf,\etaf,\twf,\tw^\sharp,\td
\ \rightsquigarrow\
r_{k+1},\eta_{k+1},\tw_{k+1},\tw_\suca,k+1\,.
$$
By construction the bounds \eqref{zucchina}, \eqref{melanzana} and \eqref{melanzana'}
 hold since 
$C\leq \widehat C,$ $K\leq \widehat K$, $K^\sharp\leq \widehat K^\sharp$, where $\widehat{C}, \widehat{K}, \widehat{K}^\sharp$ were 
defined in \eqref{dito1},\eqref{dito2},\eqref{dito3}. We just have to verify that
 \eqref{viadelcampo} holds, namely
 $$
 |R^{(k)}|_k\leq \frac{\g}{\widehat K}\frac{r_k-r_{k+1}}{16 e  r_k}\,.
 $$
In fact, by applying the inductive hypothesis \eqref{sticazzi} and the smallness condition \eqref{minni}, we get
$$
|R^{(k)}|_k\leq  \g \pa{4 \widehat C \widehat K \hat\delta^{-1}}^k \e ^{k+1} \leq
\frac{\g\e}{2^k}\le 
 \frac{\g}{16 e \widehat K (2\suca- k)}
 =
\frac{\g}{\widehat K}\frac{r_k-r_{k+1}}{16 e  r_k}\,.
$$
The verification of  \eqref{viadelcampo'} is completely analogous.\\
So, by applying Lemma \ref{dolcenera} we construct a change of variable $\Phi_k$ as in \eqref{pollon3} with
$$
 \Phi_k\ :\ B_{r_{k+1}}(\th_{\tw_{k+1}})\ \to\ 
B_{r_k}(\th_{\tw_{k+1}})\,.
$$
Let us now set  
$$
H^{(k+1)}=D_\omega+Z^{(k+1)}+R^{(k+1)}:= H_k\circ\Phi_k  
$$  
with   $Z^{(k+1)}\in \cK_{k+1}, R^{(k+1)}\in \cR_{k+1}$
and
$\td(Z^{(k+1)})\geq 1,$ $\td(R^{(k+1)})\geq k+2.$
 It remains to prove the bounds 
in the second line of \eqref{sticazzi} (with $n=k+1$). 
By \eqref{signorina} we have 

\begin{eqnarray}
|Z^{(k+1)}|_{k+1}
&\leq&
|Z^{(k)}|_{k} + 
 (\g\hat\delta)^{-1} \widehat K |R^{(k)}|_{k} (\widehat C|R^{(k)}|_{k}+ |Z^{(k)}|_{k})\,,
\nonumber
\\
|R^{(k+1)}|_{k+1}
&\leq&
  (\g\hat \delta)^{-1} \widehat K |R^{(k)}|_{k} (\widehat C|R^{(k)}|_{k}+ |Z^{(k)}|_{k})\,.
\label{signorinab}
\end{eqnarray}
By substituting the inductive hypothesis  \eqref{sticazzi}, we have the following chain of inequalities
	\begin{eqnarray*}
	\g^{-1}|R^{(k+1)}|_{k+1} 
	 & \le&
	 \hat\delta^{-1} \e^2 \widehat K(4\widehat C \widehat K \hat\delta^{-1}\e )^k (\widehat C(4 \widehat C \widehat K \hat\delta^{-1}\e )^k + 2)\\ 
	 & 
	\stackrel{\eqref{minni}}\le &
	 \hat\delta^{-1} \e^2 \widehat K(4\widehat C \widehat K \hat\delta^{-1}\e )^k (\widehat C+ 2)
	\\ & \le & (4\widehat C \widehat K \hat\delta^{-1} )^{k+1}   \e^{k+2} = (4\widehat C \widehat K \hat\delta^{-1}\eps )^{k+1}\eps ,
	\end{eqnarray*}
which proves the bound on $R^{(n)}$ in \eqref{sticazzi} for any $n$.
\\
En passant,	we note that
	\begin{equation}\label{sweet dream R}
	\g\e \pa{4 \widehat C\widehat K \hat\delta^{-1}\e}^{\suca}
	=  
	\frac{\g}{2^9 e r_\star^2}
	\pa{\frac{ \widehat C\widehat K \suca}{4 r_\star^2}}^{\suca}
	r^{2(\suca+1)}
\,.
\end{equation}
	Finally, using the same strategy as above, we also get
	$$
	\g^{-1}|Z^{(k+1)}|_{k+1 } \le  \e \pa{\sum_{h=0}^k 2^{-h}  + 
	 (4\widehat C \widehat K \hat\delta^{-1} )^{k+1}    \e^{k+1}} 
	 \stackrel{\eqref{minni}}\le 
	 \e \sum_{h=0}^{k+1} 2^{-h}\,,
	$$
	which completes the proof of the inductive hypothesis \eqref{sticazzi},
and remark that
	\begin{equation}\label{sweet dream Z}
	\eps \sum_{h=0}^{\suca} 2^{-h} = \frac{r^2}{ 2^8 e r_\star^2} \pa{1 - 2^{-\suca - 1}}.
	\end{equation}

By \eqref{pollon4} we have
\begin{eqnarray}
	\label{daitarn3}
	& \Phi_k\ :\ B_{r_{k+1}}(\th_{\tw_\suca})\ \to\ 
B_{r_k}(\th_{\tw_\suca})\,,
\nonumber
\\
&\sup_{u\in  B_{r_{k+1}}(\th_{\tw_\suca})} 	\norm{\Phi_k(u)-u}_{\tw_\suca}
\le
   r_k \g^{-1} \widehat K^\sharp |R^{(k)}|_k\,.
	\end{eqnarray}
	In conclusion we define
	\[
	\Psi:= \Phi_0\circ \Phi_1\circ\dots\circ \Phi_{\suca-1}\ :\ 
	B_{ r}( \th_{\suca}) \to B_{2r}(\th_{\suca}).
	\]
Since we have
\begin{eqnarray*}
&&\Phi_0\circ \Phi_1\circ\dots\circ \Phi_{\suca-1}-\id
\\
&&=
(\Phi_0-\id)\circ \Phi_1\circ\dots\circ \Phi_{\suca-1}+
(\Phi_1-\id)\circ \Phi_2\circ\dots\circ \Phi_{\suca-1}+
\ldots
\Phi_{\suca-1}-\id\,.
\end{eqnarray*}
By \eqref{daitarn3} we get
\begin{eqnarray*}
\sup_{u\in  B_r(\th_{\tw_\suca})} 	\norm{\Psi(u)-u}_{\tw_\suca}
\leq
\sum_{k=0}^{\suca-1}
r_k \g^{-1} \widehat K^\sharp |R^{(k)}|_k
\stackrel{\eqref{sticazzi}}\leq
2r \e \widehat K^\sharp \sum_{k=0}^{\suca-1} 2^{-k}
\leq 4r  \widehat K^\sharp\e\,,
\end{eqnarray*}
proving \eqref{california}.
We finally set
	 $Z= Z_{\suca}, R= R_{\suca}$ and the  estimates \eqref{dreaming} follow by \eqref{sweet dream R}-\eqref{sweet dream Z}.
\end{proof}

\section{Proof of Theorem \ref{sob}}\label{provasob}

Theorem \ref{sob} is a particular case of the 
general Theorem \ref{weisserose}
in the usual three cases $\SO,\MS,\GE.$ 
We have to check Assumption
\ref{hip0}.

\begin{lemma}\label{bonhoeffer}
Consider the constants introduced in Assumption
\ref{hip0}.
We have the usual three cases\footnote{
Recall Remark \ref{lentezza}.}.

$\SO$) 
When
$$
\tw_{n,j} = \tw_{0,j} \jap{j}^{ n \tau}\,,\qquad
\tau:=\tau_0(2+\fp)
$$ 
we have 
\begin{eqnarray*}
&&\widehat C 
\le
 \Cmon(4\suca,\eta/\suca,\tau)
 \,,\quad 
 \widehat K \le  
 \Cdue(4\suca,\eta/\suca,\tau)\,,
\\ 
&& \widehat K^\sharp \le  
 \Cdue(4\suca,\eta/\suca,\suca\tau)\,.
\end{eqnarray*}

 $\MS$)  When   
 $$
  \tw_{n,j} = \tw
  _{0,j} \jml{j}^{ n \tau_1}
  $$ 
and assuming the momentum conservation,  we have  
\[
\widehat \CM  = 1\,,\quad \widehat K_0,\, \widehat K^\sharp_0 \le 
6^{\tau_1} (4^6 e^{27})^{2+\fp}
\,.
\]

$\GE$) When 
 $$
 \tw_{n,j} = \tw_{0,j}  e^{\frac{n\eta}{{\suca}} \jap{j}^{\theta}}
 $$ 
 we have  
\[
\widehat C=1\,,\quad  
\widehat K,\, \widehat K^\sharp
\leq
e^{\Cuno\pa{\frac{\suca}{\eta}}^{\frac{3}{\theta}}}\,.
\]

\end{lemma}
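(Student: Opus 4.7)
\medskip

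\noindent\textbf{Proof plan for Lemma \ref{bonhoeffer}.} The approach is to verify each bound in Assumption \ref{hip0} by specializing Proposition \ref{crescenza} (for $\widehat C$) and Lemma \ref{Lieder} (for $\widehat K,\widehat K^\sharp$) to the step-wise parameter choices \eqref{gamberetto1}. The key arithmetic, used throughout, is that $r_n-r^*_n=r/(2\suca)$ and $\eta_n-\eta_{n+1}=\eta/\suca$, so that $r_n/(r_n-r^*_n)\le 4\suca$ and $\eta_n-\eta_\suca\ge \eta/\suca$ for every $0\le n<\suca$; moreover $\tw_{\suca,j}/\tw_{n,j}=(\tw_{n+1,j}/\tw_{n,j})^{\suca-n}$ in all three cases.

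First I would handle case $\SO$. For $\widehat C$, Proposition \ref{crescenza}(1) with $r\to r_n$, $\rho\to r_n-r^*_n$, $\sigma\to\eta_n-\eta_{n+1}$, $p_1\to\tau$ directly gives $\widehat C\le \Cmon(4\suca,\eta/\suca,\tau)$. For $\widehat K$, the same parameter substitution in Lemma \ref{Lieder}$(\SO)$ yields $\widehat K\le \Cdue(4\suca,\eta/\suca,\tau)$, since the quantity inside the sup in \eqref{dito2} is precisely what Lemma \ref{Liederbis} bounds. For $\widehat K^\sharp$, the weight jump from $\tw_n$ to $\tw_\suca$ is of order $(\suca-n)\tau\le \suca\tau$, so we apply the same estimate with $\tau\rightsquigarrow\suca\tau$, keeping $r/\rho\le 4\suca$ and $\sigma\ge\eta/\suca$, to obtain $\widehat K^\sharp\le \Cdue(4\suca,\eta/\suca,\suca\tau)$. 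Case $\GE$ is handled analogously: by Proposition \ref{crescenza}(2) the weight change $\tw_{n+1,j}/\tw_{n,j}=e^{(\eta/\suca)\jap{j}^\theta}$ is exactly compensated by the decrease $\sigma=\eta/\suca$ in $\eta$, so $\widehat C=1$; Lemma \ref{Lieder}$(\GE)$ with $\sigma=\eta/\suca$ then produces $\widehat K\le e^{\Cuno(\suca/\eta)^{3/\theta}}$, and for $\widehat K^\sharp$ the effective $\sigma$ is $(\suca-n)\eta/\suca\ge\eta/\suca$, which only improves the bound, giving the same constant.

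Case $\MS$ is the most delicate. The monotonicity property \eqref{pierodellafrancesca}, which uses momentum conservation, yields $\jml{j}^2/\prod_i\jml{i}^{\bal_i+\bbt_i}\le 1$ whenever $\bal_j+\bbt_j\ge 1$ and $\sum_i i(\bal_i-\bbt_i)=0$; raising this to the power $(\suca-n)\tau_1$ absorbs the entire weight jump $\tw_\suca/\tw_n$ at no cost, so $\widehat\CM=1$ and the sharp bound on $\widehat K^\sharp_0$ reduces to the one on $\widehat K_0$, where Lemma \ref{Lieder}$(\MS)$ applies directly and produces the constant $6^{\tau_1}(4^6 e^{27})^{2+\fp}$. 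The main obstacle is precisely this last point: a naive iteration of the single-step bound would give $6^{\suca\tau_1}$ and blow up with $\suca$; the saving comes from observing that \eqref{pierodellafrancesca} lets us factor out the $(\suca-n-1)\tau_1$ excess weight as a quantity $\le 1$, leaving only a single $\tau_1$ loss to be paid through the homological estimate, which is exactly the content of the $\MS$ case of Lemma \ref{Lieder} and keeps the constant $\suca$-independent.
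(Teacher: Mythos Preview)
Your proposal is correct and follows essentially the same approach as the paper: the paper's proof simply cites \eqref{mulo} (i.e.\ Proposition~\ref{crescenza}(1)) and Lemma~\ref{Lieder} for case $\SO$, \eqref{vacca} (i.e.\ the monotonicity of $\|\cdot\|_{r,p}$, which is \eqref{pierodellafrancesca}) and Lemma~\ref{Lieder} for case $\MS$, and \eqref{emiliaparanoica} (i.e.\ Proposition~\ref{crescenza}(2)) and Lemma~\ref{Lieder} for case $\GE$. Your write-up is in fact more explicit than the paper's, and your explanation of why $\widehat K_0^\sharp$ stays $\suca$-independent in the $\MS$ case---factoring out the excess weight $(\jml{j}^2/\prod_i\jml{i}^{\bal_i+\bbt_i})^{(\suca-n-1)\tau_1}\le 1$ via \eqref{pierodellafrancesca} before invoking the single-step homological bound---is exactly the mechanism the paper relies on but does not spell out.
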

\begin{proof}

$\SO$) The computation of $\widehat C$ follows from \eqref{mulo};
 the ones of $ \widehat K,\widehat K^\sharp$ again from Lemma \ref{Lieder}.

$\MS$) The computation of $\widehat \CM $ follows from \eqref{vacca};
 the ones of $ \widehat K_0,\widehat K^\sharp_0$ again from Lemma \ref{Lieder}. 
 
  $\GE$) The computation of $\widehat C$ follows from \eqref{emiliaparanoica};
 the ones of $ \widehat K,\widehat K^\sharp$ from Lemma \ref{Lieder}.
\end{proof}

 We use Theorem \ref{weisserose}
 with
 \begin{equation}\label{salisano}
 G \rightsquigarrow P\,,\ \ 
 \tw_\suca \rightsquigarrow \tw\,.
\end{equation}
We distinguish the usual three cases.

 $\SO$) Set
 \begin{equation}\label{casaprota}
 \tw_{n,j}:=\jap{j}^{1+\tau n}\,,
  \quad 
\eta:=\ta/2\,, \quad  
 \bar r:=\frac{\sqrt R}{\Calgo}\,.
 \end{equation}
  Then Assumption
\ref{hip0}   is satisfied by Lemma \ref{bonhoeffer}.
 We have that
 \begin{equation}\label{poggiomirteto}
 |P|_{\bar r,\eta,\tw_0}=| P|_{\bar r,1,0,0,\ta/2}
 \stackrel{\eqref{stimazero}}\leq
 \CNem(1,0,\ta/2)|f|_{\ta,R}
\end{equation}
Noting that
\begin{eqnarray}
		\Cmon(4\suca,\eta/\suca,\tau)
		&=&
		 2^{2\tau+1} \tau^{\tau}  \suca^\tau
		\max\left\{4 ,  (1/2\eta)\right\}^\tau = 2 (4\tau \max\left\{4 ,  (1/2\eta)\right\}\suca)^\tau \,,
				\nonumber
		\\
		 \Cdue(4\suca,\eta/\suca,\tau)
&=&
		 2 e^{27(2+\fp)}
	 (12\tau \max\left\{ 4, (1/2\eta)\right\}\suca)^{3\tau} 
	\nonumber
		\\
		 \Cdue(4\suca,\eta/\suca,\suca\tau)
&=&
		 2 e^{27(2+\fp)}
	 (6\suca\tau)^{3\suca\tau}
	\max\left\{\pa{8\suca}^{3\suca\tau} ,  (\suca/\eta)^{3\suca\tau}\right\}
	\nonumber
		\\
	&=& 2 e^{27(2+\fp)}(12\tau \max\left\{4 , (2\eta)^{-1}\right\} \suca^2)^{3\suca\tau}\,,
	\label{fara}
\end{eqnarray}
we have that
for $\suca\geq 3$ 
\begin{equation}\label{lucrezia}
\Cmon(4\suca,\eta/\suca,\tau) 
 \Cdue(4\suca,\eta/\suca,\tau)
 \leq 
  \sqrt{\Cdue(4\suca,\eta/\suca,\suca\tau)}\,.
\end{equation}
By \eqref{borgia}
\begin{eqnarray*}
r_\star
&\geq&
 \frac{\sqrt{\g R}}{\Calgo\sqrt{2^{11}e \CNem(1,0,\ta/2)|f|_{\ta,R}}}
 \geq
 \dede\,.
\end{eqnarray*}
Then, recalling \eqref{mmm} and \eqref{lucrezia}, 
 \begin{eqnarray}  \label{poggiomoiano0}
\widehat{\tr}
&\geq& 
\mathtt r(\SO)
\,.
\end{eqnarray}
Moreover
$$
\widehat{\mathtt C}_1
\leq
\mathtt C_1(\SO)\,,
\qquad
 \widehat{\mathtt C}_2
\leq 
\mathtt C_2(\SO)
$$
and, recalling  \eqref{dreaming},
$$
\widehat{\mathtt C}_3
	\leq
	\mathtt C_3(\SO)
	\,.
$$
Finally
$$
\mathtt C_1(\SO) (\mathtt r(\SO))^2\leq \frac18\,,
$$
proving the last inequality in \eqref{stracchinobis}.

 $\MS$) Set
 \begin{equation}\label{casaprotaMS}
 \tw_{n,j}:=\jml{j}^{1+\tau_1 n}\,,
  \quad 
\eta:=0\,, \quad  
 \bar r:=\frac{\sqrt R}{\CalgMo}={\sqrt {R/10}}\,.
\end{equation}
  Then Assumption
\ref{hip0}   is satisfied by Lemma \ref{bonhoeffer}.
 We have that
 \begin{equation}\label{poggiobustone}
 |P|_{\bar r,0,\tw_0}=\| P\|_{\bar r,1}
 \stackrel{\eqref{stimazero2}}\leq
2 |f|_{R}
\end{equation}
By \eqref{borgia}
$$
r_\star\geq \frac{\sqrt{\g R}}{2^6\sqrt{10 e|f|_{R}}} 
$$
 Then, recalling \eqref{mmm}
 $$
 \widehat{\tr}
\geq
\mathtt r(\MS)
\,.
 $$
Moreover
$$
\widehat{\mathtt C}_1
\leq
\mathtt C_1(\MS)\,,
\qquad
 \widehat{\mathtt C}_2
\leq
\mathtt C_2(\MS)\,,\qquad
\widehat{\mathtt C}_3
	\leq
	\mathtt C_3(\MS)
	\,,
\qquad
\mathtt C_1(\MS) (\mathtt r(\MS))^2\leq \frac18\,.
$$

$\GE$) Set
 \begin{equation}\label{casaprotaGE}
 \tw_{n,j}:=e^{a|j|+ \big(s+(\frac{n}{\suca}-1)\eta\big) \jap{j}^\theta}
		\jap{j}^p\,,\qquad
\eta=\eta_\GE:=\min\left\{\frac{\ta-a}{2},s\right\}\,, \ \  
 \bar r:=\frac{\sqrt R}{\Calg}\,.
 \end{equation}
 Then Assumption
\ref{hip0}   is satisfied by Lemma \ref{bonhoeffer}.
 We have that
 \begin{equation}\label{poggibonsi}
 |P|_{\bar r,\eta,\tw_0}=| P|_{\bar r,p,s-\eta,a,\eta}
 \stackrel{\eqref{stimazero}}\leq
\CNem(p,s-\eta,\ta- a -\eta)|f|_{\ta,R}
\end{equation}
By \eqref{borgia}
\begin{eqnarray*}
r_\star
\geq
\delta_\GE\,.
\end{eqnarray*}
Then, recalling \eqref{mmm} and Lemma \ref{bonhoeffer}, 
 \begin{eqnarray*}  
\widehat{\tr}
\geq
\mathtt r(\GE)\,,
\qquad
\widehat{\mathtt C}_1\leq
\mathtt C_1(\GE)\,,
\qquad
\widehat{\mathtt C}_2\leq
\mathtt C_2(\GE)\,,
\qquad
\widehat{\mathtt C}_3\leq
\mathtt C_3(\GE)\,,
\qquad
\mathtt C_1(\GE) (\mathtt r(\GE))^2\leq \frac18
\,.
\end{eqnarray*}
The proof of Theorem \ref{sob} is now completed.


\section{Proof of Theorem \ref{tarzanello} }\label{fine}

Theorem \ref{tarzanello} follows from Theorem \ref{sob}.
 
 We need the following auxiliary result,
 whose proof is postponed to the Appendix.
 
 \begin{lemma}\label{cobra}
 On the Hilbert space $\th_\tw$ consider the dynamical system
  \[
  \dot v = X_{\mathcal N}+X_R\,,
  \qquad
  v(0)=v_0\,, \qquad
  |v_0|_\tw\leq \frac34 r\,,
 \]
 where 
$\mathcal N\in \mathcal{A}_r(\th_{\mathtt w}) $
and $R\in\cH_{r,\eta}(\th_{\mathtt w})$
for some $r>0,\eta\ge 0.$
Assume that 
$$
\Re (X_{\mathcal N},v)_{\th_\tw}=0\,.
$$  
Then
\begin{equation}\label{giuncata}
\Big| |v(t)|_\tw-|v_0|_\tw\Big|< \frac{r}{8}\,, \qquad
\forall\,  |t|\leq 
\frac{1}{8|R|_{r,\eta,\tw}}\,.
\end{equation}
\end{lemma}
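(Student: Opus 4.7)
The plan is a standard \emph{a priori} energy estimate combined with a continuity (bootstrap) argument. The hypothesis on $X_{\mathcal N}$ kills its contribution to the time-derivative of $|v|_\tw^2$, so everything is driven by the (small) vector field $X_R$, which we control through the majorant norm $|R|_{r,\eta,\tw}$.

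First I would differentiate the squared norm. Since $\th_\tw$ is a Hilbert space with $|v|_\tw^2=(v,v)_{\th_\tw}=\sum_j\tw_j^2|v_j|^2$, one has
\begin{equation*}
\tfrac12\tfrac{d}{dt}|v(t)|_\tw^2=\Re(\dot v,v)_{\th_\tw}=\Re(X_{\mathcal N}(v),v)_{\th_\tw}+\Re(X_R(v),v)_{\th_\tw}=\Re(X_R(v),v)_{\th_\tw},
\end{equation*}
using the assumption $\Re(X_{\mathcal N},v)_{\th_\tw}=0$. Cauchy--Schwarz then gives $\bigl|\tfrac{d}{dt}|v(t)|_\tw\bigr|\le|X_R(v(t))|_\tw$ whenever $|v(t)|_\tw>0$.

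Next I would exploit the majorant structure of $R$. Expanding $R=\sum R_{\bal,\bbt}u^\bal\bar u^\bbt$, the $j$-th component of $X_R(v)$ satisfies
\begin{equation*}
|X_R^{(j)}(v)|\le\sum|R_{\bal,\bbt}|\,\bbt_j\,|v|^{\bal+\bbt-e_j}\le|X_{\und R_\eta}^{(j)}(|v|)|,
\end{equation*}
so that $|X_R(v)|_\tw\le|X_{\und R_\eta}(|v|)|_\tw$. Since the sequence of moduli $|v|$ has the same $\th_\tw$-norm as $v$, Definition~\ref{nonna} yields, for every $v$ with $|v|_\tw\le r$,
\begin{equation*}
|X_R(v)|_\tw\le r\,|R|_{r,\eta,\tw}.
\end{equation*}

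Set $T:=1/(8|R|_{r,\eta,\tw})$ and consider the forward problem (the backward case is identical). Local well-posedness in $\th_\tw$ follows from the analyticity of $X_{\mathcal N}+X_R$ in $B_r(\th_\tw)$. Define
\begin{equation*}
\tau^*:=\sup\{t\in[0,T]\,:\,|v(s)|_\tw\le 7r/8\text{ for all }s\in[0,t]\}.
\end{equation*}
Since $|v_0|_\tw\le 3r/4<7r/8$, continuity gives $\tau^*>0$. Assume by contradiction that $\tau^*<T$; then $|v(\tau^*)|_\tw=7r/8$. But on $[0,\tau^*]$ the bound $|v(s)|_\tw\le 7r/8<r$ allows to apply the previous vector-field estimate, giving
\begin{equation*}
|v(\tau^*)|_\tw\le|v_0|_\tw+\tau^*\,r\,|R|_{r,\eta,\tw}<\tfrac{3r}{4}+T\,r\,|R|_{r,\eta,\tw}=\tfrac{3r}{4}+\tfrac{r}{8}=\tfrac{7r}{8},
\end{equation*}
a contradiction. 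Hence $\tau^*=T$, and integrating $\bigl|\tfrac{d}{dt}|v(t)|_\tw\bigr|\le r\,|R|_{r,\eta,\tw}$ on $[0,t]$ for $t\le T$ yields exactly \eqref{giuncata}.

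The only real subtlety is the majorant step: one must see that $|\cdot|_{r,\eta,\tw}$, although defined through the $\eta$-weighted majorant $\und R_\eta$ whose coefficients are \emph{larger} than those of $R$, still controls the true Hamiltonian vector field $X_R$ pointwise in $\th_\tw$-norm. Once this is unpacked, the rest is an elementary Gronwall/bootstrap.
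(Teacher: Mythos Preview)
Your proof is correct and follows essentially the same approach as the paper: differentiate $|v|_\tw^2$, use the orthogonality hypothesis to drop $X_{\mathcal N}$, bound $|X_R(v)|_\tw\le r|R|_{r,\eta,\tw}$ via the majorant, and close with a continuity/contradiction argument. The only cosmetic difference is that the paper runs the bootstrap directly on the quantity $\big||v(t)|_\tw-|v_0|_\tw\big|$ rather than on $|v(t)|_\tw\le 7r/8$, but the two formulations are equivalent.
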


 Let us now prove Theorem \ref{tarzanello}, starting with
 the case $\SO)$.
 
 $\SO$) 
 Set
 $$
 r:=2\delta\,.
 $$
 Recalling \eqref{casaprota} and \eqref{salisano},
 by Corollary \ref{neminchione}
 solutions of the PDE  \eqref{NLSb}
 in the Sobolev space $\th_{p,0,0},$
correspond, by Fourier identification \eqref{silvacane},
  to orbits of  the Hamiltonian System  
 \eqref{hamNLS} in the space (recall Definition \ref{farfa})
 $$
 \th_\tw=\th^p \qquad
  {\rm with}
  \qquad 
  \tw_j
 =\jap{j}^{p}
 \quad({\rm and} \ |\cdot|_\tw=|\cdot|_p)\,.
 $$
An initial datum $u_0$
 satisfying 	$
|u_0|_{L^2}+ |\partial_x^p u_0|_{L^2} 
\le
\delta
$
 corresponds to\footnote{We still denote it by
 $u_0$.}
 $u_0\in \th^p$ with
 $|u_0|_p\leq \delta$
 by \eqref{vallinsu}.
 We want to apply the Birkhoff Normal Form Theorem \ref{sob}
 with 
 $$
 \suca=(p-1)/\tauSO\,.
 $$
 Recalling the definition of $\mathtt r(\SO,\suca)$ and noting that
 $\Calgo=2\sqrt{1+\pi^2/3}$,
 we have to verify that, for any $\suca\geq 1$
 \begin{equation}\label{cunegonda}
 \deSO ({\mathtt k}_\SO p)^{ -3 p }
 \leq 
 \frac{ \dede }
{2\sqrt{\suca 
\Cdue(4\suca,\ta/2\suca,\suca\tau) }}
\,.
\end{equation}
Indeed
\begin{eqnarray*}
&& \frac{ \dede }
{2\sqrt{\suca 
\Cdue(4\suca,\ta/2\suca,\suca\tau) }}
\stackrel{\eqref{fara}}=
\frac{\dede }{2\sqrt 2 e^{27(2+\fp)/2}}
\frac{1}{\sqrt\suca}
\frac{1}{(\sqrt{12\tau}\suca \max\left\{2 , \ta^{-1/2}\right\})^{3\suca\tau}}
\\
&&=
\frac{\dede \sqrt{\tau} }{2\sqrt 2 e^{27(2+\fp)/2} }\left(\sqrt{\frac{12}{\tau}}\max\left\{2 , \ta^{-1/2}\right\}\right)^{-3(p-1)} (p-1)^{-3(p-1)-1/2}
\\
&&=
 \deSO ({\mathtt k}_\SO)^{ -3 p } (p-1)^{-3(p-1)-1/2}
\end{eqnarray*}
and \eqref{cunegonda} follows since 
$
p^{-3p}< (p-1)^{-3(p-1)-1/2}
$
for $p>1.$

 Then by Theorem \ref{sob} and setting
 $v_0 := \Psi^{-1} (u_0)$, 
 by \eqref{stracchinobis} we get
 $$
 |v_0|_p =
 |\Psi^{-1} (u_0)-u_0|_p +|u_0|_p 
 \leq \frac18 r+\frac12 r
 =\frac58 r\,.
 $$
By \eqref{duspaghibis} and \eqref{fara} 
\begin{eqnarray*}
&&
|R|_{r,0,\tw}
\leq
\mathtt C_3(\SO,\suca)(2\delta)^{2(\suca+1)}
\\
&& =  	\delta^2 \frac{2^{10} \CNem(1,0,\ta/2)|f|_{\ta,R}}{e R}
\pa{\frac{  \suca\Cmon(4\suca,\ta/2\suca,\tau) 
		\Cdue(4\suca,\ta/2\suca,\tau) \delta^2}{\dede^2}}^{\suca}\\
	&=& \delta^2 \frac{2^{10} \CNem(1,0,\ta/2)|f|_{\ta,R}}{e R}
\pa{\frac{   4 e^{27(2+\fp)} 3^{3\tau}  (4 \max\left\{4 ,  (1/\ta)\right\})^{4\tau}  \delta^2 }{\tau \dede^2}}^{\suca} ( \tau\suca)^{\frac{4\tau+1}{\tau}(\tau\suca) }\\
\end{eqnarray*}  
We claim that  (remember that  $\suca= (p-1)/\tau$)
\[
|R|_{r,0,\tw} < \frac{1}{8} \frac{\delta^2}{\mathtt T_\SO}({\mathtt K}_\SO p)^{ 5 p }
\left(\frac{\delta}{\deSO}\right)^{2\suca}\,.
\]
This holds true since
\begin{eqnarray*}
&&\frac{2^{13} \CNem(1,0,\ta/2)|f|_{\ta,R}}{e R}
\pa{\frac{   4 e^{27(2+\fp)} 3^{3\tau}  (4 \max\left\{4 ,  (1/\ta)\right\})^{4\tau}   \deSO^2}{\tau \dede^2}}^{\suca} (p-1)^{\frac{4\tau+1}{\tau}(p-1)}
\\
&&< \frac{1}{\mathtt T_\SO}({\mathtt K}_\SO p)^{ 5 p }\,,
\end{eqnarray*}
noting that
$
(p-1)^{\frac{4\tau+1}{\tau}(p-1)} < p^{5p}
$ for $p>1$
(recall that 
 $\tau\geq 15$).
We apply Lemma \ref{cobra} with $\mathcal N\rightsquigarrow D_\omega +Z,$
 $\eta\rightsquigarrow 0;$
  then by \eqref{giuncata}
  \begin{equation*}
  |v(t)|_p \leq
\Big| |v(t)|_p -|v_0|_p \Big|+|v_0|_p < \frac{r}{8}+\frac58 r
=\frac34 r\,, \qquad
\forall\,  |t|\leq 
({\bf C}_\SO p)^{ -\frac{4\tauSO+1}{\tauSO} p }(\frac{1}{\delta})^{\frac{2(p-1)}{\tauSO}+2}
\,.
\end{equation*}
Since $\Psi$ is symplectic we have that
$u(t)=\Psi(v(t))$; then  by \eqref{stracchinobis}
$$
|u(t)|_p
\leq |\Psi(v(t))-v(t)|_p+|v(t)|_p
\leq \frac18 r +\frac34 r<2\delta\,,\qquad
\forall\,  |t|\leq 
\frac{\mathtt T_\SO}{\delta^2}({\mathtt K}_\SO p)^{ -5 p }
\left(\frac{\deSO}{\delta}\right)^{\frac{2(p-1)}{\tauSO}}\,.
$$  
Finally by \eqref{vallinsu}
we get
$$
|u(t)|_{L^2}+ |\partial_x^p u(t)|_{L^2}\leq 
2|u(t)|_p
\leq 4\delta\,,\qquad
\forall\,  |t|\leq 
\frac{\mathtt T_\SO}{\delta^2}({\mathtt K}_\SO p)^{ -5 p }
\left(\frac{\deSO}{\delta}\right)^{\frac{2(p-1)}{\tauSO}}\,,
$$
proving \eqref{tommaso3}. 

$\MS$)  It is similar to the previous case but now
 $$
 \th_\tw=\th^p \qquad
  {\rm with}
  \qquad 
\tw_j
 =\jml{j}^{p}
 \quad({\rm and} \ |\cdot|_\tw=\|\cdot\|_p)\,.
 $$
An initial datum $u_0$
 satisfying 
 $
2^p |u_0|_{L^2}+ |\partial_x^p u_0|_{L^2} 
\le
\delta
$
 corresponds to
 $u_0\in \th^p$ with
 $\|u_0\|_p\leq \delta$
 by \eqref{vallinsuMS}.
 Now we can apply the Birkhoff Normal Form Theorem \ref{sob}
 with $\suca=(p-1)/\tauMS$
 since, for any $\suca\geq 1$
 \begin{equation}\label{cunegonda2}
 \min\left\{
 \frac{2\sqrt \tauMS \delta_\MS}{\sqrt p}\,,
\ 
\frac{\sqrt R}{4\sqrt{10}}
\right\}
 \leq 
\frac12\mathtt r(\MS,\suca)
=\min\left\{
\frac{2 \delta_\MS }{\sqrt{\suca}}\,,
\ 
\frac{\sqrt R}{4\sqrt{10}}
\right\}
\,.
\end{equation}
 Proceeding as in the case $\SO$ and noting that now
\begin{eqnarray*}
8|R|_{r,0,\tw}
\leq
8\mathtt C_3(\MS,\suca) (2\delta)^{2(\suca+1)}
=
5\cdot 2^9\frac{|f|_{R}}{R}
 \left( \frac{\suca \delta^2}
 {4 \delta_\MS^2}\right)^{\suca}
 \delta^{2}
 =\frac{1}{\mathtt T_\MS}
  \left( \frac{(p-1) \delta^2}
 {4\tauMS \delta_\MS^2}\right)^{\frac{p-1}{\tauMS}}
 \delta^{2}
\,,
\end{eqnarray*}  
we get
$$
\|u(t)\|_p
\leq 2\delta\,,\qquad
\forall\,  |t|
\leq 
 \frac{\mathtt T_\MS}{\delta^{2}}
  \left( 
  \frac{4\tauMS \delta_\MS^2}{(p-1) \delta^2}
 \right)^{\frac{p-1}{\tauMS}}
\,.
$$  
Finally by \eqref{vallinsuMS}
we get
$$
2^p|u(t)|_{L^2}+ |\partial_x^p u(t)|_{L^2}\leq 
2\|u(t)\|_p
\leq 4\delta\,,\qquad
\forall\,  |t|
\leq 
 \frac{\mathtt T_\MS}{\delta^{2}}
  \left( 
  \frac{4\tauMS \delta_\MS^2}{(p-1) \delta^2}
 \right)^{\frac{p-1}{\tauMS}}
\,,
$$
proving \eqref{boh3}.

 \section{Proof of Theorems \ref{sorbolev!} and \ref{gegge}}\label{fine2}

 We start considering 
 Theorem \ref{sorbolev!}, case $\SO.$

 $\SO$)  We start by noticing that for $3p \ln (\mathtt k_\SO p) \le \ln(\deSO/\delta)$ the function 
 $\frac{\mathtt T_\SO}{\delta^2}({\mathtt K}_\SO p)^{ -5 p }
 \left(\frac{\deSO}{\delta}\right)^{\frac{2(p-1)}{\tauSO}}$ is increasing in $p$. 
 
 Let us check that $p(\delta)$ defined in \eqref{checco} satisfies \eqref{moro},
 namely, passing to the logarithms and setting
 $y :=  \ln(\deSO/\delta)$, 
 we have to check that   $ 3p \ln (\mathtt k_\SO p) \le y.$
 Indeed we have
\begin{eqnarray*}
3p \ln 
(\mathtt k_\SO p) 
\le 
3 \left(1+ \frac16 \frac{y}{\ln(y)}\right)  \left(\ln (\mathtt k_\SO)+\ln(1+ \frac16  \frac{y}{\ln(y)} )\right) 
\leq y
\end{eqnarray*}
 provided that\footnote{
 Note that the function
 $$
 y \mapsto 
 y-3 \left(1+ \frac16 \frac{y}{\ln(y)}\right)  \left(\ln y+\ln(1+ \frac16  \frac{y}{\ln(y)} )\right) 
 $$ is positive for $y\geq 40.$}
 \[
 y \ge \max\{\mathtt k_\SO,40\}.
 \]
 Now we have to show that 
 \[
  \frac{\mathtt T_\SO}{\delta^2} e^{ \ \frac{\ln^2 (\deSO/\delta)}{4 \tau \ln \ln (\deSO/\delta)}} \le 
  \frac{\mathtt T_\SO}{\delta^2}({\mathtt K}_\SO p)^{ -5 p }
  \left(\frac{\deSO}{\delta}\right)^{\frac{2(p-1)}{\tau}}
 \]
 wich amounts to
 \[
 e^{ \ \frac{y^2}{4 \tau  \ln y }}({\mathtt K}_\SO p)^{ 5 p } e^{-\frac{2(p-1)}{\tau} y }\le 1
 \]
or equivalently
\[
 \frac{y^2}{4 \tau \ \ln y } + 5 p \ln({\mathtt K}_\SO p) - \frac{2(p-1)}{\tau} y \le 0\,.
\]
Substituting $1+\frac{y}{6 \ln y}-\tau< p< \frac{y}{3 \ln y}$ we get
\begin{eqnarray*}
&&\frac{y^2}{4 \tau  \ln y } + 5 p \ln({\mathtt K}_\SO p) - \frac{2(p-1)}{\tau} y \le \frac{y^2}{4 \tau  \ln y } + \frac{5}{3} \frac{y}{\ln(y)} \ln(\frac{{\mathtt K}_\SO}{3} \frac{y}{\ln(y)})- 2 y(\frac{1}{6\tau} \frac{y}{\ln(y)}-1)
\\
&&
\le
- \frac{y^2}{12 \tau\ln y } + \frac{5}{3} \frac{y}{\ln(y)}\ln(\frac{{\mathtt K}_\SO}{3\ln(y)})+
\frac{11}{3} y <0
\end{eqnarray*}
if
\[ 
y \ge 50 \tau \ln({\mathtt K}_\SO /3)\,,\quad \frac{y}{\ln(y)} \ge 88\tau\,.
\]
Note that the last inequality holds if 
$y\geq 88 \tau^2$ (recall that $\tau\geq 15$).
Recollecting the condition that $y$ has to satisfy is
$$
 y \ge \max\{\mathtt k_\SO,\, 50 \tau \ln({\mathtt K}_\SO /3),\, 88\tau^2\}\,,
$$
namely $\delta\le \bar{\delta}_\SO$.

$\MS$) Since we are assuming
 $\delta\leq\bar\delta_\MS$ we have that $p$ defined in 
 \eqref{elisabetta} satisfies
 $p>1$ and that \eqref{eisenach} holds.
 Then 
 Theorem \ref{tarzanello} applies
 and \eqref{boh} follows by\footnote{
 Noting that $(4x/[x])^{[x]}\geq e^x$ for $x=\delta_\MS^2/\delta^2\geq 1.$} 
 \eqref{boh3}.


 

\bigskip

We finally prove Theorem \ref{gegge}.
\\
We proceed as in the proof of Theorem  \ref{sorbolev!}.
Let us choose
\begin{equation}\label{contiglianoGE}
\suca(r):=\left[\pa{2\ln \frac{2\delta_\GE}{r}}^{\teta/4}\right]
=\left[\pa{2\ln \frac{\delta_\GE}{\delta}}^{\teta/4}\right]\,.
\end{equation}
Recalling \eqref{casaprotaGE} and \eqref{salisano},
 by Corollary \ref{neminchione}
 solutions of the PDE  \eqref{NLSb}
 in the  space $\th_{p,s,a}$,
correspond, by Fourier identification \eqref{silvacane},
  to orbits of  the Hamiltonian System  
 \eqref{hamNLS} in the space
 $$
 \th_\tw\qquad
  {\rm with}
  \qquad 
  \tw_j=e^{a|j|+ s \jap{j}^\theta}
		\jap{j}^p\,.
 $$
An initial datum $u_0$
 satisfying $|u_0|_{p,s,a} \le \delta$ 
 corresponds to\footnote{We still denote it by
 $u_0$.}
 $u_0\in \th_\tw$ with
 $|u_0|_\tw\leq \delta$.
\\
We claim that  $r\leq 2\mathtt c_\GE\delta_\GE$  
implies 
\begin{equation}\label{bottargaGE}
\frac{r \suca e^{\Cuno\pa{\frac{\suca}{\eta_\GE}}^{\frac{3}{\theta}}}}{2 \delta_\GE}
\leq 1\,.
\end{equation}
Indeed we have $\suca(r)\geq\suca_\GE$
and
 by \eqref{contiglianoGE}
$r\leq 2 \delta_\GE e^{-\frac12 (\suca(r)/2)^{4/\theta}}$
and \eqref{bottargaGE} follows if we show that the function
$$
\suca \ \to\ 
e^{-\frac12 (\suca/2)^{4/\theta}}
\suca e^{\Cuno\pa{\frac{\suca}{\eta_\GE}}^{\frac{3}{\theta}}}
$$
is $\leq 1$ for $\suca\geq \suca_\GE.$
This is true since the function is decreasing for 
$\suca\geq \suca_\GE$ and is $\leq 1$ for
 $\suca= \suca_\GE.$ This proves the claim \eqref{bottargaGE}.
\\
Then by \eqref{duspaghibis} and \eqref{bottargaGE} 
\begin{eqnarray*}
|R|_{r,0,\tw}
&\leq&
\mathtt C_3(\GE) r^{2(\suca+1)}
\leq
\frac{\g r^2}{2^9 e \delta_\GE^2}
	\pa{\frac{r }{2 \delta_\GE}}^{\suca(r)}
=
\frac{\g \delta^2}{2^7 e \delta_\GE^2}
	\pa{\frac{\delta }{ \delta_\GE}}^{\suca(r)}
	\\
	&\leq&
	\frac{\g \delta^2}{2^7 e \delta_\GE^2}
e^{-\pa{\ln\frac{\delta_\GE}{\delta}}^{1+\theta/4}}\,,
\end{eqnarray*}  
since
$\suca(r)\geq\pa{\ln \frac{2\delta_\GE}{r}}^{\teta/4}
=\pa{\ln \frac{\delta_\GE}{\delta}}^{\teta/4}$.
Proceeding as in the proof of Theorem \ref{sorbolev!}
we get
$$
|u(t)|_{p,s,a}
\leq 2\delta\,,\qquad
\forall\,  |t|\leq 
\frac{\mathtt T_\GE}{\delta^2} 
	e^{\pa{\ln\frac{\delta_\GE}{\delta}}^{1+\theta/4}}\,.
$$  
The proof 
of Theorem 
\ref{gegge} is completed.

\appendix

 \section{Constants.}\label{ossobuco}
 
 In this subsection are listed all the constants appearing
 along the paper.
 We first introduce some auxiliary constants.
 Given $t,\s,\zeta>0,$  $p>1/2,$ $0<\teta<1,$ $s,\fp\geq 0,$ 
 we
 set\footnote{Regarding $\CNem$
 	note that
 	$$
 	\sup_{x\ge 1} x^p e^{- t x+s x^\theta} 
 	\le 
 	\exp\pa{(1-\teta)\pa{\frac{s}{t^\teta}}^{\frac{1}{1-\teta}}}\max
 	\left\{\frac{p }{e (1-\teta)t}, e^{-\frac{t(1-\teta)}{p}}\right\}^p\,.
 	$$
 	}
  \begin{eqnarray}
 		\Cmon(t, \s,p)
 		&:=&
 		 2^{p+1} p^{p}
 		\max\left\{\pa{2t}^{p} ,  \s^{-p}, 1\right\}\,,
 		\nonumber
 		\\	
 \Calg
 &:=&
 2^p\Big(\sum_{i\in\Z} \jap{i}^{-2p}\Big)^{1/2}
 \,,
 		\nonumber
 		\\
 \CalgM
 &:=&
 \sqrt{4+2\frac{2p+1}{2p-1}}
 \,,
 		\nonumber
 		\\
 \CNem(p,s,t)
 &:=&
  \Calg
 \big(e^s+\sup_{x\ge 1} x^p e^{- t x+s x^\theta} \big)\,,
 	\nonumber
 		\\			
 		C_*  
 	 &:=&	13/(1-\teta)\,,			
 \nonumber
 		\\	
 \Cuno
 &:=&
 	28 \,\teta^{-1}(\fp+3)
 	\Big(8(\fp+3)C_*\theta^{-1}\Big)^{\frac{2}{\teta}}
 	\pa{\ln \big(8(\fp+3)C_*\theta^{-1}\big)}^{\frac{2}{\teta}+1}
 \,,
 		\nonumber
 		\\
 \Cdue(t,\s,\zeta)
 &:=& e^{27(2+\fp)}\Cmon(t,\s,3\zeta)\,,
 	\nonumber
 		\\
 	\tau&:=& \tau_0(2+\fp)\,,\qquad
 	 \tau_0:=15/2\,, \qquad \tau_1:=2\pa{\tau_0+\frac{3}{2\ln 2}}(2+\fp)
 	\nonumber
 \end{eqnarray}
 Here are the constants appearing
 in Theorem \ref{tarzanello}:
 \begin{eqnarray*}
 \tauSO&:=& \tau\,,
 \\
  \deSO&:=& \frac{  \sqrt{\g R} \left(\sqrt{3}\max\left\{2 , \ta^{-1/2}\right\}\right)^3}{2^7 \tau  e^{27(2+\fp)/2}\sqrt{ \CNem(1,0,\ta/2)|f|_{\ta,R}} }\,,\\
  {\mathtt k}_\SO&:=& \sqrt{\frac{12}{\tau}}\max\left\{2 , \ta^{-1/2}\right\}
 \\
 \mathtt T_\SO &:=& \frac{ e R \tau^4  \mathtt k_\SO^{8+6/\tau} } {3 \cdot 2^{16} \CNem(1,0,\ta/2)|f|_{\ta,R}}   
 \\
 \mathtt K_\SO&:=&\pa{ \frac{      \tau^{4\tau} \mathtt k_\SO^{8\tau+6} }{ 2 \cdot 3^{\tau} } }^{1/(5\tau)}
 \\
 \tauMS&:=&\tau_1\,,
 \\ 
 \delta_\MS
 &:=&
 \frac{\sqrt{ \g R }}
 {\sqrt{5\cdot 2^{17} e 6^{\tau_1} (4^6 e^{27})^{2+\fp}|f|_{R}}}\,,\\
 \mathtt T_\MS
 &:=&
 \frac{R}{5\cdot 2^{9}|f|_{R}}
 \,.
 \end{eqnarray*}
 \
  Here are the constants appearing
 in Theorem \ref{sorbolev!}:
 \begin{eqnarray*}
 \bar{\delta}_\SO
 &:=&
 \min\left\{
\delta_\SO
\exp\big(-\max\{\mathtt k_\SO,\, 50 \tau \ln({\mathtt K}_\SO /3),\, 88\tau^2\}\big)
 \,,\ 
  \frac{\sqrt R}{10}
 \right\}
   \\
 \bar\delta_\MS
 &:=&
 \min\left\{\delta_\MS\,,\ 
 \frac{\sqrt R}{4\sqrt{10}}\right\}
 \,.
 \end{eqnarray*}
 Here are the constants appearinging
 in Theorem \ref{gegge}:
 \begin{eqnarray*}
 \bar\delta_\GE
 &:=&
 \min\left\{\mathtt c_\GE \delta_\GE 
 \,,\ 
 \frac{\sqrt R}{4\Calg}\right\}
 \,,
 \\
 \delta_\GE
 &:=&
  \frac{\sqrt{\g R}}{\Calg\sqrt{2^{11}e 
  \CNem(p,s-\eta_\GE,\ta- a-\eta_\GE)|f|_{\ta,R}}}\,,
  \qquad
  \eta_\GE:=\min\left\{\frac{\ta-a}{2},s\right\}\,,
 \\
 \mathtt c_\GE
 &:=&
 \exp\left( - \left( \max\left\{
 \frac{16(4\Cuno)^\theta}{\eta_\GE^{3}},
 2^{\frac{2\theta+4}{4-\theta}}
 \right\}\right)^{4/\theta}\right)
 \,,
 \\
 \mathtt T_\GE
&:=&
 \frac{2^4 e \delta_\GE^2}{\g}\,.
 \end{eqnarray*}
 Finally, with respect to the three cases $\SO,\MS,\GE,$
 we define  the constants
 $\mathtt r, \mathtt C_1, \mathtt C_2, \mathtt C_3$
 of Theorem \ref{sob}.
 To stress their dependence on the three cases 
 $\SO,\MS,\GE,$ we denote them with
 $\mathtt r(\SO),\mathtt r(\MS),\mathtt r(\GE),$
 $\mathtt C_1(\SO),\mathtt C_1(\MS),\mathtt C_1(\GE)$ etc.
 or also $\mathtt r(\SO,\suca)$ etc. when we want to emphasise 
 the dependence on
 $\suca:$
 \begin{eqnarray*}
 \mathtt r(\SO)
 &=&
 \mathtt r(\SO,\suca)
 :=
 \min\left\{
 \frac{ \dede }
 {\sqrt{\suca 
 \Cdue(4\suca,\ta/2\suca,\suca\tau) }}\,,\ 
 \frac{\sqrt R}{5}
 \right\}\,,
 \\
 \mathtt C_1(\SO)
 &:=&
 \frac{\Cdue(4\suca,\ta/2\suca,\suca\tau)}{2^{7}
 e \dede^2}
 \,,
 \\
 \mathtt C_2(\SO)
 &:=&
 \frac{\g}{2^8 e \dede^2}
 \,,
 \\
 \mathtt C_3(\SO)
 &=&
 \mathtt C_3(\SO,\suca)
 :=
 \frac{2^8 \CNem(1,0,\ta/2)|f|_{\ta,R}}{e R}
 	\pa{\frac{  \suca\Cmon(4\suca,\ta/2\suca,\tau) 
  \Cdue(4\suca,\ta/2\suca,\tau)}{4\dede^2}}^{\suca}
 \,,
 \\
 \dede
 &:=&
 \frac{\sqrt{\g R}}{\sqrt{2^{17} \CNem(1,0,\ta/2)|f|_{\ta,R}}}\,,
\end{eqnarray*}
 \begin{eqnarray*}
 \mathtt r(\MS)
 &:=&
 \mathtt r(\MS,\suca):=
 \min\left\{
 \frac{ 4\delta_\MS }{\sqrt{\suca}}\,,
 \ 
 \sqrt{R/40}\right\}
 \,,\qquad\qquad\qquad\qquad\qquad\qquad\qquad\qquad
 \\
 \mathtt C_1(\MS)
 &:=&
 \frac{1}{2^9 e \delta_\MS}
 \,,
 \\
 \mathtt C_2(\MS)
 &:=&
 \frac{5\cdot 2^5|f|_R}{R}
 \,,
 \\
 \mathtt C_3(\MS)
 &=&
 \mathtt C_3(\MS,\suca):=
  80\frac{|f|_{R}}{R}
  \Big( \suca
  /16 \delta_\MS^2\Big)^{\suca}
 \,,
\end{eqnarray*}
\begin{eqnarray*}
 \mathtt r(\GE)
 &:=&
 \min\left\{\frac{ \delta_\GE }
 {\sqrt{\suca} 
 e^{\frac12\Cuno\pa{\frac{\suca}{\eta_\GE}}^{\frac{3}{\theta}}}}
 \,,\ 
 \frac{\sqrt R}{2\Calg}\right\}
 \,, \qquad\qquad\qquad\qquad\qquad\qquad\qquad
 \\
 \mathtt C_1(\GE)
 &:=&
 \frac{e^{\Cuno\pa{\frac{\suca}{\eta_\GE}}^{\frac{3}{\theta}}}}{2^7 e \delta_\GE^2}
 \,,
 \\
 \mathtt C_2(\GE)
 &:=&
 \frac{\g}{2^8 e \delta_\GE^2}
 \,,
 \\
 \mathtt C_3(\GE)
 &:=&
 \frac{\g}{2^9 e \delta_\GE^2}
 	\pa{\frac{ \suca e^{\Cuno\pa{\frac{\suca}{\eta_\GE}}^{\frac{3}{\theta}}}}{4 \delta_\GE^2}}^{\suca}
 \,.
 \end{eqnarray*}

\section{Proofs of the main properties of the norms}

\begin{lemma}\label{chiappa}
	For $p,\beta>0$ and $x_0\geq 0$ we have that 
	$$
	\max_{x\geq x_0} x^p e^{-\beta x}=
	\begin{cases}
	&(p/\beta)^p e^{-p}
	\quad \mbox{if}  \; \ \  
	x_0\leq p/\beta
	,\\
	&x_0^p e^{-\beta x_0}
	\quad \mbox{if} \; \ \  
	x_0> p/\beta.
	\end{cases}
	$$
\end{lemma}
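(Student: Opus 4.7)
The plan is an entirely standard one-variable calculus argument. Set $f(x):=x^p e^{-\beta x}$ on $[x_0,\infty)$, with $x_0\geq 0$, $p,\beta>0$. First I would compute
\[
f'(x)=x^{p-1}e^{-\beta x}(p-\beta x),
\]
which vanishes in $(0,\infty)$ only at the critical point $x_*:=p/\beta$, is strictly positive on $(0,x_*)$, and strictly negative on $(x_*,\infty)$. Hence $f$ is strictly increasing on $(0,x_*]$ and strictly decreasing on $[x_*,\infty)$, and $x_*$ is its unique global maximum on $(0,\infty)$.

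Next I would split into the two cases according to the position of $x_0$ with respect to $x_*$. If $x_0\leq x_*=p/\beta$, then $x_*\in[x_0,\infty)$, so the maximum on $[x_0,\infty)$ is attained at $x_*$ and equals $f(x_*)=(p/\beta)^p e^{-p}$. If instead $x_0>p/\beta$, the function $f$ is strictly decreasing on $[x_0,\infty)$ (and tends to $0$ at infinity, since exponential decay beats any power), so the maximum is attained at the left endpoint and equals $f(x_0)=x_0^p e^{-\beta x_0}$.

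There is no real obstacle here: both cases follow immediately from the monotonicity information extracted from $f'$, and the formulas in the statement are exactly $f(x_*)$ and $f(x_0)$ respectively. This completes the proof.
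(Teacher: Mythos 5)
Your proof is correct: the derivative computation $f'(x)=x^{p-1}e^{-\beta x}(p-\beta x)$, the resulting monotonicity on $(0,p/\beta]$ and $[p/\beta,\infty)$, and the two-case conclusion are exactly the standard argument. The paper states this lemma without supplying a proof, treating it as elementary, so your argument fills in precisely the calculus the authors leave implicit.
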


\begin{lemma}\label{brabante}
 Let $0<a<1$ and $x_1\geq x_2\geq \ldots\geq x_N\geq 2.$ Then
 $$
 \frac{\sum_{1\leq\ell\leq N} x_\ell}{\prod_{1\leq\ell\leq N} x_\ell^a}
\leq 
x_1^{1-a}+\frac{2}{a x_1^a}\,.
 $$
\end{lemma}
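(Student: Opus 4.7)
The plan is to split the sum according to whether $\ell = 1$ or $\ell \geq 2$. Expanding $\frac{\sum_\ell x_\ell}{\prod_\ell x_\ell^a} = \sum_\ell \frac{x_\ell^{1-a}}{\prod_{m\neq\ell} x_m^a}$, the $\ell = 1$ summand is at most $x_1^{1-a}$: since each $x_m \geq 2 > 1$, the partial product $\prod_{m \geq 2} x_m^a$ in the denominator is $\geq 1$, so this summand is bounded by $x_1^{1-a}$. This yields the leading term of the claimed bound.

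For the remaining indices $\ell \geq 2$, each partial product $\prod_{m\neq\ell} x_m^a$ still contains the factor $x_1^a$, so factoring it out reduces the task to proving $\sum_{\ell \geq 2} \frac{x_\ell^{1-a}}{\prod_{m \geq 2,\,m \neq \ell} x_m^a} \leq \frac{2}{a}$. I would attack this by induction on $N$, relying on three ingredients: (i) the lower bound $x_m \geq 2$, which makes each partial product in the denominator grow at least like $2^{a(N-2)}$; (ii) the ordering $x_\ell \leq x_1$, which lets one dominate each numerator factor $x_\ell^{1-a}$ via the topmost variable; and (iii) the slack in the $\ell = 1$ estimate, since the sharp value $x_1^{1-a}/\prod_{m \geq 2} x_m^a \leq x_1^{1-a} \cdot 2^{-a(N-1)}$ is strictly smaller than the crude bound $x_1^{1-a}$ we used above.

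The main obstacle will be combining (ii) and (iii): a purely local estimate of the $\ell \geq 2$ contribution that uses only $x_m \geq 2$ can fail when some $x_\ell$ is simultaneously very large, so one must not treat the two pieces of the decomposition independently but rather transfer any excess back into the slack of the $\ell = 1$ piece. The final verification should reduce to an elementary one-variable estimate on $[2,\infty)$, in which the constant $\frac{2}{a}$ emerges from a calculus-style maximisation trading $y^{1-a}$ against $y^a$, analogous to (and in the same spirit as) the estimate already recorded in Lemma~\ref{chiappa}.
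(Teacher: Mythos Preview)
Your overall plan --- isolate the $\ell=1$ term and treat the tail by induction on $N$ --- is in the same spirit as the paper, whose proof consists only of the words ``By induction over $N$''. The more serious issue is that the inequality as stated does \emph{not} hold for every $0<a<1$. Take $N=2$, $x_1=x_2=100$, $a=\tfrac{1}{10}$: then
\[
\frac{x_1+x_2}{(x_1x_2)^a}=\frac{200}{100^{0.2}}\approx 79.6,
\qquad
x_1^{1-a}+\frac{2}{a\,x_1^{a}}=100^{0.9}+\frac{20}{100^{0.1}}\approx 63.1+12.6=75.7.
\]
Indeed, with $x_1=x_2=y$ the claim reduces (after multiplying through by $y^a$) to $2y^{1-a}-y\le 2/a$, and the supremum of the left side over $y\ge 2$ tends to $+\infty$ as $a\to 0^+$. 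So the obstacle you anticipated --- that the $\ell\ge 2$ contribution can be too large to absorb into the slack of the $\ell=1$ estimate --- is not a technical nuisance but an actual failure of the inequality; no rearrangement of your argument (or of the paper's sketch) can repair it in this generality.

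The paper, however, only invokes the lemma with $a=\tfrac12$ (in the $\MS$ case of Lemma~\ref{Lieder}), and for that value the induction on $N$ does close: peeling off $y=x_{N+1}$, the inductive hypothesis together with the crude bound $\prod_{\ell\le N}x_\ell^{1/2}\ge x_1^{1/2}$ reduces the step to
\[
\sqrt{y}+\frac{x_1+4}{\sqrt{y}}\;\le\; x_1+4\qquad(2\le y\le x_1),
\]
which, being unimodal in $y$, need only be checked at $y=2$ and $y=x_1$; both endpoint inequalities are elementary. Your outline would go through along the same lines once $a=\tfrac12$ is fixed.
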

\begin{proof}
 By induction over $N$. It is obviously true for $N=1.$
 Assume that it hols for $N$ and prove it for $N+1.$
\end{proof}

\noindent

\subsection{Proof of Lemma \ref{veronese}}\label{tec1}

We first note that (see, e.g. Lemma 17 of Biasco-Di Gregorio, ARMA 2010)
for $p >1/2$ and every sequence $\{x_i\}_{i\in\Z}$, $x_i\geq 0,$
\begin{equation*}
    \left( \sum_{i\in \Z} x_i \right)^2
    \leq
    c\sum_{i\in\Z} \left(\frac{\jap{i}^p\jap{j-i}^p}{\jap{j}^p} x_i\right)^2\,,
\end{equation*}
with
$c:=4^p\sum_{i\in\Z} \jap{i}^{-2p}=(\Calg)^2.$
Then
\begin{eqnarray*}
\norm{f\star g}_{p,s,a}^{2}
&\le&
 \sum_{j}e^{2s\jap{j}^\teta} e^{2a\abs{j}}\jap{j}^{2p}\Big(\sum_{i}|f_{i}||g_{j-i}|\Big)^2\\
 &\le& 
 c
 \sum_{j}e^{2s\jap{j}^\teta} e^{2a\abs{j}}\sum_{i}
 \jap{i}^{2p}\jap{j-i}^{2p}|f_{i}|^2
|g_{j-i}|^2\\
  &=& 
 c
 \sum_{i}
 e^{2s\jap{i}^\teta} e^{2a\abs{i}}
 \jap{i}^{2p}
 |f_{i}|^2
  \sum_{j}
 \jap{j-i}^{2p}
 e^{2s\jap{j-i}^\teta} e^{2a\abs{j-i}}
|g_{j-i}|^2\\
&=&
c
\norm{f}_{p,s,a}^{2}
\norm{g}_{p,s,a}^{2}\,.\qed
\end{eqnarray*}

\noindent
\subsection{Proof of Lemma \ref{tiepolo}}\label{latempesta}
Set
$$
\phi(i,j):=\frac{\jml{j}}{\jml{i}\jml{j-i}}\,,\qquad
\qquad
\forall\, i,j\in\mathbb Z
\,.
$$
Note that
\begin{equation}\label{lalla}
\phi(i,j)=\phi(j,i)=\phi(-i,-j)\,.
\end{equation}
We claim that
\begin{equation}\label{lalla2}
\phi(i,j)\leq 1\,.
\end{equation}
Indeed by \eqref{lalla} we can consider only the case
$j\geq 0.$
Since
$
\phi(-|i|,j)\leq \phi(|i|,j)
$ we can consider only the case
$i\geq 0$.
Again by \eqref{lalla} we can assume $j\geq i.$
In particular we can take $j>i>0,$  \eqref{lalla2}  being trivial
in the  cases
$j=i,$ $i=0$.
We have
$$
\phi(i+1,i)=\frac{i+1}{2\jml{i}}\leq\frac34\,,
\qquad
\phi(j,1)=\frac{j}{2(j-1)}\leq 1\,.
$$
Then it remains also to discuss
the case $j-2\geq i\geq 2;$ we have
$$
\phi(i,j)=\frac{j}{i(j-i)}=\frac{1}{i}+\frac{1}{j-i}\leq1\,,
$$
proving \eqref{lalla2}.

For $q\geq 0$ set
\begin{equation}\label{giorgione}
c_q:=\sup_{j\in\mathtt Z}\sum_{i\in\mathtt Z}(\phi(i,j))^q
=
\sup_{j\geq 0}\sum_{i\in\mathtt Z}(\phi(i,j))^q\,.
\end{equation}
We claim that 
\begin{equation}\label{verrocchio}
c_q\leq 4+2\frac{q+1}{q-1}<\infty\,,\qquad \forall\, q>1\,.
\end{equation}
Indeed, since $\jml{j}/\jml{j+1}\leq 1$ and
$\jml{j}/\jml{j-1}\leq 3/2$ for $j\geq 0$, we 
have\footnote{Note that the term $\left(\frac{1}{i}+\frac{1}{(j-i)}\right)^q$
for $j=4$ and $i=2$ is 1 for every $q$.}
\begin{eqnarray*}
c_q
&=&
\sup_{j\geq 0}
\left(
\frac{\jml{j}^q}{2^{q-1}\jml{j+1}^q}
+\frac{1}{2^{q-1}}
+\frac{\jml{j}^q}{2^{q-1}\jml{j-1}^q}
+
\sum_{i\leq -2,\, 2\leq i\leq j-2,\, i\geq j+2}
(\phi(i,j))^q
\right)
\\
&\leq&
2^{3-q}
+
\sup_{j\geq 0}
\left(
\sum_{i\geq 2} \frac{\jml{j}^q}{i^q(j+i)^q}
+\sum_{2\leq i\leq j-2}\left(\frac{1}{i}+\frac{1}{(j-i)}\right)^q
+\sum_{i\geq j+2} \frac{\jml{j}^q}{i^q(i-j)^q}
\right)
\\
&\leq&
2^{3-q}
+\sum_{i\geq 2} \frac{1}{i^q}
+2^{q-1}\sum_{2\leq i\leq j-2}\left(\frac{1}{i^q}+\frac{1}{(j-i)^q}\right)
+\sum_{i\geq j+2} \frac{1}{(i-j)^q}
\\
&\leq&
4+2\frac{q+1}{q-1}\,,
\end{eqnarray*}
using that
$(x+y)^q\leq 2^{q-1}(x^q+y^q)$ for $x,y\geq 	 0$
and that\footnote{
$\sum_{i\geq 2} 1^{-q}\leq 2^{-q}+\int_2^\infty x^{-q}dx$} 
$$
\sum_{i\geq 2} i^{-q}\leq \frac{q+1}{2^q(q-1)}\,.
$$

Note that for every 
$q,q_0\geq 0$
we have
\begin{equation}\label{beatoangelico}
c_{q_0+q}\leq c_{q_0}
\end{equation}
since
$$
c_{q_0+q}
:=
\sup_{j\in\mathtt Z}\sum_{i\in\mathtt Z}(\phi(i,j))^{q_0}
(\phi(i,j))^q
\stackrel{\eqref{lalla2}}
\leq
\sup_{j\in\mathtt Z}\sum_{i\in\mathtt Z}(\phi(i,j))^{q_0}
=c_{q_0}\,.
$$

We now note that 
for $p >1/2$, $j\in\mathbb Z$ and every sequence $\{x_i\}_{i\in\Z}$, $x_i\geq 0,$
we have by Cauchy-Schwarz inequality
\begin{equation*}\label{zampa1}
    \left( \sum_{i\in \Z} x_i \right)^2
    =
   \left( \sum_{i\in \Z} (\phi(i,j))^p (\phi(i,j))^{-p} x_i \right)^2 
    \leq
    c_{2p}\sum_{i\in\Z} \left((\phi(i,j))^{-p} x_i\right)^2\,,
\end{equation*}
with $c_{2p}$ defined in \eqref{giorgione}.
Using the above inequality we get
\begin{eqnarray*}
\|f\star g\|_p^{2}
&\le&
 \sum_{j}\jml{j}^{2p}\Big(\sum_{i}|f_{i}||g_{j-i}|\Big)^2\\
 &\le& 
c_{2p}
 \sum_{j}\sum_{i}
 \jml{i}^{2p}\jml{j-i}^{2p}|f_{i}|^2
|g_{j-i}|^2\\
  &=& 
c_{2p}
 \sum_{i}
 \jml{i}^{2p}
 |f_{i}|^2
  \sum_{j}
 \jml{j-i}^{2p}
|g_{j-i}|^2\\
&=&
c_{2p} 
\|f\|_p^2
\|g\|_p^2\,.
\end{eqnarray*}
The proof ends recalling \eqref{verrocchio}.
\qed


\noindent
\subsection{Proof of Lemma \ref{neminchia}}\label{tec2}
(i)
	By definition the $\eta$-majorant Hamiltonian is 
	\begin{align*}
	\und{H}_\eta =	\sum_d  \sum_{\substack{j_0,j_1\dots, j_{2d}\\
	j_0+\sum_{i=1}^{2d} (-1)^{i} j_i = 0 }} e^{\eta |\pi_{j_1,\dots,j_{2d}}|}|F^{(d)}_{j_0}| \overline{u_{j_1}}  u_{j_2}\overline{u_{j_3}}\dots u_{j_{2d}}
	\end{align*} where
	\[
	\pi_{j_1,\dots,j_{2d}} = \sum_{i=1}^{2d} (-1)^{i} j_i= -j_0\,,
	\]
	hence
	\[
	\und{H}_\eta =	\sum_d \pa{\und{F}_\eta^{(d)}\star\underbrace{ u \star \cdots\star u}_{d \;\mbox{times}} \star \underbrace{\bar u \star \cdots\star \bar u}_{d \;\mbox{times}}}_0\,,\quad \und{F}^{(d)}_\eta:= \pa{ e^{\eta|j|}|F^{(d)}_j|}_{j\in \Z}\,.
	\]
	consequently 
	\[
	X^{(j)}_{\und{H}_\eta}= \sum_d d \pa{\und{F}_\eta^{(d)}\star\underbrace{ u \star \cdots\star u}_{d \;\mbox{times}} \star \underbrace{\bar u \star \cdots\star \bar u}_{d-1 \;\mbox{times}}}_j\, .
	\]
	Moreover
	\[
	|X_{\und{H}_\eta}|_{p,s,a} \le \sum_d d 
	(\Calg)^{2d-1}| \und{F}_\eta^{(d)}|_{p,s,a} (|u|_{p,s,a} )^{2d-1}\,.
	\]
Since 
	\[ 
	|\und{F}_\eta^{(d)}|_{p,s,a} = |F^{(d)}|_{s,a+\eta,p} \le |F^{(d)}|_{ p,s,a_0} 
	\]
	we get
	\[
	|X_{\und{H}_\eta}|_{p,s,a} \le \sum_d d (\Calg)^{2d-1}
	|F^{(d)}|_{ p,s,a_0} 
	(|u|_{p,s,a} )^{2d-1}\,.
	\]
	Therefore
	\[
	|H|_{r,p,\eta}^{(p,s,a,0)} =
	r^{-1} \pa{\sup_{\norm{u}_{p,s,a} < r} \norm{{X}_{{\underline H}_\eta}}_{p,s,a} }
	\le 
	r^{-1}\sum_d d |F^{(d)}|_{p,s,a_0}  (\Calg r)^{2d-1}<\infty.
	\]
(ii) The proof is analogous to point (i).	
\qed


\noindent


\begin{lemma}\label{palis}
 Let $0<r_1<r.$ Let $E$ be a Banach space endowed with the norm $|\cdot|_E$.
 Let $X:B_r \to E$ a vector field satisfying
 $$\sup_{B_r}|X|_E\leq \delta_0\,.$$
 Then the flow $\Phi(u,t)$ of the vector field\footnote{Namely the solution 
 of the equation $\partial_t \Phi(u,t)=X(\Phi(u,t))$ with initial datum
 $\Phi(u,0)=u.$} is well defined for every 
 $$|t|\leq T:=\frac{r-r_1}{\delta_0}$$
 and $u\in B_{r_1}$
 with estimate
 $$
 |\Phi(u,t)-u|_E\leq \delta_0 |t|\,,\qquad
 \forall\, |t|\leq T
 \,.
 $$
\end{lemma}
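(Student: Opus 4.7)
\smallskip

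\noindent\textbf{Proof proposal.} The plan is to combine a standard local existence result for ODEs in Banach spaces with a simple a priori escape-time estimate. Since in all applications of this lemma in the paper the vector field $X$ is the Hamiltonian vector field of a majorant-analytic Hamiltonian (hence analytic, in particular locally Lipschitz) on $B_r$, I would first invoke the classical Cauchy--Lipschitz theorem on Banach spaces to guarantee the existence of a unique maximal solution $t\mapsto \Phi(u,t)$ of the Cauchy problem
\[
\partial_t\Phi(u,t)=X(\Phi(u,t))\,,\qquad \Phi(u,0)=u\,,
\]
as long as $\Phi(u,t)$ remains in the open set $B_r$.

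\smallskip

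The core of the argument is then the a priori estimate obtained by integrating the ODE: as long as the trajectory stays in $B_r$, one has
\[
\Phi(u,t)-u=\int_0^t X(\Phi(u,s))\,ds\,,
\]
and hence, by the hypothesis $\sup_{B_r}|X|_E\le\delta_0$,
\[
|\Phi(u,t)-u|_E\le \int_0^{|t|}|X(\Phi(u,s))|_E\,ds\le \delta_0|t|\,.
\]
This is already the quantitative bound claimed in the statement.

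\smallskip

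It remains to show that the flow does not leave $B_r$ before time $T=(r-r_1)/\delta_0$. Fix $u\in B_{r_1}$ and let $[0,T_\ast)$ be the maximal forward interval of existence with $\Phi(u,t)\in B_r$. For every $t\in[0,T_\ast)$ with $t\le T$, the previous inequality yields
\[
|\Phi(u,t)|_E\le |u|_E+\delta_0|t|\le r_1+\delta_0 T=r\,,
\]
so the trajectory stays strictly inside $B_r$ (in fact in the closed ball of radius $r$); by the standard continuation/blow-up alternative, $T_\ast\ge T$. The same argument works for negative times by reversing the sign of $X$. Putting these together, $\Phi(u,t)$ is defined, takes values in $B_r$, and satisfies $|\Phi(u,t)-u|_E\le\delta_0|t|$ for all $|t|\le T$ and all $u\in B_{r_1}$.

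\smallskip

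The only delicate point I can see is that the statement of the lemma does not explicitly require any regularity on $X$ beyond the pointwise bound. In the Banach-space setting a mere bound on $|X|_E$ is not enough to produce a flow (one needs at least local Lipschitz continuity or analyticity to invoke Cauchy--Lipschitz or Cauchy's majorant method); however this is harmless in context because the lemma is only invoked for Hamiltonian vector fields of elements of $\cH_{r,\eta}(\th_\tw)$, which are analytic on $B_r(\th_\tw)$. So in practice this regularity is implicit, and once it is granted the proof reduces to the short a priori estimate above.
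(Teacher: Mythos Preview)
Your proof is correct and follows essentially the same approach as the paper: integrate the ODE to obtain the a priori bound $|\Phi(u,t)-u|_E\le\delta_0|t|$, then use a continuation/contradiction argument to show the trajectory cannot reach the boundary of $B_r$ before time $T$. Your remark about the implicit local Lipschitz (here, analyticity) hypothesis needed to invoke Cauchy--Lipschitz is a fair point that the paper's proof leaves tacit.
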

\begin{proof}
Fix $u\in B_{r_1}$.
Let us first prove that $\Phi(u,t)$  exists $\forall\, |t|\leq T.$
Otherwise there exists a time\footnote{We assume $t_0$ positive, the negative case is analogous.} $0<t_0<T$ such that 
$|\Phi(u,t)|_E<r$ for every $0\leq t<t_0$ but
$|\Phi(u,t_0)|_E=r.$
Then, by the fundamental theorem of calculus 
\begin{equation}\label{integro}
\Phi(u,t_0)-u=\int_0^{t_0}X(\Phi(u,\tau))d\tau\,.
\end{equation}
Therefore
\begin{eqnarray*}
r-r_1 &\leq&
|\Phi(u,t_0)|_E-|u|_E\leq|\Phi(u,t_0)-u|_E\leq\int_0^{t_0}|X(\Phi(u,\tau))|_Ed\tau\leq \delta_0 t_0
\\
&<&\delta_0 T= r-r_1\,,
\end{eqnarray*}
which is a contradiction
Finally, for every $|t|\leq T,$
$$
|\Phi(u,t)-u|_E\leq\left|\int_0^{t}|X(\Phi(u,\tau))|_Ed\tau\right|
\leq \delta_0 |t|\,.
$$
\end{proof}
\subsection{Proof of Lemma \ref{stantuffo}}\label{tec3}
We first prove (i).
It is easily seen that:
\[
X_{\und H_\eta}^{(j)}(u) = \im \sum_{\bal,\bbt\in\N^\Z} \abs{H_{\bal,\bbt}}\bbt_j e^{\eta|\pi(\bal-\bbt)|}u^\bal \bar u^{\bbt-e_j}\,.
\]
Now 
\[
|X_{\und H_\eta}(u) |_\tw  \le |X_{\und H_\eta}(\und u) |_\tw
\,,\quad \und u=\pa{|u_j|}_{j\in \Z}
\]
hence, in evaluating  the supremum of
$|X_{\und H_\eta}|_\tw$ over $|u|_\tw \leq r$
we ca restrict to the case in which 
$u=(u_j)_{j\in\Z}$ has all real positive components. Hence
\begin{align*}
\abs{H}_{r,\eta,\tw} 
=
r^{-1}\sup_{\abs{u}_\tw \leq r} \abs{\pa{\sum^\ast \abs{H_{\bal,\bbt}}\bbt_j e^{\eta \abs{\pi(\bal-\bbt)}} \abs{u}^{\bal + \bbt - e_j} }_{j\in\Z}}_\tw  \,.
\end{align*}
Then
\begin{equation}
\label{norma2}
|H|_{r,\eta,\tw} 
= 
\frac{1}{2 r} \sup_{|u|_\tw\leq r} 
\norm{\pa{ W^{(j)}_{\eta}(u)}_{j\in\Z}}_{\tw} 
\end{equation}
where 
\[
W^{(j)}_{\eta}(u)= \sum^\ast \abs{H_{\bal,\bbt}}\pa{\bal_j + \bbt_j} e^{\eta \abs{\pi(\bal-\bbt)}}{u}^{\bal + \bbt - e_j}\,,
\]
since,
by the reality condition \ref{real},  we have
\[
\sum^\ast \abs{H_{\bal,\bbt}}\bbt_j 
e^{\eta \abs{\pi(\bal-\bbt)}}
{u}^{\bal + \bbt - e_j}
=
\sum^\ast \abs{H_{\bal,\bbt}}\bal_j
e^{\eta \abs{\pi(\bal-\bbt)}}
{u}^{\bal + \bbt - e_j} 
=
\frac12 W^{(j)}_{\eta}(u).
\]
By the linear map
\[
L_{r,\tw}:\ell^2\to \th_\tw\,,\qquad
y_j\mapsto \frac{r}{\tw_j}y_j  = u_j\,,
\] 
the ball of radius $1$ in $\ell^2$ is isomorphic to the
the ball of radius $r$ in $\th_\tw$,  namely
$L_{r,\tw}(B_1(\ell^2))=B_r(\th_\tw).$
We have
$$
Y^{(j)}_{H}(y;r,\eta,\tw) 
=
\frac12 W^{(j)}_{\eta}(L_{r,\tw} y)\,.
$$
Then (i) follows.

In order to prove item {\rm (ii)} we rely on the fact that, since we are using the $\eta$-majorant norm, the supremum over $y$ in the norm is achieved on the real positive cone. Moreover, given $u,v\in \ell^2$, if 
\[
|u_j|\le |v_j|\,,\quad \forall j\in \Z 
\] 
then $|u|_{\ell^2}\le |v|_{\ell^2}$.
\qed

\subsection{Proof of Proposition \ref{maspero}}\label{app:maspero}
We start by Taylor expanding $H$ in homogeneous components. The majorant analiticity implies that for a homogeneous component of degree $d$ one has
\[
|H^{(d)}|_{r,p,\eta}^\wc  \le |H|_{r,p,\eta}^\wc
\]
Now let us consider the polinomial map (homogeneous of degree $d-1$) $X_{H^{(d)}}: \th_{p,s,a} \to \th_{p,s,a}$; as is habitual we identify the polynomial map with the corresponding symmetric  multilinear operator $M^{(d-1)}:   \th_{p,s,a}^{d-1} \to \th_{p,s,a}$. Since we are in a Hilbert space, one has that 
\begin{align*}
|\und{M}|^{\rm op}_{p,s,a}&:=
\sup_{\substack{u_1,\dots u_{d-1}\in \th_{p,s,a}\\|u_i|_{p,s,a}\le 1}} | \und{M}^{(d-1)}(u_1,\dots,u_{d-1})|_{p,s,a}=
\sup_{|u|_{p,s,a}\le 1} | \und{M}^{(d-1)}(u,\dots,u)|_{p,s,a} \\
&= \sup_{|u|_{p,s,a}\le  1} |X_{\und{H}^{(d)}}|_{p,s,a} \le  r^{-d+2} |H|_{r,p,s,a,\eta}^\wc
\end{align*}
for all $\eta\ge 0$. 
Now let us compute the tame norm on a homogeneous component, i.e. 
\[
\sup_{|u|_{p_0,s,a}\le  r-\rho} \frac{| \und{M}^{(d-1)}(u^{d-1})|_{p,s,a}}{|u|_{p,s,a}} = \sup_{|u|_{ p_0,s,a}\le  r-\rho} \frac{| \und{N_p}^{(d-1)}(u^{d-1})|_{ p_0,s,a}}{|u|_{p,s,a}}
\]
where
\[
\und{N_p}^{(d-1,j)}(u^{d-1})={ \jap{j}^{p-p_0} \sum_{j_1,\dots,j_{d-1}} |M_{j_1,\dots j_{d-1}}^{(d-1,j)}| u_{j_1}\dots u_{j_{d-1}}}
\]
now  setting $\pi= \sum_j j_i- j $ we have 
\begin{align*}
&{\und{N_p}^{(d-1)}(u_1,\dots,u_{d-1})}\\ & \le (d-1)  \jap{j}^{p-p_0} \sum_{\substack{j_1,\dots,j_{d-1}: \\  |j_1|\ge |j_i|}} |M_{j_1,\dots j_{d-1} }^{(d-1,j)}| u_{j_1}\dots u_{j_{d-1}}\\ &\le (d-1) \sum_{\substack{j_1,\dots,j_{d-1}: \\  |j_1|\ge |j_i|}}\pa{\sum_i\jap{j_i}+|\pi| }^{p-p_0}  |M_{j_1,\dots j_{d-1}}^{(d-1,j)}| u_{j_1}\dots u_{j_{d-1}}\\ &
\le (d-1)2^{p-p_0}  C(\eta,p)\sum_{\substack{j_1,\dots,j_{d-1}}} e^{\eta|\pi|} |M_{j_1,\dots j_{d-1}}^{(d-1,j)}| u_{j_1}\dots u_{j_{d-1}} \\ 
&+ (d-1)2^{p-p_0} (d-1)^{p-p_0} \sum_{{j_1,\dots,j_{d-1}}} |M_{j_1,\dots j_{d-1}}^{(d-1,j)}| \jap{j_1}^{p-p_0}u_{j_1}\dots u_{j_{d-1}}
\end{align*}
which means that for any $|u|_{ p_0,s,a}\le  r-\rho$ one has
\begin{align*}
&| \und{N_p}^{(d-1)}(u^{d-1})|_{ p_0,s,a}\\
& \le (d-1)2^{p-p_0}  C(\eta,p) |H^{(d)}|_{r-\rho,p_0,s,a,\eta}|u|_{p_0,s,a} + 2^{p-p_0} (d-1)^{p-p_0+1} |\und{M}|^{\rm op}_{ p_0,s,a} (r-\rho)^{d-2} |u|_{p,s,a}\\
& \le (d-1)2^{p-p_0}(  C(\eta,p)  +  (d-1)^{p-p_0})(1-\frac{\rho}{r})^{d-2} |H|_{r,p,s,a,\eta_0} |u|_{p,s,a}
\end{align*}
We conclude that
\[
\sup_{|u|_{ p_0,s,a}\le r }\frac{|X_{\und{H}}|_{p,s,a}}{|u|_{p,s,a}} \le 2^{p-p_0}|H|_{r,p_0,s,a,\eta} \sum_{d\ge 2} (d-1)\pa{  C(\eta,p)  +  (d-1)^{p-p_0}}(1-\frac{\rho}{r})^{d-2}\,
\]
and the thesis follows since the right hand side is convergent. \qed

\subsection{Proof of Lemma \ref{cobra}}

Let us look at the time evolution of $|v(t)|_\tw^2$.  
 By construction and Cauchy-Schwarz 
 inequality
 \begin{eqnarray*}
 2|v(t)|_\tw  \left|\frac{d}{dt} |v(t)|_\tw \right|
 &=&
 \left|\frac{d}{dt} |v(t)|_\tw^2 \right|
 = 
2| \Re (v,\dot v)_{\th_\tw}|
=
2| \Re (v,X_R)_{\th_\tw}|
 \\
 &\le&
  2  |v(t)|_\tw |X_{\und R}|_\tw 
\leq
  2 r |v(t)|_\tw |R|_{r,\eta,\tw}\
\end{eqnarray*}
 as long as $|v(t)|_\tw\leq r$;
 namely
 \begin{equation}\label{bufala}
\left|\frac{d}{dt} |v(t)|_\tw\right|
 \leq
 r |R|_{r,\eta,\tw}
\end{equation}
  as long as $|v(t)|_\tw\leq r.$

Assume by contradiction that there exists
a time\footnote{The case $T_0<0$ is analogous.}  
  $$
  0<T_0<\frac{1}{8|R|_{r,\eta,\tw}}
  $$ 
  such that 
  \begin{equation}\label{giuncata2}
\Big| |v(t)|_\tw-|v_0|_\tw\Big|
< \frac{r}{8}\,, \quad
\forall\, 0\leq t<T_0\,,\qquad {\rm but}\ \ 
\Big| |v(T_0)|_\tw-|v_0|_\tw\Big|= \frac{r}{8}
\,.
\end{equation}
  Then
  $$
  |v(t)|_\tw\leq |v_0|_\tw+ \frac{r}{8}
 < 
 r\,
  \qquad
  \quad
\forall\, 0\leq t\leq T_0
  \,.
  $$
  By \eqref{bufala}  we get
  $$
  \Big| |v(T_0)|_\tw-|v_0|_\tw\Big|
  \leq
  r |R|_{r,\eta,\tw} T_0
  <\frac{r}{8}\,,
  $$
  which contradicts \eqref{giuncata2},
  proving \eqref{giuncata}.



\section{Small divisor estimates}
%
\subsection{Proof of Lemma \ref{constance general}}\label{costi1}
The fact that this \eqref{yuan 2} holds true when $\pi=0$ is proven in \cite{Bou1} and \cite{Yuan_et_al:2017}.
The bound  \eqref{yuan 2} is equivalent to proving
\begin{equation}
\sum_{l\ge 1} \na_l^\theta   -  2 \na^\theta_1+ \teta \abs{\pi} -  \pa{2 - 2^\teta}\sum_{l\ge 3} \na_l^\theta\ge 0.
\end{equation}
i.e. 
\begin{equation}\label{cheppalle}
\sum_{l\ge 2} \na_l^\theta   -   \na^\theta_1+ \teta \abs{\pi} -  \pa{2 - 2^\teta}\sum_{l\ge 3} \na_l^\theta\ge 0.
\end{equation}
\\
Inequality \eqref{cheppalle} then follows from 
\begin{equation}
f\pa{\abs{\pi}}:= \sum_{l\ge 2} \na_l^\theta   -   \pa{\abs{\pi} + \sum_{l\ge 2}\na_l}^\teta + \teta \abs{\pi} -  \pa{2 - 2^\teta}\sum_{l\ge 3} \na_l^\theta\ge 0,
\end{equation}
which we are now going to prove. We shall show that the function $f(x)$ is increasing in $x\ge 0$; then the result follows by showing $f(0) \ge 0$, which is what was proven by Yuan and Bourgain.\\
We now verify that $f'(x)\ge 0$. By direct computation we see that 
\begin{equation*}
f'(x)  = - \teta \pa{x + \sum_{l\ge 2}\na_l}^{\teta - 1} +\teta , 
\end{equation*}
so it suffices to prove that
\begin{equation}
1 \le \pa{x + \sum_{l\ge 2}\na_l}^{1 - \teta},
\end{equation}
which is indeed true, since  $\sum_{i\ge 2}\na_i\ge  \na_2\ge   1$ holds, by mass conservation. 
\qed


\subsection{Proof of Lemma \ref{constance 2 gen}}\label{costi2}

In this subsection we will take
\begin{equation}\label{acquafresh}
\bal,\bbt\in\N^\Z\quad {\rm with} \quad
	 1\leq|\bal|=|\bbt|<\infty\,.
	 \end{equation}

Given $u\in \Z^\Z$, with $|u|<\infty,$  consider the set
		\[
		\set{j\neq 0 \,,\quad \mbox{repeated}\quad  \abs{u_j} \;\mbox{times}}\,,
		\]
where $D<\infty$ 		is its cardinality.
Define the vector $m=m(u)$ as the reordering of the elements of the set above 
such that
 $|m_1|\ge |m_2|\ge \dots\geq |m_D|\ge 1.$ 	
 Given $\bal\neq\bbt\in\N^\Z,$ with $|\bal|=|\bbt|<\infty$
 we consider $m=m(\bal-\bbt)$ and $\na=\na(\bal+\bbt).$	
If we denote by $D$ the cardinality of $m$ and $N$ the one of $\na$ we have 
\begin{equation}\label{cappella}
D+\bal_0+\bbt_0\le N
\end{equation}
 and
\begin{equation}\label{abbacchio}
(|m_1|,\dots,|m_D|,\underbrace{1,\;\dots \;,1}_{N-D\;\rm{times}} )\, \leq\,
	 \pa{\na_1,\dots \na_N}\,.
\end{equation}
Set
	$$
	\s_l= {\rm sign}(\bal_{m_l}-\bbt_{m_l})\,.
	$$

	For every function $g$ defined on $\Z$ we have that
\begin{eqnarray}\label{pula2}
\sum_{i\in\Z} g(i) |\bal_i-\bbt_i|
&=&
g(0)|\bal_0-\bbt_0|+
\sum_{l\geq 1} g(m_l)\,,
\nonumber
\\
\sum_{i\in\Z} g(i) (\bal_i-\bbt_i)
&=&
g(0)(\bal_0-\bbt_0)+
\sum_{l\geq 1} \s_l g(m_l)\,.
\end{eqnarray}

\begin{lemma}\label{mizza}
Assume that $g$ defined on $\Z$ is non negative,  even
and not decreasing on $\N.$ 
 Then, if $\bal\neq\bbt$,
 \begin{equation}\label{pula}
\sum_{i\in\Z} g(i) |\bal_i-\bbt_i|
\leq
2g(m_1)+
\sum_{l\geq 3} g(\na_l)\,.
\end{equation}
\end{lemma}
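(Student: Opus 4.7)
The plan is to apply the first identity in \eqref{pula2} and then do a case analysis on the cardinality $D$ of the list $m=m(\bal-\bbt)$, using the pointwise comparison \eqref{abbacchio} between $|m|$ and $\na$ and the counting bound \eqref{cappella}. By \eqref{pula2} with our non-negative even $g$, and since $g(m_l)=g(|m_l|)$,
$$
\sum_{i\in\Z} g(i)\,|\bal_i-\bbt_i|= g(0)\,|\bal_0-\bbt_0|+\sum_{l=1}^{D} g(|m_l|).
$$
The two basic monotonicity facts I will use repeatedly are: $g(|m_l|)\le g(\na_l)$ for $1\le l\le D$ (from \eqref{abbacchio} and monotonicity of $g$ on $\N$); and $g(0)\le g(\na_l)$ for every $l$, since $\na_l\ge 1$.

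For the main case $D\ge 3$, I split the sum over $m$'s into the first two indices and the rest: by monotonicity $g(|m_2|)\le g(|m_1|)$, so
$$
\sum_{l=1}^{D} g(|m_l|)\le 2 g(|m_1|)+\sum_{l=3}^{D} g(\na_l).
$$
For the zero-index term, \eqref{cappella} gives $|\bal_0-\bbt_0|\le \bal_0+\bbt_0\le N-D$, hence $g(0)\,|\bal_0-\bbt_0|$ is bounded by $\sum_{l=D+1}^{N} g(\na_l)$. Adding the two contributions produces exactly $2g(m_1)+\sum_{l\ge 3} g(\na_l)$, which is \eqref{pula}. The case $D=2$ is identical: $\sum_{l=1}^{2} g(|m_l|)\le 2g(|m_1|)$ and, again by \eqref{cappella}, there are at least $|\bal_0-\bbt_0|$ indices $l\ge 3$ available to absorb $g(0)|\bal_0-\bbt_0|$.

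The only subtle case is $D=1$. Here mass conservation $|\bal|=|\bbt|$ combined with the fact that exactly one nonzero index contributes $\pm 1$ to $\bal-\bbt$ forces $|\bal_0-\bbt_0|=1$, so
$$
\sum_{i\in\Z}g(i)|\bal_i-\bbt_i|=g(0)+g(|m_1|)\le 2g(|m_1|),
$$
using $g(0)\le g(|m_1|)$ since $|m_1|\ge 1$; the right-hand side of \eqref{pula} is at least this. The remaining case $D=0$ is excluded by the hypothesis $\bal\neq \bbt$, because combined with $|\bal|=|\bbt|$ it would force $\bal=\bbt$.

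The main (minor) obstacle is the bookkeeping in the case $D=1$: without using mass conservation one cannot absorb $g(0)|\bal_0-\bbt_0|$ into $\sum_{l\ge 3}g(\na_l)$, since there may be no indices $l\ge 3$ at all; the observation that $|\bal_0-\bbt_0|=1$ and $|m_1|\ge 1$ closes this gap cleanly.
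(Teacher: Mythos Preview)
Your proof is correct and uses the same ingredients as the paper: the identity \eqref{pula2}, the bound $g(|m_2|)\le g(|m_1|)$, and the comparison \eqref{cappella}--\eqref{abbacchio} to absorb the remaining terms into $\sum_{l\ge 3}g(\na_l)$. Your explicit handling of $D=1$ via mass conservation (forcing $|\bal_0-\bbt_0|=1$ and then $g(0)+g(|m_1|)\le 2g(|m_1|)$) is in fact a bit sharper than the paper's uniform step $g(0)|\bal_0-\bbt_0|\le g(1)(\bal_0+\bbt_0)$, which over-counts in that borderline case.
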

\begin{proof}
 By \eqref{pula2}
 \begin{eqnarray*}
\sum_{i\in\Z} g(i) |\bal_i-\bbt_i|
&=&
g(0)|\bal_0-\bbt_0|+
\sum_{l\geq 1} g(m_l)
\\
&\leq& 
g(1)(\bal_0+\bbt_0)+
2g(m_1)+
\sum_{l\geq 3} g(m_l)
\end{eqnarray*}
and \eqref{pula} follows by
\eqref{cappella} and \eqref{abbacchio}.
\end{proof}

	We denote as before the momentum by $\pi$ so by \eqref{pula2}
	\begin{equation}\label{somma sigma p}
	\pi= \sum_{i\in \Z}\pa{\bal_i-\bbt_i}i =  \sum_l \s_l m_l 
	\end{equation}
	and
	\begin{equation} \label{somma sigma quadro}
	{\sum_i{\pa{\bal_i-\bbt_i}i^2}}=
	 \sum_l \s_l m^2_l\,.
	\end{equation}
Analogously 
\begin{equation}
	\label{enne}
	{\sum_i{\abs{\bal_i-\bbt_i}}}
	= D+|\bal_0-\bbt_0|
	\stackrel{\eqref{cappella}}\leq N\,.
\end{equation}
 Finally note that 
 \begin{equation}\label{gina}
\sigma_l\sigma_{l'} =-1\qquad \Longrightarrow\qquad m_l \neq m_{l'}\,.
\end{equation}
Note that
\begin{equation}\label{sonno}
\bal\neq\bbt \quad \Longrightarrow\quad
N\geq 3 \ \ {\rm or}\ \ \pi\neq 0\,,
\end{equation}
indeed, by mass conservation,
$|\bal|=|\bbt|=1$ therefore  if $N=2$ we get $\bal-\bbt= e_{j_1}-e_{j_2}$ so if $\pi=0$  we have $\bal=\bbt$.
Note also that 
\begin{equation}\label{abbiocco}
\bal\neq\bbt
\qquad
\Longrightarrow
\qquad
D	\geq 1\,,
\end{equation}
indeed, if $D=0$ then
  $\bal_l-\bbt_l=0$ for every $|l|\geq 1$
and, by mass conservation $\bal_0=\bbt_0$, contradicting 
 $\bal\neq\bbt$ .

\begin{lemma}
Given $\bal\neq\bbt\in\N^\Z,$ with $1\leq|\bal|=|\bbt|<\infty$
and satisfying  \eqref{divisor}, we 
have\footnote{Note that by \eqref{sonno}
the r.h.s. of \eqref{chiappechiare} is at least 20. }
\begin{equation}
\label{chiappechiare}
 \abs{m_1}\le 20 |\pi|+ 31\sum_{l\ge 3}\na_l^2\,. 
\end{equation}
\end{lemma}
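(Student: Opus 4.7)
The plan is to exploit the divisor condition \eqref{divisor}, which via \eqref{enne} translates to $|\sum_l \s_l m_l^2| \le 10 N$, together with the linear momentum identity $\sum_l \s_l m_l = \pi$ from \eqref{somma sigma p}. First I would quickly dispose of the trivial case $D = 1$: here mass conservation $\sum_i(\bal_i-\bbt_i)=0$ forces $\bal_0 - \bbt_0 = -(\bal_{m_1}-\bbt_{m_1})$, so $\pi = m_1(\bal_{m_1}-\bbt_{m_1})$ and thus $|m_1| \le |\pi|$, already giving the bound.

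For $D \ge 2$, the idea is to split off the two leading terms in both sums. Using $|m_l| \le \na_l$ for $l\le D$ and $\na_l \le \na_l^2$, I obtain the two master inequalities
\begin{equation*}
|\s_1 m_1^2 + \s_2 m_2^2| \le 10 N + \sum_{l\ge 3}\na_l^2,
\qquad
|\s_1 m_1 + \s_2 m_2| \le |\pi| + \sum_{l\ge 3}\na_l^2,
\end{equation*}
together with the combinatorial bound $N \le 2 + \sum_{l\ge 3}\na_l^2$, which holds because $\na_l \ge 1$. I would then branch on the relative signs of $\s_1,\s_2$ and the signs in $\Z$ of $m_1,m_2$: (i) if $\s_1 = \s_2$, the first inequality reads $m_1^2 + m_2^2 \le 10 N + \sum_{l\ge 3}\na_l^2$, yielding $|m_1| \le \sqrt{20 + 11\sum_{l\ge 3}\na_l^2}$; (ii) if $\s_1 \ne \s_2$ and $m_1, m_2$ have opposite signs in $\Z$, then $|\s_1 m_1 + \s_2 m_2| = |m_1|+|m_2| \ge |m_1|$, which directly yields $|m_1| \le |\pi| + \sum_{l\ge 3}\na_l^2$; (iii) if $\s_1 \ne \s_2$ and $m_1,m_2$ have the same sign in $\Z$, then by \eqref{gina} we have $m_1 \ne m_2$, hence $|m_1|-|m_2| \ge 1$, and the quadratic relation factors as $(|m_1|-|m_2|)(|m_1|+|m_2|) = m_1^2 - m_2^2 \le 10N + \sum_{l\ge 3}\na_l^2$, so $|m_1| \le 10 N + \sum_{l\ge 3}\na_l^2 \le 20 + 11\sum_{l\ge 3}\na_l^2$.

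To collect everything into the single estimate \eqref{chiappechiare}, I would invoke \eqref{sonno}: either $|\pi| \ge 1$, in which case $20|\pi| \ge 20$ absorbs the additive constant appearing in (i) and (iii), or $\pi = 0$, in which case $N \ge 3$ and therefore $\sum_{l\ge 3}\na_l^2 \ge 1$, so that $31\sum_{l\ge 3}\na_l^2 \ge 31$ dominates the bounded constants. The square root in case (i) is harmless since $\sqrt{20+11S} \le 20 + 11 S$ whenever $20 + 11 S \ge 1$.

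The main obstacle, conceptually, is case (iii) and the degenerate sub-case of (i) where $|m_1|=|m_2|$: here the linear momentum bound becomes useless (the contributions $\s_1 m_1$ and $\s_2 m_2$ partially or fully cancel), and one is forced to rely on the quadratic divisor condition. The trick of dividing by the integer gap $|m_1|-|m_2|\ge 1$ (in (iii)) or taking a square root (in (i)) is what converts the quadratic bound into the desired linear bound on $|m_1|$, while the counting inequality $N \le 2+\sum_{l\ge 3}\na_l^2$ is the glue that forces the tail $\sum_{l\ge 3}\na_l^2$ to dominate all the combinatorial factors $N$ that arise.
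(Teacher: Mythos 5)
Your proof is correct and follows essentially the same strategy as the paper: treat $D=1$ directly from the momentum identity, and for $D\ge 2$ combine the quadratic divisor information $|\s_1 m_1^2+\s_2 m_2^2|\le 10N+\sum_{l\ge3}\na_l^2$ with the linear momentum identity, branching on the signs so that one either takes a square root ($\s_1=\s_2$), reads off $|m_1|$ directly from $|\s_1m_1+\s_2m_2|=|m_1|+|m_2|$, or factors $m_1^2-m_2^2$ and divides by the integer gap $|m_1|-|m_2|\ge1$; finally \eqref{sonno} is used to absorb the constant $20$ into $20|\pi|+31\sum_{l\ge3}\na_l^2$. The only organizational differences from the paper are cosmetic: you handle $D=2$ and $D\ge3$ uniformly via the counting bound $N\le 2+\sum_{l\ge3}\na_l^2$ rather than treating $D=2$ as a special case, and when $\s_1\ne\s_2$ you branch on the signs of $m_1,m_2$ in $\Z$ rather than on whether $|m_1|=|m_2|$; both partitions exploit \eqref{gina} in the same way, and the dangerous degenerate configurations ($m_1=-m_2$, or $|m_1|=|m_2|$) are covered correctly in your scheme.
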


\begin{proof} 
In the case $D=1$
by  \eqref{somma sigma p}
$|\pi|=|m_1|$ and \eqref{chiappechiare} follows.
 Let us now consider the case $D=2$,
 i.e.
 $$
 \bal-\bbt=\s_1 e_{m_1}+\s_2 e_{m_2}
 +(\bal_0-\bbt_0)e_0\,.
 $$ 
 Let us start with the case 
 $\s_1\s_2=1.$
 By mass conservation $|\s_1+\s_2|=|\bbt_0-\bal_0|=2.$
 By \eqref{enne} $N\geq 4.$
  Then  conditions \eqref{divisor} and \eqref{enne} imply that 
  \[ 
   m_1^2 + m_2^2 \le 20+10|\bal_0-\bbt_0|=40.
  \]
 Then
 $$
 |m_1|\leq \sqrt{40}
 \leq \frac{\sqrt{40}}{2}
 \sum_{\ell=3}^N \na_\ell^2
 $$
 since $N\geq 4$ and $\na_\ell\geq 1.$
When 
  $\s_1\s_2=-1$
   we have $m_1\neq m_2$,  $|\pi|=|m_1-m_2|\geq 1$
   and by mass conservation $\bal_0-\bbt_0=0.$
 Then
  \[ 
  (|m_1|+|m_2|)(|m_1|-|m_2|)=
   m_1^2 - m_2^2 \le 20\,.
  \] 
  If $|m_1|>|m_2|$ then
\begin{equation}
    |m_1|\leq 20\leq 20 |\pi|\,. 
\end{equation}
 Otherwise $m_1=-m_2$ and, therefore,  $|\pi|=2|m_1|,$
 completing the proof in the case $D=2.$

	Let us now consider the case $D \ge 3$. 
	By \eqref{divisor},\eqref{somma sigma quadro}
	and \eqref{enne}
\begin{eqnarray*}
	m_1^2 +\s_1\s_2 m_2^2 
	&\le& 
	10 N + \sum_{l=3}^Dm_l^2 
	\leq
		10 N + \sum_{l=3}^N\na_l^2 
	\\
	&=&
	 20 +\sum_{l=3}^N (10 + \na_l^2) {\le} 20+ 11
	 \sum_{l=3}^N
	 \na_l^2 
	 \stackrel{N\ge 3}{\le} 31\sum_{l=3}^N
	 \na_l^2\,.
\end{eqnarray*}
	If $\sigma_1\sigma_2 = 1$ then
	\[ 
	\abs{m_1}, \abs{m_2} \le \sqrt{31\sum_{l\ge 3} \na_l^2}.
	\]
	If  $\s_1\s_2 = -1$
	\[
	 (|m_1|+|m_2|)(|m_1|-|m_2|)=
	m_1^2 - m_2^2 \le 
	 31\sum_{l\ge 3} \na_l^2.
	\]
	Now, if $\abs{m_1}\ne \abs{m_2}$ then 
	\[
	\abs{m_1} + \abs{m_2} 
	\le 	 31\sum_{l\ge 3} \na_l^2.
	\]
	Conversely, if $\abs{m_1} = \abs{m_2}$, by \eqref{gina}, $m_1\ne m_2$, hence $m_1 = - m_2$. By substituting this relation into \eqref{somma sigma p}, we have 
	\[
 2\abs{m_1} \le \abs{\pi} + \sum_{l\ge 3}\abs{m_l} \le \abs{\pi} + \sum_{l\ge 3}\na_l^2\,,
 \]
	concluding the proof.
	\end{proof}

		
	\begin{proof}[Conclusion of the proof of Lemma \ref{constance 2 gen}]
	
As above,	
given $\bal,\bbt\in\N^\Z,$ with $1\leq |\bal|=|\bbt|<\infty$
 we consider $m=m(\bal-\bbt)$ and $\na=\na(\bal+\bbt).$	
Note that $N:=|\bal+\bbt|\geq 2.$

We have\footnote{Using that for $x,y\geq 0$ and $0\leq c\leq 1$
we get $(x+y)^c\leq x^c+y^c.$} 
	\begin{eqnarray}\label{cosette}
	\sum_i\abs{\bal_i-\bbt_i}\jap{i}^{\theta/2} 
	&\stackrel{\eqref{pula}}\leq&
	2\abs{m_1}^{\frac{\teta}{2}} +  
	\sum_{l\ge 3} \na_l^{\frac{\teta}{2}}  
	\nonumber
	\\
	& \stackrel{\eqref{chiappechiare}}\le &
	2\pa{20|\pi| + 31 \sum_{l\ge 3} \na_l^2 }^{\frac{\teta}{2}} +
	 \sum_{l\ge 3} \na_l^{\frac{\teta}{2}}   
	\nonumber
	\\
	& \le&
	 2 \pa{20|\pi| }^{\frac{\teta}{2}} + 2(31)^{\frac{\teta}{2}}\sum_{l\ge 3}\na_l^\teta + \sum_{l\ge 3} \na_l^{\frac{\teta}{2}} 
	 \nonumber
	 \\
	& \le  & 
\frac{13 }{1-\teta}\pa{ (1-\theta) \abs{\pi}+ (2-2^\teta)\pa{\sum_{l\ge 3}\na_l^\teta }}\,,
\end{eqnarray}
using that $1-\teta\leq 2-2^\teta$
for $0\leq \teta\leq 1.$
Then by Lemma \ref{constance general} and \eqref{cosette} we get
\begin{eqnarray*}
\sum_i\abs{\bal_i-\bbt_i}\jap{i}^{\theta/2}
	&\le &
	\frac{13 }{1-\teta}\pa{ (1-\theta) \abs{\pi}+ \sum_i \jap{i}^\teta\pa{\bal_i +\bbt_i} + \theta \abs{\pi} - 2\na_1^\teta}\\
	& \le &
	\frac{13 }{1-\teta} \sq{\sum_i \jap{i}^\teta\pa{\bal_i +\bbt_i} + \abs{\pi} - 2\jap{j}^\teta}\,,
\end{eqnarray*}
proving \eqref{adele}.

\medskip

Let us now prove \eqref{cosette4} passing to the logarithm.
We have	
\begin{equation}\label{cosette3}
\begin{aligned}
&\sum_i\ln(1+\abs{\bal_i-\bbt_i}{\jap{i}}) 
\\
&=
\sum_{|i|\leq 1}\ln(1+\abs{\bal_i-\bbt_i}) 
+
\sum_{|i|\geq 2}\ln(1+\abs{\bal_i-\bbt_i}|i|) 
\\
&\leq 3\ln(1+N ) 
+
\sum_{|i|\geq 2}\ln(1+\abs{\bal_i-\bbt_i}|i|)
\\
&\leq 3\ln 2+3\ln N
+
\frac32 \sum_{|i|\geq 2}\abs{\bal_i-\bbt_i}\ln|i| \,,
\end{aligned}
\end{equation}
using that $1+cx \leq \frac32 x^c$ for $c\geq 1,$ $x\geq 2.$
If $\bal_i-\bbt_i=0$ for every $|i|\geq 2$
then \eqref{cosette4} follows.
Assume now that 
$\bal_i-\bbt_i\neq 0$ for some $|i|\geq 2.$
By
\eqref{sonno}
we have
\begin{equation}\label{giotto}
N\geq 3\quad {\rm or}\quad \pi\neq 0\,.
\end{equation}
We claim that, when $N\geq 3,$
\begin{equation}\label{scrovegni}
\ln\pa{\sum_{l= 3}^N \na_l^2}
\leq \ln N +\sum_{l= 3}^N \ln\na_l^2\,.
\end{equation}
Let
$\mathcal S:=\{3\leq l\leq N,\  {\rm s.t.}\ \na_l\geq 2\}.$
If $\mathcal S=\emptyset$ we have the equality in \eqref{scrovegni}.
Otherwise 
$\sum_{l\in\mathcal S}\na_l^2\geq 4$
and\footnote{\label{cimabue} Use that
$\ln(x+y)\leq \ln x+\ln y$ if $x,y\geq 2.$}
$$
\ln\pa{\sum_{l= 3}^N \na_l^2}
\leq \ln\pa{N+\sum_{l\in\mathcal S}\na_l^2}
\leq \ln N +\sum_{l\in\mathcal S} \ln\na_l^2\,,
$$
proving \eqref{scrovegni}.
\\
We claim that
\begin{equation}\label{sistina}
\ln\pa{20 \abs{\pi}+ 31 \sum_{l= 3}^N \na_l^2 }
\leq 
\ln(1+|\pi|)+\ln N+\sum_{l= 3}^N \ln\na_l^2 
+\ln 20+\ln 31\,.
\end{equation}
Indeed consider first the case $\pi=0,$
then $N\geq 3$ by \eqref{giotto}
and \eqref{sistina} follows by \eqref{scrovegni}.
Consider now the case $|\pi|\geq 1.$
If $N<3$ \eqref{sistina} follows (there is no sum).
If $N\geq 3$ we have\footnote{Recall footnote \ref{cimabue}.}
\begin{eqnarray*}
&&\ln\pa{20 \abs{\pi}+ 31 \sum_{l= 3}^N \na_l^2 }
\leq 
\ln\pa{20 \abs{\pi}}+\ln\pa{ 31 \sum_{l= 3}^N \na_l^2 }
\\
&&
\leq
\ln(|\pi|)+\ln\pa{ \sum_{l= 3}^N \na_l^2 } 
+\ln 20+\ln 31\,.
\end{eqnarray*}
Recalling \eqref{scrovegni} this complete the proof of 
\eqref{sistina}.

Let us continue the proof of \eqref{cosette4}.
Set $g(i):=0$ if $|i|\leq 1$ and $g(i):=\ln|i|$ if $|i|\geq 2$
and apply \eqref{pula} to \eqref{cosette3}; we get
\begin{eqnarray*}
 &&\sum_{|i|\geq 2}\abs{\bal_i-\bbt_i}\ln|i|
 \leq 2 \ln |m_1|+\sum_{l\geq 3} \ln |\na_l|
 \\
&&\stackrel{\eqref{chiappechiare}}\leq
 2\ln\pa{20 \abs{\pi}+ 31 \sum_{l\ge 3} \na_l^2 }
 +\sum_{l\geq 3} \ln \na_l
 \\
 &&\stackrel{\eqref{sistina}}\leq
 2\ln(1+|\pi|)+2\ln N+5\sum_{l= 3}^N \ln\na_l 
+16\,.
\end{eqnarray*}
Inserting in \eqref{cosette3} we obtain
\begin{eqnarray*}
\sum_i\ln(1+\abs{\bal_i-\bbt_i}{\jap{i}}) 
\leq
3\ln(1+|\pi|)+6\ln N+\frac{15}{2}\sum_{l= 3}^N \ln\na_l 
+27\,.
\end{eqnarray*}
concluding the proof of \eqref{cosette4}.
\end{proof}

\subsection{Proof of Lemma \ref{pajata}}\label{vaccinara}

	First of all we note that
	$$
	\sum_i f_i(|\ell_i|)=
	\sum_{i\ \text{s.t.} \ \ell_i\neq 0} f_i(|\ell_i|)
	$$
	since $f_i(0)=0.$
	We have that\footnote{Using that
		$\ln(1+y)\leq 1+\ln y$ for every $y\geq 1.$}
	$$
	f_i(x) \leq
	-\frac{\s}{C_* }\jap{i}^{\frac\theta 2}x + {2} \ln(x)+  \pa{{2}+\fp}\ln \jap{i} +1\,,\qquad
	\forall\, x\geq 1\,.
	$$
	We have that
	\begin{equation*}
	\max_{x\geq 1} \left( -\frac{\s}{C_* }\jap{i}^{\frac\theta 2}x + 2 \ln(x)\right)=
	\left\{
	\begin{array}{l} 
	\displaystyle -\frac{\s}{C_* }\jap{i}^{\frac\theta 2} 
	\qquad\qquad\qquad\ \,\qquad\text{if}\quad \jap{i}\geq i_0\,,
	\\
	 \empty
	\\  
	\displaystyle
	-{2}+{2}\ln \frac{2C_* }{\s}-\teta \ln \jap{i}
	\qquad\text{if}\quad \jap{i}< i_0\,,
	\end{array}
	\right.
	\end{equation*}
	where 
	$$
	i_0:=\left(\frac{2C_* }{\s}\right)^{\frac 2 \theta}\,,
	$$
	since the maximum is achieved for 
	$x=1$ if $\jap{i}\geq i_0$ and 
	$x=\frac{2C_* }{\s \jap{i}^{\theta/2}}$
	if $\jap{i}< i_0$.
	Note that $i_0\geq e.$
	Then we get 
	\begin{eqnarray*}
		&& \sum_i f_i(|\ell_i|)
		=
		\sum_{i\ \text{s.t.} \ \ell_i\neq 0} f_i(|\ell_i|)
		\leq
		\\
		&& \sum_{\jap{i}\geq i_0\ \text{s.t.} \ \ell_i\neq 0}
		\left(
		\pa{{2}+\fp}\ln \jap{i} +1
		-\frac{\s}{C_* }\jap{i}^{\frac\theta 2}
		\right)
		+
		\sum_{\jap{i}< i_0}
		\left(
		2\ln \frac{2C_* }{\s}+\Big({2}+\fp
		-\teta\Big) \ln \jap{i}
		\right)\,.
	\end{eqnarray*}
	We immediately have that
	\begin{eqnarray*}
		&& \sum_{\jap{i}< i_0}
		\left(
		2\ln \frac{2C_* }{\s}+\Big({2}+\fp
		-\teta\Big) \ln \jap{i}
		\right)
		\leq 3 i_0 
		\left(
		2\ln \frac{2C_* }{\s}+({2}+\fp
		) \ln i_0
		\right)
		\\
		&&
		=
		3\left(2+ \frac{2}{\theta}({2}+\fp)\right)\left(\frac{2C_* }{\s}\right)^{\frac 2 \theta}
		\ln \frac{2C_* }{\s}\,.
	\end{eqnarray*}
	Moreover, in the case $\jap{i}\geq i_0\geq e,$
	$$
	\pa{{2}+\fp}\ln \jap{i} +1
	-\frac{\s}{C_* }\jap{i}^{\frac\theta 2}
	\leq 
	\pa{{2}+\fp+1}\ln \jap{i}
	-\frac{\s}{C_* }\jap{i}^{\frac\theta 2} 
	=\frac{2}{\theta}\pa{{2}+\fp+1}\Big(
	\ln \jap{i}^{\frac\theta 2}-2{\frak C } \jap{i}^{\frac\theta 2}
	\Big)
	$$
	where 
	$$
	{\frak C }:=\frac{\s\theta}{4C_* \pa{{2}+\fp+1}}<1
	\,.
	$$
	Therefore
	$$
	S_*:=\sum_{\jap{i}\geq i_0\ \text{s.t.} \ \ell_i\neq 0}
	\left(
	\pa{{2}+\fp}\ln \jap{i} +1
	-\frac{\s}{C_* }\jap{i}^{\frac\theta 2}
	\right)
	$$
	satisfies
	$$
	S_*
	\leq
	\sum_{\jap{i}\geq i_0\ \text{s.t.} \ \ell_i\neq 0}
	\frac{2}{\theta}\pa{{2}+\fp+1}\Big(
	\ln \jap{i}^{\frac\theta 2}-2{\frak C} \jap{i}^{\frac\theta 2}
	\Big)\,.
	$$
	We have that\footnote{Using that, for every fixed
		$0<{\frak C} \leq 1,$ we have
		${\frak C} x\geq \ln x$ for every $x\geq
		\frac{2}{{\frak C}}\ln
		\frac{1}{{\frak C}} .$}
	$$
	\ln \jap{i}^{\frac\theta 2}-2{\frak C} \jap{i}^{\frac\theta 2}
	\leq -{\frak C} \jap{i}^{\frac\theta 2}\,,
	\qquad
	\text{when}\qquad
	\jap{i}\geq 
	i_*:=
	\left(\frac{2}{{\frak C}}\ln
	\frac{1}{{\frak C}}\right)^{\frac2\theta}
	\,.
	$$
	Note that
	$$
	i_\sharp
	\geq \max\{ i_0, i_*\}\,.
	$$
	Therefore
	\begin{eqnarray*} S_*
		&\leq&
		\frac{2}{\theta}\pa{{2}+\fp+1}
		\left(
		\sum_{\jap{i}<i_\sharp}
		\ln \jap{i}^{\frac\theta 2}
		-
		\sum_{\jap{i}\geq i_\sharp\ \text{s.t.} \ \ell_i\neq 0}
		\Big(
		{\frak C} \jap{i}^{\frac\theta 2}
		\Big)
		\right)
		\\
		&\leq&
		\pa{{2}+\fp+1}
		\left( 
		3 i_\sharp \ln i_\sharp
		-\frac{2{\frak C}}{\theta} M_\ell^{\frac\theta 2}
		\right)
		\,.
	\end{eqnarray*}
	where
	$$
	M_\ell:=\max\{  
	|i|\geq i_\sharp,\ \ \text{s.t.}\ \ \ell_i\neq 0
	\}
	$$
	and $M_\ell:=0$ if $|\ell_i|=0$
	for every $|i|\geq i_\sharp.$
	In conclusion we get
	\begin{eqnarray*}
		\sum_i f_i(|\ell_i|)
		&\leq&
		3\left({2}+ \frac{2}{\theta}({2}+\fp)\right)\left(\frac{2C_* }{\s}\right)^{\frac 2 \theta}
		\ln \frac{2C_* }{\s}
		+
		\pa{{2}+\fp+1}
		\left( 
		3 i_\sharp \ln i_\sharp
		-\frac{2{\frak C}}{\theta} M_\ell^{\frac\theta 2}
		\right)
		\\
		&\leq&
		6(\fp+3)
		i_\sharp \ln i_\sharp
		- \frac{\s}{2C_* } M_\ell^{\frac\theta 2}
		\\
		&\leq&
		7(\fp+3)
		i_\sharp \ln i_\sharp
		- \frac{\s}{2C_* } \big(\na_1(\ell)\big)^{\frac\theta 2}\,,
	\end{eqnarray*}
	noting that $\na_1(\ell)=M_\ell$ if $M_\ell\neq 0,$
	otherwise 
	$\na_1(\ell)< i_\sharp,$
	and, therefore, 
	$$
	\frac{\s}{2C_* } \big(\na_1(\ell)\big)^{\frac\theta 2}
	<
	\frac{\s}{2C_* } i_\sharp^{\frac\theta 2}
	\leq
	(\fp+3)
	i_\sharp \ln i_\sharp
	$$
\qed

\subsection{Measure Estimates}\label{cantor}
\begin{proof}[Proof of Lemma  \ref{misura}]
	For $\ell\in \Z^\Z$ with $ 0<|\ell|<\infty$ we define 
	\[
	\mathcal R_\ell := \set{\omega\in \Omega_\fp\,:\;	|\omega\cdot \ell|\leq \frac{\gamma}{1+|\ell_0|^{\mu_1}} \prod_{n\neq 0}\frac{1}{(1+|\ell_n|^{\mu_1} | n|^{{\mu_2}+\fp})}}
	\] 
	\begin{itemize}[leftmargin=*]
		\item if $\ell$  is such that $\ell_n=0$ $\forall n\neq 0$ then 
		$$
		\mu(\mathcal R_\ell) = \frac{\g}{1+|\ell_0|^{\mu_1} }
		$$\\
		\item Otherwise: let $s=s(\ell)>0$ be the smallest positive index $i$ such that $|\ell_i |+|\ell_{-i}|\neq 0$ and $S=S(\ell)$ be the biggest. 
		Then  we have\footnote{Assume, e.g. that $\ell_s\neq 0$, then
			$|\partial_{\xi_s}\omega\cdot\ell|\geq s^{-\fp}\,.$} 
		$$
		\mu(\mathcal R_\ell) \le \frac{\g  s^\fp }{\pa{1+|\ell_0|^{\mu_1}} }\prod_{n\neq 0}\frac{1}{(1+|\ell_n|^{\mu_1} |n|^{{\mu_2}+\fp})}.
		$$
	\end{itemize}
	Let us  write
	
	\begin{align*}
	\frac{1}{1 + |\ell_0|^{\mu_1}}\prod_{n\neq 0}\frac{1}{(1+|\ell_n|^{\mu_1} |n|^{{\mu_2}+\fp})} &= \frac{1}{1 + |\ell_0|^{\mu_1}}\prod_{n>0}\frac{1}{(1+|\ell_n|^{\mu_1} |n|^{{\mu_2}+\fp})} \frac{1}{(1+|\ell_{-n}|^{\mu_1} |n|^{{\mu_2}+\fp})}\\
	&= \frac{1}{1 + |\ell_0|^{\mu_1}}\prod_{s(\ell)\le n\le S(\ell)}\frac{1}{(1+|\ell_n|^{\mu_1} |n|^{{\mu_2}+\fp})} \frac{1}{(1+|\ell_{-n}|^{\mu_1} |n|^{{\mu_2}+\fp})}
	\end{align*}	
	Now
	\begin{align}\label{mamma}
	&\mu(\Omega_\fp\setminus \dgp)\le \sum_{\ell} \mu(\cR_\ell)= \sum_{ \ell_0}\frac{\g}{1+|\ell_0|^{\mu_1} } \\
	+& \sum_{s>0} \sum_{\substack{ \ell: s(\ell)= S(\ell)=s}}\frac{1}{1 + |\ell_0|^{\mu_1}}\frac{\g s^\fp}{|\ell_s|(1+|\ell_s|^{\mu_1} |s|^{{\mu_2}+\fp})} \frac{1}{(1+|\ell_{-s}|^{\mu_1} |s|^{{\mu_2}+\fp})}\label{mamma2}\\
	+& \sum_{0<s< S} \sum_{\substack{ \ell: s(\ell)=s,\\ S(\ell)=S}}\frac{\g s^\fp}{1 + |\ell_0|^{\mu_1}}\prod_{s\le n\le S }\frac{1}{(1+|\ell_n|^{\mu_1} |n|^{{\mu_2}+\fp})} \frac{1}{(1+|\ell_{-n}|^{\mu_1} |n|^{{\mu_2}+\fp})}.\label{mamma3}
	\end{align}
	Let us estimate \eqref{mamma2}
	\begin{align*}
	&\sum_{s>0}\sum_{\ell_0\in \Z}\frac{1}{1 + |\ell_0|^{\mu_1}}\sum_{ \substack{\ell_s,\ell_{-s}\in\Z\\ |\ell_s|+|\ell_{-s}|>0}}\frac{\g s^\fp}{(1+|\ell_s|^{\mu_1} |s|^{{\mu_2}+\fp})} \frac{1}{(1+|\ell_{-s}|^{\mu_1} |s|^{{\mu_2}+\fp})}\\ 
	&\le c(\mu_1)\g \sum_{s>0} s^\fp \sum_{ \substack{\ell_s,\ell_{-s}\in\Z\\ |\ell_s|+|\ell_{-s}|>0}}\frac{1}{(1+|\ell_s|^{\mu_1} |s|^{{\mu_2}+\fp})} \frac{1}{(1+|\ell_{-s}|^{\mu_1} |s|^{{\mu_2}+\fp})}
	\end{align*}
	Now  since
	\[
	\sum_{h=1}^\infty  \frac{1}{(1+h^{\mu_1} |n|^{{\mu_2}+\fp})} \le \sum_{h=1}^\infty  \frac{1}{h^{\mu_1} |n|^{{\mu_2}+\fp}} \le  \frac{c(\mu_1)}{|n|^{{\mu_2}+\fp}}
	\]
	we have 
	\[
	\sum_{h\in\Z} \frac{1}{(1+|h|^{\mu_1} |n|^{{\mu_2}+\fp})} \le  1+ \frac{2c(\mu_1)}{|n|^{{\mu_2}+\fp}}.
	\]
	Then  we have
	\[
	\sum_{ \substack{\ell_s,\ell_{-s}\in\Z\\ |\ell_s|+|\ell_{-s}|>0}}\frac{1}{(1+|\ell_s|^{\mu_1} |s|^{{\mu_2}+\fp})} \frac{1}{(1+|\ell_{-s}|^{\mu_1} |s|^{{\mu_2}+\fp})} \le \frac{c_1(\mu_1)}{|s|^{{\mu_2}+\fp}}
	\]
	and consequently
	\eqref{mamma2} is bounded by  
	\[
	c_2(\mu_1)\g \sum_{s>0} |s|^b \le c_3(\mu_1,\mu_2)\g.
	\]
	\\
	Regarding the third line in \eqref{mamma}, 
	we note that for all $n$ we have
	\[
	\sum_{ \ell_n,\ell_{-n}\in \Z}  \frac{1}{(1+|\ell_n|^{\mu_1} |n|^{{\mu_2}+\fp})} \frac{1}{(1+|\ell_{-n}|^{\mu_1} |n|^{{\mu_2}+\fp})}\le \pa{1 + 2 \frac{c(\mu_1)}{|n|^{{\mu_2}+\fp}}}^2\,.
	\]
	Hence
	\begin{align*}
	&\sum_{\substack{ \ell: s(\ell)=s,\\ S(\ell)=S}}\frac{1}{1 + |\ell_0|^{\mu_1}}\prod_{s\le n\le S }\frac{1}{(1+|\ell_n|^{\mu_1} |n|^{{\mu_2}+\fp})} \frac{1}{(1+|\ell_{-n}|^{\mu_1} |n|^{{\mu_2}+\fp})}
	\\ &= \sum_{ \ell_0\in \Z}\frac{1}{1 + |\ell_0|^{\mu_1}}\times \sum_{ \substack{\ell_s,\ell_{-s}\in\Z\\ |\ell_s|+|\ell_{-s}|>0}}\frac{1}{(1+|\ell_s|^{\mu_1} |s|^{{\mu_2}+\fp})} \frac{1}{(1+|\ell_{-s}|^{\mu_1} |s|^{{\mu_2}+\fp})}\\
	&\times \sum_{ \substack{\ell_S,\ell_{-S}\in\Z\\ |\ell_S|+|\ell_{-S}|>0}}\frac{1}{(1+|\ell_S|^{\mu_1} |S|^{{\mu_2}+\fp})} \frac{1}{(1+|\ell_{-S}|^{\mu_1} |S|^{{\mu_2}+\fp})}\\
	&\times \prod_{s< n< S }\sum_{ \ell_n,\ell_{-n}\in \Z}\frac{1}{(1+|\ell_n|^{\mu_1} |n|^{{\mu_2}+\fp})} \frac{1}{(1+|\ell_{-n}|^{\mu_1} |n|^{{\mu_2}+\fp})}
	\\
	&\le \frac{c_4(\mu_1)}{s^{{\mu_2}+\fp}S^{{\mu_2}+\fp}} \prod_{s< n< S }\pa{1 + 2 \frac{c(\mu_1)}{|n|^{{\mu_2}+\fp}}}^2
	\\
	&\le \frac{c_4(\mu_1)}{s^{{\mu_2}+\fp}S^{{\mu_2}+\fp}} 
	\exp\left(
	\sum_{n\geq 1 } \ln\pa{1 + 2 \frac{c(\mu_1)}{|n|^{{\mu_2}+\fp}}}^2 \right)
	\\
	&\le \frac{c_5(\mu_1,\mu_2)}{s^{{\mu_2}+\fp}S^{{\mu_2}+\fp}}\,.
	\end{align*}
	Then, multiplying by $\gamma s^\fp$ and taking the $\sum_{0<s<S},$
	we have that also  \eqref{mamma3} is bounded  by  some constant $\Cmeas(\mu_1,\mu_2)\g$.
\end{proof}

\bibliographystyle{alpha}
\bibliography{bibliografiaBMP}

\end{document}